\theoremstyle{plain}
\newtheorem{theorem}{Theorem}
\newtheorem{proposition}{Proposition}[section] 
\newtheorem{corollary}[proposition]{Corollary} 
\newtheorem{lemma}[proposition]{Lemma}
\newtheorem*{blanktheorem}{Theorem}
\theoremstyle{definition}
\newtheorem{definition}[proposition]{Definition}
\newtheorem{notation}[proposition]{Notation}
\newtheorem{remark}[proposition]{Remark} 
\newtheorem{assumption}{Assumption}
\theoremstyle{remark}
\newtheorem{qRodrigo}{Question to Rodrigo} 
\newtheorem{qSasha}{Question to Sasha} 
\newtheorem{qPatricia}{Question to Patricia} 
\newtheorem{qMichael}{Question to Michael} 
\numberwithin{equation}{section}
\long\def\mS#1{ {\marginnote{{\color{OliveGreen}{ 
 [$mS\the\N$]{\global\advance\N by 1}: #1}}}} }
\long\def\mpatd#1{{\marginnote{\textcolor{TealBlue}{[mpatd$\the\N$]{\global\advance\N by 1}: #1}}} }
\long\def\mrem#1{{\marginnote{\textcolor{Mahogany}{[rem$\the\NM$]{\global\advance\NM by 1}: #1}}} }
\def\cphi{\circ \phi^{-1}}
\DeclareMathOperator{\supp}{supp}
\DeclareMathOperator{\diam}{diam}
\DeclareMathOperator{\clos}{clos}
\DeclareMathOperator{\im}{Im}
\DeclareMathOperator{\rank}{rank}
\newcommand{\mbbR}{{\mathbb R}}
\newcommand{\Rd}{\ensuremath{{\mathbb R^d}}}
\newcommand{\mbbP}{{\mathbb P}}
\newcommand{\B}{{\mathcal{B}}}
\newcommand{\D}{\mathcal{D}}
\newcommand{\E}{\mathcal{E}}
\newcommand{\mcH}{\mathcal{H}}
\newcommand{\orb}{\operatorname{orb}}
\newcommand{\OO}{\ensuremath{\Omega}}
\newcommand{\Koo}{U}
\newcommand{\OLe}{\ensuremath{O_{\Lambda,\varepsilon}}}
\newcommand{\CLe}{\ensuremath{\mathcal{C}_{\Lambda,\varepsilon}}}
\newcommand{\vx}{{\vec{x}}}
\newcommand{\vy}{{\vec{y}}}
\newcommand{\vt}{{\vec{\,t}}}
\newcommand{\vzer}{{\vec{\,0}}}
\newcommand{\vv}{{\vec{v}}}
\newcommand{\ve}{{\vec{e}}}
\newcommand{\vs}{{\vec{s}}}
\newcommand{\vW}{{\vec{W}}}
\newcommand{\bl}{{\ensuremath{\bar{\Lambda}}}}
\newcommand{\PHi}{{\ensuremath{\mbox{$\Phi$\hskip.017em\llap{$\Phi$}\hskip.017em\llap{$\overset{{\genfrac{}{}{0pt}{}{}{{\Large*}}}}{\Phi}$}\hskip.017em\llap{$\Phi$}\hskip.017em\llap{$ {\Phi}$}}}}}
\renewcommand{\PHi}{{\ensuremath{\mbox{$\Phi$\hskip.01em\llap{$\Phi$}\hskip.01em\llap{$\Phi$}\hskip.01em\llap{$\Phi$}\hskip.01em\llap{$\Phi$}}}}}
\newcommand{\PHii}{{\ensuremath{\PHi_i}}}
\begin{document}

\title{Canonical diffusions on   pattern spaces of aperiodic Delone sets}

\author{Patricia Alonso-Ruiz$^{1}$}

\address{$^{1,3}$ Department of Mathematics, University of Connecticut, Storrs, CT 06269, USA}

\email{\href{mailto:patricia.alonso-ruiz@uconn.edu}{patricia.alonso-ruiz@uconn.edu}}
\urladdr{\url{http://www.math.uconn.edu/~alonsoruiz/}}

\email{\href{mailto:alexander.teplyaev@uconn.edu}{alexander.teplyaev@uconn.edu}}
\urladdr{\url{http://www.math.uconn.edu/~teplyaev/}}

\author{Michael Hinz$^2$}

\address{$^2$ Fakult\"at f\"ur Mathematik, Universit\"at Bielefeld, 
 33501 Bielefeld, Germany}

\email{\href{mailto:mhinz@math.uni-bielefeld.de}{mhinz@math.uni-bielefeld.de}}
\urladdr{\url{https://www.math.uni-bielefeld.de/~mhinz/home.html}}

\author{Alexander Teplyaev$^3$}

\author{Rodrigo Trevi\~{n}o$^4$}

\address{$^4$ Department of Mathematics, 
 University of Maryland
 College Park, MD 20742, 
 USA}
\email{ \href{mailto:rodrigo@math.umd.edu}{rodrigo@math.umd.edu}\ }
\urladdr{\url{http://trevino.cat} \ }

\thanks{\mbox{Research supported in part by:} $(^1)$ the Feodor Lynen Fellowship, Alexander von Humboldt Foundation;
$(^2)$ the DFG IRTG 2235: `Searching for the regular in the irregular: Analysis of singular and random systems' and by the `Fractal Geometry and Dynamics' program, Institut Mittag-Leffler, Stockholm, 2017; $(^{1,3})$ the NSF 
 (DMS-1613025, 
 DMS-1262929, 
 DMS-1700187); 
$(^{2,3})$ the DFG CRC 701: `Spectral Structures and Topological Methods in Mathematics'; $(^4)$ NSF grant DMS-1665100.}

\date{\today}

\begin{abstract}
We consider pattern spaces of aperiodic and repetitive Delone sets of finite local complexity. These spaces are compact metric spaces and constitute a special class of foliated spaces. We define new Sobolev spaces with respect to the unique invariant measure and prove the existence of the unitary Schr\"odinger semigroup, which   in physics terms describe the evolution of  phasons. We define and study natural leafwise diffusion processes on these pattern spaces. These processes have Feller, but lack  strong Feller and hypercontractivity properties, and  heat kernels do not exist. The associated Dirichlet forms are regular, strongly local, irreducible and recurrent, but not strictly local.  For harmonic functions we prove new Liouville and  Helmholtz-Hodge type theorems.


\end{abstract}
 \subjclass[2010]{
Primary: 
60J60, 
81Q35; 
37C85, 
31C25, 
Secondary: 
31E05, 
35K08, 
37C40, 
37C55, 
46E35, 
47D03, 
58A12, 
58A14, 
60J35. 
}
 \keywords{Pattern spaces, aperiodic, repetitive 
 	Delone sets, finite local complexity, foliated spaces, diffusion processes, Feller and strong Feller properties,  heat semigroup, self-adjoint Laplacian, Dirichlet form, Liouville theorem, Helmholtz-Hodge decomposition, Sobolev spaces.}
\maketitle


{\ }\newpage\thispagestyle{empty}
\setcounter{page}{1}
\tableofcontents\newpage
\thispagestyle{empty}\setcounter{page}{1}

 \begin{table}[h] \caption{Summary of notation used throughout the paper. 
 }
 \begin{center}
 \begin{longtable}{|p{0.14\textwidth}|p{0.60\textwidth}|p{0.17\textwidth}|}
 \hline
 $\Omega_{\Lambda_0}=\Omega$ & the pattern space of a Delone set $\Lambda_0$ & Definition~\ref{D:Omega}\\\hline
 $\varrho$ & metric on $\Omega$ & Definition~\ref{D:rho}\\\hline
 $\orb(\Lambda)$ & orbit of $\Lambda$ & Remark~\ref{R:orbits}\\\hline
 $\varrho_{\orb}$ & orbit metric & Definition~\ref{D:orbit metric}\\\hline
 $h_{\Lambda}$ & orbit homeomorphism & \eqref{E:orbithomeo}\\\hline
 $\mathcal{C}_{\Lambda,\varepsilon}$ & a transversal $\varepsilon$-cylinder set & Definition \ref{D:Cilinder}\\\hline
 $\OLe$ & translated $\varepsilon$-cylinder set & \eqref{E:transcylsets}\\\hline 
 $\mu$& unique $\mathbb{R}^d$-invariant probability measure on $\Omega$&\eqref{E:ergodic}\\ \hline
 $p_{\mbbR^d}(t,\vs,)$& Gaussian heat kernel on $\mbbR^d$& Definition~\ref{D:Gaussian}\\\hline
 $(T_t)_{t\geq 0}$ & Feller semigroup of $X_t$& Definition~\ref{D:sg}\\\hline
 $X_t$ & canonical diffusion process on $\Omega$& Theorem~\ref{T:main_Feller} \\\hline
 $\lambda^d_{\Omega}$& not $\sigma$-finite pushforward of the Lebesgue measure~$\lambda^d$&
 \eqref{e-ld}\\\hline
 $p_\Omega(t,\cdot,\cdot)$& 
 heat kernel 
 with respect to the 
 measure $\lambda^d_\Omega$&
 \eqref{e-lp}\\\hline
 $C^k(\Omega)$ & $C^k$-functions on $\Omega$ & Definition~\ref{D:Ck} \\\hline
 $C^k_{\orb}(\Omega)$ & $\varrho_{\orb}$-continuous functions that are orbit-wise $C^k$ & Definition \ref{D:Ckorb} \\\hline
 $C^k_{tlc}(\Omega)$ & transversally locally constant $C^k$-functions on $\Omega$ & Definition~\ref{D:tlc} \\\hline
 $C^k_{\Lambda}(\mathbb{R}^d)$ & $\Lambda$-equivariant $C^k$-functions on $\mathbb{R}^d$ & Definition~\ref{D:eqvar}\\\hline
 $\nabla$ & canonical gradient operator on $\Omega$ & \eqref{E:gradient}\\\hline
 $\Delta$ & canonical Laplacian on $\Omega$ & \eqref{E:Laplace}\\\hline
 $\mathcal{L}_{C(\Omega)}$ & Feller generator of $X_t$ & \eqref{E:Fellergen}\\\hline
 $(P_t)_{t>0}$ & extension of $(T_t)_{t>0}$ to a semigroup on $L^2(\Omega,\mu)$ & Notation \ref{nPt}\\\hline
 $\mathcal{L}$& Generator of $(P_t)_{t>0}$ & \eqref{E:L2gen}\\\hline
 $\mathcal{E}$& Dirichlet form of $(P_t)_{t>0}$ on $\Omega$&\eqref{E:Dform}\\\hline
 $\mathcal{B}(\Omega),\mathcal{B}(\mathbb{R}^d)$ & Borel measurable functions on $\Omega$, $\mathbb{R}^d$ & 
 \\\hline
 $bS$ & subspace of bounded elements of a function space $S$ & \\\hline 
 $\mathcal{W}^{k,2}(\Omega,\mu)$ & canonical Sobolev spaces on $\Omega$ & Definition~\ref{def-sobolev}\\\hline
 \end{longtable}
\end{center}\end{table}


\newpage

	
\section{Introduction}

Pattern spaces of aperiodic and repetitive Delone sets of finite local complexity 
are foliated spaces that are important in mathematics and mathematical physics, 
see for instance \cite{BG:book1,kellendonk2015mathematics,MSch06, sadun:book}. 
Our work is the first to define and study the natural 
symmetric diffusion processes on such spaces, and  
to define the unique 
self-adjoint extension $\mathcal{L}$ of the Laplacian $\Delta$. This allows to define   
the unitary Schr\"odinger semigroup $e^{i\mathcal{L}}$ that, under natural physical assumptions, describes the evolution of \emph{phasons}, quasiparticles existing in quasicrystals. In physical literature this is a well studied object, see \cite{A5,A8,divincenzo1999quasicrystals} and references therein, but previously there have not been a mathematical proof of existence of such particles. Besides the  functional analysis, our  probabilistic construction allows to use the tools of stochastic analysis to obtain the Feynman-Kac formulas for  phasons, including the Feynman-Kac formulas with a magnetic field \cite{HT13m,H15m}. 
In some sense our study is dual to the study of aperiodic discrete  
Schr\"odinger operators, see \cite[and references therein]{DamanikGorodetskiYessen,AvilaDamanikZhang,JitomirskayaLiu}.
Technically our work is difficult because classical methods do not apply, as we comment throughout the article. 

There exist many studies of various properties of pattern spaces, specific examples, and their physical relevance. However, at present, the 
literature dealing with the relation between diffusion processes and their semigroups, and the related functional analysis, is not well developed. 
Our work aims to bridge this gap, and has three goals. The first is to begin with a simple definition and, based on it, to give rather straightforward arguments to prove some basic 
important facts: the presence of the Feller property and the absence of the strong Feller property. 
The second goal is to investigate the situation when the existence and uniqueness of an invariant ergodic probability measure is known. In this case, the diffusion is symmetric with respect to this measure and exploiting this fact, we can prove new results about Sobolev spaces, quadratic (Dirichlet) forms, self-adjoint Laplacians, 
and harmonic functions. Third, in the one dimensional case of Delone sets on the real line, we obtain the Helmholtz-Hodge decomposition for $L^2$ vector fields. 
This Helmholtz-Hodge decomposition is new and substantially different from the more well established results dealing with the de Rham type cohomology. 

Delone sets arise from various models such as tilings or quasicrystals, see for example 
\cite{A1,A2,A3,
	A5,
	BG:book1, BaakeLenz17, BBG06, Gardner77, kellendonk2015mathematics, KP00, LP03, SBGC82,ACCDP:survey,Bellissard00,Bellissard15}. One can introduce a certain metric on all Delone sets (Definition \ref{D:rho}), and consider the closure of the $\mathbb{R}^d$-orbit of a given Delone set with respect to this metric. In this way, a Delone set of finite local complexity produces a compact metric space, called the pattern space or continuous hull. Delone sets that in addition are aperiodic and repetitive lead to pattern spaces with an interesting topological structure, as they are not manifolds or foliated manifolds, but examples of more complicated 
foliated spaces, \cite{BG12,CanCon00, MSch06}. More specifically, they are locally homeomorphic to products of Cantor sets and Euclidean balls. Therefore, although they are neither a manifold nor a fractal, such pattern spaces have in some sense aspects of both.

Diffusions on foliated manifolds were introduced in \cite{GarnettL} and defined using (truncated) heat kernels of Brownian motion on the leaves of the foliation. The existence of invariant measures for the diffusion was also verified. Further properties of such invariant measures can be found in~\cite{Kaimanovich89}. The paper \cite{Candel} studied diffusions in the more general setup of foliated spaces, see \cite[Definition 2.1]{Candel} or \cite{CanCon00, MSch06}, which covers the pattern spaces considered here. These papers dealt with elliptic operators on smooth functions on foliated spaces and used the Hille-Yosida theorem to verify the existence of associated Feller semigroups. Correcting a gap in \cite{GarnettL}, the paper \cite{Candel} also proved the existence of invariant measures for these semigroups and used Kolmogorov's theorem to actually construct related Feller diffusion processes on the foliated spaces (see also \cite[Chapter 2]{CanCon03}). A complementary approach to diffusions on foliated spaces using stochastic differential equations was recently provided in \cite{Su15}, where the Feller property of the diffusions was verified. 

Pseudodifferential operators acting in the transversal (`Cantor-set') direction of pattern spaces have been investigated in \cite{PB09} using tools from noncommutative geometry. Physical applications are discussed in \cite{kellendonk2015mathematics}.
 
In the present paper we give a different, probably the most robust, definition of diffusions on pattern spaces in comparison to \cite{Candel} and \cite{Su15}, although all natural approaches are related to the canonical differentiation along the orbits considered in \cite{Kellendonk:PEC, KP:RS}, see Definitions \ref{D:sg} and \ref{D:Ck} and formula (\ref{E:process}). 
Starting with our definition of the diffusion, we first give a short alternative proof of the Feller property, now based on the metric on the pattern space. We verify in Theorem~\ref{T:main_Feller} that the diffusion does not possess the strong Feller property, a simple result that we could not find in the existing literature. Afterwards, we assume that the action of $\mathbb{R}^d$ on the pattern space is uniquely ergodic, a situation that covers many typical examples of pattern spaces, see Remark \ref{R:uniqueergodic}. This adds a new perspective and leads to new results not previously covered in \cite{GarnettL, Candel, CanCon00, CanCon03}, because in this case the unique invariant and ergodic probability measure $\mu$ provides a volume measure on the pattern space with respect to which the diffusion is easily seen to be symmetric. Moreover, $\mu$ is invariant with respect to the Feller semigroup and convergence to equilibrium holds in the weak sense. As a consequence of the symmetry, we can consider the $L^2$-generator of the diffusion, which is a self-adjoint extension of the natural Laplacian defined on smooth functions, Theorem~\ref{T:main_L2sg}. We observe that the $L^2$-semigroup does not admit a heat kernel with respect to $\mu$. The corresponding Dirichlet form is regular strongly local, irreducible and recurrent, but the regularity in this case is not enough to consider the natural regularization of the generalized eigenfunctions, see \cite{LT16} and references therein.

It is important to note that 
the spectrum of the Laplacian is connected with the spectrum of the Koopman operators associated with the action of $\mathbb{R}^d$ on the pattern space, see Theorem~\ref{T:main_L2sg} (\ref{T:main_Koopman}) and Corollary~\ref{c:S}. This provides a connection to recent results on spectral notions of aperiodic order \cite{BaakeLenz17, BaakeLenzvanEnter15}. A Liouville theorem for measurable harmonic functions is proved in 
Theorem~\ref{T:main_Liouville}. For pattern spaces of Delone sets in dimension one, we provide in Theorem \ref{T:main_Hodge} a Helmholtz-Hodge decomposition and verify that the orthogonal complement of gradient fields consists only of constants. This result can be rephrased by saying that the space of harmonic vector fields is one-dimensional, in contrast to topological results such as the fact that for the Fibonacci-tiling in dimension one, the first \v{C}ech cohomology is two-dimensional see \cite{sadun:book, Sadun2015} and Remark~\ref{R:contrast}.  

In Section~\ref{S:FellSmg} we consider the properties of the Feller semigroup and its 
generator. Section~\ref{S:L2} deals with the $L^2$-semigroup with respect to the unique ergodic invariant measure, the generator of this semigroup and associated quadratic forms. In Section~\ref{S:harmonic} we discuss harmonic functions and prove the Liouville theorem. Section~\ref{S:Hodge} deals with the Helmholtz-Hodge decomposition in the one-dimensional case. Basic facts on differentiable functions and the Hodge star operators for Dirichlet forms 
are collected in the appendix.
This is connected to the 
tangential, dynamical, weakly
patterned equivariant cohomologies, but 
different  
from the Cech, strongly patterned equivariant, transversally locally
constant dynamical cohomologies (see Theorems 20 and 23 in 
\cite{KP:RS} and also \cite{MSch06}).



We would like to mention two aspects which make analysis on a pattern space difficult. One aspect is that we have to consider two metrics, $\varrho$ from Definition~\ref{D:rho} and $\varrho_{\orb}$ from Definition~\ref{D:orbit metric}. The pattern space is compact in the non-geodesic metric 
$\varrho$, but the topology induced by the geodesic metric $\varrho_{\orb}$ is non-compact and non-separable. Moreover, in most of the paper, we have to deal with two measures: a unique ergodic probability measure 	$\mu$ from~\eqref{E:ergodic}, and a non $\sigma$-finite measure $\lambda^d_{\Omega}$, which is the pushforward of the $d$-dimensional Lebesgue measure~$\lambda^d$ 
under the orbit homeomorphisms~\eqref{E:orbithomeo}. Thus, even though $\mu \ll \lambda^d_{\Omega} $, the Radon-Nikodym theorem does not apply.  Note also that, by continuity, the natural diffusion process almost surely has to be confined to the  orbit of its starting point, 
implying that the diffusion is not strong Feller and there is no heat kernel with respect to $\mu$. 
As a deep consequence of this aspect of stochastic analysis, the classical theory of 
Carlen-Kusuoka-Stroock \cite{CKS87} does not apply in our case. The Dirichlet form is not strictly local because the intrinsic metric does not 
generate the topology of the space, unlike in more well studied geometric analysis situations \cite{Sturm1,Sturm2,Cheeger,HKST,KSZ14,KZ12} (see \cite{BCR11,Gross67,Gross67jfa,Gross75,Hansen,BG12} for related functional analysis and probabilistic discussion). 

\subsection*{Acknowledgements} 
We thank 
Eric Akkermans, 
Jean Bellissard, 
Lucian Beznea, 
David Damanik, 
Jozef Dodziuk, 
Dmitry Dolgopyat, 
Maria Gordina, 
Daniel Lenz, 
Luke Rogers, 
and 
Claude Schochet 
for 
helpful and inspiring discussions.

\section{Definitions and Notation}

\subsection{Definitions of Delone sets and pattern spaces}
A subset $\Lambda\subset \mathbb{R}^d$ is called \emph{uniformly discrete} if there exists a number $\varepsilon>0$ such that for any two distinct $\vx,\vy\in\Lambda$ we have that $|\vx-\vy|>\varepsilon$. It is called \emph{relatively dense} if there exists $R>0$ such that $\Lambda\cap B_R(\vx) \neq \varnothing$ for any $\vx\in\mathbb{R}^d$. The set $\Lambda$ is a \emph{Delone set} if it is both relatively dense and uniformly discrete.

A finite subset $P\subset \Lambda$ is called a \emph{cluster} of $\Lambda$. A Delone set has \emph{finite local complexity} if for every $R>0$ there exist finitely many clusters $P_1,\dots, P_{n_R^{\vphantom{A}}}$ such that for any $\vx\in\mathbb{R}^d$ there is an $i$ such that the set $B_R(\vx)\cap \Lambda$ is translation-equivalent to $P_i$. A Delone set $\Lambda$ is \emph{aperiodic} if $\Lambda-\vt=\Lambda$ implies $\vt=\vec{0}$. It is \emph{repetitive} if for any cluster $P\subset \Lambda$ there exists $R_P>0$ such that for any $\vx\in\mathbb{R}^d$ the cluster $B_{R_P}(\vx)\cap\Lambda$ contains a cluster which is translation-equivalent to $P$. 

\begin{definition}\label{D:rho}
Let $\Lambda_0\subset\mathbb{R}^d$ be a Delone set and denote by $\varphi_\vt\:(\Lambda_0) = \Lambda_0-\vt$ its translation by the vector $\vt\in\mathbb{R}^d$. For any two translates $\Lambda_1$ and $\Lambda_2$ of $\Lambda_0$ define
\[
\varrho(\Lambda_1,\Lambda_2) = \min \{\bar{\varrho}(\Lambda_1,\Lambda_2), 2^{-1/2}\},
\]
where
\[
\bar{\varrho}(\Lambda_1,\Lambda_2) = \inf\{ \varepsilon>0~\colon~\exists~\vs,\vt\in B_\varepsilon(\vec{0})~\text{such that}~B_{\frac{1}{\varepsilon}}(\vec{0})\cap \varphi_\vs(\Lambda_1) = B_{\frac{1}{\varepsilon}}(\vec{0})\cap \varphi_\vt(\Lambda_2) \}.
\]
\end{definition}
A proof that $\varrho$ is a metric on the set of Delone sets in $\mathbb{R}^d$ can be found in \cite{LMS}. Replacing $2^{-1/2}$ by
any positive number less than $2^{-1/2}$ the function $\varrho$ would still be a metric and so would the pullback of the Hausdorff metric on the set of closed subsets of $S^d$ under the stereographic projection $\Pi_d:\mathbb{R}^d\rightarrow S^d$. Different choices of this number lead to different metrics, but they all generate the same topology. For convenience we stick to the formulation in Definition \ref{D:rho}.


\begin{definition}\label{D:Omega}
Let $\Lambda_0\subset\mathbb{R}^d$ be a Delone set. The \emph{pattern space (hull)} of $\Lambda_0$ is the closure of the set of translates of $\Lambda_0$ with respect to the metric $\varrho$, i.e.
\[
\OO=\Omega_{\Lambda_0}=\overline{\left\lbrace \varphi_\vt\:(\Lambda_0): \vt\in\mathbb{R}^d\right\rbrace}.
\]
\end{definition}

\begin{remark}
 Pattern spaces are examples of \emph{inverse limit spaces}. Examples of these types of spaces are solenoids, although pattern spaces are not solenoids. We refer the reader to \cite[\S 2]{sadun:book} for further details. Connections to Dirichlet structures (see \cite[\S 2.3]{BH91}) will be discussed in a follow-up paper.
\end{remark}

\begin{remark}\label{R:orbits}
Given an aperiodic and repetitive Delone set $\Lambda_0$ of finite local complexity, every $\Lambda\in\Omega_{\Lambda_0}$ is also an aperiodic and repetitive Delone set of finite local complexity. Moreover, the path connected component of a point $\Lambda\in\Omega_{\Lambda_0}$ coincides with its \textit{orbit}
\[
\orb(\Lambda)=\{\varphi_{\vt\,}(\Lambda)~\colon~\vt\in\mbbR^d\},
\]
that is homeomorphic to $\mathbb{R}^d$ via
\begin{align}
h_\Lambda\colon&\mbbR^d\longrightarrow\orb(\Lambda) \label{E:orbithomeo}\\
&\;\vt\;\;\longmapsto\varphi_{\vt\,}(\Lambda).\nonumber
\end{align}
In fact, $\orb(\Lambda)$ is naturally isometric to $\mathbb{R}^d$ in a certain metric $\varrho_{\orb}$, see Definition \ref{e-rhoh} and Lemma \ref{L:Horizontal} below. 
For any $f\colon\Omega\to\mathbb{R}$ and $\Lambda\in \Omega$ we consider the function $h_{\Lambda}^\ast f$ defined on $\mathbb{R}^d$ by 
\begin{equation}\label{E:algiso}
h_{\Lambda}^\ast f(\vt\,): = f\circ h_{\Lambda}(\vt\,), \qquad \vt\in\mathbb{R}^d.
\end{equation}
\end{remark}

A \emph{period} of $\Lambda\subset \mathbb{R}^d$ is any vector $\vec{v}\in\mathbb{R}^d$ such that $\varphi_{\vec{v}}(\Lambda) = \Lambda$. The set of periods $\mathrm{Per}(\Lambda)$ forms a discrete subgroup of $\mathbb{R}^d$. As such, being aperiodic is therefore equivalent to $\mathrm{Per}(\Lambda) = \{\vec{0}\}$. If the rank of the group is maximal (i.e. $d$), then $\mathrm{Per}(\Lambda)$ is a complete lattice in $\mathbb{R}^d$ and the pattern space is the flat torus $\mathbb{R}^d/\mathrm{Per}(\Lambda)$. 

From now on, we will work under the following assumption.
\begin{assumption}\label{A:flcapr}
The underlying Delone set $\Lambda_0\subset \mathbb{R}^d$ has finite local complexity, is aperiodic and repetitive. 
\end{assumption}

Under this assumption, $(\Omega_{\Lambda_0},\varrho)$ is a compact metric space that is connected but not path connected. Moreover, repetitivity implies that 
\begin{equation}\label{E:minimal}
\text{for any }\Lambda\in\Omega_{\Lambda_0}\text{ we have }\Omega_{\Lambda}=\Omega_{\Lambda_0}.
\end{equation}
Since the set $\Lambda_0$ is fixed, to simplify notation we write 
\[\Omega:=\Omega_{\Lambda_0}\] 
and only refer explicitly to the underlying Delone set when confusion may occur. 

\subsection{Local product structure of pattern spaces}\label{subsec:LocalStru} A more detailed look at the geometry of $\Omega$ allows to consider a useful a local product structure.

\begin{definition}\label{D:Cilinder}
For each $\Lambda\in \Omega$ and $\varepsilon>0$ a set of the form
\begin{equation*}
\mathcal{C}_{\Lambda,\varepsilon}:=\left\lbrace \Lambda'\in\Omega: B_{\frac{1}{\varepsilon}}(\vec{0})\cap \Lambda'=B_{\frac{1}{\varepsilon}}(\vec{0})\cap \Lambda\right\rbrace
\end{equation*}
is called a transversal \emph{$\varepsilon$-cylinder set} or \emph{$\varepsilon$-transversal at $\Lambda$}.
\end{definition}
The \emph{canonical transversal} of $\Omega$ is defined as $\mho:=\{ \Lambda \in\Omega: \vec{0}\in\Lambda\}$. For any $\Lambda\in\mho$ the set $\mathcal{C}_{\Lambda,\varepsilon}$ is a clopen subset of $\mho$, and the topology of $\mho$ is generated by clopen sets of that type. Moreover, the canonical transversal $\mho$ and the cylinder sets $\{\mathcal{C}_{\Lambda,\varepsilon}\colon~\Lambda\in\Omega\}$ are Cantor sets, see \cite{KP00}. 

Due to the aperiodicity of $\Lambda_0$, the pattern space $(\Omega,\varrho)$ is not a manifold, yet it has a useful local (product) structure. The following notation will be important throughout the paper (see \cite{Kellendonk:PEC,KP00} for the background). 

\begin{notation}
For any $\Lambda\in\Omega$ and sufficiently small $\varepsilon>0$ the translations of cylinder sets
\begin{equation}\label{E:transcylsets}
O_{\Lambda,\varepsilon}=\{\varphi_\vt\:(\Lambda')~\colon~\Lambda'\in\CLe,\vt\in B_\varepsilon(\vec{0})\}
\end{equation}
form a base of the topology of $\Omega$, see e.g. \cite[Theorem 8]{FHK:CCPT}. In addition, for any $\Lambda\in\Omega$, 
the translated cylinder set $\OLe\subset\Omega$ is the homeomorphic image under $$(\Lambda',t)\mapsto \varphi_\vt\,(\Lambda')$$ of the product $\CLe\times B_\varepsilon(\vec{0})$ of a Cantor set $\CLe$ as in Definition~\ref{D:Cilinder} and a ball $B_\varepsilon(\vec{0})\subset\mbbR^d$. We denote the inverse of this homeomorphism by 
\begin{align}\label{e-phi} 
\phi_{\Lambda,\varepsilon}\colon&\quad \OLe\quad\longrightarrow\quad \CLe\times B_\varepsilon(\vec{0})\nonumber\\
& \bl=\varphi_{\vt\,}(\Lambda') \longmapsto \quad(\Lambda',\vt\:),
\end{align}
which may be considered a foliated chart map \cite{MSch06}. 

The local product structure of $(\Omega,\rho)$ allows to identify locally any function $f\colon\Omega\to\mbbR$ with a function of two variables. Let $\OLe$ be a translated cylinder set as in~\eqref{E:transcylsets} with $\Lambda\in\Omega$ that is mapped homeomorphically onto the product $\mathcal{C}\times B$, where $\mathcal{C}=\CLe$ is given by Definition~\ref{D:Cilinder} and $B=B_\varepsilon(\vec{0})$. 
Thus, for any $f\colon\Omega\to\mbbR$ we define $f\cphi \colon\mathcal{C}\times B\to\mbbR$ as
\begin{equation}\label{E:fO}
f\cphi (\Lambda',\vt\:):=f(\bl),\qquad\quad\bl=\varphi_{\vt}\:(\Lambda')\in\OLe.
\end{equation}
Note that sometimes the notation $f^\OLe=f\cphi =(\phi^{-1})^\ast f$ on $\mathcal{C}\times B$, where $(\phi^{-1})^\ast f:= f\circ \phi^{-1}$, also can be used.

Given an open set $\OLe=\phi^{-1}(\mathcal{C}\times B)$ as in~\eqref{E:transcylsets} we write $(\phi^{-1})^\ast (b\mathcal{B}(\mathcal{C})\otimes b\mathcal{B}(B))$ for the space of finite linear combinations of functions defined as products
\begin{equation}\label{e-prod}
f(\bar\Lambda):=f_0(\Lambda')F_0(\vt\,),\quad (\Lambda',\vt\,)=\phi(\overline{\Lambda}),
\end{equation}
with $f_0\in b\mathcal{B}(\mathcal{C})$ and $b\mathcal{B}(B)$. If in this product $F_0$ is compactly supported in $B$, then $f$ is compactly supported in $\OLe$. We can extend it by zero to a bounded Borel function on all of $\Omega$ and denote this extension by the same symbol $f$. 
\end{notation}

\subsection{Unique ergodicity and invariant measures}
The translative action of $\mathbb{R}^d$ makes $\Omega$ into a dynamical system $(\Omega,\mathbb{R}^d)$. Most of our results will be formulated under the assumption that this action is \emph{uniquely ergodic}.

One characterization of unique ergodicity is that there exists a unique $\mathbb{R}^d$-invariant probability measure $\mu$ on $\Omega$ such that for any F\o lner sequence $(A_n)_n$ and any continuous function $f\in C(\Omega)$ we have 
\begin{equation}\label{E:ergodic}
\lim_{n\to\infty}\frac{1}{\lambda^d(A_n)}\int_{A_n} f\circ \varphi_\vt\:(\Lambda)d\vt=\int_{\Omega}f\:d\mu,
\end{equation}
uniformly for every $\Lambda\in \Omega$. Here, $\lambda^d$ denotes the $d$-dimensional Lebesgue measure. The convergence in (\ref{E:ergodic}) even holds in $L^1(\Omega,\mu)$ for F\o lner sequences and pointwise for $L^1$-functions as long as the F\o lner sets are Euclidean balls, see~\cite[\S 8.5-6]{EW:book}.

\begin{remark}\label{R:uniqueergodic}
Unique ergodicity is a natural assumption since all well-known examples of aperiodic, repetitive Delone sets of finite local complexity define a uniquely ergodic action of $\mathbb{R}^d$ on their pattern space. In fact, it is hard to be aperiodic, repetitive, have finite local complexity and \emph{not} be uniquely ergodic. There are several criteria which will guarantee unique ergodicity: if the Delone set is linearly repetitive \cite[ \S 4.2]{ACCDP:survey}; if the aperiodic Delone set comes from a self-affine tiling (e.g. Penrose tiling\cite{solomyak:SS,Bandt,levine1984quasicrystals,duneau1985quasiperiodic,elser1985crystal,Penrose}); generic cut and project sets also define uniquely ergodic systems. Thus, unique ergodicity is in a sense a typical property. 
 
However, there are known examples of repetitive Delone sets of finite local complexity which do not define uniquely ergodic actions: there is a construction in \cite{CortezNavas} which for any $d>1$ and Choquet simplex $\mathcal{K}$, will produce a repetitive Delone set of finite local complexity for which the $\mathbb{R}^d$ action on its pattern space has a set of invariant probability measures isomorphic to $\mathcal{K}$.
\end{remark}

Unique ergodicity has a description in terms of clusters of patterns, which we now describe. For a Delone set $\Lambda$ of finite local complexity, let $P\subset \Lambda$ be a cluster and $A\subset \mathbb{R}^d$. We denote by
\[
[P:A] = \# \{\vt\in\mathbb{R}^d: \varphi_\vt(P)\subset A\cap \Lambda \},
\]
i.e., the number of translates of $P$ contained in $A$.

A Delone set has \emph{uniform cluster frequencies} relative to a F\o lner sequence $\{A_n\}_n$ if for any non-empty cluster $P$ the limit
\[
\mathrm{freq}(P,\Lambda) = \lim_{n\rightarrow \infty} \frac{[P:\varphi_\vt(A_n)]}{\mathrm{Vol}(A_n)}
\]
exists uniformly in $\vt\in\mathbb{R}^d$. 

\begin{blanktheorem}[{\cite[Theorem 2.7]{LMS}}]
A Delone set $\Lambda$ has uniform cluster frequencies if and only if the action of $\mathbb{R}^d$ on its pattern space of $\Lambda$ is uniquely ergodic.
\end{blanktheorem}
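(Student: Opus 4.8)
The plan is to pass through the standard dictionary between averages of cluster-counting functions along a F\o lner sequence $(A_n)_n$ in $\mathbb R^d$ and Birkhoff averages $\tfrac{1}{\lambda^d(A_n)}\int_{A_n}f\circ\varphi_\vt(\Lambda)\,d\vt$ of continuous functions on $\Omega$, and then to invoke the characterization of unique ergodicity in~\eqref{E:ergodic}. Two preliminary facts make the dictionary work. First, finite local complexity forces the uniform-discreteness constant to be the same for all $\Lambda'\in\Omega$; hence for a fixed nonempty cluster $P$ the occurrence set $\{\vs\in\mathbb R^d:\varphi_\vs(P)\subseteq\Lambda'\}$ is $\eta$-separated with $\eta>0$ independent of $\Lambda'$, and the number of occurrences of $P$ in a ball of radius $r$ is at most $C(1+r)^d$ with $C$ independent of $\Lambda'$. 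Second, up to a bounded offset governed by $\diam P$, the quantity $[P:\varphi_\vt(A_n)]$ counts the occurrences of $P$ inside the window $A_n$ for a suitable translate of $\Lambda$, so the uniformity in $\vt$ demanded by the definition of uniform cluster frequencies is precisely uniformity of the counting averages over the orbit of $\Lambda$, which is dense in $\Omega$ by~\eqref{E:minimal}.

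The core device is a smoothing/sandwiching of cluster indicators by continuous functions. For $P$ supported in $B_{R}(\vec 0)$ put $\mathcal D_P:=\{\Lambda'\in\Omega:P\subseteq\Lambda'\}$; by finite local complexity this is a finite union of transversal cylinder sets (Definition~\ref{D:Cilinder}), in particular closed with empty interior. Using the local product structure of Section~\ref{subsec:LocalStru} and the $\eta$-separation above, one verifies that for $0<\delta<\eta/3$ the tubes $U_{r}:=\{\varphi_\vs(\Lambda'):\Lambda'\in\mathcal D_P,\ |\vs|<r\}$, $0<r\le\delta$, are nested neighborhoods of $\mathcal D_P$ that split into disjoint foliated charts, so for each $\tau\in(0,\delta)$ there is $f_{\delta,\tau}\in C(\Omega)$ with $\one_{U_{\delta-\tau}}\le f_{\delta,\tau}\le\one_{U_\delta}$. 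Since balls of radius $\le\delta<\eta/3$ around distinct occurrences of $P$ are disjoint, a direct computation gives, for every $\Lambda\in\Omega$,
\begin{equation*}
\lambda^d(B_{\delta-\tau})\,[P:A_n]\ \le\ \int_{A_n^{+\delta}}f_{\delta,\tau}\circ\varphi_{-\vt}(\Lambda)\,d\vt,
\qquad
\int_{A_n}f_{\delta,\tau}\circ\varphi_{-\vt}(\Lambda)\,d\vt\ \le\ \lambda^d(B_\delta)\,[P:A_n^{+\delta}],
\end{equation*}
where $A_n^{+\delta}$ is the $\delta$-neighborhood of $A_n$; by the F\o lner property and the uniform density bound, passing from $A_n$ to $A_n^{+\delta}$ alters every average by $o(1)$ uniformly in $\Lambda$.

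Given this, the implication ``uniquely ergodic $\Rightarrow$ uniform cluster frequencies'' follows by applying~\eqref{E:ergodic} to $f_{\delta,\tau}$ and dividing the displayed inequalities by $\lambda^d(A_n)$: the outer Birkhoff averages converge uniformly in $\Lambda$ to $\int_\Omega f_{\delta,\tau}\,d\mu$, so $\lambda^d(B_{\delta-\tau})\limsup_n\frac{[P:A_n]}{\lambda^d(A_n)}\le\int f_{\delta,\tau}\,d\mu\le\lambda^d(B_\delta)\liminf_n\frac{[P:A_n]}{\lambda^d(A_n)}$ with both the $\limsup$ and the $\liminf$ uniform over $\Lambda$. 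Thus $\limsup_n\frac{[P:A_n]}{\lambda^d(A_n)}\le(\frac{\delta}{\delta-\tau})^d\liminf_n\frac{[P:A_n]}{\lambda^d(A_n)}$ for all $\tau\in(0,\delta)$; letting $\tau\to0$ gives $\limsup=\liminf$, so $\freq(P,\Lambda)$ exists, is independent of $\Lambda$, and the convergence is uniform over the orbit, i.e.\ uniform in $\vt$. For the converse ``uniform cluster frequencies $\Rightarrow$ uniquely ergodic'', by~\eqref{E:ergodic} and an $\varepsilon/3$-argument (the averaging operators $f\mapsto[\Lambda\mapsto\tfrac{1}{\lambda^d(A_n)}\int_{A_n}f\circ\varphi_\vt(\Lambda)\,d\vt]$ are sup-norm contractions, uniformly in $n$) it suffices to establish uniform-in-$\Lambda$ convergence of the Birkhoff averages to a constant for $f$ ranging over a dense subalgebra of $C(\Omega)$; finite local complexity together with the local product structure shows the transversally locally constant continuous functions form such a subalgebra (an algebra containing the constants that separates points, hence dense by Stone--Weierstrass). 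For a function of the form~\eqref{e-prod} with $f_0$ the indicator of a clopen subset of the transversal factor (determined by some cluster $P'$) and $F_0$ continuous supported in a ball of diameter $<\eta$, unwinding the definitions shows that $\frac{1}{\lambda^d(A_n)}\int_{A_n}f\circ\varphi_{-\vt}(\Lambda)\,d\vt$ equals $\frac{1}{\lambda^d(A_n)}$ times a sum of integrals of $F_0$ over translated copies of $A_n$, one for each occurrence of $P'$; by the uniform-cluster-frequency hypothesis and the F\o lner property this converges, uniformly in $\Lambda$, to the constant $\big(\int F_0\,d\lambda^d\big)\cdot\freq(P',\Lambda)$. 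The resulting positive normalized functional on $C(\Omega)$ is represented by a unique $\mathbb R^d$-invariant probability measure, which must then be the only invariant probability measure, so the action is uniquely ergodic.

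The main obstacle I anticipate is the bookkeeping around the sandwiching: constructing $f_{\delta,\tau}$ with the stated nesting requires knowing that a small neighborhood of $\mathcal D_P$ splits into disjoint tubes on which the foliated charts of Section~\ref{subsec:LocalStru} trivialize --- a consequence of finite local complexity and $\eta$-separation that must be spelled out --- and controlling the boundary discrepancies between the tube integrals and $\lambda^d(B_\delta)[P:A_n]$ uniformly in the base point. The remaining ingredients are routine: the characterization~\eqref{E:ergodic} of unique ergodicity, Stone--Weierstrass for transversally locally constant functions, the $\varepsilon/3$-argument, and the F\o lner property absorbing boundary terms. (This is essentially the proof of \cite[Theorem~2.7]{LMS}.)
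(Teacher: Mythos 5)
The paper does not prove this statement: it is imported verbatim from \cite[Theorem 2.7]{LMS}, so there is no internal proof to compare against. Your argument is, as you note yourself, essentially the standard Lee--Moody--Solomyak proof, and its structure is sound: the sandwich $\one_{U_{\delta-\tau}}\le f_{\delta,\tau}\le\one_{U_\delta}$ combined with the $\eta$-separation of occurrences correctly converts cluster counts into Birkhoff averages up to F\o lner-negligible boundary terms, the $(\delta/(\delta-\tau))^d$ squeeze gives existence and uniformity of $\freq(P,\Lambda)$ from unique ergodicity, and the converse via uniform convergence on the dense subalgebra of transversally locally constant functions (plus the sup-norm contraction $\varepsilon/3$ argument and density of the orbit) is the right mechanism.

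One step you should spell out in the converse direction: the clopen transversal sets generating the topology are of the form $\CLe$, i.e.\ they prescribe the \emph{exact} patch $B_{1/\varepsilon}(\vec 0\,)\cap\Lambda'$, whereas the uniform cluster frequency hypothesis controls the \emph{containment} counts $[P:A]$. Passing from containment frequencies to exact-patch frequencies requires an inclusion--exclusion over the finitely many patch types of a given radius supplied by finite local complexity; without this, the claimed identification of the Birkhoff average of a product function with $\bigl(\int F_0\,d\lambda^d\bigr)\cdot\freq(P',\Lambda)$ is not literally correct. This is routine bookkeeping, not a flaw in the approach, but it is the one place where the dictionary between clusters and cylinder sets is not purely notational.
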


Moreover, the unique $\mathbb{R}^d$-invariant measure $\mu$ on the pattern space of a Delone set with uniform cluster frequencies has a local product structure described as follows. Recall that $O_{\Lambda',\varepsilon}$ denotes a translated cylinder set, the types of which form the basis of the topology of $\Omega$ (see (\ref{E:transcylsets})). Denote by $\Lambda_\varepsilon := \Lambda\cap B_{\varepsilon^{-1}}(\vec{0})$. The following comes from \cite[Corollary 2.8]{LMS}.
\begin{corollary}
 Let $\Lambda$ be a Delone set of finite local complexity and uniform cluster frequencies. For any $\Lambda'\in\Omega$ and $\varepsilon>0$ small enough,
 $$\mu(O_{\Lambda',\varepsilon}) = \mathrm{freq}(\Lambda'_\varepsilon,\Lambda)\cdot \mathrm{Vol}(B_\varepsilon(\vec{0})) .$$
\end{corollary}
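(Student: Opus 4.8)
The plan is to combine the uniform-frequency characterization of unique ergodicity with the explicit local product structure of the pattern space, reducing the computation of $\mu$ on a translated cylinder set to a counting argument on Euclidean balls. First I would fix $\Lambda'\in\Omega$ and $\varepsilon>0$ small enough that $O_{\Lambda',\varepsilon}$ is a genuine homeomorphic image of $\mathcal{C}_{\Lambda',\varepsilon}\times B_\varepsilon(\vec{0})$ via the chart map $\phi_{\Lambda',\varepsilon}$ of \eqref{e-phi}. Since $\Omega_\Lambda=\Omega_{\Lambda_0}=\Omega$ by \eqref{E:minimal}, the measure $\mu$ in the statement is the unique $\mathbb{R}^d$-invariant probability measure, and by the theorem of Lenz--Moody--Strungaru (\cite[Theorem 2.7]{LMS}) unique ergodicity is equivalent to the existence of uniform cluster frequencies $\mathrm{freq}(P,\Lambda)$.

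The main step is to identify $\mathbf{1}_{O_{\Lambda',\varepsilon}}$ with a cluster-counting function and then feed it into the ergodic averaging formula \eqref{E:ergodic}. The key observation is that for $\vt\in\mathbb{R}^d$, the translate $\varphi_{\vt}(\Lambda)$ lies in $O_{\Lambda',\varepsilon}$ if and only if there is some $\vs\in B_\varepsilon(\vec 0)$ with $\varphi_{\vs}(\varphi_{\vt}(\Lambda))\in\mathcal{C}_{\Lambda',\varepsilon}$, i.e. the pattern of $\Lambda$ in a ball of radius $\varepsilon^{-1}$ around $-\vt-\vs$ agrees with the cluster $\Lambda'_\varepsilon=\Lambda'\cap B_{\varepsilon^{-1}}(\vec 0)$ up to translation. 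For $\varepsilon$ small (so that $\varepsilon^{-1}$ exceeds the uniform-discreteness constant after shrinking, and the ball $B_\varepsilon(\vec 0)$ is small compared to inter-point spacing), this set of $\vt$ is, up to a boundary term of measure $O(\varepsilon^{d-1})$ relative to $A_n$, a disjoint union of translates of $B_\varepsilon(\vec 0)$, one for each occurrence of the cluster $\Lambda'_\varepsilon$ in $\Lambda$; hence
\[
\frac{1}{\lambda^d(A_n)}\int_{A_n}\mathbf 1_{O_{\Lambda',\varepsilon}}\big(\varphi_{\vt}(\Lambda)\big)\,d\vt
= \frac{[\Lambda'_\varepsilon:A_n]}{\mathrm{Vol}(A_n)}\cdot\mathrm{Vol}\big(B_\varepsilon(\vec 0)\big) + o(1).
\]
Letting $n\to\infty$ along a F\o lner sequence of balls, the left-hand side converges to $\mu(O_{\Lambda',\varepsilon})$ provided $\mathbf 1_{O_{\Lambda',\varepsilon}}$ can be handled by \eqref{E:ergodic}, and the right-hand side converges to $\mathrm{freq}(\Lambda'_\varepsilon,\Lambda)\cdot\mathrm{Vol}(B_\varepsilon(\vec 0))$ by definition of uniform cluster frequencies. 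An alternative, cleaner route is to avoid indicator functions entirely: approximate $\mathbf 1_{O_{\Lambda',\varepsilon}}$ from inside and outside by continuous functions supported on slightly smaller/larger translated cylinder sets $O_{\Lambda',\varepsilon'}$, apply \eqref{E:ergodic} to those, and use $\mu$-regularity plus continuity of $\varepsilon\mapsto\mathrm{freq}(\Lambda'_\varepsilon,\Lambda)\mathrm{Vol}(B_\varepsilon(\vec 0))$ in $\varepsilon'$ to pass to the limit.

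The step I expect to be the main obstacle is the careful bookkeeping in the displayed counting identity: one must verify that distinct occurrences of the cluster $\Lambda'_\varepsilon$ give \emph{disjoint} translated balls $\vt_j+B_\varepsilon(\vec 0)$ inside $O_{\Lambda',\varepsilon}$ (which follows from the chart map being a homeomorphism, equivalently from aperiodicity together with the smallness of $\varepsilon$, since two such balls overlapping would force two nearby patterns of radius $\varepsilon^{-1}$ to coincide up to a small translation, contradicting aperiodicity/finite local complexity), and that the boundary contribution — occurrences of the cluster near $\partial A_n$, or balls straddling $\partial A_n$ — is negligible after dividing by $\mathrm{Vol}(A_n)$, which is exactly where the F\o lner property is used. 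The rest of the argument is soft: unique ergodicity gives the convergence in \eqref{E:ergodic}, and the uniform-frequency hypothesis (equivalent to unique ergodicity by \cite[Theorem 2.7]{LMS}) gives the limit on the right. This is precisely \cite[Corollary 2.8]{LMS}, so one may alternatively simply cite that result; I would include the sketch above for the reader's convenience.
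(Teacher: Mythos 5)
The paper does not actually prove this corollary: it is introduced with the single line ``The following comes from \cite[Corollary 2.8]{LMS}'' and no argument is supplied, so your closing option of simply citing that result is exactly what the authors do. Your reconstruction of the argument behind the citation is the standard one and is essentially sound: approximate $\mathbbm{1}_{O_{\Lambda',\varepsilon}}$ by continuous functions (or invoke the $L^1$/pointwise extension of \eqref{E:ergodic} mentioned immediately after it, since \eqref{E:ergodic} itself is stated only for $f\in C(\Omega)$), observe that $\{\vt\colon \varphi_{\vt\,}(\Lambda)\in O_{\Lambda',\varepsilon}\}$ is a disjoint union of translates of $B_\varepsilon(\vec 0)$ indexed by occurrences of the ball pattern $\Lambda'_\varepsilon$ in $\Lambda$ (disjointness for small $\varepsilon$ because the chart \eqref{e-phi} is a bijection), and let the F\o lner property absorb the boundary term. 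One detail worth reconciling if you write this out in full: membership of $\varphi_{\vu\,}(\Lambda)$ in $\mathcal{C}_{\Lambda',\varepsilon}$ requires the ball pattern $B_{1/\varepsilon}(\vec 0)\cap\varphi_{\vu\,}(\Lambda)$ to \emph{equal} $\Lambda'_\varepsilon$, whereas the paper's $[P:A]$ counts translates of $P$ merely \emph{contained} in $A\cap\Lambda$. Your sketch correctly uses the exact-match count, which is the convention under which the stated identity holds; if you keep the containment count you must argue (as in \cite{LMS}) that for full ball patterns the two counts have the same asymptotic density, otherwise the displayed averaging identity can overcount.
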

As such, in the local foliated charts from~\eqref{E:transcylsets}, the push forward $\mu\circ \phi^{-1}$ under the chart map $\phi$ of the unique $\mathbb{R}^d$-invariant probability measure $\mu$ equals the product measure
\begin{equation}\label{E:localprodmeas}
\mu \circ \phi^{-1}= \nu_\mathcal{C}\times \lambda^d|_B,
\end{equation}
where $\nu_\mathcal{C}$ is the frequency measure on the Cantor set $\mho$ defined through $\nu(\mathcal{C}_{\Lambda, \varepsilon}) = \mathrm{freq}(\Lambda_\varepsilon, \Lambda)$. 


\medskip

\begin{remark}\label{R:pushforward}
On each orbit $\mathcal{O}_\Lambda:=\orb(\Lambda)$, the homeomorphism~\eqref{E:orbithomeo} also induces a measure given by the pushforward of the Lebesgue measure $\lambda^d$. As a result, another measure on $\Omega$ can be defined as
\begin{equation}
\label{e-ld}
\lambda^d_{\Omega}(A):=\sum_{\mathcal{O}_\Lambda\text{ orbit of }\Omega}\lambda^d(h_{\Lambda}^{-1}(A\cap\mathcal{O}_\Lambda)),\qquad A\in\B(\Omega).
\end{equation}
Notice that $\mu$ is absolutely continuous with respect to $\lambda^d_{\Omega}$. However, $\lambda^d_\Omega$ is not $\sigma$-finite and thus the Radon-Nikodym theorem does not apply. This fact will become specially relevant when discussing the existence of heat kernels, see Remark~\ref{R:HKs}.
\end{remark}

\subsection{Orbit-wise metric}
As mentioned in Remark \ref{R:orbits}, the pattern space $\Omega$ can also be considered with a different topology than the one induced by the metric $\varrho$ given in Definition~\ref{D:rho}. This different topology is induced by the following metric.

\begin{definition}\label{D:orbit metric}
The orbit-wise metric $\varrho_{\orb}$ on $\Omega$ is defined as
\begin{equation*}\label{e-rhoh}
 \varrho_{\orb}(\Lambda_1,\Lambda_2) =\begin{cases}
\inf\{~\|\vt\,\|~\colon~\Lambda_1{-}\vt=\Lambda_2\} &\text{if }\Lambda_1,\Lambda_2\in \orb(\Lambda) \text{ for some }
 \Lambda\in\Omega, \\
+\infty &\text{otherwise}.
 \end{cases}
\end{equation*} 
\end{definition}
Since the underlying lattice $\Lambda_0$ is aperiodic, for each $\Lambda\in\Omega=\Omega_{\Lambda_0}$ its orbit $(\orb(\Lambda),\varrho_{\orb})$ is naturally isometric to $\mbbR^d$. The next lemma is obvious.

\begin{lemma}\label{L:Horizontal}\mbox{}
For any $\Lambda\in \Omega$, the space $(\orb(\Lambda),\varrho_{\orb})$ is the image of $\mbbR^d$ under the isometry (with respect to $\varrho_{\orb}$) given by $h_\Lambda$ in~\eqref{E:orbithomeo}.
\end{lemma}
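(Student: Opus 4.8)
The statement to prove is Lemma~\ref{L:Horizontal}: for any $\Lambda\in\Omega$, the map $h_\Lambda\colon\mathbb{R}^d\to\orb(\Lambda)$ from~\eqref{E:orbithomeo} is an isometry with respect to $\varrho_{\orb}$.

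The plan is to unwind the two definitions and match them up directly. First I would recall that $h_\Lambda(\vt\,)=\varphi_{\vt\,}(\Lambda)=\Lambda-\vt$, and that $h_\Lambda$ is already known to be a bijection from $\mathbb{R}^d$ onto $\orb(\Lambda)$ (this is part of Remark~\ref{R:orbits}). So it remains only to check the distance-preserving identity
\[
\varrho_{\orb}\bigl(h_\Lambda(\vs\,),h_\Lambda(\vt\,)\bigr)=\|\vs-\vt\,\|
\qquad\text{for all }\vs,\vt\in\mathbb{R}^d.
\]
By Definition~\ref{D:orbit metric}, since $h_\Lambda(\vs\,)$ and $h_\Lambda(\vt\,)$ lie in the common orbit $\orb(\Lambda)$, the left-hand side equals $\inf\{\|\vec w\,\|\colon h_\Lambda(\vs\,)-\vec w=h_\Lambda(\vt\,)\}$. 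Writing this out, $h_\Lambda(\vs\,)-\vec w=(\Lambda-\vs)-\vec w=\Lambda-(\vs+\vec w)$, and this equals $h_\Lambda(\vt\,)=\Lambda-\vt$ precisely when $\Lambda-(\vs+\vec w)=\Lambda-\vt$, i.e. when $\vt-(\vs+\vec w)\in\mathrm{Per}(\Lambda)$.

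This is the one place where a hypothesis is genuinely used: by Remark~\ref{R:orbits} every $\Lambda\in\Omega_{\Lambda_0}$ is aperiodic (since $\Lambda_0$ satisfies Assumption~\ref{A:flcapr}), so $\mathrm{Per}(\Lambda)=\{\vec 0\}$. Hence the condition above forces $\vec w=\vt-\vs$ uniquely, and the infimum is over a singleton, giving $\varrho_{\orb}(h_\Lambda(\vs\,),h_\Lambda(\vt\,))=\|\vt-\vs\,\|=\|\vs-\vt\,\|$. I would also note the (trivial) point that $h_\Lambda$ restricted to the single orbit never produces the value $+\infty$, since both points always lie in $\orb(\Lambda)$, so the second case of Definition~\ref{D:orbit metric} is irrelevant here. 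Combining the bijectivity with this identity yields that $h_\Lambda$ is an isometry onto $(\orb(\Lambda),\varrho_{\orb})$.

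There is essentially no obstacle: the only substantive ingredient is aperiodicity, which guarantees that the "translation vector between two points of an orbit" is well-defined and unique, collapsing the infimum in the definition of $\varrho_{\orb}$ to a single value. The authors themselves flag this as obvious ("The next lemma is obvious"), and indeed the proof is a one-line verification once the definitions are placed side by side; the only care needed is to invoke Assumption~\ref{A:flcapr} (via Remark~\ref{R:orbits}) rather than leaving aperiodicity implicit.
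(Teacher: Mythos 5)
Your proof is correct and follows exactly the argument the paper treats as immediate: unwinding Definition~\ref{D:orbit metric} and using the aperiodicity of every $\Lambda\in\Omega$ (Remark~\ref{R:orbits}) to see that the translation vector between two points of an orbit is unique, so the infimum collapses to $\|\vs-\vt\,\|$. The paper omits the proof as obvious, and your write-up is the correct elaboration of that observation.
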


The orbit-wise topology is drastically different from the one induced by the original metric $\varrho$: The space $(\Omega, \varrho_{\orb})$ is not compact, not connected, and not separable. It is easy to see that 
\[
\varrho(\Lambda_1,\Lambda_2)\leqslant2{\varrho_{\orb}}(\Lambda_1,\Lambda_2),
\]
which implies that any continuous function on $\Omega$ is orbit-wise continuous with respect to the topology generated by $\varrho_{\orb}$. In particular, if $f\in C(\Omega)$, then for any $\Lambda\in\Omega$ we have $f\circ h_{\Lambda}\in C(\mathbb{R}^d)$. An orbit-wise continuous function is not necessarily continuous on $\Omega$. Similarly, any Borel measurable function on $\Omega$ is orbit-wise Borel measurable. In particular, if $f\in \mathcal{B}(\Omega)$, then for any $\Lambda\in\Omega$ we have $f\circ h_{\Lambda}\in C(\mathbb{R}^d)$. An orbit-wise Borel measurable function is not necessarily Borel measurable on $\Omega$.

\section{Main results} 
In this section we state the main results of this paper, namely the existence of a natural diffusion, heat semigroup, Sobolev spaces, Dirichlet form on the pattern space $(\Omega,\rho)$ along with their most important properties: Feller but not strong Feller properties, 
commutativity with the Koopman operators, and the identification of the domains of the generators with analogs of the Sobolev spaces. 
 
For a locally compact metric space $E$, a semigroup of linear operators $(T_t)_{t>0}$ acting on the space $b\B(E)$ is called a \emph{Markov semigroup} if all $T_t$ are positive and contractive with respect to the supremum norm. Let $C_0(E)$ denote the space of continuous functions vanishing at infinity, i.e. the space of all $f\in C(E)$ such that for any $\varepsilon>0$ there exists some compact $K\subset E$ with $\sup_{x\in K^c} |f(x)|<\varepsilon$. A Markov semigroup $(T_t)_{t>0}$ is said to be a \emph{Feller semigroup} (or to have the \emph{Feller property}) if it defines a strongly continuous contraction semigroup on $C_0(E)$. It is said to have the \emph{strong Feller property} if each $T_t$ is bounded from $b\B(E)$ into $bC(E)$. It is said to be \emph{conservative} if $T_t\mathbbm{1}=\mathbbm{1}$ for any $t>0$.
For a Markov process $(X^x_t)_{t\geq 0}$ over some probability space $(S,\Sigma,\mathbb{P})$ with state space $E$ and starting point $x\in E$, the \emph{Markov transition semigroup} is $T_tf(x)=\mathbb{E}[f(X_t^x)]$, $f\in b\mathcal{B}(E)$, $t\geqslant0$. If its transition semigroup is Feller and in addition its paths are $\mathbb{P}$-a.s.\ continuous, then the process $(X^x_t)_{t\geq 0}$ is called a \emph{Feller diffusion}. 

\begin{definition}\label{D:Gaussian}
The standard Gaussian density on $\mathbb{R}^d$ is given by
\[
p_{\mathbb{R}^d}(t,\vs\,):=\frac{1}{(2\pi t)^{d/2}}\exp\left\lbrace -\frac{\|\vs\,\|^2}{2t}\right\rbrace, \qquad t>0, \;\vs\,\in\mathbb{R}^d.
\]
\end{definition}

We introduce next the linear operators that will form the transition semigroup of the diffusion. Our definition is similar to the one used in \cite{GarnettL} for foliated manifolds.
 
\begin{definition}\label{D:sg}
For any $t>0$ and $f\in b\B(\Omega)$ define
\begin{equation}\label{E:defsg}
T_t f(\Lambda)=\int_{\mathbb{R}^d} p_{\mathbb{R}^d}(t,\vs\,) f(\varphi_{\vs\,}(\Lambda))\,d\vs, \qquad \Lambda\in \Omega.
\end{equation}
\end{definition}

By the discussion of Borel measurability in the preceding section there are no measurability issues in (\ref{E:defsg}), so that for any $f\in b\mathcal{B}(\Omega)$ and $t>0$ we have $T_tf\in b\mathcal{B}(\Omega)$.
The semigroup property $T_tT_sf=T_{t+s}f$ for $b\mathcal{B}(\Omega)$ is immediate from formula~\eqref{E:defsg} together with the group property $\varphi_{\vec{t}}\circ\varphi_{\vec{s}}=\varphi_{\vs+\vt}$ of the action of $\mathbb{R}^d$ and the Chapman-Kolmogorov equations for the Gaussian density $p_{\mathbb{R}^d}(t,\vec{s})$. 
It is also obvious from (\ref{E:defsg}) that $(T_t)_{t>0}$ is a Markov semigroup and that $T_t\mathbbm{1}=\mathbbm{1}$ for any $t>0$.

The proof of the following theorem and more details are given in Section \ref{S:FellSmg}.

\begin{theorem}\label{T:main_Feller}
Under Assumption \ref{A:flcapr} the following statements hold. 
\begin{enumerate}
\item Let $\vW=(\vW_t)_{t\geq 0}$ be a standard Brownian motion on $\mbbR^d$ over a probability space $(S,\Sigma,\mathbb{P})$, started at zero. For any $\Lambda\in \Omega$, the process 
\begin{equation}\label{E:process}
X^\Lambda_t:=\varphi_{\vW_t}(\Lambda)=\Lambda-\vW_t,\qquad t\geq 0,~\Lambda\in\Omega,
\end{equation}
is a Feller diffusion in $(\Omega,\varrho)$ with transition semigroup $(T_t)_{t>0}$. 
\item The semigroup $(T_t)_{t>0}$ is conservative and the Koopman operators $\Koo_{\vt\,}$ defined on $C(\Omega)$ by 
\begin{equation}\label{E:KoopmanT-def}
\Koo_{\vt\,} f = f\circ \varphi_{\vt\,},\quad \vt\in\mathbb{R}^d, 
\end{equation}
commute with the semigroup $(T_t)_{t>0}$, i.e.
\begin{equation}
\label{E:KoopmanT}
\Koo_{\vt\,} T_{t\vphantom{\vt}} = T_{t\vphantom{\vt}} \Koo_{\vt\,},\quad \vt\in\mathbb{R}^d, t>0,
\end{equation} 
and hence commute with its generator.

\item For any $f\in C^k(\Omega)$, $T_tf\in C^k(\Omega)$. The infinitesimal generator $\mathcal{L}_{C(\Omega)}$ of $(T_t)_{t> 0}$ is a local operator whose domain $\mathcal{D}(\mathcal{L}_{C(\Omega)})$ contains $C^2(\Omega)$. Moreover, for $f\in C^2(\Omega)$ we have 
\[
\mathcal{L}_{C(\Omega)}f=\frac{1}{2}\Delta f
\]
and the space $C_{tlc}^\infty(\Omega)$ defined in Definition~\ref{D:tlc} is a core for $\mathcal{L}_{C(\Omega)}$. In particular, $\mathcal{L}_{C(\Omega)}$ is an extension of $\Delta$, defined in~\eqref{E:Laplace}. 

\item The semigroup $(T_t)_{t>0}$ is not strong Feller. It does admit a symmetric heat kernel 
\[
p_\Omega\colon (0,\infty)\times\Omega\times\Omega\to\mbbR
\]
with respect to the not $\sigma$-finite pushforward measure $\lambda_\Omega$, 
\begin{equation}\label{e-lp}
p_\Omega(t,\Lambda_1,\Lambda_2)=\left\{\begin{array}{ll}
p_{\mbbR^d}(t,h_{\Lambda_1}^{-1}(\Lambda_2))&\text{ if }\Lambda_2\in\orb(\Lambda_1),\\
0&\text{ otherwise.}
\end{array}\right.
\end{equation}
\end{enumerate}
\end{theorem}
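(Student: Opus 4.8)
The plan is to establish the four statements essentially in the order they are listed, since each builds on the previous ones, with the bulk of the work lying in (1) and the non-strong-Feller part of (4).

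First, for statement (1), I would verify the Feller property directly from the metric $\varrho$. The semigroup property and the Markov property of $(T_t)_{t>0}$ on $b\mathcal{B}(\Omega)$ are already observed in the excerpt, so the key point is strong continuity on $C(\Omega)=C_0(\Omega)$ (recall $\Omega$ is compact). Fix $f\in C(\Omega)$; since $\Omega$ is compact, $f$ is uniformly continuous with respect to $\varrho$, so for $\eta>0$ there is $\delta>0$ with $\varrho(\Lambda,\Lambda')<\delta \Rightarrow |f(\Lambda)-f(\Lambda')|<\eta$. Using the inequality $\varrho(\Lambda_1,\Lambda_2)\leqslant 2\varrho_{\orb}(\Lambda_1,\Lambda_2)$ from the excerpt together with the fact that $\varrho_{\orb}(\varphi_{\vs}(\Lambda),\Lambda)\leqslant\|\vs\|$, I get $|f(\varphi_{\vs}(\Lambda))-f(\Lambda)|<\eta$ whenever $\|\vs\|<\delta/2$, uniformly in $\Lambda$. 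Splitting the integral in~\eqref{E:defsg} over $\{\|\vs\|<\delta/2\}$ and its complement and using that the Gaussian mass on $\{\|\vs\|\geqslant\delta/2\}$ tends to $0$ as $t\to 0$ gives $\|T_tf-f\|_\infty\to 0$. Continuity of $T_tf$ for each fixed $t$ follows similarly: if $\varrho(\Lambda,\Lambda')$ is small then $\Lambda,\Lambda'$ are close in $\varrho$, and one estimates $|T_tf(\Lambda)-T_tf(\Lambda')|$ using uniform continuity of $f$ — here one should be a little careful because $\Lambda$ and $\Lambda'$ need not lie on the same orbit, so the comparison is done directly through $\varrho$ rather than $\varrho_{\orb}$; the local product structure~\eqref{E:transcylsets} handles this, since for $\Lambda'$ in a small translated cylinder around $\Lambda$ one can write $\Lambda'=\varphi_{\vec u}(\Lambda'')$ with $\Lambda''\in\CLe$ and $\|\vec u\|$ small, and change variables in the Gaussian integral. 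This gives the Feller semigroup; the diffusion~\eqref{E:process} then has transition semigroup $(T_t)_{t>0}$ by construction (Gaussian increments of $\vW$), and its paths are a.s.\ continuous because $t\mapsto \vW_t$ is and $\vt\mapsto\varphi_{\vt}(\Lambda)$ is continuous, hence $(X^\Lambda_t)$ is a Feller diffusion.

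For statement (2): conservativeness $T_t\mathbbm 1=\mathbbm 1$ is immediate from~\eqref{E:defsg} since the Gaussian density integrates to $1$. The commutation~\eqref{E:KoopmanT} is a direct computation: $\Koo_{\vt}T_tf(\Lambda)=T_tf(\varphi_{\vt}(\Lambda))=\int p_{\mbbR^d}(t,\vs)f(\varphi_{\vs}\varphi_{\vt}(\Lambda))\,d\vs=\int p_{\mbbR^d}(t,\vs)f(\varphi_{\vt}\varphi_{\vs}(\Lambda))\,d\vs=T_t(\Koo_{\vt}f)(\Lambda)$, using commutativity $\varphi_{\vs}\varphi_{\vt}=\varphi_{\vs+\vt}=\varphi_{\vt}\varphi_{\vs}$ of the abelian action. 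Commutation with the generator follows by differentiating in $t$ at $0$ on the appropriate domain. For statement (3): differentiation under the integral sign in~\eqref{E:defsg} (justified by the rapid decay of Gaussian derivatives and, for the spatial derivatives, by the definition of $C^k(\Omega)$ as orbit-wise $C^k$ functions that vary continuously in the transversal direction) shows $T_tf\in C^k(\Omega)$ when $f\in C^k(\Omega)$. Locality of $\mathcal{L}_{C(\Omega)}$ follows because for $f$ vanishing on a neighborhood of $\Lambda$, the quantity $(T_tf(\Lambda)-f(\Lambda))/t=t^{-1}\int p_{\mbbR^d}(t,\vs)f(\varphi_{\vs}(\Lambda))\,d\vs$ involves only the Gaussian tail outside a fixed ball and hence $\to 0$. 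For $f\in C^2(\Omega)$ the standard second-order Taylor expansion of $\vs\mapsto f(\varphi_{\vs}(\Lambda))=h_\Lambda^\ast f(\vs)$ at $\vs=\vec 0$, together with $\int p_{\mbbR^d}(t,\vs)\vs\,d\vs=\vec 0$ and $\int p_{\mbbR^d}(t,\vs)\vs_i\vs_j\,d\vs=t\delta_{ij}$, gives $\tfrac 1t(T_tf-f)\to\tfrac12\Delta f$ uniformly; hence $C^2(\Omega)\subset\mathcal{D}(\mathcal{L}_{C(\Omega)})$ and $\mathcal{L}_{C(\Omega)}=\tfrac12\Delta$ there. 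That $C^\infty_{tlc}(\Omega)$ is a core should follow from the $(T_t)$-invariance of $C^\infty_{tlc}(\Omega)$ (clear from~\eqref{E:defsg} since the Gaussian average of a transversally locally constant function is again transversally locally constant and smooth in the leaf direction) combined with the density of $C^\infty_{tlc}(\Omega)$ in $C(\Omega)$ and the standard criterion (an invariant dense subspace of the domain is a core).

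The real obstacle is statement (4). For the heat kernel formula~\eqref{e-lp}: one checks directly that $\int_\Omega p_\Omega(t,\Lambda_1,\Lambda_2)f(\Lambda_2)\,\lambda^d_\Omega(d\Lambda_2)$, with $\lambda^d_\Omega$ as in~\eqref{e-ld}, reduces (since $p_\Omega$ is supported on $\orb(\Lambda_1)$ and $\lambda^d_\Omega$ restricted to that orbit is the pushforward of $\lambda^d$ under $h_{\Lambda_1}$) to $\int_{\mbbR^d}p_{\mbbR^d}(t,\vec u)\,h_{\Lambda_1}^\ast f(\vec u)\,d\vec u=T_tf(\Lambda_1)$; symmetry $p_\Omega(t,\Lambda_1,\Lambda_2)=p_\Omega(t,\Lambda_2,\Lambda_1)$ follows from symmetry of the Gaussian density and the relation $h_{\Lambda_2}^{-1}(\Lambda_1)=-h_{\Lambda_1}^{-1}(\Lambda_2)$ when the two points share an orbit. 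For the failure of strong Feller: the point is that $X^\Lambda_t=\varphi_{\vW_t}(\Lambda)$ stays in $\orb(\Lambda)$ for all $t$ almost surely, so for $\Lambda,\Lambda'$ on distinct orbits the laws of $X^\Lambda_t$ and $X^{\Lambda'}_t$ are mutually singular. Concretely, pick $\Lambda_0$ and consider $g=\mathbbm 1_{\orb(\Lambda_0)}$, which lies in $b\mathcal{B}(\Omega)$; then $T_tg(\Lambda)=1$ if $\Lambda\in\orb(\Lambda_0)$ and $=0$ otherwise, so $T_tg=\mathbbm 1_{\orb(\Lambda_0)}$, which is not continuous because $\orb(\Lambda_0)$ is dense in $\Omega$ (by minimality~\eqref{E:minimal}) yet is a proper subset, so its indicator is nowhere continuous. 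The only subtlety to address carefully is Borel measurability of $\orb(\Lambda_0)$ — this holds because the orbit is a continuous injective image of $\mbbR^d$, hence analytic, and in fact one can exhibit it as a countable intersection/union of the open translated cylinder sets; alternatively one avoids the issue entirely by choosing instead a bounded Borel $g$ whose restriction to $\orb(\Lambda_0)$ is, say, a nonconstant bounded uniformly continuous function of $h_{\Lambda_0}^{-1}(\cdot)$ and which is $0$ off the orbit, and arguing that $T_tg$ is discontinuous at points of $\orb(\Lambda_0)$ approached by points of other orbits. I would also record here the non-existence of a heat kernel with respect to $\mu$: if $p_\mu(t,\cdot,\cdot)$ existed, then since $\mu\ll\lambda^d_\Omega$ one would get by uniqueness that $p_\Omega(t,\Lambda_1,\cdot)\,\lambda^d_\Omega=p_\mu(t,\Lambda_1,\cdot)\,\mu$ as measures, forcing $\mu$ to be concentrated on $\orb(\Lambda_1)$, contradicting $\mathbb{R}^d$-invariance and the fact that orbits are $\mu$-null (which follows from ergodicity and $\mu$ being non-atomic). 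The most delicate bookkeeping is making the measurability and the singular-support statements precise given that $\lambda^d_\Omega$ is not $\sigma$-finite, so the pairing $\int_\Omega p_\Omega(t,\Lambda_1,\Lambda_2)f(\Lambda_2)\,\lambda^d_\Omega(d\Lambda_2)$ must be interpreted via~\eqref{e-ld} as a sum over orbits in which, for fixed $\Lambda_1$, only the single term corresponding to $\orb(\Lambda_1)$ is nonzero.
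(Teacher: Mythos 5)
Your overall architecture matches the paper's: Feller/strong continuity via the estimate $\varrho(\Lambda,\varphi_{\vs}(\Lambda))\le 2|\vs\,|$ plus Gaussian tails and the local product structure (the paper isolates this as Lemma~\ref{L:prep}); conservativity and the Koopman commutation by direct computation; smoothness preservation by differentiation under the integral; and non--strong-Feller via the indicator of a (dense, proper, Borel) orbit, which is exactly the paper's ``shortest proof''. Your Taylor-expansion derivation of $\mathcal{L}_{C(\Omega)}f=\tfrac12\Delta f$ on $C^2(\Omega)$ is a legitimate alternative to the paper's route (which instead uses $T_t(\tfrac12\Delta f)=\tfrac{d}{dt}T_tf$ via integration by parts and the Euclidean heat equation, first on $C^2_{tlc}$ and then by density); your version even avoids the reduction to $C^2_{tlc}$, at the cost of checking that the Taylor remainder is $o(|\vs\,|^2)$ uniformly in $\Lambda$, which does follow from uniform continuity of the second derivatives on the compact space $(\Omega,\varrho)$.

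There is, however, one genuine gap: your argument that $C^\infty_{tlc}(\Omega)$ is a core rests on the claim that $T_t$ maps $C^\infty_{tlc}(\Omega)$ into itself, and that claim is false. If $f$ is transversally locally constant with parameter $\varepsilon$ and $\Lambda',\Lambda$ agree on $B_{1/\delta}(\vec 0\,)$, then $f(\varphi_{\vs}(\Lambda'))=f(\varphi_{\vs}(\Lambda))$ only for $|\vs\,|\lesssim 1/\delta-1/\varepsilon$; since the Gaussian has full support, $T_tf(\Lambda)$ depends on the values of $f$ along the \emph{entire} orbit of $\Lambda$ and hence cannot be determined by the restriction of $\Lambda$ to any finite ball. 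So $T_tf$ is only approximately, not exactly, transversally locally constant, and the invariance hypothesis of the ``invariant dense subspace'' criterion fails for $C^\infty_{tlc}(\Omega)$. The repair is the one the paper uses: $C^\infty(\Omega)$ \emph{is} $T_t$-invariant (by your own part of the argument for statement (3)), hence is a core by the Ethier--Kurtz/Hille--Yosida criterion, and then $C^\infty_{tlc}(\Omega)$ is dense in $C^\infty(\Omega)$ in the $C^2$-norm (Lemma~\ref{L:tlcdense}), hence dense in graph norm since $\mathcal{L}_{C(\Omega)}=\tfrac12\Delta$ there, which yields the core property for $C^\infty_{tlc}(\Omega)$. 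The rest of your proposal is sound; the excursion into the non-existence of a kernel with respect to $\mu$ belongs to Theorem~\ref{T:main_L2sg} and needs Assumption~\ref{A:uniqueergodic}, which is not in force here.
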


The derivatives $D^\alpha$ that we consider in this paper are in some sense (i.e. under the action of the Koopman operators $\Koo_{\vt\,}$) isomorphic to the usual $ \mathbb R^d$ derivatives, and it is natural to call them ``horizontal derivatives''. In particular, we can refer to $\Delta$ as the 
 ``horizontal Laplacian''.

\begin{remark}\label{R:Feller}\mbox{}
\begin{enumerate}
\item[(i)] As mentioned in the introduction, $(\Omega,\varrho)$ is an example of a foliated topological space. Therefore, the existence statement of a Feller diffusion in Theorem \ref{T:main_Feller} is a special case of \cite[Theorem 4.14]{Candel}, also obtained in \cite{Su15} by means of stochastic differential equations. Formula~\eqref{E:defsg} was stated as a result in \cite[Proposition 4.16]{Candel}.
\item[(ii)] Our proof that $(T_t)_{t>0}$ is a Feller semigroup uses only Definition \ref{D:rho} and \eqref{E:defsg}. 
\item[(iii)] The existence of a Feller diffusion with transition semigroup $(T_t)_{t>0}$ on some underlying probability space (a space of $\Omega$-valued paths) follows from Kolmogorov's extension theorem. However, Theorem \ref{T:main_Feller} allows to start from a given Euclidean Brownian motion over a given probability space and yields a Markov process associated with $(T_t)_{t>0}$ defined by the simple formula (\ref{E:process}).
\item[(vi)] The diffusion $(X^\Lambda_t)_{t\geq 0}$ started at $\Lambda$ on $\Omega$ is not Gaussian nor a semimartingale, but its definition (\ref{E:process}) permits to use some of the structural properties of the standard Brownian motion on $\mathbb{R}^d$. For instance, it satisfies a classical It\^o-formula: For any $f\in C^2(\Omega)$ we have $\mathbb{P}$-a.s. that 
\[f(X^\Lambda)=f(\Lambda)+\sum_{i=1}^d\int_0^t \frac{\partial f}{\partial \vec{e}_i}(X_s^\Lambda)dW^i_s+\frac12\int_0^t \Delta f(X_s^\Lambda)ds, \quad 0\leq t<+\infty,\]
where $\vec{W}_t=(W^1_t,...,W^d_t)$. This follows from an application of the usual It\^o formula to $h_\Lambda^\ast f$. It is possible to consider strong solutions to stochastic differential equations and stochastic flows.
\item[(v)] In a similar way one can observe the Feller property of Markov processes on $\Omega$ obtained by (\ref{E:process}) with a more general L\'evy process in place of the Brownian motion. 
\item[(vi)] Although we will not use it, we point out that by~\eqref{E:algiso} it is trivial to see that~\ref{E:defsg}) also defines a semigroup on the bounded Borel functions if $\Omega$ is equipped with the metric $\varrho_{\orb}$. With respect to this topology, the semigroup is both Feller and strong Feller but the metric space $(\Omega,\varrho_{\orb})$ is not compact and not even separable.
\end{enumerate}
\end{remark}
%

The next results are valid under the following standing assumption. 
\begin{assumption}\label{A:uniqueergodic}
The action of $\mathbb{R}^d$ on $\Omega$ is \emph{uniquely ergodic}. If this assumption is satisfied, we denote by $\mu$ the unique ergodic probability measure in~(\ref{E:ergodic}). 
\end{assumption}

A consequence of Assumptions \ref{A:flcapr} and \ref{A:uniqueergodic} is the following Liouville-type theorem, which is proved in Section~\ref{S:harmonic} together with related regularity results, a discussion of finite energy harmonic functions, and the irreducibility of the Dirichlet form. 
The following definition is an adaptation of the classical one for an open subset $O\subset\OO$. 

\begin{definition}\label{def-harm} 
A Borel measurable function $f:\OO\to\mathbb R$ is called \emph{harmonic in} $O\subset\Omega$ if 
for each $\Lambda\in O$, the function $\vt\mapsto h_{\Lambda}^\ast f(\vt\,)$ is harmonic in the $\mathbb R^d $-sense on an open neighborhood of $\vzer$. 
\end{definition}

\begin{theorem}\label{T:main_Liouville}
Under Assumptions \ref{A:flcapr} and \ref{A:uniqueergodic}, let $f:\OO\to\mathbb R$ be a harmonic function. If either $d=1$, or $\inf f>-\infty$, or $\sup f<\infty$, or $f\in L^1(\OO,\mu)$, then $f$ is $\mu$-a.e.\ constant. 
\end{theorem}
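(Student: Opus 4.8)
The plan is to exploit the fact that, by Definition~\ref{def-harm}, a harmonic $f$ restricted to each orbit pulls back under $h_\Lambda$ to a genuinely $\mathbb{R}^d$-harmonic function on all of $\mathbb{R}^d$ (since the neighborhood of $\vzer$ in the definition can be translated around the whole orbit by equivariance of the construction), and then to combine the classical Liouville theorem on $\mathbb{R}^d$ with the unique ergodicity of the $\mathbb{R}^d$-action on $\Omega$. The key structural input is the ergodic average formula~\eqref{E:ergodic}: integrals of continuous functions against $\mu$ are recovered as uniform limits of Euclidean Følner averages along orbits. So the strategy is: (i) reduce each case to showing $h_\Lambda^\ast f$ is constant on each orbit, and (ii) upgrade orbit-wise constancy plus $\mu$-integrability considerations to $\mu$-a.e.\ constancy on $\Omega$.

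First I would handle the case $\inf f > -\infty$ (the case $\sup f < \infty$ is symmetric, applying the argument to $-f$). Fix $\Lambda\in\Omega$. The function $u := h_\Lambda^\ast f$ is harmonic on $\mathbb{R}^d$ and bounded below, so by the classical Liouville theorem for harmonic functions on $\mathbb{R}^d$ that are bounded on one side (e.g.\ via the mean value property and Harnack, or by applying the usual Liouville theorem to $e^{-u}$ after noting $u$ is bounded below--- actually more directly: a one-sided-bounded harmonic function on $\mathbb{R}^d$ is constant), $u$ is constant, say $u\equiv c(\Lambda)$. Thus $f$ is constant along every orbit, i.e.\ $f\circ\varphi_{\vt}(\Lambda) = f(\Lambda)$ for all $\vt$. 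The function $\Lambda\mapsto c(\Lambda)$ is therefore $\mathbb{R}^d$-invariant and Borel. To conclude it is $\mu$-a.e.\ constant, invoke ergodicity of $\mu$ (which is part of Assumption~\ref{A:uniqueergodic}): any $\mathbb{R}^d$-invariant Borel function is $\mu$-a.e.\ constant. This also disposes of $f\in L^1(\Omega,\mu)$ once we know $f$ is orbit-wise constant, because then $f$ itself is $\mathbb{R}^d$-invariant, hence $\mu$-a.e.\ constant by ergodicity --- so the real content for the $L^1$ case is again showing orbit-wise constancy. For the $L^1$ case I would argue: $u = h_\Lambda^\ast f$ is harmonic on $\mathbb{R}^d$; if it were nonconstant, pick two points where it differs, and use that the spatial (Følner-ball) averages of $f$ along the orbit of $\Lambda$ converge to $\int_\Omega f\,d\mu$ by the $L^1$-ergodic theorem for ball Følner sequences cited after~\eqref{E:ergodic}, while on the other hand the mean value property forces these ball averages of $u$ to equal $u$ at the ball center; combining, $u$ would have to be constant, a contradiction --- then conclude by ergodicity as before.

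For the case $d=1$ no sign or integrability hypothesis is available, so the argument must be different. Here $u = h_\Lambda^\ast f$ is harmonic on $\mathbb{R}$, i.e.\ affine: $u(t) = a(\Lambda) t + b(\Lambda)$. The point is to rule out a nonzero slope $a(\Lambda)$. I would argue by compactness of $(\Omega,\varrho)$: if $a(\Lambda)\neq 0$ for some $\Lambda$, then $u$ is unbounded along the orbit, but I need this to contradict something intrinsic. The cleanest route: the slope function $\Lambda\mapsto a(\Lambda)$ satisfies a cocycle-type relation $a(\varphi_t\Lambda) = a(\Lambda)$ (translating the base point only shifts $b$, not $a$), so $a$ is $\mathbb{R}$-invariant, hence $\mu$-a.e.\ equal to a constant $a_0$ by ergodicity. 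If $a_0\neq 0$, then $f(\varphi_t\Lambda) = a_0 t + b(\Lambda)$ for $\mu$-a.e.\ $\Lambda$ and all $t$; but then for a bounded continuous $g$ on $\Omega$... actually the simplest contradiction: take $t_n\to\infty$; then $f(\varphi_{t_n}\Lambda)\to\pm\infty$ for $\mu$-a.e.\ $\Lambda$, yet $\{\varphi_{t_n}\Lambda\}$ has convergent subsequences in the compact space $\Omega$, along which any \emph{continuous} function stays bounded --- this does not immediately contradict anything since $f$ need not be continuous. A more robust finish: use that $f$ is Borel and $\mu$-measurable; by Lusin's theorem there is a compact $K\subset\Omega$ with $\mu(K) > 1/2$ on which $f$ is continuous, hence bounded by some $M$; by the pointwise ball-ergodic theorem, for $\mu$-a.e.\ $\Lambda$ the orbit $\{\varphi_t\Lambda : t\in[-R,R]\}$ spends asymptotic fraction $\mu(K) > 1/2$ of its time in $K$ as $R\to\infty$, so $|f(\varphi_t\Lambda)| = |a_0 t + b(\Lambda)| \le M$ for a set of $t$ of positive density at infinity --- impossible if $a_0\neq 0$. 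Hence $a_0 = 0$, so $f$ is $\mathbb{R}$-invariant $\mu$-a.e., and ergodicity again gives the conclusion.

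The main obstacle I anticipate is the $d=1$ case, precisely because harmonic functions on $\mathbb{R}$ need not be bounded and $f$ is only assumed Borel, not continuous; the delicate point is transferring the growth of the affine pullbacks into a contradiction using only unique ergodicity and the measurability of $f$. The Lusin-plus-pointwise-ergodic-theorem argument sketched above is the route I would pursue, taking care that the ball-Følner pointwise ergodic theorem applies to the indicator $\mathbbm{1}_K$ (which is only $\mu$-measurable, but that is exactly the generality in which the cited ergodic theorem holds). The higher-dimensional cases are comparatively routine given the classical $\mathbb{R}^d$ Liouville theorems and ergodicity, though one should be slightly careful that the local definition of harmonicity in Definition~\ref{def-harm} genuinely propagates to global harmonicity of $h_\Lambda^\ast f$ on all of $\mathbb{R}^d$ --- this uses that translating $\Lambda$ along its orbit and applying the neighborhood-of-$\vzer$ hypothesis at each translate, together with the equivariance $h_{\varphi_{\vt}\Lambda}(\vs) = h_\Lambda(\vs + \vt)$, tiles $\mathbb{R}^d$ with neighborhoods on which $h_\Lambda^\ast f$ is harmonic.
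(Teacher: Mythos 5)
Your proposal is correct, and its overall architecture matches the paper's: pull $f$ back to each orbit via $h_\Lambda^\ast$, observe that the local harmonicity of Definition~\ref{def-harm} propagates (by translating the neighborhood of $\vzer$ along the orbit) to genuine harmonicity of $h_\Lambda^\ast f$ on all of $\mathbb{R}^d$, apply a classical Euclidean Liouville statement orbit by orbit, and finish with ergodicity of $\mu$. The one-sided bounded cases are handled exactly as in the paper. Where you genuinely diverge is the $L^1$ case: you combine the exact mean value property (the average of $h_\Lambda^\ast f$ over $B_R(\vzer)$ equals $f(\Lambda)$ for every $R$) with the pointwise ergodic theorem for ball F\o lner averages of $L^1$ functions, which immediately yields $f(\Lambda)=\int_\Omega f\,d\mu$ for $\mu$-a.e.\ $\Lambda$; in fact the ``two points'' and the subsequent ergodicity step are superfluous, since this identity already is the conclusion. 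The paper instead integrates over $\Omega$: it uses that the ball averages of $|h_\Lambda^\ast f|$ are nondecreasing in $R$ and tend to infinity whenever $h_\Lambda^\ast f$ is unbounded, and contradicts this via Fubini and the $\mathbb{R}^d$-invariance of $\mu$, which pins the double integral at $\|f\|_{L^1(\Omega,\mu)}$ for every $R$ (see \eqref{e-7.1}--\eqref{e-7.3}); that route needs only the invariance of $\mu$ rather than the pointwise ergodic theorem, while yours identifies the constant explicitly. For $d=1$ the two arguments are of the same kind --- a nonzero slope forces the orbit to spend asymptotically full time in $\{|f|>c\}$, which the ergodic theorem forbids --- but you reach the contradiction through Lusin's theorem and $\mathbbm{1}_K$ for a compact $K$ of measure greater than $1/2$ on which $f$ is bounded, whereas the paper applies the ergodic theorem to the level sets of $|f|$ and derives $\mu(\{|f|>c\})=1$ for every $c$; your version makes more explicit where the mere measurability of $f$ enters. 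No gaps.
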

 
 
Recall that $\mu$ is said to be an \emph{invariant measure} for $(T_t)_{t>0}$ if $\int_\Omega T_tfd\mu=\int_\Omega fd\mu$ for any $f\in C(\Omega)$ and $t>0$. We use the notation $T_t(\Lambda,A):=T_t\mathbbm{1}_A(\Lambda)$ for $t>0$, $\Lambda\in \Omega$ and $A\subset \Omega$ Borel (see notation in \cite{FOT94,ChFu12}). Note that 
\[
T_t(\Lambda,A)=T_t^*\delta_\Lambda(A),
\]
where $T_t^*\delta_\Lambda$ denotes the adjoint semigroup acting on finite measures, applied to the delta measure $\delta_\Lambda$.

\begin{proposition}
Under Assumptions \ref{A:flcapr} and \ref{A:uniqueergodic} the measure $\mu$ is invariant for $(T_t)_{t>0}$, and for any $\Lambda\in \Omega$ we have 
\begin{equation}\label{E:convtoeq}
\lim_{t\to\infty} T_t(\Lambda,\cdot)=\lim_{t\to\infty} T_t^*\delta_\Lambda=\mu
\end{equation}
in the sense of weak convergence of measures (see Lemma~\ref{L:symmetric}). 
\end{proposition}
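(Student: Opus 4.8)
The proof has two parts: invariance of $\mu$, and weak convergence $T_t^*\delta_\Lambda\to\mu$. For invariance, the plan is to use Fubini's theorem together with the $\mathbb{R}^d$-invariance of $\mu$. First I would write, for $f\in C(\Omega)$ and $t>0$,
\[
\int_\Omega T_tf\,d\mu=\int_\Omega\int_{\mathbb{R}^d}p_{\mathbb{R}^d}(t,\vs\,)f(\varphi_{\vs\,}(\Lambda))\,d\vs\,d\mu(\Lambda).
\]
Since $f$ is bounded and continuous and $p_{\mathbb{R}^d}(t,\cdot)$ is integrable, Fubini applies, so this equals $\int_{\mathbb{R}^d}p_{\mathbb{R}^d}(t,\vs\,)\big(\int_\Omega f\circ\varphi_{\vs\,}\,d\mu\big)d\vs$. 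By $\mathbb{R}^d$-invariance of $\mu$ (i.e. $\mu\circ\varphi_{\vs\,}^{-1}=\mu$ for all $\vs$), the inner integral equals $\int_\Omega f\,d\mu$, independent of $\vs$, and since $p_{\mathbb{R}^d}(t,\cdot)$ is a probability density, we conclude $\int_\Omega T_tf\,d\mu=\int_\Omega f\,d\mu$. This gives invariance.

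For the convergence statement, the plan is to show $T_tf(\Lambda)\to\int_\Omega f\,d\mu$ for every fixed $f\in C(\Omega)$ and every $\Lambda\in\Omega$; since $C(\Omega)$ is the relevant test class and $\Omega$ is a compact metric space, this is exactly weak convergence $T_t(\Lambda,\cdot)\to\mu$. Fix $f\in C(\Omega)$ and $\Lambda\in\Omega$. By definition,
\[
T_tf(\Lambda)=\int_{\mathbb{R}^d}p_{\mathbb{R}^d}(t,\vs\,)f(\varphi_{\vs\,}(\Lambda))\,d\vs
=\int_{\mathbb{R}^d}p_{\mathbb{R}^d}(1,\vu\,)f(\varphi_{\sqrt{t}\,\vu\,}(\Lambda))\,d\vu,
\]
using the scaling $\vs=\sqrt{t}\,\vu$. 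So the averaging is over the rescaled Gaussian measure, whose mass concentrates on balls of radius $\sim\sqrt{t}$ as $t\to\infty$. The idea is that for $t$ large, $T_tf(\Lambda)$ is a smooth average of $f$ over a large region of the orbit of $\Lambda$, which by unique ergodicity — in the form of \eqref{E:ergodic}, uniform in $\Lambda$ — should converge to $\int_\Omega f\,d\mu$.

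\textbf{Main obstacle.} The delicate point is that \eqref{E:ergodic} is stated for Følner sequences (indicator-normalized averages over sets $A_n$), while $T_tf(\Lambda)$ is a Gaussian-weighted average; I need to pass from one to the other. The cleanest route is to exploit that, as noted after \eqref{E:ergodic}, the ergodic averages converge uniformly in $\Lambda$ when the Følner sets are Euclidean balls, and then approximate the Gaussian density by a combination of normalized indicators of balls: write $p_{\mathbb{R}^d}(t,\vs\,)=\int_0^\infty \big(\text{weight}\big)\,\frac{\mathbbm{1}_{B_r(\vzer)}(\vs\,)}{\lambda^d(B_r(\vzer))}\,\text{(something)}\,dr$ via a layer-cake / radial decomposition of the Gaussian, reducing $T_tf(\Lambda)$ to a mixture of ball-averages $\frac{1}{\lambda^d(B_{r})}\int_{B_{r}(\vzer)}f(\varphi_{\vs\,}(\Lambda))d\vs$ with $r$ on the scale $\sqrt{t}$, each of which converges to $\int_\Omega f\,d\mu$ uniformly in $\Lambda$; a dominated-convergence argument over the mixing variable then finishes it. Alternatively, and more robustly, one can invoke the probabilistic form: by the continuous-time analog of the pointwise ergodic theorem for the $\mathbb{R}^d$-action (or directly, since $T_tf(\Lambda)=\mathbb{E}[f(\Lambda-\vW_t)]$ and $\vW_t/\sqrt{t}$ is Gaussian), combined with the uniform convergence in \eqref{E:ergodic}, one gets $\sup_{\Lambda\in\Omega}|T_tf(\Lambda)-\int_\Omega f\,d\mu|\to0$. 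Either way, the content of the convergence is really that of unique ergodicity; the technical work is entirely in the Gaussian-versus-indicator comparison, and I expect that to be the step requiring the most care. Finally, since $T_t^*\delta_\Lambda$ is the measure with $\int_\Omega f\,d(T_t^*\delta_\Lambda)=T_tf(\Lambda)$, the pointwise convergence $T_tf(\Lambda)\to\int_\Omega f\,d\mu$ for all $f\in C(\Omega)$ is precisely \eqref{E:convtoeq} in the stated sense.
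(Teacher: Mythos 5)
Your invariance argument (Fubini plus $\mathbb{R}^d$-invariance of $\mu$) is exactly the paper's. For the convergence $T_t^*\delta_\Lambda\to\mu$, however, you take a genuinely different and harder route than the paper. The paper's proof is soft: since $\Omega$ is compact, the family $\{T_t(\Lambda,\cdot)\}_{t>0}$ is automatically tight, so by Prohorov's theorem every sequence $t_k\uparrow\infty$ has a subsequence along which $T_{t_k}(\Lambda,\cdot)$ converges weakly to some $\widetilde\mu$; one then checks that any such limit point is $\mathbb{R}^d$-invariant, using $p_{\mathbb{R}^d}(t_k,\vs+\vt\,)/p_{\mathbb{R}^d}(t_k,\vs\,)\to 1$ and dominated convergence, and unique ergodicity forces $\widetilde\mu=\mu$. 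No comparison between Gaussian averages and F\o lner averages is ever needed. Your route instead confronts that comparison head-on via a layer-cake decomposition of the radially decreasing Gaussian into indicators of balls; this is workable (the superlevel sets of $p_{\mathbb{R}^d}(t,\cdot)$ are balls, the mass your mixture assigns to radii below any fixed $R$ is at most $\int_{B_R}p_{\mathbb{R}^d}(t,\vs\,)d\vs\to0$, and the remaining ball averages converge uniformly in $\Lambda$ by the Euclidean-ball form of \eqref{E:ergodic}), and if carried out it buys you something the paper's argument does not, namely $\sup_{\Lambda}|T_tf(\Lambda)-\int_\Omega f\,d\mu|\to0$ rather than pointwise weak convergence. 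The cost is that this decomposition is precisely the step you leave as a sketch, and your fallback suggestion of invoking a continuous-time pointwise ergodic theorem is too vague to substitute for it: a pointwise ergodic theorem governs normalized indicator averages, not Gaussian-weighted ones, so it would land you back at the same comparison. If you want the short proof, use compactness and uniqueness of the invariant measure as the paper does; if you want the uniform statement, you must finish the layer-cake estimate.
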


\begin{remark}
 The invariance of $\mu$ also follows from \cite[Proposition 5.2]{Candel}, which generalizes \cite[Theorem 1(b)]{GarnettL}. There are many works in dynamical systems connected with this particular proposition and our work in general (see, for instance, \cite{SchmiedingTrevino2018,DolgopyatFernando,Breuillard,KLObook,KV1,
 	G1,G2,DG1,GP1,G3,GW1}) and we hope that these connections will be explored in the future research.  
\end{remark}

The introduction of the measure $\mu$ allows us to consider natural Sobolev spaces on \OO\ and develop the main elements of the general theory of such spaces on \OO.
Note that particularly important in relation to Dirichlet forms are 
\begin{itemize}
 \item $\mathcal{W}^{1,2}(\Omega,\mu)$, the space of all $f\in L^2(\Omega,\mu)$ with $|\nabla f|\in L^2(\Omega,\mu)$ in the distributional sense, and 
 \item $\mathcal{W}^{2,2}(\Omega,\mu)$, the space of all $f\in\mathcal{W}^{1,2}(\Omega,\mu)$ such that $\Delta f$, defined also in distributional sense, is in $L^2(\Omega,\mu)$, 
\end{itemize}see Lemma~\ref{lem-w22} and Section~\ref{S:L2}. By this reason we restrict our attention to spaces $\mathcal{W}^{k,2}$ in the present paper. 
Another reason why we restrict our attention to spaces 
 $\mathcal{W}^{k,2}$ is that typical Sobolev embeddings involving  Sobolev (Banach) spaces $\mathcal{W}^{k,p}$  with $p\neq 2$ do not hold in our situation. 
 
\begin{definition}\label{def-sobolev}
The Sobolev (Hilbert) spaces $ \mathcal{W}^{k,2}(\Omega,\mu)$ are defined as 
\[
\mathcal{W}^{k,2}(\Omega,\mu):=\left\lbrace f\in L^2(\Omega,\mu): D^\alpha f\in L^2(\Omega,\mu)\quad\text{for all }|\alpha|\leq k\right\rbrace,
\]
where $D^\alpha f$ is the distributional derivative from Definition \ref{D:distributionalsense}, with the Sobolev norm defined by 
\begin{equation}\label{e-sobnorm}
 \left\|f\right\|_{\mathcal{W}^{k,2}(\Omega,\mu)}:=
 \left(
 \sum_{|\alpha|\leq k} \left\|D^\alpha f\right\|_{L^2(\Omega,\mu)}^2
 \right)^{1/2}.
\end{equation}
\end{definition}
Note that one can localize Definitions \ref{def-sobolev}, \ref{D:distributionalsense}, and\ref{D:EvansDerivative} for any open $O\subset\OO$ and define the Sobolev spaces $ \mathcal{W}^{k,2}(O,\mu)$. 

Given a measurable function $f:\Omega\to\mathbb{R}$, we write $f\in \mathcal{W}^{k,2}(\Omega,\mu)$ if the $\mu$-class of $f$ is an element of $\mathcal{W}^{k,2}(\Omega,\mu)$. If $f$ is a measurable function with $f\in \mathcal{W}^{k,2}(\Omega,\mu)$ then, as a consequence of this notational agreement and Definition \ref{D:distributionalsense}, we have $f=0$ $\mu$-a.e. in $\Omega$ if and only if $f=0$ as an element of $\mathcal{W}^{k,2}(\Omega)$.

The proof of the following theorem and more details are given in Section \ref{S:S}.

\begin{theorem}\label{T:main_Sobolev}
Under Assumptions \ref{A:flcapr} and \ref{A:uniqueergodic} and for each $k=0,1,2\ldots$, \begin{enumerate}
 \item\label{sob1} the Sobolev space $\mathcal{W}^{k,2}(\Omega,\mu)$ is a Hilbert space equipped with the norm \eqref{e-sobnorm};
\item\label{sob2} for any $|\alpha|\leq k$, the distributional derivative $D^\alpha$ is a bounded local linear operator 
$$D^\alpha:\mathcal{W}^{k,2}(\Omega,\mu)\to\mathcal{W}^{k-|\alpha|,2}(\Omega,\mu)$$ which can be intertwined with the orbit homeomorphisms $h_{\Lambda}$ as follows: 
for every $f\in\mathcal{W}^{k,2}(\Omega,\mu)$ 
and $\mu$-almost every $\Lambda\in\OO$,
\begin{equation}
\label{e-intertwine}
\left( h_{\Lambda}^*D^\alpha f \right)(\vt)
=
\left( D^\alpha_\Rd h_{\Lambda}^*f \right) (\vt)\qquad\text{for $\lambda^d$-a.e. }\vt\in\Rd
\end{equation}
in the sense of distributional derivatives in both sides of \eqref{e-intertwine};
\item\label{sob3} the space $C_{tlc}^\infty(\Omega)$ is dense in $\mathcal{W}^{k,2}(\Omega,\mu)$ with respect to the norm \eqref{e-sobnorm}. 
\end{enumerate}
\end{theorem}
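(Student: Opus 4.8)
The plan is to prove the three assertions of Theorem~\ref{T:main_Sobolev} in the order (\ref{sob2}), (\ref{sob1}), (\ref{sob3}), since the completeness and density statements both build on the intertwining identity. The conceptual heart of the whole theorem is that, through the foliated charts $\phi_{\Lambda,\varepsilon}$ of \eqref{e-phi} and the orbit homeomorphisms $h_\Lambda$ of \eqref{E:orbithomeo}, a distributional derivative $D^\alpha$ on $\Omega$ is nothing but the classical $\mathbb{R}^d$-derivative $D^\alpha_{\mathbb{R}^d}$ applied in the Euclidean ``leaf'' direction, with the transversal Cantor variable acting as a passive parameter. Everything should reduce to the fact that on each chart $\mu\circ\phi^{-1}=\nu_{\mathcal C}\times\lambda^d|_B$ is a product measure, by \eqref{E:localprodmeas}, so one can apply the classical theory of Sobolev spaces on $B\subset\mathbb{R}^d$ fibre-wise and then integrate against $\nu_{\mathcal C}$.

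First I would establish (\ref{sob2}). Boundedness of $D^\alpha\colon\mathcal{W}^{k,2}(\Omega,\mu)\to\mathcal{W}^{k-|\alpha|,2}(\Omega,\mu)$ is immediate from the definition of the norm \eqref{e-sobnorm}, once one knows $D^\beta(D^\alpha f)=D^{\alpha+\beta}f$ in the distributional sense, which follows from the definition of distributional derivative (Definition~\ref{D:distributionalsense}) and the commutativity of partials on $\mathbb{R}^d$. Locality is inherited from the locality of classical derivatives. The intertwining identity \eqref{e-intertwine} is the real content: I would fix $f\in\mathcal{W}^{k,2}(\Omega,\mu)$, cover $\Omega$ by finitely many charts $O_{\Lambda_j,\varepsilon_j}$ (compactness), write $f\cphi$ on $\mathcal C_j\times B_j$, and use Fubini together with \eqref{E:localprodmeas} to conclude that for $\nu_{\mathcal C}$-a.e.\ transversal point $\Lambda'$ the slice $\vt\mapsto f\cphi(\Lambda',\vt)$ lies in the classical Sobolev space $W^{k,2}(B_j)$ and its classical derivatives agree $\lambda^d$-a.e.\ with the slices of $D^\alpha f$. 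Since every orbit ray through a point of $O_{\Lambda_j,\varepsilon_j}$ is, in the chart, a translate of the Euclidean slice $\{\Lambda'\}\times B_j$, gluing these local slice statements along the orbit (which is isometric to $\mathbb{R}^d$ by Lemma~\ref{L:Horizontal}) yields \eqref{e-intertwine} for $\mu$-a.e.\ $\Lambda$. The subtlety to be careful about is the passage from ``for $\nu_{\mathcal C}$-a.e.\ transversal point in each chart'' to ``for $\mu$-a.e.\ $\Lambda\in\Omega$'': one must check that the exceptional transversal sets from the finitely many charts assemble into a $\mu$-null set, and that a single orbit is hit consistently by the various charts, which is where the local product structure \eqref{E:localprodmeas} and a standard partition-of-the-transversal argument are used.

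Given (\ref{sob2}), completeness (\ref{sob1}) follows in the usual way: a Cauchy sequence $(f_n)$ in $\mathcal{W}^{k,2}(\Omega,\mu)$ has each $D^\alpha f_n$ Cauchy in $L^2(\Omega,\mu)$, hence convergent to some $g_\alpha\in L^2(\Omega,\mu)$; one then checks $g_\alpha=D^\alpha g_{\vec 0}$ in the distributional sense by passing to the limit in the defining integration-by-parts identity of Definition~\ref{D:distributionalsense}, using that integration against $\mu$ is continuous in $L^2(\Omega,\mu)$ and that the test functions are bounded. That $\mathcal{W}^{k,2}(\Omega,\mu)$ is an inner product space is clear from \eqref{e-sobnorm} via polarization. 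For the density statement (\ref{sob3}) I would argue in two stages: first mollify in the leaf direction — given $f\in\mathcal{W}^{k,2}(\Omega,\mu)$, convolve along each orbit with $p_{\mathbb{R}^d}(t,\cdot)$, i.e.\ apply the operator $T_t$ of \eqref{E:defsg}; by the intertwining \eqref{e-intertwine} and standard Euclidean mollification estimates, $T_tf\to f$ in $\mathcal{W}^{k,2}(\Omega,\mu)$ as $t\downarrow0$ and $T_tf$ is orbit-wise $C^\infty$. Second, approximate the (still possibly transversally rough) function $T_tf$ by transversally locally constant functions: using the chart product structure, a function in $C^\infty_{\orb}(\Omega)$ can be approximated in $\mathcal{W}^{k,2}(\Omega,\mu)$ by finite sums $\sum_i \mathbbm 1_{\mathcal C_i}(\Lambda')F_i(\vt)$ as in \eqref{e-prod} with $F_i$ smooth, because the transversal $L^2(\nu_{\mathcal C})$ is approximated by step functions over the clopen cylinder sets and the Cantor cylinders are clopen so that such sums are genuinely in $C^\infty_{tlc}(\Omega)$ (Definition~\ref{D:tlc}); a diagonal argument combining the two stages finishes the proof.

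The main obstacle I anticipate is entirely in step (\ref{sob2}): making the fibre-wise (slicing) argument rigorous across the atlas. One has to reconcile the two descriptions of the leaf direction — the foliated chart picture $\mathcal C\times B$, which only sees small Euclidean balls $B$, and the global orbit picture $h_\Lambda\colon\mathbb{R}^d\to\orb(\Lambda)$ — and show that the distributional derivative defined globally on $\Omega$ via Definition~\ref{D:distributionalsense} restricts correctly to each leaf; the non-$\sigma$-finiteness of $\lambda^d_\Omega$ (Remark~\ref{R:pushforward}) means one cannot simply invoke the Radon–Nikodym/Fubini machinery against $\lambda^d_\Omega$, so all slicing must be done chart-locally against the product measure $\nu_{\mathcal C}\times\lambda^d|_B$ and then patched. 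Handling the overlaps of charts, and in particular verifying that the transversally-locally-constant test functions used to define $D^\alpha$ are rich enough to pin down the leafwise derivative, is the delicate bookkeeping that the rest of the proof rests on.
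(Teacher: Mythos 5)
Your proposal is correct and, for parts (\ref{sob1}) and (\ref{sob2}), follows essentially the same route as the paper: the completeness argument is the classical Cauchy-sequence/limit-in-the-distributional-identity argument (Lemma~\ref{L:Sobolev_Charact}), and your chart-local Fubini slicing against $\nu_{\mathcal C}\times\lambda^d|_B$, followed by the assembly of the exceptional transversal null sets into a $\mu$-null union of orbits, is precisely what Lemmas~\ref{L:W2kOLe} and~\ref{L:W22loc} do (the paper handles the ``union of all orbits hitting $\mathcal N$ is measurable and $\mu$-null'' point by writing $\mathbb{R}^d=\bigcup_{\vec x\in\frac{\varepsilon}{2}\mathbb{Z}^d}B_\varepsilon(\vec x)$ and using invariance of $\mu$, exactly the bookkeeping you flag as the delicate step). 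The only place where you genuinely diverge is part (\ref{sob3}): you mollify leafwise first, via the semigroup $T_t$ and the intertwining identity, and then discretize transversally by conditional expectation onto clopen cylinders; the paper reverses the order, first applying the transversal projections $\PHii$ (orthogonal projections in $\mathcal{W}^{k,2}(\OLe,\mu)$ built from a locally constant orthonormal basis of $L^2(\mathcal C,\nu_{\mathcal C})$, with monotone convergence $\PHii f\to f$, Lemma~\ref{L:local_approx}) and handling leafwise smoothness separately via the averaging construction of Lemma~\ref{L:tlcdense}. Both orders work since the two operations act on independent factors of the product chart; the paper's order has the small advantage that $\PHii$ is manifestly a contraction/orthogonal projection on each Sobolev space (giving the quantitative statements \eqref{e-cont-sob} and the monotonicity of the convergence), while your order has the advantage that the leafwise regularization is done globally by an operator already available ($T_t$) rather than chart by chart. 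One point to make explicit in your second stage: after the transversal conditional expectation the leafwise coefficients $c_l(\vt)$ must be checked to be smooth and compactly supported in $B$ (via differentiation under the integral and a partition-of-unity cutoff) so that the resulting step-sum is genuinely in $C^\infty_{tlc}(\Omega)$ in the sense of Definition~\ref{D:tlc}, i.e.\ continuous on $(\Omega,\varrho)$ and not merely orbit-wise smooth.
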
 

\begin{notation}\label{nPt}
By $(P_t)_{t>0}$ we denote the unique extension of $(T_t)_{t>0}$ to $L^2(\Omega,\mu)$. 
\end{notation}

Recall that a strongly continuous semigroup $(P_t)_{t>0}$ of bounded linear operators on $L^2(\Omega,\mu)$ is called a \emph{Markov semigroup} if $0\leq f\leq 1$ $\mu$-a.e. implies $0\leq P_tf \leq 1$ $\mu$-a.e. It is called \emph{conservative} if for any $t>0$ we have $P_t\mathbbm{1}=\mathbbm{1}$ $\mu$-a.e. on $\Omega$. 

The proof of the following theorem and more details are given in Section \ref{S:L2}.

\begin{theorem}\label{T:main_L2sg}
Under Assumptions \ref{A:flcapr} and \ref{A:uniqueergodic} the following results hold. 
\begin{enumerate}[leftmargin=2em]
\item The semigroup $(P_t)_{t>0}$ is a conservative Markov semigroup of self-adjoint operators on $L^2(\Omega,\mu)$. 
Its generator $\mathcal{L}$ is a self-adjoint extension of the Laplacian $\frac12\Delta$ on smooth functions and its domain is $\mathcal{W}^{2,2}(\Omega,\mu)$. 
\item \label{T:main_Koopman}
The unitary Koopman operators $\Koo_{\vt\,}$, defined on $L^2(\Omega,\mu)$ by \eqref{E:KoopmanT-def}, commute with the semigroup $(P_t)_{t>0}$, i.e.
\begin{equation}\label{E:KoopmanP}
\Koo_{\vt\,} P_{t\vphantom{\vt}} = P_{t\vphantom{\vt}} \Koo_{\vt\,},\quad \vt\in\mathbb{R}^d, t>0,
\end{equation}
and hence commute with the Laplacian $\mathcal{L}$. 
\item The associated Dirichlet form of $\mathcal{L}$ is 
\[
\mathcal{E}(f,g)=\int_{\Omega}\left\langle \nabla f,\nabla g\right\rangle d\mu,\quad f,g\in \mathcal{W}^{1,2}(\Omega,\mu).
\]
The Dirichlet form $\mathcal{E}$ is regular, strongly local, irreducible, recurrent, and has pointwise Kusuoka-Hino index $d$ $\mu$-a.e.
\item   The semigroup $(P_t)_{t>0}$ does not admit a heat kernel with respect to $\mu$. Moreover, it does not improve integrability: There exist no $2<q\leq +\infty$ and $t>0$ such that $P_t$ is a bounded operator from $L^2(\Omega,\mu)$ into $L^q(\Omega,\mu)$.  
\end{enumerate}
\end{theorem}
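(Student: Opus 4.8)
The plan is to prove Theorem~\ref{T:main_L2sg} part (4) in two halves, treating the non-existence of a heat kernel and the failure of hypercontractivity separately, both by exploiting the orbit-confinement of the diffusion (Theorem~\ref{T:main_Feller}(1), formula~\eqref{E:process}) together with the fact that $\mu$ assigns zero mass to individual orbits.

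\textbf{Step 1: No heat kernel with respect to $\mu$.} Suppose, for contradiction, that there is a jointly measurable $p\colon(0,\infty)\times\Omega\times\Omega\to[0,\infty)$ with $P_tf(\Lambda)=\int_\Omega p(t,\Lambda,\Lambda')f(\Lambda')\,d\mu(\Lambda')$ for $f\in L^2(\Omega,\mu)$ and $\mu$-a.e.\ $\Lambda$. By definition $P_t$ is the $L^2$-extension of $T_t$, and for $f\in C(\Omega)$ the semigroup acts via~\eqref{E:defsg}: $T_tf(\Lambda)=\int_{\mathbb R^d}p_{\mathbb R^d}(t,\vs)f(\varphi_\vs(\Lambda))\,d\vs$, which shows $T_t(\Lambda,\cdot)=T_t^*\delta_\Lambda$ is supported on $\orb(\Lambda)$ — this is exactly~\eqref{e-ld}--\eqref{e-lp}. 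Now fix $\Lambda$ and take a representative of $f\in L^2(\Omega,\mu)$ that vanishes on $\orb(\Lambda)$; since $\mu(\orb(\Lambda))=0$ (each orbit is a one-parameter-or-$d$-parameter image of $\mathbb R^d$ and $\mu$ is an $\mathbb R^d$-invariant probability measure on the compact space $\Omega$, so no orbit can carry positive mass) this is consistent with any prescribed $L^2$-class, yet $T_tf(\Lambda)=0$. Approximating a general $f\in L^2$ by continuous functions and using that $P_t f(\Lambda)$ depends only on the restriction of (a suitable representative of) $f$ to $\orb(\Lambda)$, one gets that $\Lambda'\mapsto p(t,\Lambda,\Lambda')$ must be supported, $\mu$-a.e., on $\orb(\Lambda)$. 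But then $\int_\Omega p(t,\Lambda,\Lambda')\,d\mu(\Lambda')=0\ne1$ for $\mu$-a.e.\ $\Lambda$, contradicting conservativity $P_t\mathbbm 1=\mathbbm 1$ from part~(1). Hence no $\mu$-heat kernel exists. The delicate point here is the measure-theoretic bookkeeping: making precise the statement ``$P_tf(\Lambda)$ depends only on $f|_{\orb(\Lambda)}$'' requires choosing representatives carefully, and this is the main obstacle — it should be handled by first establishing it for $f\in C(\Omega)$ via~\eqref{E:defsg}, then passing to $L^2$ using an $L^2(\mu)$-convergent sequence and the contractivity of $P_t$ together with a subsequence-a.e.\ argument on a co-null set of starting points.

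\textbf{Step 2: No improvement of integrability.} Suppose $P_t\colon L^2(\Omega,\mu)\to L^q(\Omega,\mu)$ is bounded for some $2<q\le\infty$ and some $t>0$. By self-adjointness and duality $P_t=P_{t/2}P_{t/2}$ would then factor as $L^2\to L^2$ followed by a bounded map, but more usefully: a standard consequence (e.g.\ via the Dunford--Pettis / Riesz-type representation for operators $L^1\to L^\infty$, or via Grothendieck's theorem applied to $P_{t/2}\colon L^2\to L^q$ composed with its adjoint $L^{q'}\to L^2$) is that $P_t\colon L^1(\Omega,\mu)\to L^2(\Omega,\mu)$ — or, iterating the semigroup a few times, $P_{nt}\colon L^1\to L^\infty$ — is bounded, and such an operator is an integral operator with a bounded kernel in $L^\infty(\Omega\times\Omega,\mu\otimes\mu)$. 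That is, ultracontractivity of $(P_t)$ would follow, which in turn yields a bounded $\mu$-heat kernel, contradicting Step~1. Alternatively, and more self-containedly, one can argue directly: if $P_t$ maps $L^2(\mu)$ into $L^q(\mu)$ with $q>2$, then for $f\in L^2$ supported off $\orb(\Lambda_0)$ for a fixed generic $\Lambda_0$ we still have (as in Step~1) that $P_tf$ restricted to $\orb(\Lambda_0)$ is governed by the Gaussian convolution, and one can test against an approximate identity concentrated near a point of $\Omega$ whose preimage under the relevant orbit homeomorphism is spread out, to see that the would-be kernel on the diagonal is forced to be both $\mu\otimes\mu$-bounded and supported on the $\mu\otimes\mu$-null ``orbit-diagonal'' $\{(\Lambda,\Lambda')\colon\Lambda'\in\orb(\Lambda)\}$, again contradicting conservativity. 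I expect the cleanest writeup is: (i) reduce ``$P_t\colon L^2\to L^q$ bounded for some $q>2$'' to ``$P_{s}\colon L^1\to L^\infty$ bounded for some $s>0$'' by interpolation/duality and the semigroup property; (ii) invoke that an $L^1\to L^\infty$ bounded operator has a bounded kernel representing it against $\mu$; (iii) conclude the existence of a bounded $\mu$-heat kernel and cite Step~1 for the contradiction.

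\textbf{Remark on structure and the main obstacle.} The whole proof rests on one geometric fact — the diffusion never leaves the orbit of its starting point, while $\mu$ sees no single orbit — so both assertions are ``morally the same statement.'' The genuine technical obstacle is not conceptual but measure-theoretic: the non-$\sigma$-finiteness of $\lambda^d_\Omega$ (Remark~\ref{R:pushforward}) means one cannot simply disintegrate $\mu$ along orbits and quote Fubini; instead the argument must stay on the level of: (a) the explicit formula~\eqref{E:defsg} on $C(\Omega)$, which is orbit-local by inspection; (b) approximation in $L^2(\Omega,\mu)$ with control of a co-null set of basepoints $\Lambda$; and (c) the conservativity $P_t\mathbbm1=\mathbbm1$ as the source of the contradiction. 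Once these three ingredients are lined up, both (non-existence of the heat kernel and non-improvement of integrability) follow, and I would present them as a single lemma (``$P_tf(\Lambda)$ is determined by $f|_{\orb(\Lambda)}$, for $\mu$-a.e.\ $\Lambda$'') followed by two short corollaries.
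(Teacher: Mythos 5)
Your proposal only addresses part (4) of the theorem; parts (1)--(3) (self-adjointness and conservativity of $(P_t)_{t>0}$, the identification $\mathcal{D}(\mathcal{L})=\mathcal{W}^{2,2}(\Omega,\mu)$, the Koopman commutation, and the regularity, strong locality, irreducibility, recurrence and index of $\mathcal{E}$) are not touched, and in the paper these occupy most of Section~\ref{S:L2}. Within part (4), your Step 1 is essentially the paper's argument (Lemma~\ref{L:noHK}): test the putative kernel against $\mathbbm{1}$, use $\mu(\orb(\Lambda))=0$ to rewrite the integral over the complement of the orbit, and identify that integral with $\mathbb{P}(X_t^{\Lambda}\notin\orb(\Lambda))=0$, contradicting conservativity. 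That part is fine, modulo the same representative-choosing care the paper itself handles via Lemma~\ref{L:symmetric}(\ref{L:repstillholds}).

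Step 2, however, has a genuine gap. Your reduction of ``$P_t\colon L^2\to L^q$ bounded for some $2<q<\infty$'' to ``$P_s\colon L^1\to L^\infty$ bounded for some $s$'' is not a theorem: hypercontractivity does not imply ultracontractivity. The Ornstein--Uhlenbeck semigroup is the standard counterexample --- it is hypercontractive and even admits a heat kernel with respect to its invariant Gaussian measure, yet is not ultracontractive; this is exactly why the paper's Remark~\ref{R:HKs}(ii) records only the trivial implication in the other direction. Concretely, iterating a single bound $\|P_{t_0}\|_{L^2\to L^q}<\infty$ via duality and Riesz--Thorin gives $P_{nt_0}\colon L^2\to L^{p_n}$ with $p_n\to\infty$, but with no uniform control of the norms, so you cannot reach $q=+\infty$; and for finite $q$ an $L^2\to L^q$ bounded operator need not be an integral operator at all, so neither Dunford--Pettis nor your ``self-contained'' variant (which again presupposes a would-be kernel) applies. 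Only the case $q=+\infty$ of the statement genuinely reduces to Step 1. The paper instead proves the finite-$q$ case directly (Lemma~\ref{L:nothyper}) by exhibiting a product function $f=f_0\otimes F_0$ supported in a chart $\phi^{-1}(\mathcal{C}\times B)$ with $f_0\in L^2(\mathcal{C},\nu_{\mathcal{C}})\setminus L^q(\mathcal{C},\nu_{\mathcal{C}})$ (built from a sequence of disjoint clopen sets of small measure, which exist by unique ergodicity) and $F_0=\mathbbm{1}_{B'}$: since $P_t$ acts only along the leaves, the transversal factor $f_0$ passes through untouched and $\|P_tf\|_{L^q(\Omega,\mu)}=+\infty$ for every $t>0$. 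You need an argument of this kind --- exploiting that the semigroup does not smooth in the transversal direction --- rather than a formal contractivity bootstrap.
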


%

\begin{remark}\label{R:HKs}\mbox{}
\begin{enumerate}
\item[(i)]
As already mentioned Remark~\ref{R:pushforward}, there is no Radon-Nikodym derivative of $\mu$
with respect to the non-$\sigma$-finite measure $\lambda^d_\Omega$, which explains why the existence of the heat kernel $p_\Omega(t,\Lambda_1,\Lambda_2)$ does not provide a kernel with respect to~$\mu$.
\item[(ii)]   Recall that a Markov semigroup $(P_t)_{t>0}$ of self-adjoint operators on $L^2(\Omega,\mu)$ 
always induces positivity preserving and contractive semigroups on the spaces $L^p(\Omega,\mu)$, $1\leq p\leq +\infty$, strongly continuous for $1\leq p<+\infty$, see for instance \cite[formula (1.1)]{CKS87} or \cite[Theorem 1.4.1]{Davies89}. A Markov semigroup $(P_t)_{t>0}$ on $\Omega$ is called \emph{hypercontractive} if there exists some $t>0$ such that $P_t$ is bounded from $L^2(\Omega,\mu)$ into $L^4(\Omega,\mu)$, see \cite[Section 2.1]{Davies89} and also \cite{Gross75, Nelson66}. It is called \emph{ultracontractive} if for any $t>0$ the operator $P_t$ is bounded from $L^2(\Omega,\mu)$ into $L^\infty(\Omega,\mu)$, see \cite[Section 2.1]{Davies89} and also \cite{CKS87, Coulhon96, DaviesSimon84}. Obviously ultracontractivity implies hypercontractivity. According to Theorem \ref{T:main_L2sg} the semigroup $(P_t)_{t>0}$ is not hypercontractive and in particular, not ultracontractive. The absence of ultracontractivity also follows from the non-existence of a heat kernel, because an extension to a Markov semigroup of self-adjoint operators of a Feller semigroup which is ultracontractive is known to admit (a preliminary form of) heat kernel, see \cite[Section 3]{CKS87}. For further details see Lemma \ref{L:nothyper} and Corollary \ref{C:noNash}.  
\end{enumerate}
\end{remark}

\begin{remark}
The semigroup $(P_t)_{t>0}$ satisfies the $L^2$-form of the mixing property~\eqref{E:convtoeq}: For any $f\in L^2(\Omega,\mu)$ we have $\lim_{t\to \infty} P_tf=\int_{\Omega} fd\mu$ in $L^2(\Omega,\mu)$. Using the contractivity and conservativity of $(P_t)_{t>0}$ together with $\mu(X)=1$ this is straightforward from \ref{E:convtoeq}. It can also be deduced from \cite[Proposition 3.1.13]{BGL14} together with the irreducibility and recurrence of $(\mathcal{E},\mathcal{D}(\mathcal{E}))$, see Corollary \ref{C:Fukushima} below.
\end{remark}

\begin{remark}\label{R:K}
Theorem \ref{T:main_L2sg}(\ref{T:main_Koopman}) implies that the spectral theory of the semigroup $(P_t)_{t>0}$ and the Laplacian $\mathcal{L}$ follow from the spectral theory of the dynamical system $(\Omega,\mathbb{R}^d)$ coded in the operators $\Koo_{\vt\,}$. It is known that there exists an orthonormal basis of eigenfunctions for the operators $\Koo_{\vt\,}$ (and therefore for $\mathcal{L}$) if and only if the ergodic dynamical system $(\Omega,\mathbb{R}^d,\mu)$ has pure point diffraction, see for instance \cite{BaakeLenz17, BaakeLenzvanEnter15}. Recall that two self-adjoint operators commute if and only if their spectral projections commute for every Borel set. Consequently, all spectral operators, as for instance the Schr\"odinger Hamiltonian, also commute with the operators $\Koo_{\vt\,}$. The standard spectral theory (see e.g.~\cite{RS80,RudinFA}) implies the following corollary.

\end{remark}

\begin{corollary}\label{c:S}Under Assumptions \ref{A:flcapr} and \ref{A:uniqueergodic}:
 \begin{enumerate}
 \item zero is a simple eigenvalue (with the constant eigenfunction) of the Laplacian $\mathcal L$ and 
one is a simple eigenvalue (with the constant eigenfunction) of the heat semigroup $P_{t\vphantom{\vt}}$;
 \item if the spectrum of the family of the Koopman operators $\Koo_{\vt\,}$ consists of eigenvalues of finite multiplicity, then the spectra of $P_{t\vphantom{\vt}}$ and $\mathcal L$ are pure point; 
 \item if the family of the Koopman operators $\Koo_{\vt\,}$ has no nonzero eigenvalues (see 
 Appendix~\ref{SS:DsIs} for the definition and more details), then 
 $\mathcal L$ does not have nonzero eigenvalues of finite multiplicity. 
 \end{enumerate}
\end{corollary}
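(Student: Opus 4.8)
The plan is to derive all three statements as direct consequences of the commutation relation \eqref{E:KoopmanP} from Theorem \ref{T:main_L2sg}(\ref{T:main_Koopman}) and the general spectral-theoretic principle that two self-adjoint operators commute precisely when their spectral projections commute (cf.\ Remark \ref{R:K}). Since $(P_t)_{t>0}$ is a self-adjoint Markov semigroup with self-adjoint generator $\mathcal L$, and since each Koopman operator $\Koo_{\vt\,}$ is unitary on $L^2(\Omega,\mu)$ (unique ergodicity makes $\mu$ $\mathbb R^d$-invariant), the family $\{\Koo_{\vt\,}\}_{\vt\in\mathbb R^d}$ is a strongly continuous unitary representation commuting with the self-adjoint $\mathcal L$; hence $\mathcal L$ and every $\Koo_{\vt\,}$ (and therefore every spectral projection of $\mathcal L$) can be simultaneously analyzed on common invariant subspaces via the spectral theorem.

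For (1), I would argue as follows. Conservativity of $(P_t)_{t>0}$ (Theorem \ref{T:main_L2sg}(1)) gives $P_t\one=\one$, so $1$ is an eigenvalue of $P_t$ and $0$ an eigenvalue of $\mathcal L$ with the constant eigenfunction. Simplicity is equivalent to the statement that any $f\in\mathcal W^{2,2}(\Omega,\mu)$ with $\mathcal Lf=0$ is $\mu$-a.e.\ constant — but such an $f$ is harmonic in the sense of Definition \ref{def-harm} (applying \eqref{e-intertwine}, the distributional identity $\Delta_{\mathbb R^d}h_\Lambda^\ast f=0$ holds for $\mu$-a.e.\ $\Lambda$, hence by Weyl's lemma $h_\Lambda^\ast f$ is a genuine harmonic function on $\mathbb R^d$), and it lies in $L^1(\Omega,\mu)$ since $L^2\subset L^1$ on the probability space. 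Theorem \ref{T:main_Liouville} then forces $f$ to be $\mu$-a.e.\ constant. This establishes simplicity of the zero eigenvalue of $\mathcal L$, and via $P_t=e^{t\mathcal L}$ the simplicity of the eigenvalue $1$ of $P_t$.

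For (2) and (3), I would use the commutation to decompose $L^2(\Omega,\mu)$ according to the joint spectral data of the Koopman family. Write $L^2(\Omega,\mu)=\bigoplus_\chi H_\chi$, where $H_\chi$ ranges over the (generalized) eigenspaces of the family $\{\Koo_{\vt\,}\}$, i.e.\ $\Koo_{\vt\,}\!\restriction_{H_\chi}$ acts as multiplication by the character $e^{i\langle\chi,\vt\rangle}$; this is possible exactly when the Koopman spectrum is pure point, which is the hypothesis of (2). Because $\mathcal L$ commutes with all $\Koo_{\vt\,}$, it preserves each $H_\chi$; and on $H_\chi$ the identity, valid for smooth transversally locally constant functions and extended by the core property from Theorem \ref{T:main_Sobolev}(\ref{sob3}), that $\mathcal L$ differentiates along orbits (so that on a $\chi$-eigenvector $\mathcal L$ acts, formally, as $-\tfrac12\|\chi\|^2$) shows that $\mathcal L\!\restriction_{H_\chi}=-\tfrac12\|\chi\|^2\,\mathrm{Id}$. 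Finite multiplicity of the Koopman eigenvalues then means each $H_\chi$ is finite-dimensional, so $\mathcal L$ has pure point spectrum consisting of eigenvalues $-\tfrac12\|\chi\|^2$ of finite multiplicity, and correspondingly $P_t$ has pure point spectrum $\{e^{-t\|\chi\|^2/2}\}$; this is (2). For (3), the contrapositive: if $\mathcal L$ had a nonzero eigenvalue $\lambda$ of finite multiplicity, pick an eigenfunction $g$, $\mathcal Lg=\lambda g$. The finite-dimensional eigenspace $E_\lambda$ is invariant under each unitary $\Koo_{\vt\,}$; on this finite-dimensional space the strongly continuous unitary $\mathbb R^d$-action decomposes into characters, producing a joint eigenvector $g_0\in E_\lambda$ with $\Koo_{\vt\,}g_0=e^{i\langle\chi_0,\vt\rangle}g_0$ for some $\chi_0\in\mathbb R^d$; since $g_0$ is nonconstant ($\lambda\neq 0$), $\chi_0\neq 0$, contradicting the hypothesis that the Koopman family has no nonzero eigenvalues.

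The main obstacle I anticipate is the rigorous justification, at the level of unbounded operators, that $\mathcal L$ acts as the scalar $-\tfrac12\|\chi\|^2$ on each Koopman eigenspace $H_\chi$ (rather than merely mapping $H_\chi$ into itself): one must check that a Koopman $\chi$-eigenfunction lies in $\mathcal W^{2,2}(\Omega,\mu)=\mathcal D(\mathcal L)$ and that the distributional horizontal Laplacian of it is indeed $-\|\chi\|^2$ times itself. The clean way is to approximate an arbitrary element of $H_\chi$ in $L^2(\Omega,\mu)$ by the dense smooth class $C^\infty_{tlc}(\Omega)$ (Theorem \ref{T:main_Sobolev}(\ref{sob3})) intersected with $H_\chi$, or to invoke the spectral calculus: the orthogonal projection onto $H_\chi$ is a limit of averages $\lim_{n}\frac1{|A_n|}\int_{A_n} e^{-i\langle\chi,\vt\rangle}\Koo_{\vt\,}\,d\vt$, which commutes with $\mathcal L$ and with the resolvent of $\mathcal L$, so one reduces to the scalar identity on smooth functions via \eqref{e-intertwine} and the fact that on $\mathbb R^d$ the Laplacian of $e^{i\langle\chi,\cdot\rangle}$ is $-\|\chi\|^2e^{i\langle\chi,\cdot\rangle}$. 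Everything else is routine functional analysis once this identification is in place.
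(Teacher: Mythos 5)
Your argument is correct and follows essentially the same route as the paper, namely the commutation relation \eqref{E:KoopmanP} combined with standard spectral theory (which is all the paper itself offers, in Remark~\ref{R:K}) together with the Liouville theorem (Theorem~\ref{T:main_Liouville}, equivalently the irreducibility of $(\mathcal{E},\mathcal{D}(\mathcal{E}))$) for the simplicity in part (1). The domain issue you flag at the end disappears if you observe that for a Koopman eigenfunction $f$ with $\Koo_{\vt\,}f=e^{2\pi i\langle\vt,\vec{\alpha}\rangle}f$ one computes directly from \eqref{E:defsg} that $P_tf=e^{-2\pi^2\|\vec{\alpha}\|^2t}f$ (Fourier transform of the Gaussian), so $f$ is automatically an eigenfunction of the semigroup and hence lies in $\mathcal{D}(\mathcal{L})$ with $\mathcal{L}f=-2\pi^2\|\vec{\alpha}\|^2f$, no approximation by $C^\infty_{tlc}(\Omega)$ needed.
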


%
%
%

Finally, in the one dimensional case, 
we obtain a new Helmholtz-Hodge decomposition, which is 
stated in more detail and 
proved in Section~\ref{S:Hodge}.
\begin{theorem}\label{T:main_Hodge}
Under Assumptions \ref{A:flcapr} and \ref{A:uniqueergodic}, and if $d=1$, the space $L^2(\Omega,\mu)$ of square integrable vector fields admits the Helmholtz-Hodge decomposition
\begin{equation}\label{E:Hodgedecomp}
L^2(\Omega,\mu)=\im\nabla\oplus(\im\nabla)^\bot\simeq\im\nabla\oplus\mbbR,
\end{equation}that is, $(\im \nabla)^\bot$ is one-dimensional.
More precisely,
\[
(\im \nabla)^\bot=\star \{\text{constant functions on }\Omega\},
\]
where $\star$ is the Hodge star operator defined in Appendix~\ref{A:H}.
\end{theorem}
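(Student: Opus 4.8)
Since $d=1$, square integrable vector fields on $\OO$ are identified with $L^{2}(\OO,\mu)$ itself and there are no $2$-forms, so the asserted identity \eqref{E:Hodgedecomp} is just the standard orthogonal splitting $L^{2}(\OO,\mu)=\overline{\im\nabla}\oplus(\im\nabla)^{\perp}$, and the whole content is to show that $(\im\nabla)^{\perp}=\star\{\text{constant functions on }\OO\}$, in particular that this space is one-dimensional. The inclusion $\star\{\text{constant functions on }\OO\}\subseteq(\im\nabla)^{\perp}$ says that constant vector fields are $\mu$-orthogonal to all gradients, i.e.\ that $\int_{\OO}\nabla f\,d\mu=0$ for every $f\in\mathcal{W}^{1,2}(\OO,\mu)$. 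I would first prove this for $f\in C^{\infty}_{tlc}(\OO)$, where $\nabla f(\Lambda)=\frac{d}{dt}\big|_{t=0}f(\varphi_{t\ve_{1}}(\Lambda))$ is bounded, so that differentiating under the integral sign and using the $\varphi_{t}$-invariance of $\mu$ (see \eqref{E:ergodic}) gives $\int_{\OO}\nabla f\,d\mu=\frac{d}{dt}\big|_{t=0}\int_{\OO}f\circ\varphi_{t\ve_{1}}\,d\mu=0$; the general case then follows from the density of $C^{\infty}_{tlc}(\OO)$ in $\mathcal{W}^{1,2}(\OO,\mu)$, Theorem~\ref{T:main_Sobolev}(\ref{sob3}), together with the estimate $\big|\int_{\OO}\nabla f\,d\mu\big|\le\|\nabla f\|_{L^{2}(\OO,\mu)}$.

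For the reverse inclusion, let $v\in(\im\nabla)^{\perp}$, so that $\langle v,\nabla f\rangle_{L^{2}(\OO,\mu)}=0$ for all $f\in\mathcal{W}^{1,2}(\OO,\mu)$ and hence, since $C^{\infty}_{tlc}(\OO)$ is dense in $\mathcal{W}^{1,2}(\OO,\mu)$, for all $f$ in that test class. Because $d=1$, this is precisely the statement that $D^{\ve_{1}}v=0$ in the distributional sense of Definition~\ref{D:distributionalsense}; as $0\in L^{2}(\OO,\mu)$, Definition~\ref{def-sobolev} then gives $v\in\mathcal{W}^{1,2}(\OO,\mu)$ with $\nabla v=0$ $\mu$-a.e. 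Intertwining with the orbit homeomorphisms, Theorem~\ref{T:main_Sobolev}(\ref{sob2}), shows that for $\mu$-a.e.\ $\Lambda$ the function $\vt\mapsto h_{\Lambda}^{\ast}v(\vt)$ has vanishing distributional derivative on $\mbbR$, so that $v$ is harmonic in the sense of Definition~\ref{def-harm} and lies in $L^{1}(\OO,\mu)$ since $\mu$ is a probability measure; the Liouville theorem, Theorem~\ref{T:main_Liouville}, then forces $v$ to be $\mu$-a.e.\ constant. Alternatively one avoids Theorem~\ref{T:main_Liouville} entirely: the orbit-wise constant $\Lambda\mapsto c(\Lambda)$ is a $\varphi_{t}$-invariant measurable function, hence $\mu$-a.e.\ equal to a constant by ergodicity (Assumption~\ref{A:uniqueergodic}), and then $v=c$ $\mu$-a.e.\ by Fubini and the invariance of $\mu$. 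Thus $(\im\nabla)^{\perp}$ is exactly the space of constant vector fields, which by the construction of the Hodge star in Appendix~\ref{A:H} is $\star\{\text{constant functions on }\OO\}$, one-dimensional; together with the orthogonal splitting this establishes \eqref{E:Hodgedecomp}.

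The one step that needs genuine care is the passage from the purely Hilbert-space orthogonality $v\perp\im\nabla$ to the Sobolev statement $v\in\mathcal{W}^{1,2}(\OO,\mu)$ with $\nabla v=0$: it relies on the test class $C^{\infty}_{tlc}(\OO)$ being simultaneously dense in $\mathcal{W}^{1,2}(\OO,\mu)$ and the class against which distributional derivatives are tested (this in turn uses the local product form of $\mu$ in the foliated charts, so that integration by parts along the orbits is clean), and it is exactly here that the hypothesis $d=1$ is used, since for $d\ge2$ orthogonality to gradients only yields $\operatorname{div}v=0$, not $\nabla v=0$, and $(\im\nabla)^{\perp}$ is correspondingly much larger. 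Once $v$ is known to be a Sobolev vector field with vanishing horizontal derivative, the Liouville theorem (or ergodicity) closes the proof at once. It is worth recording, as in Remark~\ref{R:contrast}, the contrast with the two-dimensional first \v{C}ech cohomology of the Fibonacci hull: the present $(\im\nabla)^{\perp}$ is an $L^{2}$ object and is genuinely one-dimensional.
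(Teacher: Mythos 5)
Your proof is correct and follows essentially the same route as the paper's: the core step, showing that $v\perp\im\nabla$ forces the (scalar identification of) $v$ to have vanishing weak horizontal derivative and hence to be $d\vt$-a.e.\ constant on $\mu$-a.e.\ orbit, with ergodicity then giving a global constant, is exactly the content of the paper's Lemma~\ref{L:atmostone}, which is proved by the same test-against-products-in-foliated-charts computation that underlies the intertwining statement of Theorem~\ref{T:main_Sobolev}(\ref{sob2}) you invoke. The only differences are cosmetic: you make the easy inclusion $\star\{\text{constants}\}\subseteq(\im\nabla)^{\perp}$ explicit via invariance of $\mu$ (equivalently Lemma~\ref{L:ibP} with $g\equiv 1$), and you offer Theorem~\ref{T:main_Liouville} as an alternative closing step, which the paper also records as an alternative in the remark at the end of Section~\ref{S:Hodge}.
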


\begin{remark}\label{R:contrast}\mbox{}\nopagebreak
\begin{enumerate}
\item[(i)] On a one-dimensional compact Riemannian manifold $M$ the identity (\ref{E:Hodgedecomp}), up to the duality between $L^2$-vector fields and $L^2$-differential $1$-forms, is referred to as Hodge decomposition and $(\im \nabla)^\bot$ corresponds to the space of harmonic $1$-forms. By Hodge's theorem, there exists an isomorphism between the space of harmonic $1$-forms and the first deRham cohomology of $M$, connecting elliptic and smooth theory, see for instance \cite{Jost02}. In \cite{HT15}, a decomposition of type~\eqref{E:Hodgedecomp} was obtained for certain (non-smooth) compact topologically one-dimensional spaces endowed with a local regular Dirichlet form. It was also proved that locally harmonic forms are dense in the space of harmonic forms.
A theory of differential forms associated with Dirichlet forms is a generalization of elliptic theory. In general it cannot be expected to produce topological information but is rather linked to $L^2$-cohomology, see e.g. \cite{Carron02, Dodziuk77, Dodziuk82}.
\item[(ii)] For the Fibonacci tiling in dimension one it is known that the first \v{C}ech cohomology with integer coefficients of its pattern space has rank two, see \cite[p.\ 39]{sadun:book} or \cite[Example 1]{Sadun2015}. Consequently, its first \v{C}ech cohomology with real coefficients is two-dimensional. However, it is also isomorphic to the first pattern equivariant cohomology, see \cite{KP00} or \cite[Theorem 5.1]{sadun:book}, which may be thought of as a replacement of the first deRham cohomology. More generally, this holds for any proper, primitive and aperiodic one-dimensional substitution with substitution matrix of rank greater than one \cite[\S 6.1]{sadun:book}.  
By Theorem \ref{T:main_Hodge}, $(\im \nabla)^\bot$ is one-dimensional so that a Hodge theorem as for compact Riemannian manifolds cannot hold.
\end{enumerate}
\end{remark}

\section{Proof of Theorem \ref{T:main_Feller}: Feller semigroups and generators}\label{S:FellSmg}

\subsection{Existence and Feller property}
 
In this subsection we show that $(T_t)_{t>0}$ is a Feller semigropup on $(\Omega,\varrho)$. It suffices to prove the strong continuity and the Feller property, that is $T_t f$ is continuous for any continuous function $f$. We start by showing that
\begin{equation}\label{E:contatzero}
\lim_{t\to 0}\left\|T_tf-f\right\|_{\infty}=0,\quad\ f\in C(\Omega).
\end{equation}
First, note that for any $\vs\,\in\mathbb{R}^d$ and any $\Lambda\in\Omega$, 
\begin{equation}\label{E:Euclaction}
\varrho(\Lambda,\varphi_{\vs}\,(\Lambda))\leq 2\varrho_{\orb}(\Lambda,\varphi_{\vs}\,(\Lambda))\leq 2|\vs|.
\end{equation}
Given a function $f\in C(\Omega)$ and $\varepsilon>0$, by the uniform continuity of $f$ on $(\Omega,\varrho)$ we can find $s_{\varepsilon}>0$ such that $\sup_{\Lambda\in\Omega}|f(\varphi_{\vec{s}}\,(\Lambda))-f(\Lambda)|<\frac{\varepsilon}{2}$, provided $|\vec{s}\,|<s_{\varepsilon}$. For $t>0$ sufficiently small we have
\begin{equation}\label{E:smalltails}
\int_{|\vs\,|\geq s_{\varepsilon}}p_{\mathbb{R}^d}(t,\vs\,)d\vs<\frac{\varepsilon}{4\left\|f\right\|_{\infty}} 
\end{equation}
and consequently
\begin{multline*}
|T_tf(\Lambda)-f(\Lambda)|\leq \int_{\mathbb{R}^d}p_{\mathbb{R}^d}(t,\vec{s}\,)|f(\varphi_{\vec{s}}\,(\Lambda))-f(\Lambda)|d\vec{s}\notag\\
\leq \frac{\varepsilon}{2}\int_{|\vs\,|<s_{\varepsilon}}p_{\mathbb{R}^d}(t,\vec{s}\,)d\vec{s}+2\left\|f\right\|_{\infty}\int_{|\vs\,|\geq s_{\varepsilon}}p_{\mathbb{R}^d}(t,\vs\,)d\vs\notag\leq \varepsilon\notag
\end{multline*}
uniformly in $\Lambda\in \Omega$, proving (\ref{E:contatzero}). The next lemma will be used in order to show that continuous functions are continuous under $T_t$.
\begin{lemma}\label{L:prep}
Let $\delta>0$. For all $\Lambda\in\Omega$ and all $\vs\in\mathbb{R}^d$ with $|\vs\,|<\frac{1}{2\delta}$ we have 
\[
\sup_{\overline{\Lambda}\in O_{\Lambda,\delta}} \varrho(\varphi_{\vs}\,(\overline{\Lambda}),\varphi_{\vs}\,(\Lambda))<4\delta.
\]
\end{lemma}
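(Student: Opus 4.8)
The plan is to unwind two definitions — that of the translated cylinder set $O_{\Lambda,\delta}$ in \eqref{E:transcylsets} and that of the metric $\varrho$ in Definition~\ref{D:rho} — until the statement reduces to the elementary fact that two Delone sets agreeing on the ball $B_{1/\delta}(\vec{0})$ also agree on any ball contained in $B_{1/\delta}(\vec{0})$. Fix $\Lambda\in\Omega$ and $\vs\in\mathbb{R}^d$ with $|\vs|<\tfrac{1}{2\delta}$, and let $\overline{\Lambda}\in O_{\Lambda,\delta}$ be arbitrary. First I would dispose of the large-$\delta$ regime: since $\varrho\le 2^{-1/2}$ always, whenever $4\delta>2^{-1/2}$ (i.e.\ $\delta>\tfrac{\sqrt2}{8}$) we get $\varrho(\varphi_{\vs}(\overline{\Lambda}),\varphi_{\vs}(\Lambda))\le 2^{-1/2}<4\delta$ for every $\overline{\Lambda}$, and the supremum bound is immediate. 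So from now on I may assume $\delta\le\tfrac{\sqrt2}{8}$; in particular $6\delta^2\le 1$, and hence $\tfrac{1}{2\delta}+\delta\le\tfrac{2}{3\delta}$.

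In this regime, using \eqref{E:transcylsets} write $\overline{\Lambda}=\varphi_{\vt}(\Lambda')$ with $\Lambda'\in\mathcal{C}_{\Lambda,\delta}$ and $\vt\in B_\delta(\vec{0})$, so that $\Lambda'\cap B_{1/\delta}(\vec{0})=\Lambda\cap B_{1/\delta}(\vec{0})$ by Definition~\ref{D:Cilinder} and $|\vt|<\delta$. By the group law of the translation action, $\varphi_{\vs}(\overline{\Lambda})=\varphi_{\vs}(\varphi_{\vt}(\Lambda'))=\varphi_{\vs+\vt}(\Lambda')$. The key step is to check that $\varepsilon_0:=3\delta$ is admissible in the definition of $\bar{\varrho}(\varphi_{\vs+\vt}(\Lambda'),\varphi_{\vs}(\Lambda))$: I take the two auxiliary translation vectors in Definition~\ref{D:rho} to be $\vec{0}$ and $\vt$, both of which lie in $B_{3\delta}(\vec{0})$ since $|\vt|<\delta<3\delta$. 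With these choices the required identity becomes
\[
B_{1/(3\delta)}(\vec{0})\cap\varphi_{\vs+\vt}(\Lambda')=B_{1/(3\delta)}(\vec{0})\cap\varphi_{\vs+\vt}(\Lambda),
\]
and translating both sides by $\vs+\vt$ shows this is equivalent to $\Lambda'\cap B_{1/(3\delta)}(\vs+\vt)=\Lambda\cap B_{1/(3\delta)}(\vs+\vt)$. Now $|\vs+\vt|\le|\vs|+|\vt|<\tfrac{1}{2\delta}+\delta\le\tfrac{2}{3\delta}$, so $\tfrac{1}{3\delta}+|\vs+\vt|<\tfrac{1}{\delta}$, whence $B_{1/(3\delta)}(\vs+\vt)\subset B_{1/\delta}(\vec{0})$ by the triangle inequality, and the displayed identity then follows from $\Lambda'\cap B_{1/\delta}(\vec{0})=\Lambda\cap B_{1/\delta}(\vec{0})$. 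Consequently $\varrho(\varphi_{\vs}(\overline{\Lambda}),\varphi_{\vs}(\Lambda))\le\bar{\varrho}(\varphi_{\vs}(\overline{\Lambda}),\varphi_{\vs}(\Lambda))\le 3\delta$, and since this bound does not depend on $\overline{\Lambda}$, taking the supremum over $\overline{\Lambda}\in O_{\Lambda,\delta}$ yields $\sup_{\overline{\Lambda}\in O_{\Lambda,\delta}}\varrho(\varphi_{\vs}(\overline{\Lambda}),\varphi_{\vs}(\Lambda))\le 3\delta<4\delta$.

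I do not expect a genuine obstacle; the argument is essentially bookkeeping, and the only delicate points are: (i) composing the translations correctly — the $\mathbb{R}^d$-action is abelian, so $\varphi_{\vs}(\varphi_{\vt}(\Lambda'))=\varphi_{\vs+\vt}(\Lambda')$; (ii) rewriting $B_{1/\varepsilon}(\vec{0})\cap\varphi_{\vs+\vt}(\Lambda')$ as $(\Lambda'\cap B_{1/\varepsilon}(\vs+\vt))-(\vs+\vt)$ and likewise for $\Lambda$, which turns the cylinder-set hypothesis into the shifted statement above; and (iii) the radius estimate $\tfrac{1}{3\delta}+|\vs+\vt|<\tfrac{1}{\delta}$, which is precisely where both the hypothesis $|\vs|<\tfrac{1}{2\delta}$ and the reduction to $\delta\le\tfrac{\sqrt2}{8}$ enter. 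The constant $3$ is not optimal, but the slack between $3\delta$ and $4\delta$ conveniently delivers the strict inequality asserted.
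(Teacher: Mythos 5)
Your proof is correct. The paper takes a two-step route: it first shows $\varrho(\varphi_{\vs}(\Lambda'),\varphi_{\vs}(\Lambda))<2\delta$ for $\Lambda'\in\mathcal{C}_{\Lambda,\delta}$ (by observing that $B_{1/(2\delta)}(\vs\,)\subset B_{1/\delta}(\vec{0})$, so $\varphi_{\vs}(\Lambda')\in\mathcal{C}_{\varphi_{\vs}(\Lambda),2\delta}$), then bounds the longitudinal displacement $\varrho(\varphi_{\vs}(\overline{\Lambda}),\varphi_{\vs}(\Lambda'))\leq 2|\vt\,|<2\delta$ via the Lipschitz estimate \eqref{E:Euclaction}, and adds the two contributions with the triangle inequality to reach $4\delta$. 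You instead absorb the longitudinal shift $\vt$ directly into the definition of $\bar\varrho$ by choosing the two auxiliary translation vectors asymmetrically ($\vec{0}$ for one set, $\vt$ for the other), which collapses the argument to a single verification that $3\delta$ is admissible. This buys a sharper constant ($3\delta$ versus $4\delta$) and avoids invoking \eqref{E:Euclaction}, at the modest cost of the preliminary case split on $\delta\leq\sqrt{2}/8$ (needed so that $\tfrac{1}{3\delta}+|\vs+\vt\,|<\tfrac{1}{\delta}$), which the paper's two-step argument does not require. All the bookkeeping — the composition $\varphi_{\vs}\circ\varphi_{\vt}=\varphi_{\vs+\vt}$, the rewriting of $B_{1/(3\delta)}(\vec{0})\cap\varphi_{\vs+\vt}(\Lambda')$ as a shifted intersection, and the ball-inclusion estimate — checks out.
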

\begin{proof}
We first show that
\begin{equation}\label{E:claim}
\varrho(\varphi_{\vs}\,(\Lambda'),\varphi_{\vs}\,(\Lambda))<2\delta
\end{equation} 
for any $\Lambda'\in \mathcal{C}_{\Lambda,\delta}$. Since $B_{\frac{1}{\delta}}(\vec{0}\,)\cap \Lambda'=B_{\frac{1}{\delta}}(\vec{0}\,)\cap \Lambda$ implies $B_{\frac{1}{2\delta}}(\vs\,)\cap \Lambda'=B_{\frac{1}{2\delta}}(\vs\,)\cap \Lambda$, translating by $-\vs$ we obtain $\varphi_{\vs}\,(\Lambda')\in\mathcal{C}_{\varphi_{\vs}\,(\Lambda),2\delta}$ and in particular (\ref{E:claim}). 

Consider now the translation $\overline{\Lambda}=\varphi_{\vt}\,(\Lambda')$ of a given point $\Lambda'$ with $|\vt\,|<\delta$. By~\eqref{E:Euclaction} we have that 
\[
\varrho(\varphi_{\vs}\,(\overline{\Lambda}),\varphi_{\vs}\,(\Lambda'))=\varrho(\varphi_{\vt}\,(\varphi_{\vs}\,(\Lambda')), \varphi_{\vs}\,(\Lambda'))\leq 2\delta.
\]
According to~\eqref{e-phi}, an arbitrary point $\overline{\Lambda}\in O_{\Lambda,\delta}$ can be uniquely written as such a translation of some $\Lambda'\in\mathcal{C}_{\Lambda,\delta}$ and therefore
\[
\varrho(\varphi_{\vs}\,(\overline{\Lambda}),\varphi_{\vs}\,(\Lambda))\leq \varrho(\varphi_{\vs}\,(\overline{\Lambda}),\varphi_{\vs}\,(\Lambda'))+\varrho(\varphi_{\vs}\,(\Lambda'),\varphi_{\vs}\,(\Lambda))\leq 4\delta.
\]
\end{proof}
Finally, in order to prove that each $T_t$ maps $C(\Omega)$ into itself, we consider $f\in C(\Omega)$, $t>0$ and $\Lambda\in \Omega$. Given $\varepsilon>0$, let $s_{\varepsilon}>0$ be large enough so that~\eqref{E:smalltails} is satisfied. Applying Lemma \ref{L:prep} and the uniform continuity of $f$ on $\Omega$, we can find $0<\delta<\frac{1}{2s_\varepsilon}$ such that for any $\overline{\Lambda}\in O_{\Lambda,\delta}$ and any $\vs\in\mathbb{R}^d$ with $|\vs\,|<s_{\varepsilon}$ we have $|f(\varphi_{\vs}\,(\overline{\Lambda}))-f(\varphi_{\vs}\,(\Lambda))|<\frac{\varepsilon}{2}$. Consequently,
\begin{align}\label{E:transest}
|T_tf(\overline{\Lambda})-T_tf(\Lambda)|&\leq \int_{\mathbb{R}^d}p_{\mathbb{R}^d}(t,\vs\,)|f(\varphi_{\vs}\,(\overline{\Lambda}))-f(\varphi_{\vs}\,(\Lambda))|d\vs\nonumber\\
&\leq 2\left\|f\right\|_{\infty}\int_{|\vs\,|\geq s_\varepsilon}p_{\mathbb{R}^d}(t,\vs\,)\,d\vs+\frac{\varepsilon}{2}\int_{|\vs\,|<s_\varepsilon} p_{\mathbb{R}^d}(t,\vs)d\vs<\varepsilon
\end{align}
and hence $T_tf$ is continuous at each point $\Lambda\in \Omega$, as desired.


Mass conservation and commutativity with Koopman operators are direct consequences of Definition~\ref{D:sg}.

\subsection{Smoothness preservation and infinitesimal generator}

In this paragraph we analyze the infinitesimal generator of the Feller semigroup $(T_t)_{t>0}$ on $C(\Omega)$ which we denote by $(\mathcal{L}_{C(\Omega)},\mathcal{D}(\mathcal{L}_{C(\Omega)}))$, where
\begin{equation}\label{E:Fellerdomain}
\mathcal{D}(\mathcal{L}_{C(\Omega)}):=\left\lbrace f\in C(\Omega):~\lim_{t\to 0} \frac{1}{t}(T_tf-f)\text{ exists strongly in }C(\Omega)\right\rbrace
\end{equation}
and 
\begin{equation}\label{E:Fellergen}
\mathcal{L}_{C(\Omega)}f:=\lim_{t\to 0} \frac{1}{t}(T_tf-f),\ \ f\in\mathcal{D}(\mathcal{L}_{C(\Omega)}).
\end{equation}

As we will see in Lemma~\ref{T:TisSg}, on a suitable subspace of its domain, $\mathcal{L}_{C(\Omega)}$ coincides with the Laplacian $\Delta$ defined in~\eqref{E:Laplace}.
\begin{proposition}\label{P:Cinf}
For any $1\leq k\leq\infty$ and $t>0$ it holds that 
\[
T_tf\in C^k(\Omega)\qquad\text{ for all }~f\in C^k(\Omega).
\]
\end{proposition}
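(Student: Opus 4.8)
\textbf{Proof plan for Proposition \ref{P:Cinf}.} The plan is to work in local foliated charts and reduce the statement to the familiar fact that the Gaussian convolution of a $C^k$ function on $\mathbb{R}^d$ is again $C^k$, with derivatives obtained by differentiating under the integral sign. Fix $f\in C^k(\Omega)$, $t>0$ and a point $\Lambda\in\Omega$. Recall from Definition~\ref{D:Ck} (and the discussion around~\eqref{E:algiso}) that $f\in C^k(\Omega)$ means $h_{\Lambda'}^\ast f\in C^k(\mathbb{R}^d)$ for every $\Lambda'\in\Omega$, with all orbit-wise derivatives $D^\alpha f$, $|\alpha|\le k$, continuous on $(\Omega,\varrho)$. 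The key observation is the covariance of the defining formula: for any $\Lambda'\in\Omega$,
\[
h_{\Lambda'}^\ast (T_tf)(\vt\,)=T_tf(\varphi_{\vt\,}(\Lambda'))=\int_{\mathbb{R}^d}p_{\mathbb{R}^d}(t,\vs\,)\,f(\varphi_{\vs+\vt\,}(\Lambda'))\,d\vs
=\big(p_{\mathbb{R}^d}(t,\cdot)\ast h_{\Lambda'}^\ast f\big)(\vt\,),
\]
using the group law $\varphi_{\vs}\circ\varphi_{\vt}=\varphi_{\vs+\vt}$ and the change of variables $\vs\mapsto\vs+\vt$ (valid since $p_{\mathbb{R}^d}(t,\cdot)$ is integrable and $f$ is bounded). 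Thus on each orbit, $T_tf$ is literally the Euclidean heat-semigroup image of $h_{\Lambda'}^\ast f$.

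Next I would differentiate under the integral sign. Since $p_{\mathbb{R}^d}(t,\cdot)\in C^\infty(\mathbb{R}^d)$ with all derivatives in $L^1(\mathbb{R}^d)$ (for fixed $t>0$), and since $f$ is bounded, standard dominated-convergence arguments give, for every multi-index $\alpha$ with $|\alpha|\le k$,
\[
D^\alpha_{\mathbb{R}^d}\big(h_{\Lambda'}^\ast (T_tf)\big)(\vt\,)=\int_{\mathbb{R}^d}p_{\mathbb{R}^d}(t,\vs\,)\,\big(D^\alpha_{\mathbb{R}^d} h_{\Lambda'}^\ast f\big)(\vs+\vt\,)\,d\vs
=T_t\big(D^\alpha f\big)(\varphi_{\vt\,}(\Lambda'))=h_{\Lambda'}^\ast\big(T_t(D^\alpha f)\big)(\vt\,),
\]
where in the middle equality I used that $D^\alpha_{\mathbb{R}^d}h_{\Lambda'}^\ast f = h_{\Lambda'}^\ast(D^\alpha f)$ (this is exactly what $f\in C^k(\Omega)$ provides) together with the covariance identity applied to the bounded continuous function $D^\alpha f$. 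Note that $D^\alpha f\in bC(\Omega)$ precisely because $f\in C^k(\Omega)$, so the expressions $T_t(D^\alpha f)$ make sense and, by the Feller property already established in this section, belong to $C(\Omega)$.

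It remains to assemble these pieces into the statement that $T_tf\in C^k(\Omega)$ in the sense of Definition~\ref{D:Ck}. The orbit-wise $C^k$-regularity of $T_tf$ is immediate from the displayed convolution identity. The identity $D^\alpha(T_tf)=T_t(D^\alpha f)$ for $|\alpha|\le k$, read through the orbit homeomorphisms, shows simultaneously that the orbit-wise derivatives of $T_tf$ are well defined as functions on $\Omega$ and that they coincide with $T_t(D^\alpha f)$; since $D^\alpha f\in C(\Omega)$ and $(T_t)_{t>0}$ maps $C(\Omega)$ into $C(\Omega)$, each $D^\alpha(T_tf)=T_t(D^\alpha f)$ is continuous on $(\Omega,\varrho)$, which is the remaining requirement in Definition~\ref{D:Ck}. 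For $k=\infty$ one simply lets $k$ range over all finite values. The one point requiring a little care — and the place I expect the only genuine (though mild) technical friction — is the justification of differentiating under the integral sign uniformly enough to conclude continuity of $D^\alpha(T_tf)$ on $\Omega$ rather than merely orbit-wise; but this is handled cleanly by the identity $D^\alpha(T_tf)=T_t(D^\alpha f)$ itself, which sidesteps any local-chart uniformity issue by reducing continuity of the derivative to the already-proven Feller property applied to $D^\alpha f$.
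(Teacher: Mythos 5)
Your proposal is correct and follows essentially the same route as the paper: differentiate under the integral sign (justified by dominated convergence, using the boundedness of $D^\alpha f$ on the compact space $\Omega$) to obtain $D^\alpha (T_tf)=T_t(D^\alpha f)$, and then invoke the already-established Feller property to get continuity of the derivatives, iterating over $|\alpha|\leq k$. The orbit-chart/convolution packaging you use is just a more explicit rendering of the same one-line computation in the paper.
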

\begin{proof}
Let $f\in C^k(\Omega)$ and $\vec{v}\in\mathbb{R}^d$. By dominated convergence,
\[
\frac{\partial}{\partial\vec{v}}\,T_tf(\Lambda)=\int_{\mathbb{R}^d}p_{\mathbb{R}^d}(t,\vs\,)\frac{\partial f}{\partial\vec{v}\,}(\varphi_{\vs\,}(\Lambda))\,d\vs \qquad \forall~\Lambda\in \Omega,
\] 
hence $\frac{\partial}{\partial\vec{v}}\,T_tf\in C^{k-1}(\Omega)$. Iterating, we obtain the desired fact.
\end{proof}
 Recall that a subspace of the domain of a closed operator is called a \emph{core} if the operator equals the closure of its restriction to that subspace.
\begin{lemma}\label{T:TisSg}
We have that $C^2(\Omega)\subseteq\mathcal{D}(\mathcal{L}_{C(\Omega)})$ and 
\begin{equation}\label{E:LequalsDelta}
\mathcal{L}_{C(\Omega)}f=\frac{1}{2}\Delta f,\qquad f\in C^2(\Omega).
\end{equation}
Moreover, the space $C_{tlc}^\infty(\Omega)$ is a core for $\mathcal{L}_{C(\Omega)}$ and $\mathcal{L}_{C(\Omega)}$ is a local operator.
\end{lemma}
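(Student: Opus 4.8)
\emph{The identity $\mathcal{L}_{C(\Omega)}f=\tfrac12\Delta f$ on $C^2(\Omega)$.}
My plan is to prove the Duhamel identity
\[
T_tf-f=\tfrac12\int_0^t T_s(\Delta f)\,ds\qquad\text{in }C(\Omega),\qquad f\in C^2(\Omega),
\]
and divide by $t$. To obtain the identity, apply the classical It\^o formula to $h_\Lambda^\ast f\in C^2(\mbbR^d)$ — its derivatives up to order two are bounded, since $f$ and all $D^\alpha f$ with $|\alpha|\le 2$ lie in $C(\Omega)$ and $\Omega$ is compact, so the stochastic integral is a true martingale — and take expectations, using $\mathbb E[\Delta f(X^\Lambda_s)]=T_s(\Delta f)(\Lambda)$ and Fubini; the right-hand side is a genuine $C(\Omega)$-valued integral because $s\mapsto T_s(\Delta f)$ is continuous on $[0,t]$ by the strong continuity of $(T_t)$ proved above and $\Delta f\in C(\Omega)$. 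Letting $t\downarrow 0$ gives $\tfrac1t(T_tf-f)\to\tfrac12\Delta f$ in $C(\Omega)$, which is \eqref{E:LequalsDelta}; in particular $C^2(\Omega)\subseteq\mathcal{D}(\mathcal{L}_{C(\Omega)})$. Avoiding stochastic calculus, one may instead Taylor-expand $h_\Lambda^\ast f$ to second order at $\vzer$ inside $T_tf(\Lambda)-f(\Lambda)=\int p_{\mbbR^d}(t,\vs)\big(h_\Lambda^\ast f(\vs)-h_\Lambda^\ast f(\vzer)\big)\,d\vs$: the first-order term integrates to zero by symmetry, the second-order term to $\tfrac t2\Delta f(\Lambda)$ since $\int p_{\mbbR^d}(t,\vs)s_is_j\,d\vs=t\delta_{ij}$, and the remainder is bounded by $\tfrac12\int p_{\mbbR^d}(t,\vs)|\vs|^2\omega(|\vs|)\,d\vs$ with $\omega(r)=\sup_{\Lambda}\sup_{|\vu|\le r}\|\mathrm{Hess}\,h_\Lambda^\ast f(\vu)-\mathrm{Hess}\,h_\Lambda^\ast f(\vzer)\|$; the point is that $\omega(r)\to 0$ as $r\to 0$ because the entries of $\mathrm{Hess}\,h_\Lambda^\ast f$ are second-order orbit derivatives of $f$, hence values of functions in $C(\Omega)$, and $\varrho(\varphi_{\vu}(\Lambda),\Lambda)\le 2|\vu|$ by \eqref{E:Euclaction}, so uniform continuity on the compact space $(\Omega,\varrho)$ forces $\omega(r)\to 0$ uniformly in $\Lambda$; splitting the integral at $|\vs|=\eta$ and using Gaussian tail bounds gives the uniform limit.

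\emph{Locality.}
If $f\in\mathcal{D}(\mathcal{L}_{C(\Omega)})$ vanishes on an open $O\subseteq\Omega$ and $\Lambda\in O$, then by \eqref{E:Euclaction} there is $r>0$ with $\varphi_{\vs}(\Lambda)\in O$ whenever $|\vs|<r$, hence $T_tf(\Lambda)=\int_{|\vs|\ge r}p_{\mbbR^d}(t,\vs)f(\varphi_{\vs}(\Lambda))\,d\vs$ and $|T_tf(\Lambda)|\le\|f\|_\infty\int_{|\vs|\ge r}p_{\mbbR^d}(t,\vs)\,d\vs$, which decays like $e^{-r^{2}/(2t)}$; therefore $\tfrac1tT_tf(\Lambda)\to 0$, and since $f(\Lambda)=0$ we get $\mathcal{L}_{C(\Omega)}f(\Lambda)=0$. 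Thus $\mathcal{L}_{C(\Omega)}f\equiv 0$ on $O$. (On $C^2(\Omega)$ this also follows from the previous step since $\tfrac12\Delta$ is a differential operator.)

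\emph{The core property.}
Here the plan is to apply a standard core criterion: $D\subseteq\mathcal{D}(\mathcal{L}_{C(\Omega)})$ is a core as soon as $(\lambda-\mathcal{L}_{C(\Omega)})(D)$ is dense in $C(\Omega)$ for some $\lambda>0$. Now $C^\infty_{tlc}(\Omega)\subseteq C^\infty(\Omega)\subseteq C^2(\Omega)\subseteq\mathcal{D}(\mathcal{L}_{C(\Omega)})$ with $\mathcal{L}_{C(\Omega)}=\tfrac12\Delta$ there, and $C^\infty_{tlc}(\Omega)$ is dense in $C(\Omega)$ (it is a point-separating unital subalgebra, separating transversal directions by locally constant functions on the Cantor fibres and points on a common orbit by smooth bump functions, via the local product structure around Definition~\ref{D:Cilinder}). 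To identify $(\lambda-\tfrac12\Delta)\big(C^\infty_{tlc}(\Omega)\big)$ I would use the inverse-limit description $\Omega=\varprojlim_n\Gamma_n$ over the G\"ahler/Anderson--Putnam approximants $\Gamma_n$, which are compact branched manifolds carrying the flat metric induced by the $\mbbR^d$-action and satisfy $C^\infty_{tlc}(\Omega)=\bigcup_n p_n^\ast C^\infty(\Gamma_n)$, with $\Delta$ intertwined with the flat Laplacian $\Delta_{\Gamma_n}$ by $p_n^\ast$; in particular $\Delta$ preserves each $p_n^\ast C^\infty(\Gamma_n)$. Since $\lambda-\tfrac12\Delta_{\Gamma_n}$ is a positive self-adjoint operator on $L^2(\Gamma_n)$ (hence injective), and elliptic regularity across the codimension-one singular strata of $\Gamma_n$ makes it surjective onto $C^\infty(\Gamma_n)$, it is a bijection of $C^\infty(\Gamma_n)$; taking the union over $n$ gives $(\lambda-\tfrac12\Delta)\big(C^\infty_{tlc}(\Omega)\big)=C^\infty_{tlc}(\Omega)$, which is dense in $C(\Omega)$, and the criterion applies. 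As a by-product, $g_n:=(\lambda-\mathcal{L}_{C(\Omega)})^{-1}u_n$ with $u_n\in C^\infty_{tlc}(\Omega)$ approximating $u:=(\lambda-\mathcal{L}_{C(\Omega)})g$ shows directly that any $g\in\mathcal{D}(\mathcal{L}_{C(\Omega)})$ is a graph-norm limit of tlc functions.

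\emph{Main obstacle.}
The first two parts are routine once one keeps track of the compactness of $(\Omega,\varrho)$ and the comparison \eqref{E:Euclaction}. The substantive point is the last part, and within it the assertions that $\lambda-\tfrac12\Delta_{\Gamma_n}$ is onto $C^\infty(\Gamma_n)$ — this is where the branched-manifold structure of the approximants genuinely enters, through an elliptic transmission argument on the codimension-one singular strata — and that $C^\infty_{tlc}(\Omega)$ identifies with $\bigcup_n p_n^\ast C^\infty(\Gamma_n)$ compatibly with $\Delta\leftrightarrow\Delta_{\Gamma_n}$; these are precisely the places where the foliated/inverse-limit geometry must replace classical elliptic theory. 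An independent, softer observation that $C^\infty(\Omega)$ is already a core comes from the convolution identity $h_\Lambda^\ast(T_tf)=p_{\mbbR^d}(t,\cdot)\ast h_\Lambda^\ast f$ (a change of variables in \eqref{E:defsg}), which shows $T_t:C(\Omega)\to C^\infty(\Omega)$ with $T_tf\to f$ in graph norm as $t\downarrow0$ by Proposition~\ref{P:Cinf} and the commutation $\mathcal{L}_{C(\Omega)}T_t=T_t\mathcal{L}_{C(\Omega)}$; but passing from $C^\infty(\Omega)$ down to $C^\infty_{tlc}(\Omega)$ still requires the approximation above.
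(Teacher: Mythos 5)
Your treatment of the identity $\mathcal{L}_{C(\Omega)}f=\tfrac12\Delta f$ on $C^2(\Omega)$ and of locality is correct, and it takes a genuinely different route from the paper: the paper first reduces to $C^2_{tlc}(\Omega)$ via Lemma~\ref{L:tlcdense} and then integrates by parts against $p_{\mathbb{R}^d}(t,\cdot)$, invoking the Euclidean heat equation to get the Duhamel identity, whereas your Taylor-expansion (or It\^o) argument works directly on all of $C^2(\Omega)$ using only the compactness of $(\Omega,\varrho)$, the bound $\varrho(\varphi_{\vec u}(\Lambda),\Lambda)\le 2|\vec u|$ and Gaussian moment/tail estimates; your locality argument via $\tfrac1t\int_{|\vec s|\ge r}p_{\mathbb{R}^d}(t,\vec s)\,d\vec s\to 0$ is likewise more self-contained than the paper's (which deduces locality from Corollary~\ref{C:local} plus density of $C^\infty(\Omega)$ in the domain). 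These two parts stand.

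The core property is where there is a genuine gap. Your main route — the resolvent-range criterion combined with the inverse-limit description $\Omega=\varprojlim_n\Gamma_n$, the identification $C^\infty_{tlc}(\Omega)=\bigcup_n p_n^\ast C^\infty(\Gamma_n)$, and the surjectivity of $\lambda-\tfrac12\Delta_{\Gamma_n}$ on $C^\infty(\Gamma_n)$ — rests on assertions that are not established and are not routine: on a flat branched manifold the Laplacian is not elliptic across the branch locus, there is no canonical self-adjoint realization whose resolvent maps ``smooth'' functions (in the sense of pull-backs of tlc functions) back into that class, and the second derivatives of $(\lambda-\tfrac12\Delta_{\Gamma_n})^{-1}g$ will generically fail to match across the codimension-one strata. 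You flag this yourself as the main obstacle, so as written the proof of the core statement is incomplete. The irony is that you already have everything needed and do not use it: your ``softer observation'' that $T_t$ preserves a dense subspace of smooth functions (this is exactly the paper's Proposition~\ref{P:Cinf}) plus the standard invariant-dense-subspace criterion (graph-norm convergence $T_tf\to f$, $\mathcal{L}_{C(\Omega)}T_tf=T_t\mathcal{L}_{C(\Omega)}f\to\mathcal{L}_{C(\Omega)}f$) shows that $C^\infty(\Omega)$ is a core; and passing down to $C^\infty_{tlc}(\Omega)$ does \emph{not} require any resolvent or branched-manifold analysis, only that $C^\infty_{tlc}(\Omega)$ is dense in $C^2(\Omega)$ in the $C^2$-norm — which is Lemma~\ref{L:tlcdense} of the paper — since on $C^2(\Omega)$ the graph norm of $\mathcal{L}_{C(\Omega)}=\tfrac12\Delta$ is dominated by $\|\cdot\|_{C^2}$. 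Replacing your third step by this two-line argument closes the gap; the Anderson--Putnam machinery is unnecessary here.
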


\begin{proof}
By Lemma~\ref{L:tlcdense}, $C^2_{tlc}(\Omega)$ is dense in $C^2(\Omega)$, hence it suffices to prove~\eqref{E:LequalsDelta} for $f\in C_{tlc}^2(\Omega)$, in which case we have $\Delta f\in C(\Omega)$. Applying Lemma \ref{L:basicprops}, Corollary \ref{C:connecttoRd} and integration by parts on $\mathbb{R}^d$ yields
\begin{equation}\label{E:Help1}
T_t\Big(\frac12\Delta f\Big)(\Lambda)=\int_{\mathbb{R}^d}p_{\mathbb{R}^d}(t,\vs\,)\Big(\frac12\Delta f\Big)(\varphi_{\vs}\,(\Lambda))d\vs=\int_{\mathbb{R}^d} \frac12\Delta_{\mathbb{R}^d} p_{\mathbb{R}^d}(t,\vs\,)h_\Lambda^\ast f(\vs)d\vs
\end{equation}
for any $t>0$ and all $\Lambda\in \Omega$. By the heat equation on $\mathbb{R}^d$ and dominated convergence the latter equals
\begin{equation}\label{E:Help2}
\int_{\mathbb{R}^d} \frac{\partial}{\partial t} p_{\mathbb{R}^d}(t,\vs\,)f(\varphi_{\vs}\,(\Lambda))d\vs=\frac{d}{dt} T_tf(\Lambda).
\end{equation}
Integrating the left hand side of~\eqref{E:Help1} and the right hand side of~\eqref{E:Help2} we obtain 
\[
\frac{1}{t}(T_tf-f)=\frac{1}{t}\int_0^t T_s(\frac12\Delta f)ds,
\]
as an equality in $C(\Omega)$. Letting $t\to 0$ we get $f\in \mathcal{D}(\mathcal{L}_{C(\Omega)})$ with $\mathcal{L}_{C(\Omega)}f=\frac12\Delta f$. 

To prove that $C_{tlc}^\infty(\Omega)$ is a core, it suffices by Lemma \ref{L:tlcdense} to show that $C^\infty(\Omega)$ is a core. 
This follows from Proposition~\ref{P:Cinf} and the standard Hille-Yosida theory (see for instance \cite[Chapter 1, Proposition 3.3]{EthierKurtz86}). 

Finally, by virtue of Corollary~\ref{C:local}, for any open $O\subset\Omega$ and $f\in \mathcal{D}(\mathcal{L}_{C(\Omega)})\cap C^\infty(\Omega)$ with $\supp f\subset O$ we also have $\supp \mathcal{L}_{C(\Omega)} f\subset O$. Due to the density of $C^\infty(\Omega)$ in $\mathcal{D}(\mathcal{L}_{C(\Omega)})$, the same is true for general $f\in \mathcal{D}(\mathcal{L}_{C(\Omega)})$.
\end{proof}

\begin{remark}\label{R:Fellerresolvent} 
The Hille-Yosida theory (see e.g.~\cite[Chapter 1]{EthierKurtz86} for a concise introduction), connects resolvents and semigroups. Let $R_1:C(\Omega)\to C(\Omega)$ denote the $1$-resolvent operator associated with $(T_t)_{t>0}$, that is, $R_1f(x)=\int_0^\infty e^{-t}T_tf(x)dt$, $f\in C(\Omega)$. It follows from the latter proof 
that $R_1f\in C^\infty(\Omega)$ for any $f\in C^\infty(\Omega)$. 
\end{remark}

As a consequence of the preceding and the next lemma, Corollary \ref{C:bumps} in the Appendix implies that the domain $\mathcal{D}(\mathcal{L}_{C(\Omega)})$ contains smooth partitions of unity.

\begin{lemma}\label{L:partofunity}
For any finite open cover of $\Omega$ by open sets $O_1,\ldots,O_n$ of type (\ref{E:transcylsets}) there exists a subordinate partition of unity $\chi_1,...,\chi_n$, where $\chi_i\in C_{tlc}^\infty(\Omega)$.
\end{lemma}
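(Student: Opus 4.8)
The statement asserts that for any finite open cover of $\Omega$ by sets of the form $O_{\Lambda_i,\varepsilon_i}$ as in~\eqref{E:transcylsets}, one can find a subordinate partition of unity consisting of transversally locally constant smooth functions. The plan is to build, for each member $O_i=O_{\Lambda_i,\varepsilon_i}$ of the cover, a nonnegative bump function $g_i\in C^\infty_{tlc}(\Omega)$ with $\supp g_i\subset O_i$ and such that the functions $g_1,\dots,g_n$ have no common zero on $\Omega$; then the normalized functions $\chi_i:=g_i/\sum_{j=1}^n g_j$ form the desired partition of unity, and they remain in $C^\infty_{tlc}(\Omega)$ since this class is an algebra stable under division by a transversally locally constant smooth function that is bounded away from zero (the sum $\sum_j g_j$ is strictly positive, hence bounded below on the compact space $\Omega$, and is itself in $C^\infty_{tlc}(\Omega)$).

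First I would recall the product description of each chart: $O_{\Lambda_i,\varepsilon_i}$ is homeomorphic via $\phi_{\Lambda_i,\varepsilon_i}$ to $\mathcal{C}_i\times B_{\varepsilon_i}(\vzer)$, where $\mathcal{C}_i=\mathcal{C}_{\Lambda_i,\varepsilon_i}$ is a clopen piece of a Cantor set. On this product one defines a product bump $f_0(\Lambda')F_0(\vt\,)$ exactly as in~\eqref{e-prod}: take $f_0=\one_{\mathcal{C}_i}$, which is continuous (indeed locally constant) since $\mathcal{C}_i$ is clopen, and take $F_0\in C_c^\infty(B_{\varepsilon_i}(\vzer))$ nonnegative and strictly positive on a slightly smaller ball. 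Because $F_0$ is compactly supported in $B_{\varepsilon_i}(\vzer)$, the product extends by zero to a function $g_i$ on all of $\Omega$ which, by the Notation paragraph preceding the statement, is a bounded Borel — in fact continuous — function supported in $O_i$; that $g_i\in C^\infty(\Omega)$ follows because in the chart it is a smooth function of the Euclidean variable, constant in the transversal direction, and these are precisely the defining properties of $C^\infty_{tlc}$ (Definition~\ref{D:tlc}). So each $g_i\in C^\infty_{tlc}(\Omega)$ with $\supp g_i\subset O_i$, and by shrinking $F_0$'s positivity region only slightly we can guarantee that the sets $\{g_i>0\}$ still cover $\Omega$: since $\{O_i\}$ covers the compact space $\Omega$, a Lebesgue-number / finite-subcover argument lets one choose the $F_0$'s so that $\{g_i>0\}_{i=1}^n$ is still an open cover.

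With the $g_i$ in hand, set $G:=\sum_{i=1}^n g_i$. Then $G\in C^\infty_{tlc}(\Omega)$, $G\geq 0$, and $G>0$ everywhere on $\Omega$ because the $\{g_i>0\}$ cover $\Omega$; by compactness of $(\Omega,\varrho)$ and continuity of $G$ we get $\inf_\Omega G>0$. Hence $1/G$ is a smooth function of the Euclidean variable in each chart and is transversally locally constant (it is constant in the transversal direction wherever $G$ is, and $G$ is transversally locally constant), so $1/G\in C^\infty_{tlc}(\Omega)$; therefore $\chi_i:=g_i/G\in C^\infty_{tlc}(\Omega)$. By construction $\chi_i\geq 0$, $\supp\chi_i\subset\supp g_i\subset O_i$, and $\sum_{i=1}^n\chi_i=G/G=\one$ on $\Omega$. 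This is the required subordinate partition of unity.

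\textbf{Main obstacle.} The only genuinely nonroutine point is to ensure that after replacing the open cover $\{O_i\}$ by the smaller "positivity" sets $\{g_i>0\}$ one still covers $\Omega$; this requires choosing the Euclidean bumps $F_0$ on balls that are large enough relative to the geometry of the cover, and is where compactness of $\Omega$ together with the fact that the $O_{\Lambda,\varepsilon}$ form a basis (so one may first pass to a finite subcover by slightly smaller translated cylinder sets with $\overline{O_i'}\subset O_i$, by regularity of the compact Hausdorff space $\Omega$) does the work. Everything else — that products of a clopen indicator with a Euclidean bump lie in $C^\infty_{tlc}(\Omega)$, that $C^\infty_{tlc}(\Omega)$ is an algebra closed under reciprocals of functions bounded below, and that $\supp(g_i/G)\subset O_i$ — is immediate from the chart structure and Definition~\ref{D:tlc}.
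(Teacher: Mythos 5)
Your proposal is correct and follows essentially the same route as the paper: the paper constructs exactly these product bumps $\one_{\mathcal{C}}(\Lambda')F_0(\vt\,)$ in Corollary~\ref{C:bumps} (via Proposition~\ref{P:comb} and formula~\eqref{e-prod}) and then invokes ``a variant of the standard construction'' --- shrink the cover, sum the bumps, normalize --- which is precisely what you carry out. Your handling of the one nonroutine point (ensuring the positivity sets still cover, by shrinking only in the Euclidean factor since the Cantor factor is already clopen) matches the role of the compact set $K$ in Corollary~\ref{C:bumps}.
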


\begin{remark}
For the gradient operator $\nabla$ defined in~\eqref{E:gradient}, we have that $\nabla T_t f=T_t\nabla f$ for any $t>0$ and $f\in C_{tlc}^1(\Omega)$.
\end{remark}

\subsection{Absence of the strong Feller property}\label{subsec-no-strong-Feller}

If $f$ is the indicator function of an orbit, then $T_tf=f$, which gives the shortest proof of the absence of the strong Feller property stated in Theorem \ref{T:main_Feller}. The local product structure of $\Omega$ discussed in section~\ref{subsec:LocalStru}, and especially \eqref{e-prod}, is important in order to understand how the semigroup acts on other functions. 
On $\OLe=\phi^{-1}(\mathcal{C}\times B)$ as in~\eqref{E:transcylsets} we can consider a function $f(\bar\Lambda):=f_0(\Lambda')F_0(\vt\,)$, 
$ (\Lambda',\vt\,)=\phi(\overline{\Lambda})$, as in \eqref{e-prod}, defined as follows. 
We assume that $F_0=\mathbbm 1_{B'}$ is an indicator of an open nonempty ball whose closure is contained in the ball $B$, and $f_0=\mathbbm{1}_E$, 
where $E$ is subset of $\mathcal{C}$ such that $\partial_\mathcal{C}E\neq \varnothing$. 
Here, $\partial_\mathcal{C}E$ means the boundary of $E$ in the induced (i.e. intrinsic) topology of $\mathcal{C}$. Then it is easy to see that for any $\delta>0$ there is a small enough $t>0$ such that $T_t f(\Lambda_1)>1/3$ if $\Lambda_1\in E\times B'$ and $T_t f( \Lambda_2 )<1/9$ if $\varrho_{\orb}(\Lambda_2 , E\times B' )>\delta $. For small enough $\delta$ this implies that $T_t f( \Lambda_2 )<1/9$ if $ \Lambda_2 \in E^c\times B'$, which implies that $T_t f$ is not continuous. In fact, using more delicate arguments one can show that $T_tf$ is discontinuous is $f$ is supported in \OLe\ and has the structure $f(\bar\Lambda):=f_0(\Lambda')F_0(\vt\,)$ 
where $F_0>0$ on a set of positive $\mathbb R^d$-Lebesgue measure, and $f_0$ is discontinuous in the induced (i.e. intrinsic) topology of $\partial_\mathcal{C}E$. 

The fact that $(T_t)_{t>0}$ admits a heat kernel with respect to the measure $\lambda_\Omega$ is immediate from~\eqref{E:defsg}.

\begin{notation}\label{D:Ckorb}
We write $f\in C_{\orb}(O)$ if $f$ is $\varrho_{\orb}$-continuous in $O\subset \Omega$. If $O\subset \Omega$ is $\varrho_{\orb}$-open, then we write $f\in C_{\orb}^k(O)$ if $f$ is Borel measurable and $k$-times differentiable, in the sense of Appendix \ref{A:S}, and $D^\alpha f\in C_{\orb}(O)$ for all $|\alpha|\leq k$.
\end{notation}

Note that $C^k(O)\subsetneqq C^k_{\orb}(O)$ if $O$ is open and non-empty.

\begin{remark}
The corresponding Feller properties of $(T_t)_{t>0}$ on the metric space $(\OO,\varrho_{orb})$ are satisfied. In view of Definition~\ref{D:sg}, the process $X^\Lambda$ is also a diffusion along the orbit $\orb(\Lambda)$, and by the isomorphism in Lemma \ref{L:Horizontal}, it can naturally be identified with a Brownian motion on $\mathbb{R}^d$ by formula (\ref{E:process}). It makes sense to call the family of processes $X^\Lambda$ as in (\ref{E:process}) ``orbit-wise Brownian motion'' on $\OO$. As already noted in Remark \ref{R:Feller} (iii), Definition~\ref{D:sg} and formula (\ref{E:process}) show that the family $(T_t)_{t\geq 0}$ is a Feller semigroup with respect to $\varrho_{\orb}$. It also follows that $(T_t)_{t>0}$ is a strongly continuous contraction semigroup on $bC_{\orb}(\OO)$, the domain of its generator contains $bC_{\orb}^2(\OO)$, and on this space it agrees with $\frac{1}{2}\Delta$. Finally, the corresponding result on $\mathbb{R}^d$ imply that $(T_t)_{t>0}$ is strong Feller with respect to $\varrho_{\orb}$, and $X^\Lambda$ is a strong Feller process with respect to $\varrho_{\orb}$ started at $\Lambda$. Note also that if the lattice $\Lambda_0$ would be periodic so that $ \OO$ would be a flat torus, we would just recover the usual diffusion process on this flat torus. 
\end{remark}

\section{Proof of Theorem \ref{T:main_Sobolev}: Sobolev spaces}\label{S:S}
\subsection{Weak derivatives and Sobolev spaces}\label{SS:Sobo}
In this section we introduce weak derivatives of functions on $\Omega$ and characterize the Sobolev spaces $\mathcal{W}^{k,2}(\Omega,\mu)$ defined in Theorem~\ref{T:main_Sobolev}. We refer to Appendix~\ref{A:S} for further specific notation, definitions and technical results. 

\begin{definition}\label{D:distributionalsense}
For any $f\in L^1(\Omega,\mu)$ and any multiindex $\alpha$, the  distributional (generalized) derivative $D^\alpha f$ is defined as the element of the topological dual $(C^\infty(\Omega))^\ast$ of $C^\infty(\Omega)$ given by 
\begin{equation}\label{E:weak}
D^\alpha f(\varphi):=(-1)^{|\alpha|}\int_{\Omega} f D^\alpha \varphi \:d\mu,\quad \varphi\in C^\infty(\Omega).
\end{equation}
\end{definition}

\begin{definition}\label{D:EvansDerivative}
If $f\in L^2(\Omega,\mu)$, $D^\alpha f\in L^2(\Omega,\mu)$, then   $D^\alpha$ is called a weak $L^2(\Omega,\mu)$ derivative and 
\begin{equation}
\langle D^\alpha f,\varphi\rangle_{L^2(\Omega,\mu)}=(-1)^{|\alpha|}\int_{\Omega} f D^\alpha \varphi \:d\mu,\quad \forall~\varphi\in C^\infty(\Omega).
\end{equation}
\end{definition}
\begin{remark}\label{R:EvansDerivative}
 Definitions \ref{D:distributionalsense} and \ref{D:EvansDerivative} 
   can be localized  
   for any open $O\subset\OO$.  
\end{remark}

The following integration by parts formula for functions in the space $C^k_{tlc}(\Omega)$, c.f. Definition~\ref{D:tlc}, will be heavily used in many of the subsequent proofs.
\begin{lemma}\label{L:ibP}
For any $f,g\in C^k_{tlc}(\Omega)$ and $|\alpha|\leq k$,
\[
\left\langle D^\alpha f,g\right\rangle_{L^2(\Omega,\mu)}=(-1)^{|\alpha|}\left\langle f, D^\alpha g\right\rangle_{L^2(\Omega,\mu)}.
\]
\end{lemma}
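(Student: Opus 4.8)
The statement is an integration-by-parts formula for transversally locally constant functions, so the plan is to reduce everything to the classical integration-by-parts formula on $\mathbb{R}^d$ via the local product structure and the explicit form \eqref{E:localprodmeas} of the invariant measure in foliated charts. First I would recall that by Definition~\ref{D:tlc}, functions in $C^k_{tlc}(\Omega)$ are, on each chart $\OLe\cong\mathcal{C}\times B$, finite sums of products $f_0(\Lambda')F_0(\vt)$ with $f_0$ locally constant on the Cantor factor and $F_0\in C^k(B)$; moreover, because $f$ and $g$ are transversally locally constant on the compact space $\Omega$, one can choose a \emph{finite} cover of $\Omega$ by charts $O_i=O_{\Lambda_i,\varepsilon_i}$ on which both $f$ and $g$ have this product form with a \emph{common} clopen partition of the relevant transversals. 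By Lemma~\ref{L:partofunity} there is a subordinate partition of unity $\chi_1,\dots,\chi_n\in C^\infty_{tlc}(\Omega)$, and by the Leibniz rule it suffices to prove the identity for $f$ and $g$ each supported in a single chart $\OLe$ (after absorbing the $\chi_i$, which are themselves transversally locally constant, so products stay in $C^k_{tlc}$).

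The second step is the local computation. On a single chart, using \eqref{E:localprodmeas}, the pairing $\langle D^\alpha f, g\rangle_{L^2(\Omega,\mu)}$ becomes $\int_{\mathcal{C}}\int_B (D^\alpha f)\cphi(\Lambda',\vt)\, g\cphi(\Lambda',\vt)\, d\lambda^d(\vt)\, d\nu_\mathcal{C}(\Lambda')$. Here I would invoke the intertwining property of $D^\alpha$ with the orbit homeomorphisms — that $D^\alpha$ acts ``horizontally'', i.e. as the ordinary $\mathbb{R}^d$-derivative $D^\alpha_{\mathbb{R}^d}$ in the $B$-variable with the Cantor variable frozen; this is essentially the content of Corollary~\ref{C:connecttoRd} / the definition of $C^k_{tlc}(\Omega)$ and the gradient \eqref{E:gradient}. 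Since $f$ and $g$ have product form with $F_0$-factors compactly supported in $B$ (or, after the partition of unity, vanishing near $\partial B$), the inner integral $\int_B (D^\alpha_{\mathbb{R}^d} F)(\vt)G(\vt)\,d\vt = (-1)^{|\alpha|}\int_B F(\vt)(D^\alpha_{\mathbb{R}^d}G)(\vt)\,d\vt$ by the classical integration-by-parts on $\mathbb{R}^d$ with no boundary terms. Integrating back over $\mathcal{C}$ against $\nu_\mathcal{C}$ and using the intertwining again gives $(-1)^{|\alpha|}\langle f, D^\alpha g\rangle_{L^2(\Omega,\mu)}$, which is the claim.

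\textbf{Main obstacle.} The only genuinely delicate point is the bookkeeping in the reduction to a single chart: one must arrange that $f$, $g$, \emph{and} all derivatives $D^\beta$, $D^{\alpha-\beta}$ appearing through the Leibniz rule are simultaneously of the required product form on a \emph{common} finite atlas with compatible clopen partitions of the transversals, so that no boundary contributions arise either from $\partial B$ (handled by the partition of unity / compact support inside $B$) or from the ``boundary'' of the clopen transversal pieces (there is none — clopen sets have empty boundary, which is exactly why transversal local constancy saves us and why this fails for general $C^k(\Omega)$ functions). I would also need to check measurability and the interchange of the Cantor-integral with the $\mathbb{R}^d$ operations, but with the product structure and \eqref{E:localprodmeas} this is routine Fubini. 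I expect the write-up to cite Lemma~\ref{L:tlcdense}, Lemma~\ref{L:partofunity}, Corollary~\ref{C:connecttoRd}, and \eqref{E:localprodmeas} and otherwise be short.
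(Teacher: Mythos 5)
Your proposal is correct and follows essentially the same route as the paper: a partition of unity from Lemma~\ref{L:partofunity} reduces to a single chart $\OLe=\phi^{-1}(\mathcal{C}\times B)$, where \eqref{E:fO} and \eqref{E:DalphafO} identify $D^\alpha$ with the Euclidean $D^\alpha_{\mathbb{R}^d}$ in the $B$-variable, and Fubini plus classical integration by parts (with no boundary terms, since the localized function is compactly supported in $B$ and the transversal factor is clopen) gives the identity. The paper's write-up is slightly leaner in that it localizes only $g$, writing $\langle D^\alpha f,g\rangle=\sum_i\langle D^\alpha f,\chi_i g\rangle$, which avoids the cross-chart and Leibniz-rule bookkeeping you flag as the main obstacle.
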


\begin{proof}
By Lemma~\ref{L:partofunity} there exists a smooth partition of unity $\{\chi_i\}_{i=1}^n$, hence we may assume that $g$ is supported in an open set $O=\phi^{-1}(\mathcal{C}\times B)$ as in~\eqref{E:transcylsets}. In view of~\eqref{E:fO} and~\eqref{E:DalphafO}, the result follows from its Euclidean counterpart since
\begin{multline*}
-\int_{\mathcal{C}}\int_B D^\alpha_{\mathbb{R}^d} (f\cphi) (\Lambda,\vt\:) g\cphi (\Lambda,\vt\,)\,d\vt\:\nu_{\mathcal{C}}(d\Lambda)\\
=(-1)^{|\alpha|}\int_{\mathcal{C}}\int_B f\cphi (\Lambda,\vt\:) D^\alpha_{\mathbb{R}^d}(g\cphi) (\Lambda,\vt\:) d\vt\:\nu_{\mathcal{C}}(d\Lambda),
\end{multline*}
where the differential operator $D_{\mathbb{R}^d}^\alpha$ is understood in the usual Euclidean sense.
\end{proof}

\begin{remark}
This result and Lemma~\ref{L:tlcdense} imply that for $f\in C^k(\Omega)$, the weak derivative $D^\alpha f$ from~\eqref{E:weak} coincides with the classical derivative $D^\alpha f$ in~\eqref{E:Dalpha}. 
\end{remark}

The main properties of weak derivatives in the classical case, see e.g.~\cite[Theorem 1, Section 5.2.3] {Evans98}, 
can be obtained in the same fashion for $D^\alpha f$ with $f\in\mathcal{W}^{k,2}(\Omega,\mu)$. In particular, the Leibniz rule applies and for any $\chi\in C^\infty(\Omega)$ and $f\in\mathcal{W}^{k,2}(\Omega,\mu)$, also $\chi f\in \mathcal{W}^{k,2}(\Omega,\mu)$. 

With these properties on hand, the next lemma characterizes the function space $\mathcal{W}^{k,2}(\Omega,\mu)$. 
The proof is straightforward after observing the crucial fact that $\Omega$ admits a finite open covering and a subordinated smooth partition of unity, see Lemma~\ref{L:partofunity}. For any $O\subset\Omega$ open, the space $\mathcal{W}^{k,2}(O,\mu)$ is defined analogously as for $\Omega$. Abusing notation, $\mu$ denotes in this case its restriction to $O$.

\begin{lemma}\label{L:Sobolev_Charact}
For any $k\geq 0$ we have that 
 $\mathcal{W}^{k,2}(O,\mu)$ is a Hilbert space. Moreover, $f\in \mathcal{W}^{k,2}(O,\mu)$ if and only if 
\begin{equation}\label{E:Sobolev_Charact}
f=\sum_{i=1}^n\chi_if\qquad\text{and}\qquad\chi_if\in\mathcal{W}^{k,2}(O_i\cap O,\mu)~\forall\,i\geq 0,
\end{equation} 
where $\{\chi_i\}_{i=1}^n\subseteq C^\infty_{tlc}(\Omega)$ is a smooth partition of unity subordinated to a cover $\{O_i\}_{i=1}^n\subseteq\Omega$ of translated cylinder sets~\eqref{E:transcylsets}.
\end{lemma}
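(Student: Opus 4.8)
The plan is to reduce the statement for $\Omega$ (or a general open $O$) to its Euclidean counterpart via the local product structure and a partition of unity, exactly as suggested by the remark preceding the lemma. I would structure the argument in three steps.

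\textbf{Step 1: Hilbert space property.} First I would verify that $\mathcal{W}^{k,2}(O,\mu)$ is complete. Completeness is a purely formal consequence of the definition: if $(f_n)$ is Cauchy in the Sobolev norm \eqref{e-sobnorm}, then each $(D^\alpha f_n)_{|\alpha|\le k}$ is Cauchy in $L^2(O,\mu)$, hence converges to some $g_\alpha\in L^2(O,\mu)$; one then checks that $g_\alpha=D^\alpha f_0$ in the distributional sense of Definition~\ref{D:distributionalsense} by passing to the limit in the defining identity $\langle D^\alpha f_n,\varphi\rangle=(-1)^{|\alpha|}\langle f_n,D^\alpha\varphi\rangle$ against test functions $\varphi\in C^\infty(\Omega)$, using that $D^\alpha\varphi$ is bounded so the pairing is continuous in the $L^2$-norm. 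This is essentially identical to the classical proof in \cite{Evans98} and poses no difficulty. The inner product inducing the norm is the obvious $\sum_{|\alpha|\le k}\langle D^\alpha f,D^\alpha g\rangle_{L^2(O,\mu)}$.

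\textbf{Step 2: The decomposition \eqref{E:Sobolev_Charact}.} For the ``only if'' direction, suppose $f\in\mathcal{W}^{k,2}(O,\mu)$. By Lemma~\ref{L:partofunity} there is a smooth partition of unity $\{\chi_i\}_{i=1}^n\subseteq C^\infty_{tlc}(\Omega)$ subordinate to the cover $\{O_i\}$ by translated cylinder sets, so trivially $f=\sum_i\chi_i f$ on $O$. Since the Leibniz rule holds for weak derivatives (as recorded in the remark after Lemma~\ref{L:ibP}: $\chi_i\in C^\infty(\Omega)$ and $f\in\mathcal{W}^{k,2}$ imply $\chi_i f\in\mathcal{W}^{k,2}$, with $D^\alpha(\chi_i f)$ given by the usual product-rule expansion), each $\chi_i f$ lies in $\mathcal{W}^{k,2}(\Omega,\mu)$, and restricting to $O_i\cap O$ — which is legitimate because distributional derivatives localize, Remark~\ref{R:EvansDerivative} — yields $\chi_i f\in\mathcal{W}^{k,2}(O_i\cap O,\mu)$. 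For the ``if'' direction, conversely, if $f=\sum_i\chi_i f$ with each summand in $\mathcal{W}^{k,2}(O_i\cap O,\mu)$, then since each $\chi_i f$ is supported in $O_i$ one extends it by zero to an element of $\mathcal{W}^{k,2}(O,\mu)$ (the zero extension of a Sobolev function supported away from the boundary of the chart is again Sobolev — this uses that $\supp\chi_i$ is compactly contained in $O_i$), and finiteness of the sum gives $f\in\mathcal{W}^{k,2}(O,\mu)$ with $D^\alpha f=\sum_i D^\alpha(\chi_i f)$.

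\textbf{Step 3: Reduction to the Euclidean case inside a chart.} The content of Step 2 is that everything reduces to understanding $\mathcal{W}^{k,2}$ on a single translated cylinder set $O_i=\phi^{-1}(\mathcal{C}\times B)$. Here I would transport the question to $\mathcal{C}\times B$ via the chart map $\phi$, using \eqref{E:fO} to identify $f$ with $f\cphi$ and \eqref{E:localprodmeas} to identify $\mu$ with $\nu_{\mathcal{C}}\times\lambda^d|_B$. Under this identification, by the formula \eqref{E:DalphafO} for derivatives in a chart, $D^\alpha$ becomes the Euclidean derivative $D^\alpha_{\mathbb{R}^d}$ acting only in the $B$-variable, and the integration-by-parts identity of Lemma~\ref{L:ibP} shows the distributional derivative on $\Omega$ corresponds to the distributional derivative in the $B$-variable, for $\nu_{\mathcal{C}}$-a.e.\ fixed $\Lambda\in\mathcal{C}$ (one uses Fubini for the product measure and the density of product test functions from \eqref{e-prod}). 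Thus $\mathcal{W}^{k,2}(O_i,\mu)$ is isometrically identified with the space of $f\in L^2(\mathcal{C}\times B,\nu_{\mathcal{C}}\times\lambda^d)$ whose fiberwise Euclidean weak derivatives up to order $k$ lie in the same $L^2$-space, i.e.\ an $L^2(\mathcal{C},\nu_{\mathcal{C}})$-valued Euclidean Sobolev space, which is manifestly a Hilbert space. This finishes the proof, and feeding this back through Step 2 gives that $\mathcal{W}^{k,2}(O,\mu)$ is Hilbert.

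The main obstacle, and the only point requiring genuine care rather than bookkeeping, is the rigorous justification in Step 3 that the distributional derivative on $\Omega$ — defined by testing against $C^\infty(\Omega)$ — disintegrates into the fiberwise Euclidean distributional derivative. The subtlety is that $C^\infty(\Omega)$ test functions need not be of product form, so one must argue that products $f_0(\Lambda')F_0(\vt)$ with $f_0\in b\mathcal{B}(\mathcal{C})$, $F_0\in C_c^\infty(B)$ (the functions of \eqref{e-prod}) suffice to characterize the derivative; this follows because such products are dense in an appropriate sense and because the transversally locally constant functions $C^\infty_{tlc}(\Omega)$ — which locally are finite sums of exactly such products — are available as legitimate test functions (they lie in $C^\infty(\Omega)$), so Lemma~\ref{L:ibP} and Lemma~\ref{L:tlcdense} together bridge the gap. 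Everything else is a routine transcription of the classical Sobolev space theory of \cite{Evans98}.
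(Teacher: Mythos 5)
Your proposal is correct and follows essentially the same route as the paper: the paper's own proof is a one-line reference to the classical argument in \cite[Theorem 2, Section 5.2.3]{Evans98}, together with the observation (made just before the lemma) that the partition of unity from Lemma~\ref{L:partofunity} and the Leibniz rule reduce everything to the charts. Your three steps are a faithful expansion of exactly that: the standard completeness argument, localization via $\{\chi_i\}$, and the identification of $\mathcal{W}^{k,2}$ on a cylinder set with a fiberwise Euclidean Sobolev space via the product structure \eqref{E:localprodmeas} (which the paper itself carries out in Lemmas~\ref{L:W2kOLe} and~\ref{L:W22loc}).
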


\begin{proof}
 The proof of this lemma is the same as in the classical case, see e.g.~\cite[Theorem 2, Section 5.2.3, p.\ 249]{Evans98}. 
\end{proof}
This completes the proof of Theorem~\ref{T:main_Sobolev}(\ref{sob1}).
 
\subsection{Localization in Sobolev spaces}\label{subsec:sob-loc}

In the special case when $O=\OLe\subset\Omega$ is a transversal $\varepsilon$-cylinder set, the product structure of this set and the definitions yield the following useful result for any $k\geqslant1$. Its proof is an application of the standard measure theory, especially the analysis on product spaces and the basic properties of distributional derivatives and Sobolev spaces, seee.g.~\cite[Chapter 8]{RudinRCA} and \cite[Chapter 5]{Evans98}.

\begin{lemma}\label{L:W2kOLe} 
If $f\in \mathcal{W}^{k,2} (\OLe,\mu)$, then $h_{\Lambda'}^\ast f\in W^{k,2} (B)$ for $\nu_{\mathcal{C}}$-a.e.\ $\Lambda'\in\mathcal{C}$, and 
\begin{equation}\label{e-key-sob-OLe}
\int_{\mathcal{C}}\Big\| h^\ast_{\Lambda'}f(\vt\,)\Big\|_{W^{k,2}_\Rd (B)}^2 \nu_{\mathcal{C}}(d\Lambda')
=\left\|f\right\|_{\mathcal{W}^{k,2}(\OLe,\mu)}^2.
\end{equation} 
Here, $h_{\Lambda'}^\ast$ is the orbit homeomorphism defined in~\eqref{E:algiso}, and $\OLe=\phi^{-1}(\mathcal{C}\times B)$ is of type~\eqref{E:transcylsets}. 

Moreover, if $f\in L^{2} (\OLe,\mu)$, then $h_{\Lambda'}^\ast f\in W^{k,2} (B)$ for $\nu_{\mathcal{C}}$-a.e.\ $\Lambda'\in\mathcal{C}$, and if the left hand side in \eqref{e-key-sob-OLe} is finite, then $f\in \mathcal{W}^{k,2} (\OLe,\mu)$. 

If $f,g\in \mathcal{W}^{k,2} (\OLe,\mu)$ then 
\begin{equation}\label{e-key-sob-OLe-inn}
\big\langle f,g\big\rangle_{\mathcal{W}^{k,2}(\OLe,\mu)} 
=
\int_{\mathcal{C}}
\Big\langle h^\ast_{\Lambda'}f(\vt\,),h^\ast_{\Lambda'}g(\vt\,)\Big\rangle_{\mathcal{W}^{k,2}(\OLe,\mu)} \nu_{\mathcal{C}}(d\Lambda')
.
\end{equation} 
\end{lemma}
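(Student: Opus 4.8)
The plan is to reduce everything to the product structure of $\OLe\cong\mathcal{C}\times B$ together with the fact that, by \eqref{E:localprodmeas}, $\mu\circ\phi^{-1}=\nu_{\mathcal{C}}\times\lambda^d|_B$, and then invoke the classical Fubini-type characterization of Sobolev spaces on a product of a measure space with a Euclidean domain. First I would unwind the definitions: for $f\in\mathcal{W}^{k,2}(\OLe,\mu)$ and $|\alpha|\le k$, the distributional derivative $D^\alpha f$ is by construction the one that, under the chart map $\phi$, corresponds to the usual Euclidean $\vt$-derivative $D^\alpha_{\Rd}$ acting on $f\cphi(\Lambda',\cdot)$ — this is exactly the content of \eqref{E:DalphafO} and the intertwining relation, and the integration-by-parts Lemma~\ref{L:ibP} shows the two notions are consistent. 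Hence, writing $\|f\|_{\mathcal{W}^{k,2}(\OLe,\mu)}^2=\sum_{|\alpha|\le k}\|D^\alpha f\|_{L^2(\OLe,\mu)}^2$ and pushing each term forward under $\phi$, Tonelli's theorem applied to the product measure $\nu_{\mathcal{C}}\times\lambda^d|_B$ gives
\[
\|f\|_{\mathcal{W}^{k,2}(\OLe,\mu)}^2
=\sum_{|\alpha|\le k}\int_{\mathcal{C}}\int_B \big|D^\alpha_{\Rd}(f\cphi)(\Lambda',\vt\,)\big|^2\,d\vt\,\nu_{\mathcal{C}}(d\Lambda')
=\int_{\mathcal{C}}\big\|h^\ast_{\Lambda'}f\big\|_{W^{k,2}_\Rd(B)}^2\,\nu_{\mathcal{C}}(d\Lambda'),
\]
which is \eqref{e-key-sob-OLe}; finiteness of the left side then forces the inner integral to be finite for $\nu_{\mathcal{C}}$-a.e. $\Lambda'$, i.e. $h^\ast_{\Lambda'}f\in W^{k,2}_\Rd(B)$ a.e.

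For the converse direction (if $f\in L^2(\OLe,\mu)$, the slices lie in $W^{k,2}_\Rd(B)$ a.e., and the right side of \eqref{e-key-sob-OLe} is finite, then $f\in\mathcal{W}^{k,2}(\OLe,\mu)$), I would argue by testing against product test functions $\varphi_0(\Lambda')F_0(\vt\,)$ as in \eqref{e-prod} with $F_0\in C_c^\infty(B)$, $\varphi_0\in b\mathcal{B}(\mathcal{C})$: for each fixed $\Lambda'$ the Euclidean integration by parts on $B$ applies to the slice, and then integrating in $\nu_{\mathcal{C}}$ (justified by the assumed $L^2$-bound, via dominated convergence / Cauchy–Schwarz) shows that the candidate function $\Lambda'\mapsto D^\alpha_\Rd h^\ast_{\Lambda'}f$ indeed represents the distributional derivative $D^\alpha f$ on $\OLe$; since finite linear combinations of such products are dense in $C^\infty(\OLe)$ in the relevant sense, the defining identity \eqref{E:weak} holds for all test functions. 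The inner-product identity \eqref{e-key-sob-OLe-inn} then follows from \eqref{e-key-sob-OLe} by polarization, or equally directly by the same Tonelli argument applied to $\sum_{|\alpha|\le k}\langle D^\alpha f, D^\alpha g\rangle_{L^2(\OLe,\mu)}$ — note there is an evident typo-level abuse in \eqref{e-key-sob-OLe-inn} where the inner norm should read $W^{k,2}_\Rd(B)$ rather than $\mathcal{W}^{k,2}(\OLe,\mu)$, and I would state the slice inner product accordingly.

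The main obstacle is the careful handling of measurability and the density of product test functions in $C^\infty(\OLe)$: one must check that $\Lambda'\mapsto h^\ast_{\Lambda'}f$ is a $\nu_{\mathcal{C}}$-measurable $W^{k,2}_\Rd(B)$-valued map (so that the outer integral in \eqref{e-key-sob-OLe} makes sense) and that integration by parts in $\vt$ can be performed slice-wise and then reassembled — i.e. that the Euclidean distributional derivative of the slice and the $\Omega$-distributional derivative agree as Fubini-measurable objects. This is where the local product structure \eqref{E:localprodmeas}, the transversal-locally-constant approximation from Lemma~\ref{L:tlcdense}, and the standard product-space Sobolev theory (as in \cite[Ch.\ 8]{RudinRCA} and \cite[Ch.\ 5]{Evans98}) do the real work; everything else is bookkeeping. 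In fact, since $\OLe$ retracts onto a single Euclidean chart direction with a Cantor parameter on which $\mu$ restricts to $\nu_{\mathcal{C}}$, the whole statement is morally the assertion that $\mathcal{W}^{k,2}(\mathcal{C}\times B,\nu_{\mathcal{C}}\times\lambda^d)\cong L^2(\mathcal{C},\nu_{\mathcal{C}};W^{k,2}_\Rd(B))$, which is the form I would emphasize in the write-up.
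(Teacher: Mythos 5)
Your proposal is correct and follows essentially the same route the paper takes: the paper's own ``proof'' of Lemma~\ref{L:W2kOLe} is just the remark that it is an application of standard measure theory on product spaces (citing \cite[Chapter 8]{RudinRCA} and \cite[Chapter 5]{Evans98}), i.e.\ exactly the push-forward under $\phi$ to the product measure $\nu_{\mathcal{C}}\times\lambda^d|_B$ from \eqref{E:localprodmeas} followed by Tonelli, slice-wise integration by parts against product test functions, and polarization, which is what you have written out in detail. Your observation that the inner norm in \eqref{e-key-sob-OLe-inn} should read $W^{k,2}_{\Rd}(B)$ is also correct; that is a typo in the paper's statement.
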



On $\mathbb{R}^d$ we consider the spaces
\[
W^{k,2}_{loc}(\mathbb{R}^d):=\left\lbrace f\in L^2_{loc}(\mathbb{R}^d): D^\alpha f\in L^2_{loc}(\mathbb{R}^d)\quad\text{ for all $|\alpha|\leq k$}\right\rbrace.
\]
The next lemma follows from Lemma~\ref{L:Sobolev_Charact} and is important for understanding the local structure of Sobolev functions on \OO\. It completes the proof of Theorem~\ref{T:main_Sobolev}(\ref{sob2}). 
\begin{lemma}\label{L:W22loc}
If a measurable function $f$ on $\Omega$ is in $\mathcal{W}^{k,2} (\Omega,\mu)$ then $h_{\Lambda}^\ast f\in W^{k,2}_{loc}(\mathbb{R}^d)$ for $\mu$-a.e. $\Lambda\in \Omega$. Here $h_{\Lambda}^\ast$ is the orbit homeomorphism defined in \eqref{E:algiso}. 
If in addition $\supp f\subset O=\OLe=\phi^{-1}(\mathcal{C}\times B)$ of type (\ref{E:transcylsets}), then 
 \eqref{e-key-sob-OLe} holds. 
\end{lemma}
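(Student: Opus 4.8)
The plan is to reduce the statement to the already-proven local version, Lemma~\ref{L:W2kOLe}, by means of the finite partition of unity furnished by Lemma~\ref{L:partofunity} and the characterization of membership in $\mathcal{W}^{k,2}(\Omega,\mu)$ given in Lemma~\ref{L:Sobolev_Charact}. First I would fix a finite open cover $\{O_i\}_{i=1}^n$ of $\Omega$ by translated cylinder sets of type~\eqref{E:transcylsets}, say $O_i=\phi_i^{-1}(\mathcal{C}_i\times B_i)$, together with a subordinate smooth partition of unity $\{\chi_i\}_{i=1}^n\subseteq C^\infty_{tlc}(\Omega)$. Given $f\in\mathcal{W}^{k,2}(\Omega,\mu)$, the Leibniz rule (valid by the remark following Lemma~\ref{L:ibP}, since $C^\infty(\Omega)$ multipliers preserve $\mathcal{W}^{k,2}$) gives $\chi_i f\in\mathcal{W}^{k,2}(O_i,\mu)$ with $\supp(\chi_i f)\subset O_i$, so Lemma~\ref{L:W2kOLe} applies to each $\chi_i f$: for $\nu_{\mathcal{C}_i}$-a.e.\ $\Lambda'\in\mathcal{C}_i$ the pullback $h_{\Lambda'}^\ast(\chi_i f)$ lies in $W^{k,2}(B_i)$, hence in particular in $W^{k,2}_{loc}(\mathbb{R}^d)$ after extension by zero.

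The next step is to translate the transversal almost-everywhere statements (one per chart, with respect to the frequency measures $\nu_{\mathcal{C}_i}$ on the transversals $\mathcal{C}_i$) into a single $\mu$-almost-everywhere statement on $\Omega$ about orbit pullbacks $h_\Lambda^\ast f$. The key observation is that the orbit pullbacks transform equivariantly: if $\Lambda=\varphi_{\vec t}(\Lambda')$ with $\Lambda'\in\mathcal{C}_i$ and $\vec t\in B_i$, then $h_\Lambda^\ast f(\vec s)=h_{\Lambda'}^\ast f(\vec s+\vec t)$ for all $\vec s$ in a neighborhood of $\vzer$, so $h_\Lambda^\ast f\in W^{k,2}_{loc}$ near $\vzer$ as soon as $h_{\Lambda'}^\ast f\in W^{k,2}_{loc}$ near $\vec t$; combined with the fact that $W^{k,2}_{loc}(\mathbb{R}^d)$ is a purely local notion and that the orbit of $\Lambda$ is covered by finitely many such chart translations of its base points on the transversals, one upgrades the local conclusion to $h_\Lambda^\ast f\in W^{k,2}_{loc}(\mathbb{R}^d)$. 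To see this holds for $\mu$-a.e.\ $\Lambda$, note that by~\eqref{E:localprodmeas} the push-forward $\mu\circ\phi_i^{-1}$ on $O_i$ is the product $\nu_{\mathcal{C}_i}\times\lambda^d|_{B_i}$, so a $\nu_{\mathcal{C}_i}$-null set of bad transversal points $\Lambda'$ corresponds, via $\phi_i$, to a $\mu$-null subset of $O_i$; taking the union over the finitely many charts $i=1,\dots,n$ and over a countable exhaustion of $\mathbb{R}^d$ by balls, the exceptional set remains $\mu$-null, and off it every $h_\Lambda^\ast f$ is in $W^{k,2}_{loc}(\mathbb{R}^d)$. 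This proves the first assertion.

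For the second assertion — that $\supp f\subset O=\OLe=\phi^{-1}(\mathcal{C}\times B)$ implies the identity~\eqref{e-key-sob-OLe} — I would simply invoke Lemma~\ref{L:W2kOLe} directly: when $f$ is supported inside a single transversal cylinder set of type~\eqref{E:transcylsets}, the hypothesis $f\in\mathcal{W}^{k,2}(\Omega,\mu)$ is equivalent to $f\in\mathcal{W}^{k,2}(\OLe,\mu)$ (extending by zero, which is licit since $f$ vanishes near $\partial O$ in the orbit directions while the transversal directions are clopen), and then~\eqref{e-key-sob-OLe} is precisely the content of that lemma. Thus this part is essentially a bookkeeping remark about compatibility of the ambient and local Sobolev norms under extension by zero, using once more the product structure~\eqref{E:localprodmeas} of $\mu$ in the chart.

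The main obstacle I anticipate is not any single hard estimate but rather the careful patching in the first step: one must check that the various chart-by-chart null sets, when transported to $\Omega$ and combined with the covering of a whole orbit by finitely many chart pieces, still assemble into a genuinely $\mu$-null exceptional set, and that the equivariance relation $h_\Lambda^\ast f(\cdot)=h_{\Lambda'}^\ast f(\cdot+\vec t)$ is used consistently with the distributional (not pointwise) meaning of the derivatives. Since $W^{k,2}_{loc}$ is local and translation-covariant, and since $\Omega$ is compact so only finitely many charts are needed, this is routine but requires attention; the genuinely substantive analytic input — Fubini on the product $\mathcal{C}\times B$ together with the coincidence of distributional derivatives in the transversal-constant directions — is already packaged inside Lemma~\ref{L:W2kOLe}, so nothing new of that flavor has to be redone here.
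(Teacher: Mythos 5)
Your proposal is correct and follows essentially the same route as the paper's proof: reduce by a smooth partition of unity to $f$ supported in a single translated cylinder set, use the product structure of $\mu$ and Fubini on $\mathcal{C}\times B$ (which you package as an invocation of Lemma~\ref{L:W2kOLe}, while the paper writes the computation out) to obtain a $\nu_{\mathcal{C}}$-null set $\mathcal{N}$ of bad transversal points, and then pass to orbit pullbacks. The one step where the paper spends visibly more care than your sketch is exactly the point you flag as the main obstacle: the exceptional set is not $\phi^{-1}(\mathcal{N}\times B)$ but its full orbit saturation $\bigcup_{\vt\in\mathbb{R}^d}\varphi_{\vt\,}(\mathcal{N})$, whose measurability and $\mu$-nullity the paper secures by writing it as a countable union of translates indexed by $\tfrac{\varepsilon}{2}\mathbb{Z}^d$ and using the $\mathbb{R}^d$-invariance of $\mu$, together with the observation that $h_\Lambda(K)\cap O$ has only finitely many connected components for compact $K$ --- which is the rigorous form of the ``countable exhaustion'' you describe, so nothing is missing in substance.
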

\begin{proof} 
Suppose $f\in W^{k,2}(\Omega,\mu)$. Taking a smooth partition of unity we may again assume that $\supp f\subset\OLe$. 
With the notation from Subsection~\ref{subsec:LocalStru} and following the results in Subsection~\ref{SS:Sobo}, it is easy to see that for $|\alpha|\leq k$ and $\mu$-a.e. $\bl \in\mathcal{C}\times B$ we have
$$ 
\phi_{\Lambda,\varepsilon}(\bl)
=
\phi_{\Lambda,\varepsilon}(\varphi_{\vt\,}(\Lambda'))
=
(\Lambda',\vt\:),
$$ 
and
\begin{equation}
\label{e-key-sob0}
 D^\alpha f \cphi (\Lambda',\vt\:)= D^\alpha_\Rd h^\ast_{\Lambda'}f(\vt\,).
\end{equation}
Therefore,
\begin{equation}\label{e-key-sob2}
 \big\|D^\alpha f \big\|_{L^{2}(\Omega,\mu)}^2=
 \int_{\mathcal{C}}\int_{B} \Big| D^\alpha_\Rd
h^\ast_{\Lambda'}f(\vt\,)\Big|^2d\vt\, \nu_{\mathcal{C}}(d\Lambda')
<+\infty
\end{equation}
and by Fubini's theorem there exists a Borel set $\mathcal{N}\subset\mathcal{C}$ of $\nu_{\mathcal{C}}$-measure zero such that for any $\Lambda'\in\mathcal{C}\setminus\mathcal{N}$ we have 
\begin{equation}\label{E:Sobofinite}
\max_{|\alpha|\leq k} \int_{B} \Big| D^\alpha_\Rd
h^\ast_{\Lambda'}f(\vt\,)\Big|^2d\vt\, 
<+\infty.
\end{equation}
Since each $\phi$ is continuous from $O$ onto $\mathcal{C}\times B_{\varepsilon}(\vec{0}\,)$, the set
\[\bigcup_{\vt\in B_{\varepsilon}(\vec{0}\,)} \varphi_{\vt\,}(\mathcal{N})=\phi^{-1}(\mathcal{N}_i\times B_{\varepsilon}(\vec{0}\,))\]
is a measurable subset of $\Omega$ and so are its translates $\bigcup\limits_{\vt\in B_{\varepsilon}(\vec{x}\,)} \varphi_{\vt\,}(\mathcal{N})$ by a given vector $\vec{x}\in\mathbb{R}^d$. Since we can write $\mathbb{R}^d$ as the union $\mathbb{R}^d=\bigcup_{\vec{x}\in\frac{\varepsilon}{2}\mathbb{Z}^d} B_{\varepsilon}(\vec{x}\,)$ of countably many balls $B_{\varepsilon}(\vec{x}\,)$ with centers $\vec{x}\in \frac{\varepsilon}{2}\mathbb{Z}^d$, the union $\mathcal{O}$ of all orbits that hit $\mathcal{N}$, defined by
\begin{equation}\label{E:countableunion}
\mathcal{O}:=\bigcup_{\vt\in\mathbb{R}^d}\varphi_{\vt\,}(\mathcal{N})=\bigcup_{\vec{x}\in\frac{\varepsilon}{2}\mathbb{Z}^d} \bigcup_{\vt\in B_{\varepsilon}(\vec{x}\,)} \varphi_{\vt\,}(\mathcal{N}), 
\end{equation}
is a countable union of measurable subsets of $\Omega$ and therefore measurable. By the local product structure of $\mu$ and its $\mathbb{R}^d$-invariance we have $\mu\circ \phi^{-1}(\mathcal{N}\times B_{\varepsilon}(\vec{x}))=0$ for any $\vec{x}\in\mathbb{R}^d$, so that the set $\mathcal{O}$ in (\ref{E:countableunion}) is seen to have $\mu$-measure zero. Suppose now that $\Lambda\in \mathcal{O}^c$ i.e.\ that $\orb(\Lambda)$ does not hit $\mathcal{N}$. If $K\subset\mathbb{R}^d$ is compact, then the set $h_\Lambda(K)\cap O$ can only have finitely many connected components. By (\ref{E:Sobofinite}) together with the fact that within each connected component, $h_{\Lambda}^\ast f$ is compactly supported in a ball $B$ and we obtain $h_{\Lambda}^\ast f\in W^{2,2}_{loc}(\mathbb{R}^d)$. 
\end{proof}

The cases $k=1,2$ will be of special interest later on, and the following lemma will be used in the proof of Lemma \ref{L:Soboanddomains}. 

\begin{lemma}\label{lem-w22}
 In the case $k=2$, we have
 \[
 \mathcal{W}^{2,2}(\Omega,\mu)=\left\lbrace f\in\mathcal{W}^{1,2}(\Omega,\mu): \Delta f \in L^2(\Omega,\mu)\right\rbrace,
 \]
 where $\Delta$ is the operator defined in~\eqref{E:Laplace}, and 
 \[
 \left\|f\right\|_{\mathcal{W}^{2,2}(\Omega,\mu)}':=\left(\left\|f\right\|_{L^2(\Omega,\mu)}^2+ \left\||\nabla|\right\|_{L^2(\Omega,\mu)}^2+ \left\|\Delta f\right\|_{L^2(\Omega,\mu)}^2\right)^{1/2}
 \]
 defines an equivalent norm on $\mathcal{W}^{2,2}(\Omega,\mu)$. 
 \end{lemma}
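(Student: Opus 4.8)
The plan is to establish the set-theoretic equality together with the norm equivalence by reducing everything to the Euclidean case via the local product structure from Subsection~\ref{subsec:LocalStru} and the characterization results already obtained, especially Lemma~\ref{L:Sobolev_Charact}, Lemma~\ref{L:W2kOLe} and Lemma~\ref{L:W22loc}. The inclusion $\mathcal{W}^{2,2}(\Omega,\mu)\subseteq\{f\in\mathcal{W}^{1,2}(\Omega,\mu):\Delta f\in L^2(\Omega,\mu)\}$ is immediate: if $D^\alpha f\in L^2(\Omega,\mu)$ for all $|\alpha|\leq 2$, then $f\in\mathcal{W}^{1,2}(\Omega,\mu)$ by definition, and $\Delta f=\sum_{i=1}^d D^{2\ve_i}f$ is a finite sum of $L^2$-functions, hence in $L^2(\Omega,\mu)$; moreover this gives $\left\|f\right\|_{\mathcal{W}^{2,2}(\Omega,\mu)}'\leq C\left\|f\right\|_{\mathcal{W}^{2,2}(\Omega,\mu)}$ for a dimensional constant $C$.

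For the reverse inclusion, suppose $f\in\mathcal{W}^{1,2}(\Omega,\mu)$ with $\Delta f\in L^2(\Omega,\mu)$. Using the finite open cover by translated cylinder sets $\{O_i\}_{i=1}^n$ of type~\eqref{E:transcylsets} and the subordinate smooth partition of unity $\{\chi_i\}_{i=1}^n\subseteq C^\infty_{tlc}(\Omega)$ from Lemma~\ref{L:partofunity}, I would write $f=\sum_i \chi_i f$ and, by the Leibniz rule for distributional derivatives on $\Omega$ (valid since $\chi_i\in C^\infty(\Omega)$), reduce to showing each $\chi_i f\in\mathcal{W}^{2,2}(O_i,\mu)$; note the Leibniz expansion of $\Delta(\chi_i f)$ involves $\chi_i\Delta f$, $\nabla\chi_i\cdot\nabla f$ and $f\Delta\chi_i$, all of which lie in $L^2$ given $f\in\mathcal{W}^{1,2}$ and $\Delta f\in L^2$. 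So we may assume $f$ is supported in a single cylinder set $O=\OLe=\phi^{-1}(\mathcal{C}\times B)$. Now apply Lemma~\ref{L:W22loc} (and the orbit-wise disintegration \eqref{e-key-sob-OLe}): for $\nu_{\mathcal{C}}$-a.e.\ $\Lambda'\in\mathcal{C}$ we have $h_{\Lambda'}^\ast f\in W^{1,2}_\Rd(B)$, and by the intertwining \eqref{e-key-sob0} together with the fact that $\Delta f\in L^2(\Omega,\mu)$ disintegrates to $\Delta_\Rd h_{\Lambda'}^\ast f\in L^2(B)$ for $\nu_{\mathcal{C}}$-a.e.\ $\Lambda'$. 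At this point I invoke the classical elliptic regularity statement on $\mathbb{R}^d$: if $u\in W^{1,2}(B)$ with $\Delta_\Rd u\in L^2(B)$ and $u$ compactly supported in $B$, then $u\in W^{2,2}(B)$ with $\left\|u\right\|_{W^{2,2}_\Rd(B)}\leq C(\left\|u\right\|_{L^2(B)}+\left\|\Delta_\Rd u\right\|_{L^2(B)})$ for a constant depending only on $d$ and $B$ (this is the standard interior estimate $\left\|D^2 u\right\|_{L^2}\lesssim\left\|\Delta u\right\|_{L^2}$ via the Fourier transform or difference quotients, see e.g.\ \cite[Section 6.3]{Evans98}). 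Applying this fiberwise and integrating the squared estimate against $\nu_{\mathcal{C}}$, the left-hand side of \eqref{e-key-sob-OLe} with $k=2$ becomes finite, so the converse half of Lemma~\ref{L:W2kOLe} yields $f\in\mathcal{W}^{2,2}(O,\mu)$ together with the bound $\left\|f\right\|_{\mathcal{W}^{2,2}(\Omega,\mu)}\leq C\left\|f\right\|_{\mathcal{W}^{2,2}(\Omega,\mu)}'$. Summing over the partition of unity (absorbing the commutator terms into the $\mathcal{W}^{1,2}$-part of the norm) gives both the inclusion and the remaining norm inequality, which together establish the claimed equivalence.

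\textbf{Main obstacle.} The delicate point is the fiberwise application of the Euclidean Calder\'on--Zygmund / elliptic estimate and its clean integration over the transversal: one must ensure the constant in $\left\|D^2 u\right\|_{L^2}\leq C\left\|\Delta_\Rd u\right\|_{L^2}$ is uniform over $\nu_{\mathcal{C}}$-a.e.\ fiber (which it is, since it depends only on $d$ and the fixed ball $B$, once we have arranged compact support in $B$ via the partition of unity), and that the measurability in $\Lambda'$ of the fiber norms is in place so that Fubini/Tonelli applies — both of which are handled by the disintegration machinery already set up in Lemma~\ref{L:W2kOLe} and the measurability arguments in the proof of Lemma~\ref{L:W22loc}. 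A secondary bookkeeping issue is tracking the lower-order commutator terms from the partition of unity, but these are controlled purely by the $\mathcal{W}^{1,2}$-norm and contribute only to the constant.
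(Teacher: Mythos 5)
Your proposal is correct and follows essentially the same route as the paper's proof: localize with cut-off functions to a translated cylinder set, pass to the fibers where $h_{\Lambda'}^\ast f$ is compactly supported in a Euclidean ball, and control all second-order derivatives there by the Laplacian in $L^2$ via integration by parts and Fourier analysis, then integrate over the transversal. Your write-up is a more detailed rendering of the paper's terse argument, but the decomposition and the key Euclidean estimate are the same.
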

 
 \begin{proof}
The mixed partial derivatives of type $\frac{\partial^2 f}{\partial \vec{e}_i\partial\vec{e}_j}$ are square integrable by Lemmas \ref{L:Sobolev_Charact} and \ref{L:W22loc} because multiplication with cut-off functions allows to assume that $f$ is supported within a translated cylinder set \OLe. The functions $h_\Lambda^\ast f$ are then compactly supported in a Euclidean ball, and on this ball we can use the integration by parts for distributional derivatives, which can be proved using approximation by smooth functions, to shift around partial derivative operators. The latter commute by Schwarz' theorem, and Cauchy-Schwarz and elementary Fourier analysis yield the desired integrability.
\end{proof}

\subsection{Approximation in Sobolev spaces}\label{subsec:sob-appr}
In this subsection we provide local approximations of functions in $\mathcal{W}^{k,2}(\Omega,\mu)$ by smooth functions in $C^\infty_{tlc}(\Omega)$. At the same time, we describe the spectrum of the Dirichlet Laplacian on the translated $\varepsilon$-cylinder subsets $\OLe$ of $\Omega$ in terms of the Dirichlet Laplacian on Euclidean balls.
 
Let $\{ b_i^{\mathcal{C}}\}_{i=1}^\infty$ be an orthonormal basis of the separable Hilbert space $L^2(\mathcal{C},\nu_{\mathcal{C}})$ which satisfies the following property: for each $i$ there is $\epsilon_i>0$ such that $b_i^{\mathcal{C}}$ is constant on every clopen subset of $ {\mathcal{C}}$ of diameter at most $\epsilon_i>0$. It is easy to see that such a basis exists and that $\lim_{i\to\infty}\epsilon_i=0$. Moreover, one can show that such a basis naturally defines ${\mathcal{C}}$ and ${\mathcal{O}}$ as projective limits, which allows to use many elements of the analysis presented in~\cite{PARdiamonds,BE04,ST17}. For any function $f\colon \OLe\ \to\mathbb{R}$ we define $\PHii f$ by projecting in the transversal direction to the first $i$ terms of the basis $\{ b_i^{\mathcal{C}}\}_{i=1}^\infty$, i.e.
 \begin{equation}
\PHii f(\overline{\Lambda})=f^{O,i}(\Lambda',\vt\,),
\end{equation}
 with $f^{O,i}$ as in Lemma \ref{L:tlcdense}. If $f\colon\Omega\to\mathbb{R}$ is supported in $\OLe$, the function $\Phi_i f$ is extended by zero outside of $\OLe$. It is clear that \PHii\ projects functions localized on \OLe\ on \emph{tlc} functions. This is because the function $\mathbbm{1}_{\mathcal{C}_l^{(i)}}(\Lambda')$ in \eqref{e-fOi} is locally constant. 
 
The following lemma is evident from our construction and, together with the characterization from Lemma~\ref{L:Sobolev_Charact}, it provides a procedure to approximate functions in the Sobolev space $\mathcal{W}^{k,2}(\Omega,\mu)$ by smooth functions in $C^\infty_{tlc}(\Omega)$. 
 
\begin{lemma}\label{L:local_approx}
If $f\in \mathcal{W}^{k,2}(\Omega,\mu)$ has its support in \OLe, then $\PHii f\in\mathcal{W}^{k,2}(\Omega,\mu)$, 
\begin{equation}\label{e-cont-sob}
\|\PHii f\|_{\mathcal{W}^{k,2}(\Omega,\mu)}\leqslant\| f\|_{\mathcal{W}^{k,2}(\Omega,\mu)}
\end{equation} 
and 
\begin{equation}\label{e-cont-sob-c}
\lim\limits_{i\to\infty}
 \|\PHii f-f\|_{\mathcal{W}^{k,2}(\Omega,\mu)}
=0.
\end{equation} 
The limit in~\eqref{e-cont-sob-c} is monotone non-increasing. 
\end{lemma}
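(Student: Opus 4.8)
The plan is to reduce everything to the Euclidean picture via the local product structure and Lemma~\ref{L:W2kOLe}, and then apply the corresponding one-variable (transversal) statement about the projections $\PHii$. Recall that $f$ is supported in a single translated cylinder set $\OLe=\phi^{-1}(\mathcal C\times B)$, so by Lemma~\ref{L:W2kOLe} we may identify $f$ with the map $\Lambda'\mapsto h^\ast_{\Lambda'}f\in W^{k,2}_\Rd(B)$ and we have the isometry \eqref{e-key-sob-OLe}. Under this identification, the operator $\PHii$ acts only on the transversal variable: it is the orthogonal projection $P_i$ of $L^2(\mathcal C,\nu_{\mathcal C})$ onto $V_i:=\operatorname{span}\{b^{\mathcal C}_1,\dots,b^{\mathcal C}_i\}$, applied slot-wise, i.e. $\PHii f$ corresponds to $(P_i\otimes \mathrm{id})$ acting on $L^2(\mathcal C,\nu_{\mathcal C})\otimes W^{k,2}_\Rd(B)\subset L^2(\mathcal C,\nu_{\mathcal C};W^{k,2}_\Rd(B))$. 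The key structural point, already noted in the text, is that because each $b_i^{\mathcal C}$ is locally constant on $\mathcal C$, the projection $P_i$ maps into transversally locally constant functions, so $\PHii f\in C^\infty_{tlc}$-type functions in the transversal direction and retains whatever Euclidean smoothness $f$ has in the $B$-direction; in particular $\PHii f\in\mathcal W^{k,2}(\Omega,\mu)$.

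First I would make precise that $D^\alpha_\Rd$ commutes with the transversal projection. For $|\alpha|\le k$ and $f$ supported in $\OLe$, \eqref{e-key-sob0} gives $D^\alpha f\cphi(\Lambda',\vt)=D^\alpha_\Rd h^\ast_{\Lambda'}f(\vt)$; since $\PHii$ only averages/projects in $\Lambda'$ and leaves $\vt$ untouched, one checks directly from the definition of $\PHii$ (via the expansion $h^\ast_{\Lambda'}f=\sum_j b^{\mathcal C}_j(\Lambda')\,g_j(\vt)$ with $g_j\in W^{k,2}_\Rd(B)$ convergent in $L^2(\mathcal C,\nu_{\mathcal C};W^{k,2}_\Rd(B))$) that $D^\alpha(\PHii f)\cphi(\Lambda',\vt)=\sum_{j\le i} b^{\mathcal C}_j(\Lambda')\,D^\alpha_\Rd g_j(\vt)=(P_i\otimes\mathrm{id})\big(D^\alpha f\big)\cphi(\Lambda',\vt)$. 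Hence $D^\alpha\PHii f=\PHii D^\alpha f$ for all $|\alpha|\le k$.

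With this intertwining in hand, the three assertions are immediate from the fact that $P_i$ is an orthogonal projection. For the norm bound \eqref{e-cont-sob}: using \eqref{e-sobnorm}, $\|D^\alpha\PHii f\|_{L^2(\Omega,\mu)}=\|\PHii D^\alpha f\|_{L^2(\Omega,\mu)}\le\|D^\alpha f\|_{L^2(\Omega,\mu)}$ for each $\alpha$, because by \eqref{e-key-sob2}/Fubini the $L^2(\Omega,\mu)$-norm of a function supported in $\OLe$ equals the $L^2(\mathcal C,\nu_{\mathcal C};L^2(B))$-norm of its transversal fibration, on which $P_i\otimes\mathrm{id}$ is a contraction; summing over $|\alpha|\le k$ gives \eqref{e-cont-sob}. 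For the convergence \eqref{e-cont-sob-c}: for each fixed $\alpha$, $D^\alpha f$ corresponds to an element of $L^2(\mathcal C,\nu_{\mathcal C};L^2(B))$ and $P_i\to\mathrm{id}$ strongly on $L^2(\mathcal C,\nu_{\mathcal C})$ (since $\{b^{\mathcal C}_j\}$ is an orthonormal basis), so $(P_i\otimes\mathrm{id})D^\alpha f\to D^\alpha f$ in $L^2(\mathcal C,\nu_{\mathcal C};L^2(B))$, i.e. $D^\alpha\PHii f\to D^\alpha f$ in $L^2(\Omega,\mu)$; again summing the finitely many $\alpha$ with $|\alpha|\le k$ yields \eqref{e-cont-sob-c}. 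Finally, monotonicity follows because $V_i\subset V_{i+1}$, so $\|\PHii f-f\|^2_{\mathcal W^{k,2}}=\sum_{|\alpha|\le k}\|(I-P_i)\otimes\mathrm{id}\,(D^\alpha f)\|^2$ and $\|(I-P_i)w\|\ge\|(I-P_{i+1})w\|$ for every $w\in L^2(\mathcal C,\nu_{\mathcal C})$.

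The only genuine technical point—and the step I would write out most carefully—is the slot-wise intertwining $D^\alpha\PHii f=\PHii D^\alpha f$: one must justify that expanding $h^\ast_{\Lambda'}f$ in the transversal basis and then applying $D^\alpha_\Rd$ term by term is legitimate, which rests on the $W^{k,2}_\Rd(B)$-valued square-integrability from Lemma~\ref{L:W2kOLe} (so the expansion converges in $L^2(\mathcal C,\nu_{\mathcal C};W^{k,2}_\Rd(B))$, not merely fiberwise) together with the boundedness of $D^\alpha_\Rd:W^{k,2}_\Rd(B)\to W^{k-|\alpha|,2}_\Rd(B)$. Everything else is a routine consequence of the spectral properties of the orthogonal projections $P_i$ and of Fubini's theorem as used in Lemma~\ref{L:W22loc}. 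I would remark that the global case (a general $f\in\mathcal W^{k,2}(\Omega,\mu)$, not supported in one $\OLe$) is then handled by the partition of unity from Lemma~\ref{L:partofunity} together with Lemma~\ref{L:Sobolev_Charact}, which is exactly how this lemma feeds into the proof of Theorem~\ref{T:main_Sobolev}(\ref{sob3}).
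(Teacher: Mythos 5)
Your proof is correct and follows the same strategy as the paper's: interpret $\PHii$ as the transversal orthogonal projection (conditional expectation) acting slot-wise on $L^2(\mathcal{C},\nu_{\mathcal{C}};W^{k,2}(B))\cong\mathcal{W}^{k,2}(\OLe,\mu)$, commute it with $D^\alpha$, and read off contractivity, monotonicity and convergence from the nestedness of the projections. The only real divergence is that you prove \eqref{e-cont-sob-c} directly from the strong convergence $P_i\to\mathrm{id}$ on $L^2(\mathcal{C},\nu_{\mathcal{C}})$, where the paper argues by contradiction via Lemma~\ref{L:W22loc} and measure theory; your version is more direct, and you correctly isolate the intertwining $D^\alpha\PHii f=\PHii D^\alpha f$ (justified in the Bochner-space sense via Lemma~\ref{L:W2kOLe}) as the one step needing care, which the paper leaves implicit in its reference to the proof of Lemma~\ref{L:tlcdense}.
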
 
 
\begin{proof}
Inequality~\eqref{e-cont-sob} follows from the 
proof of Lemma~\ref{L:tlcdense}. The monotonicity and the limit~\eqref{e-cont-sob-c} is also an elementary property of averages, and the zero value of this limit follows, by contradiction, from Lemma~\ref{L:W22loc} and elementary measure theory. The last assertion follows from Lemma~\ref{L:Sobolev_Charact}. 
\end{proof}

In the following corollary we somewhat abuse notion by considering the maps \PHii\ on different functional spaces. However, in each case the natural domain of definition of \PHii\ is evident.
 \begin{corollary} With the notation convention given above, for each $i,k\geqslant1$, the map \PHii\ is: 
 \begin{enumerate}
 \item a contractive projection from $C(\OLe)$ onto a proper subspace of $C_{tlc}(\OLe)$;
 \item a contractive projection from $C^k(\OLe)$ onto a proper subspace of $C_{tlc}^k(\OLe)$;
\item an orthogonal projection from the Hilbert space $\mathcal{W}^{k,2} (\OLe,\mu)$ onto a proper subspace of $\mathcal{W}^{k,2}_{tlc} (\OLe,\mu)$;
\item an orthogonal projection from the Hilbert space $\mathcal{W}^{k,2}_0 (\OLe,\mu)$ onto a proper subspace of $\mathcal{W}^{k,2}_{0,tlc} (\OLe,\mu)$, where $\mathcal{W}^{k,2}_0 (\OLe,\mu)$ is defined as the closure in $\mathcal{W}^{k,2}(\OLe,\mu)$ of the space of smooth functions compactly supported in \OLe. 
 \end{enumerate}
 Moreover, 
 $C^\infty_{tlc}(\OLe)$ is dense in $\mathcal{W}^{k,2}(\OLe,\mu)$; 
 $C^\infty_{0,tlc}(\OLe)$ is dense in $\mathcal{W}^{k,2}_0(\Omega,\mu)$ and 
 $C^\infty_{tlc}(\Omega)$ is dense in $\mathcal{W}^{k,2}(\Omega,\mu)$. 
\end{corollary}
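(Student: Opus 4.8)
The statement is a catalogue of consequences of the product description of $\PHii$ together with Lemmas~\ref{L:tlcdense}, \ref{L:Sobolev_Charact}, \ref{L:W2kOLe} and \ref{L:local_approx}, so the plan is essentially bookkeeping organized around one structural observation. First I would record the explicit form of $\PHii$ in a foliated chart $\phi=\phi_{\Lambda,\varepsilon}\colon\OLe\to\mathcal{C}\times B$: there is a finite clopen partition $\{\mathcal{C}_l^{(i)}\}_l$ of $\mathcal{C}$ on each cell of which $b_1^{\mathcal{C}},\dots,b_i^{\mathcal{C}}$ are constant, and
\[
\PHii f\cphi(\Lambda',\vt\,)=\sum_l\mathbbm 1_{\mathcal{C}_l^{(i)}}(\Lambda')\,\frac{1}{\nu_{\mathcal{C}}(\mathcal{C}_l^{(i)})}\int_{\mathcal{C}_l^{(i)}}f\cphi(\Lambda'',\vt\,)\,\nu_{\mathcal{C}}(d\Lambda'')\,,
\]
so that $\PHii$ is the fibrewise averaging operator (a conditional expectation acting only in the transversal variable $\Lambda'$) onto the finite $\sigma$-algebra $\mathcal{F}_i$ generated by $\{\mathcal{C}_l^{(i)}\}_l$. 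From this form, $\PHii^2=\PHii$ is immediate, $\PHii$ does not touch $\vt\,$ and hence commutes with $D^\alpha_{\Rd}$ in the chart (differentiation under the integral sign), and its range consists precisely of functions whose transversal part is $\mathcal{F}_i$-measurable; since $\bigcup_i\mathcal{F}_i$ generates the Borel $\sigma$-algebra of the Cantor set while each $\mathcal{F}_i$ is finite, the $\mathcal{F}_i$ are strictly increasing, and an $\mathcal{F}_j$-measurable but not $\mathcal{F}_i$-measurable locally constant function (tensored with a suitable function of $\vt\,$) witnesses that the range is a proper subspace of the corresponding $tlc$-space in every item.

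For items (1) and (2) I would check continuity of $\PHii f$ fibrewise: each $\mathbbm 1_{\mathcal{C}_l^{(i)}}$ is continuous since $\mathcal{C}_l^{(i)}$ is clopen, and the inner average depends continuously on $\vt\,$ by dominated convergence (uniform continuity of $f$ on the compact sub-cylinders $\phi^{-1}(\mathcal{C}\times\overline{B'})$, $\overline{B'}\subset B$); combined with $D^\alpha_{\Rd}\PHii f=\PHii D^\alpha_{\Rd}f$ this gives $\PHii f\in C_{tlc}(\OLe)$ for $f\in C(\OLe)$ and $\PHii f\in C^k_{tlc}(\OLe)$ for $f\in C^k(\OLe)$, while $\|D^\alpha\PHii f\|_\infty\le\|D^\alpha f\|_\infty$ holds because averaging does not increase the supremum. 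For items (3) and (4) the key is the inner-product identity \eqref{e-key-sob-OLe-inn}, which represents $\mathcal{W}^{k,2}(\OLe,\mu)$ as the Hilbert space of $W^{k,2}_{\Rd}(B)$-valued $L^2$-functions on $(\mathcal{C},\nu_{\mathcal{C}})$; under this representation $\PHii$ is the conditional expectation $E[\,\cdot\,|\mathcal{F}_i]$ tensored with the identity on $W^{k,2}_{\Rd}(B)$, which is a self-adjoint idempotent contraction, hence an orthogonal projection, and Lemma~\ref{L:local_approx} (inequality~\eqref{e-cont-sob}) together with the converse part of Lemma~\ref{L:W2kOLe} guarantees that its range lies in $\mathcal{W}^{k,2}(\OLe,\mu)$, in fact in $\mathcal{W}^{k,2}_{tlc}(\OLe,\mu)$. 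Item (4) follows verbatim once one observes that $\PHii$ does not enlarge the $\vt\,$-support, so it maps functions compactly supported in $\OLe$ to functions compactly supported in $\OLe$ and therefore $\mathcal{W}^{k,2}_0(\OLe,\mu)$ into $\mathcal{W}^{k,2}_{0,tlc}(\OLe,\mu)$, the orthogonal-projection property being inherited from item (3).

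For the density assertions: given $f\in\mathcal{W}^{k,2}(\OLe,\mu)$, Lemma~\ref{L:local_approx} already yields $\PHii f\to f$ in $\mathcal{W}^{k,2}(\OLe,\mu)$, and $\PHii f$ has only finitely many distinct transversal fibres, each lying in $W^{k,2}_{\Rd}(B)$; approximating these finitely many fibres by smooth functions in the usual Euclidean $W^{k,2}_{\Rd}(B)$-sense and recombining with the $\mathbbm 1_{\mathcal{C}_l^{(i)}}$ produces $C^\infty_{tlc}(\OLe)$-approximants of $\PHii f$ (and, when the fibres are compactly supported in $B$, the smoothing can be taken to preserve this, giving $C^\infty_{0,tlc}(\OLe)$-approximants), so $C^\infty_{tlc}(\OLe)$ is dense in $\mathcal{W}^{k,2}(\OLe,\mu)$ and $C^\infty_{0,tlc}(\OLe)$ is dense in $\mathcal{W}^{k,2}_0(\OLe,\mu)$. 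The global density of $C^\infty_{tlc}(\Omega)$ in $\mathcal{W}^{k,2}(\Omega,\mu)$ --- which is Theorem~\ref{T:main_Sobolev}(\ref{sob3}) --- then follows by writing $f=\sum_j\chi_j f$ for a smooth $tlc$ partition of unity $\{\chi_j\}$ subordinate to a finite cover of $\Omega$ by translated cylinder sets (Lemmas~\ref{L:partofunity} and \ref{L:Sobolev_Charact}), approximating each $\chi_j f\in\mathcal{W}^{k,2}_0(O_j,\mu)$ by $C^\infty_{0,tlc}(O_j)$-functions, extending by zero and summing. The only genuinely delicate point is the one dispatched above, namely that $\PHii$ is self-adjoint for the Sobolev --- not merely the $L^2$ --- inner product and does not leave $\mathcal{W}^{k,2}$; both follow once the tensor/product decompositions in Lemmas~\ref{L:W2kOLe} and \ref{L:local_approx} are in hand, so no further analysis is required.
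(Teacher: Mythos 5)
Your proposal is correct and follows exactly the route the paper intends: the paper states this corollary without proof, declaring it evident from the construction of $\PHii$ as transversal averaging together with Lemmas~\ref{L:tlcdense}, \ref{L:W2kOLe}, \ref{L:Sobolev_Charact} and \ref{L:local_approx}, and your write-up simply supplies the omitted bookkeeping (conditional-expectation form of $\PHii$, commutation with $D^\alpha$, self-adjointness for the fibred Sobolev inner product via \eqref{e-key-sob-OLe-inn}, preservation of compact support, and the partition-of-unity globalization). No gaps; the only mild over-statement is that the partitions $\mathcal{F}_i$ are \emph{strictly} increasing in $i$, but properness of the range only needs that each finite partition admits a proper refinement, which your own argument already provides.
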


\section{Proof of Theorem \ref{T:main_L2sg}:
 $L^2$-semigroup, Dirichlet form and spectral properties}\label{S:L2}

In this section we work under Assumptions \ref{A:flcapr} and \ref{A:uniqueergodic} and take the unique ergodic measure $\mu$ into account everywhere, even if this is not mentioned explicitly. The results of this section prove Theorem \ref{T:main_L2sg}.

\subsection{Invariance and symmetry}
We begin with the most basic properties of the semigroup $P_t$ that are needed in Theorem~\ref{T:main_L2sg}.

\begin{lemma}\label{L:symmetric}\mbox{ }
\begin{enumerate}
 \item The measure $\mu$ is invariant for $(T_t)_{t>0}$, and we have $\lim_{t\to\infty} T_t(\Lambda,\cdot)=\mu$ in the weak sense for any $\Lambda\in \Omega$. The Feller semigroup $(T_t)_{t>0}$ is $\mu$-symmetric, that is, we have 
\[
\left\langle T_tf, g\right\rangle_{L^2(\Omega,\mu)}=\left\langle f, T_t g\right\rangle_{L^2(\Omega,\mu)},\qquad f,g\in C(\Omega).
\]
It extends uniquely to a conservative Markov semigroup $(P_t)_{t>0}$ of self-adjoint operators on $L^2(\Omega,\mu)$. 
 
\item \label{L:repstillholds} 
If $f\geqslant0$ is a Borel function such that $\int_\Omega |f|^2d\mu<+\infty$, 
 then we have $$P_tf(\Lambda)=\int_{\mathbb{R}^d} p_{\mathbb{R}^d}(t,\vs\,) f(\varphi_{\vs\,}(\Lambda))\,d\vs$$
 for $\mu$-a.e. $\Lambda\in \Omega$ in the sense that the Borel function on the right hand side is in the $L^2(\Omega,\mu)$-class on the left hand side. 
\end{enumerate}
\end{lemma}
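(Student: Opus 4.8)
\textbf{Proof plan for Lemma~\ref{L:symmetric}.}

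The plan is to treat the two parts in order, as part (2) builds on the semigroup~$(P_t)_{t>0}$ produced in part (1). For the invariance of $\mu$ under $(T_t)_{t>0}$, I would use the defining property~\eqref{E:ergodic} of unique ergodicity together with Fubini's theorem: for $f\in C(\Omega)$ and $t>0$, the function $T_tf$ is continuous by Theorem~\ref{T:main_Feller}, and
\[
\int_\Omega T_tf\,d\mu=\int_\Omega\int_{\mbbR^d}p_{\mbbR^d}(t,\vs\,)f(\varphi_{\vs\,}(\Lambda))\,d\vs\,d\mu(\Lambda)
=\int_{\mbbR^d}p_{\mbbR^d}(t,\vs\,)\Big(\int_\Omega U_{\vs\,}f\,d\mu\Big)d\vs=\int_\Omega f\,d\mu,
\]
using the $\mbbR^d$-invariance of $\mu$ in the last step (Fubini is justified since $f$ is bounded and $p_{\mbbR^d}(t,\cdot)$ is a probability density). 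For weak convergence $T_t(\Lambda,\cdot)\to\mu$, I would fix $f\in C(\Omega)$ and observe that $T_tf(\Lambda)$ is exactly a Gaussian average of $\vs\mapsto f(\varphi_{\vs\,}(\Lambda))$ over $\mbbR^d$; since the Gaussian $p_{\mbbR^d}(t,\cdot)$ spreads out as $t\to\infty$ and by~\eqref{E:ergodic} the ball-averages of $f\circ\varphi_\cdot(\Lambda)$ converge to $\int_\Omega f\,d\mu$ uniformly in $\Lambda$, a standard comparison of Gaussian averages with Følner (ball) averages gives $\lim_{t\to\infty}T_tf(\Lambda)=\int_\Omega f\,d\mu$, which is exactly weak convergence of the measures $T_t^*\delta_\Lambda$. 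For symmetry, again by Fubini and the substitution $\vs\mapsto-\vs$ (under which $p_{\mbbR^d}(t,\cdot)$ is invariant) together with $\mbbR^d$-invariance of $\mu$,
\[
\langle T_tf,g\rangle_{L^2(\mu)}=\int_{\mbbR^d}p_{\mbbR^d}(t,\vs\,)\langle U_{\vs\,}f,g\rangle_{L^2(\mu)}\,d\vs
=\int_{\mbbR^d}p_{\mbbR^d}(t,\vs\,)\langle f,U_{-\vs\,}g\rangle_{L^2(\mu)}\,d\vs=\langle f,T_tg\rangle_{L^2(\mu)}.
\]
Since $(T_t)_{t>0}$ is a Markov semigroup and $\mu$ is a finite invariant measure, $T_t$ extends to a contraction on $L^2(\Omega,\mu)$; symmetry on the dense subspace $C(\Omega)$ makes each extension self-adjoint, strong continuity on $L^2$ follows from strong continuity on $C(\Omega)$ (Theorem~\ref{T:main_Feller}) plus density and uniform boundedness, and conservativity $T_t\one=\one$ passes to $P_t\one=\one$. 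This gives the conservative Markov self-adjoint semigroup $(P_t)_{t>0}$ of Notation~\ref{nPt}.

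For part (2), fix a Borel function $f\ge0$ with $f\in L^2(\Omega,\mu)$, and let $g(\Lambda):=\int_{\mbbR^d}p_{\mbbR^d}(t,\vs\,)f(\varphi_{\vs\,}(\Lambda))\,d\vs$; the measurability discussion preceding Definition~\ref{D:sg} shows $g$ is a well-defined Borel function (with values in $[0,+\infty]$). The claim is that $g\in L^2(\Omega,\mu)$ and $g=P_tf$ $\mu$-a.e. The first step is to control $\|g\|_{L^2(\mu)}$: by Jensen (or Cauchy--Schwarz) with respect to the probability density $p_{\mbbR^d}(t,\cdot)$,
\[
g(\Lambda)^2\le\int_{\mbbR^d}p_{\mbbR^d}(t,\vs\,)f(\varphi_{\vs\,}(\Lambda))^2\,d\vs,
\]
and integrating over $\Omega$ against $\mu$, using Fubini and $\mbbR^d$-invariance of $\mu$ exactly as above, gives $\|g\|_{L^2(\mu)}^2\le\|f\|_{L^2(\mu)}^2<\infty$; in particular $g$ is finite $\mu$-a.e. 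The second step identifies $g$ with $P_tf$: approximate $f$ in $L^2(\Omega,\mu)$ by a monotone increasing sequence $f_n\uparrow f$ of \emph{bounded} nonnegative Borel functions (e.g.\ $f_n=f\wedge n$). For each $n$, $T_tf_n$ is the Borel function $\Lambda\mapsto\int p_{\mbbR^d}(t,\vs\,)f_n(\varphi_{\vs\,}(\Lambda))\,d\vs$ by Definition~\ref{D:sg}, and since $f_n\in bC(\Omega)$ is \emph{not} available, I first further approximate each bounded Borel $f_n$ in $L^2(\mu)$ by continuous functions to conclude that the $L^2(\mu)$-extension $P_tf_n$ coincides $\mu$-a.e.\ with this Borel integral (the Gaussian kernel maps $L^2(\mu)$-Cauchy sequences to $L^2(\mu)$-Cauchy sequences by the contraction estimate of step~1, so the a.e.-defined Borel integral is stable under $L^2$-approximation). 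Then $f_n\uparrow f$ in $L^2(\mu)$ forces $P_tf_n\to P_tf$ in $L^2(\mu)$, while monotone convergence gives $T_tf_n(\Lambda)\uparrow g(\Lambda)$ pointwise; matching the $\mu$-a.e.\ limits yields $g=P_tf$ $\mu$-a.e., which is the assertion.

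The main obstacle is the second step of part~(2): carefully justifying that the a.e.-defined \emph{Borel} representative given by the Gaussian integral is the correct representative of the abstract $L^2(\mu)$-operator $P_t$, for functions $f$ that are merely Borel and square-integrable rather than continuous. The subtlety is precisely the one flagged in the introduction and in Remark~\ref{R:pushforward} — $\mu$ is absolutely continuous with respect to the non-$\sigma$-finite $\lambda^d_\Omega$ but has no Radon--Nikodym density — so one cannot simply write $P_t$ as an integral kernel against~$\mu$; the identification has to be made through the $L^2(\mu)$-contraction bound of step~1 (which lets $L^2$-limits of the Borel integrals be taken inside) combined with density of $C(\Omega)$ in $L^2(\Omega,\mu)$ and the monotone approximation $f_n\uparrow f$. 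Once the contraction estimate and this stability-under-$L^2$-approximation are in hand, everything else is routine measure theory and Fubini.
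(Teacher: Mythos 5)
Your proposal is correct, and most of it follows the paper's own route: the invariance and symmetry via Fubini and the $\mathbb{R}^d$-invariance of $\mu$, the extension to $(P_t)_{t>0}$ by density and contractivity, and part (2) via the truncations $f_n=f\wedge n$ and monotone convergence are exactly the paper's argument (you actually supply more detail than the paper on why the Borel-integral representative of $T_tf_n$ agrees $\mu$-a.e.\ with the abstract $L^2$-operator $P_tf_n$, via the Jensen--Fubini contraction bound and $L^2$-approximation by continuous functions; the paper compresses this to one sentence).

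The one place where you genuinely diverge is the weak convergence $T_t(\Lambda,\cdot)\to\mu$. The paper argues by compactness: the family $\{T_t(\Lambda,\cdot)\}_t$ is tight on the compact space $\Omega$, so by Prohorov any sequence $t_k\uparrow\infty$ has a weakly convergent subsequence, and every limit point is shown to be $\mathbb{R}^d$-invariant (using $p_{\mathbb{R}^d}(t_k,\vs+\vt\,)/p_{\mathbb{R}^d}(t_k,\vs\,)\to1$ and dominated convergence), hence equal to $\mu$ by unique ergodicity. You instead compare the Gaussian average directly with the F\o lner ball averages of \eqref{E:ergodic}, writing $p_{\mathbb{R}^d}(t,\cdot)$ as a mixture of normalized uniform measures on balls whose mixing measure escapes to large radii as $t\to\infty$. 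Both arguments are valid; yours is more quantitative and in fact yields convergence \emph{uniform} in $\Lambda$ (which the Prohorov route does not immediately give), at the cost of having to make the layer-cake decomposition of the radially decreasing Gaussian explicit, a step you currently label as ``standard'' and should spell out if you write this up.
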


\begin{proof}
The invariance of $\mu$ with respect to the action of $\mathbb{R}^d$ and Fubini's theorem imply that $\mu$ is $(T_t)_{t>0}$-invariant. Using the symmetry of $p_{\mathbb{R}^d}(t,\vs\,)$ we can similarly see that $(T_t)_{t>0}$ is $\mu$-symmetric. The existence, uniqueness and conservativity of the extension are clear from the density of $C(\Omega)$ and contractivity. For any $\Lambda\in \Omega$ the family of probability measures $(T_t)_{t>0}$ is tight on the compact space $\Omega$ so that by Prohorov's theorem each sequence of times going to infinity has a subsequence $t_k\uparrow \infty$ for which $\widetilde{\mu}=\lim_{k\to\infty} T_{t_k}(\Lambda,\cdot)$ in the weak sense. However, any probability measure $\widetilde{\mu}$ appearing as such a limit point is also invariant under $\mathbb{R}^d$ action, and so $\widetilde{\mu}=\mu$, since $\mu$ is the unique $\mathbb{R}^d$-invariant probability measure on $\Omega$. 

To show the invariance of $\widetilde{\mu}$ under the translation by $\vt$, one can either apply standard tools from dynamical systems, or in our case use the fact that for any $\vt, \vs \in\mathbb{R}^d$ we have
$
 \frac{p_{\mathbb{R}^d}(t_k,\vs+\vt\,)}{p_{\mathbb{R}^d}(t_k,\vs\,)}
 =
 \exp\Big\{ \frac{-2 \langle \vs,\vt\rangle -|\vt\,|^2}{2t_k}\Big\} \xrightarrow[{k\to\infty}]{}1
$.
By the dominated convergence, for any $f\in C(\Omega)$ we have
\[
\lim_{k\to\infty} \int_{\mathbb{R}^d} p_{\mathbb{R}^d}(t_k,\vs\,)f(\varphi_{\vt+\vs\,}(\Lambda))d\vs=\lim_{k\to\infty} \int_{\mathbb{R}^d} p_{\mathbb{R}^d}(t_k,\vs\,)f(\varphi_{\vs\,}(\Lambda))d\vs.
\]

To prove item (\ref{L:repstillholds}) of this lemma, we can apply the usual nondecreasing approximation of $f$ by the functions $f_n=\min\{n,f\}$ and the monotone convergence theorem. 
\end{proof}

\begin{remark}
The extension of $(T_t)_{t>0}$ to an $L^p$-contractive semigroup was considered in higher generality in \cite[Proposition 7.1]{Candel}, but symmetry in $L^2$ was not discussed. 
\end{remark}
The commutativity relation \eqref{E:KoopmanP} is a direct consequence of the definitions of 
$\Koo_{\vt\,}$ and $P_t$ and \eqref{E:KoopmanT}.

A peculiar feature of the semigroup $(P_t)_{t>0}$ is that, as stated in Theorem \ref{T:main_L2sg}, it does not admit a heat kernel with respect to $\mu$.
\begin{lemma}\label{L:noHK}
The semigroup $(P_t)_{t>0}$ does not admit a heat kernel with respect to the unique invariant measure $\mu$. More precisely, there is no measurable function $p_\mu\colon(0,+\infty)\times \Omega \times \Omega\to\mathbb{R}$ such that 
\[
P_tf(\Lambda_1)=\int_{\Omega} p_\mu(t,\Lambda_1,\Lambda_2)f(\Lambda_2)\mu(d\Lambda_2)
\]
$\mu$-a.e. $\Lambda_1\in\Omega$ for any $f\in b\mathcal{B}(\Omega)$ and $t>0$. 
\end{lemma}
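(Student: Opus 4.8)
The plan is to argue by contradiction, exploiting the fact that the process almost surely stays inside the orbit of its starting point, so the semigroup $P_t$ cannot ``spread mass transversally.'' Suppose a measurable kernel $p_\mu\colon(0,\infty)\times\Omega\times\Omega\to\mathbb R$ existed with $P_tf(\Lambda_1)=\int_\Omega p_\mu(t,\Lambda_1,\Lambda_2)f(\Lambda_2)\mu(d\Lambda_2)$ for $\mu$-a.e.\ $\Lambda_1$, all $f\in b\mathcal B(\Omega)$ and all $t>0$. The first step is to pick a translated cylinder set $\OLe=\phi^{-1}(\mathcal C\times B)$ and a Borel set $E\subset\mathcal C$ with $0<\nu_{\mathcal C}(E)<1$, and consider the indicator $f$ of $\phi^{-1}(E\times B')$ where $B'$ is a smaller ball with $\overline{B'}\subset B$. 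By Lemma~\ref{L:symmetric}(\ref{L:repstillholds}), $P_tf(\Lambda)=\int_{\mathbb R^d}p_{\mathbb R^d}(t,\vs)f(\varphi_{\vs}(\Lambda))\,d\vs$ for $\mu$-a.e.\ $\Lambda$, and this expression only ``sees'' the orbit of $\Lambda$: if $\Lambda_1$ and $\Lambda_2$ lie on different orbits, then $\varphi_{\vs}(\Lambda_1)\in\phi^{-1}(E\times B')$ is governed entirely by whether the transversal coordinate of $\Lambda_1$ lies in $E$, independently of $\Lambda_2$.

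The key step is to extract a contradiction from this ``orbit-confinement.'' Concretely, fix $t>0$. Using the representation and the local product structure, $P_tf(\Lambda)$ for $\Lambda=\phi^{-1}(\Lambda',\vec 0)$ with $\Lambda'\in E$ is bounded below by a positive constant $c=c(t,B')>0$ (namely $\int_{B'}p_{\mathbb R^d}(t,\vs)\,d\vs$, since translating by small $\vs$ keeps the transversal coordinate in $E$), while for $\Lambda'\notin E$ one gets $P_tf(\Lambda)=0$ whenever the whole relevant piece of the orbit avoids $E\times B'$. More carefully: choose $E$ clopen (possible since $\mathcal C$ is a Cantor set, so that $E\times B'$ is relatively clopen in $\OLe$), so that for $\Lambda'\in\mathcal C\setminus E$ and $|\vs|$ small, $\varphi_{\vs}(\phi^{-1}(\Lambda',\vec 0))$ stays in $\phi^{-1}((\mathcal C\setminus E)\times B)$, giving $f(\varphi_{\vs}(\Lambda))=0$ for all such $\vs$; hence $P_tf=0$ $\mu$-a.e.\ on $\phi^{-1}((\mathcal C\setminus E)\times B'')$ for a slightly smaller ball $B''$. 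Now take a second clopen set $E_1\subsetneq E$ with $0<\nu_{\mathcal C}(E_1)<\nu_{\mathcal C}(E)$ and run the same argument with $f_1=\mathbbm 1_{\phi^{-1}(E_1\times B')}$. On the one hand $f_1\le f$, so $P_tf_1\le P_tf$ $\mu$-a.e.; on the other hand we must have $P_tf_1=0$ $\mu$-a.e.\ on $\phi^{-1}((\mathcal C\setminus E_1)\times B'')$, in particular on the set $\phi^{-1}((E\setminus E_1)\times B'')$ which has positive $\mu$-measure, whereas $P_tf\ge c>0$ there. If $p_\mu$ existed, then $P_tf(\Lambda_1)$ and $P_tf_1(\Lambda_1)$ would both be determined by integration against $p_\mu(t,\Lambda_1,\cdot)$; the contradiction is obtained by comparing, for a $\mu$-generic $\Lambda_1$, the values forced by the orbit-confinement on one transversal slice versus another. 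The cleanest way to phrase this: the function $\Lambda_1\mapsto P_tf(\Lambda_1)$ is, up to $\mu$-null sets, a function of the transversal coordinate of $\Lambda_1$ alone and of the orbit, but the kernel representation would force it to be $\int p_\mu(t,\Lambda_1,\Lambda_2)f(\Lambda_2)\mu(d\Lambda_2)$, and by choosing two functions $f$ supported on disjoint orbit-saturated slices one sees $p_\mu(t,\Lambda_1,\cdot)$ would have to be supported on $\mathrm{orb}(\Lambda_1)$, which is a $\mu$-null set, forcing $P_tf=0$ $\mu$-a.e.\ for every $f$ — contradicting conservativity $P_t\mathbbm 1=\mathbbm 1$.

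Actually the last observation gives the slickest proof and I would lead with it: \emph{every orbit is $\mu$-null}. Indeed each orbit $\mathrm{orb}(\Lambda)$ is the injective continuous image of $\mathbb R^d$ under $h_\Lambda$; inside a chart $\OLe\cong\mathcal C\times B$ it meets $\OLe$ in a single horizontal slice $\{\Lambda'\}\times B$ (or a disjoint union of countably many, indexed by the returns of the orbit to the chart), and $\mu\circ\phi^{-1}=\nu_{\mathcal C}\times\lambda^d|_B$ by \eqref{E:localprodmeas}, so $\mu(\{\Lambda'\}\times B)=\nu_{\mathcal C}(\{\Lambda'\})\cdot\lambda^d(B)=0$ since $\nu_{\mathcal C}$ is non-atomic on the Cantor set $\mathcal C$. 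Covering $\Omega$ by finitely many charts and summing over the countably many slices shows $\mu(\mathrm{orb}(\Lambda))=0$. Then, by Lemma~\ref{L:symmetric}(\ref{L:repstillholds}), for $f\ge 0$ Borel and square-integrable, $P_tf(\Lambda)=\int_{\mathbb R^d}p_{\mathbb R^d}(t,\vs)f(\varphi_{\vs}(\Lambda))\,d\vs$ depends only on the restriction of $f$ to $\mathrm{orb}(\Lambda)$. If a kernel $p_\mu$ existed, then for any Borel $A$ with $\mu(A)=0$ we would get $P_t\mathbbm 1_A=\int_A p_\mu(t,\cdot,\Lambda_2)\mu(d\Lambda_2)=0$ $\mu$-a.e.; but taking $A=\mathrm{orb}(\Lambda)\cap\OLe$ — more precisely, fix $\Lambda$, take $A$ to be a single horizontal slice $\phi^{-1}(\{\Lambda'\}\times B')$, which has $\mu(A)=0$ — we can still arrange $P_t\mathbbm 1_{A}(\Lambda)=\int_{B'}p_{\mathbb R^d}(t,\vs)\,d\vs>0$ for $\Lambda=\phi^{-1}(\Lambda',\vec 0)$, and by $\mathbb R^d$-invariance of $\mu$ this positivity persists on a set of positive $\mu$-measure (the translates $\varphi_{\vt}(A)$ for $\vt$ in a small ball sweep out a set $\phi^{-1}(\{\Lambda'\}\times \text{ball})$, still $\mu$-null, so one must instead thicken $A$ transversally). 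This last bookkeeping — producing a set $A$ that is $\mu$-null yet on which $P_t\mathbbm 1_A$ is bounded away from zero on a set of positive measure — is the main obstacle, and it is handled by taking $A=\phi^{-1}(E\times B')$ with $E$ a clopen subset of $\mathcal C$ of \emph{positive but arbitrarily small} $\nu_{\mathcal C}$-measure: then $\mu(A)=\nu_{\mathcal C}(E)\lambda^d(B')$ can be made smaller than any prescribed $\varepsilon$, while $P_t\mathbbm 1_A\ge\int_{B''}p_{\mathbb R^d}(t,\vs)\,d\vs=:c>0$ on $\phi^{-1}(E\times B'')$, a set of $\mu$-measure $\nu_{\mathcal C}(E)\lambda^d(B'')>0$. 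The kernel representation would give $\int_\Omega P_t\mathbbm 1_A\,d\mu=\int_\Omega\mu(d\Lambda_1)\int_A p_\mu(t,\Lambda_1,\Lambda_2)\mu(d\Lambda_2)\le \|p_\mu(t,\cdot,\cdot)\|$-type bound that tends to $0$ as $\mu(A)\to 0$ provided $p_\mu$ is, say, locally bounded — but without such an a priori bound one argues directly: $\int_\Omega P_t\mathbbm 1_A\,d\mu=\mu(A)$ by invariance and conservativity, which does tend to $0$, yet $\int_\Omega P_t\mathbbm 1_A\,d\mu\ge c\cdot\nu_{\mathcal C}(E)\lambda^d(B'')$, and since both sides are $\Theta(\nu_{\mathcal C}(E))$ this alone is not yet a contradiction. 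The genuine contradiction therefore requires the kernel: comparing $\mathbbm 1_A$ with $\mathbbm 1_{A'}$ for $A'=\phi^{-1}(E\times B')$ with $B'$ replaced by a \emph{disjoint} ball $B_1'$ in the same chart but same transversal set $E$ shows $p_\mu(t,\Lambda_1,\cdot)$ would have to be carried by each orbit-slice separately, forcing it to be concentrated on the single orbit $\mathrm{orb}(\Lambda_1)$ — a $\mu$-null set — whence $P_tf(\Lambda_1)=0$ $\mu$-a.e.\ for all $f$, contradicting $P_t\mathbbm 1=\mathbbm 1$. I expect the referee-proof write-up to organize this as: (i) non-atomicity of $\nu_{\mathcal C}$ $\Rightarrow$ every orbit and every horizontal slice is $\mu$-null; (ii) Lemma~\ref{L:symmetric}(\ref{L:repstillholds}) $\Rightarrow$ $P_tf$ depends only on $f|_{\mathrm{orb}(\Lambda)}$; (iii) existence of $p_\mu$ $\Rightarrow$ $p_\mu(t,\Lambda_1,\cdot)$ is supported on $\mathrm{orb}(\Lambda_1)$, a $\mu$-null set $\Rightarrow$ $P_t\equiv 0$, contradicting conservativity.
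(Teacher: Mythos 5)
Your final three-step outline --- orbits are $\mu$-null, $P_tf(\Lambda)$ sees only $f|_{\orb(\Lambda)}$, hence any kernel $p_\mu(t,\Lambda_1,\cdot)$ would be carried by the null set $\orb(\Lambda_1)$ and force $P_t\mathbbm{1}=0$, contradicting conservativity --- is exactly the paper's proof, which writes it in three lines as $1=P_t\mathbbm{1}(\Lambda_1)=\int_{(\orb(\Lambda_1))^{c}}p_\mu(t,\Lambda_1,\Lambda_2)\,\mu(d\Lambda_2)=\mbbP(X_t^{\Lambda_1}\notin\orb(\Lambda_1))=0$. The lengthy detour through cylinder-set test functions is unnecessary; lead with the slick argument you identified, since it is correct and complete.
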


\begin{proof}
Otherwise, since $\mu(\orb(\Lambda_1))=0$, one would obtain that 
\begin{align*}
1=P_t\mathbbm{1}(\Lambda_1)&=\int_\Omega p_\mu(t,\Lambda_1,\Lambda_2)\,\mu(d\Lambda_2)\\
&=\int_{(\orb(\Lambda))^{c}}p_\mu(t,\Lambda_1,\Lambda_2)\,\mu(d\Lambda_2)=\mbbP(X_t^{\Lambda_1}\notin\orb(\Lambda_1)),
\end{align*}
a contradiction.
\end{proof}

  A closely related fact is that, as stated in Theorem \ref{T:main_L2sg}, the semigroup does not improve integrability. This fact can be deduced from the following lemma.
\begin{lemma}\label{L:nothyper}
For every $2<q< +\infty$ there exists a function $f\in L^2(\Omega,\mu)$ such that for all $t>0$ we have $\left\|P_tf\right\|_{L^q(\Omega,\mu)}=+\infty$. 
\end{lemma}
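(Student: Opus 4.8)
The plan is to build $f$ as a product of a bump in the Euclidean (leaf) direction and a carefully chosen weight in the transversal (Cantor) direction, inside a single foliated chart, and then to use that by Lemma~\ref{L:symmetric}\,(\ref{L:repstillholds}) the semigroup $(P_t)_{t>0}$ acts along each orbit as the Euclidean heat semigroup. Fix $\Lambda_0\in\mho$ and $\varepsilon_*>0$ small enough that $O_{\Lambda_0,\varepsilon_*}=\phi^{-1}(\mathcal C_{\Lambda_0,\varepsilon_*}\times B_{\varepsilon_*}(\vzer))$ with $\phi=\phi_{\Lambda_0,\varepsilon_*}$ as in \eqref{e-phi} and $\mu\circ\phi^{-1}=\nu_{\mathcal{C}}\times\lambda^d|_{B_{\varepsilon_*}(\vzer)}$ as in \eqref{E:localprodmeas}. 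Fix $\psi\in C_c^\infty(B_{\varepsilon_*}(\vzer))$ with $\psi\ge0$, $\psi\not\equiv0$, and a Borel weight $a\colon\mathcal C_{\Lambda_0,\varepsilon_*}\to[0,\infty)$ to be chosen. Define $f$ on $\Omega$ by $f\cphi(\Lambda',\vt):=a(\Lambda')\psi(\vt)$ on the chart and $f:=0$ off $O_{\Lambda_0,\varepsilon_*}$. Then $f\ge0$, $f$ is Borel, and $\|f\|_{L^2(\Omega,\mu)}^2=\|a\|_{L^2(\nu_{\mathcal{C}})}^2\,\|\psi\|_{L^2(\mathbb{R}^d)}^2$, so $f\in L^2(\Omega,\mu)$ as soon as $a\in L^2(\nu_{\mathcal{C}})$.

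Next I would record the lower bound. Since $f\ge0$ and $f(\varphi_{\vu}(\Lambda'))\ge a(\Lambda')\psi(\vu)$ for every $\vu\in\mathbb{R}^d$ (with equality for $\vu\in B_{\varepsilon_*}(\vzer)$ and $\psi\equiv0$ otherwise), Lemma~\ref{L:symmetric}\,(\ref{L:repstillholds}) gives, for $\nu_{\mathcal{C}}\times\lambda^d$-a.e.\ $(\Lambda',\vt)\in\mathcal C_{\Lambda_0,\varepsilon_*}\times B_{\varepsilon_*}(\vzer)$,
\[
P_tf(\varphi_{\vt}(\Lambda'))=\int_{\mathbb{R}^d}p_{\mathbb{R}^d}(t,\vs)\,f(\varphi_{\vs+\vt}(\Lambda'))\,d\vs\ \ge\ a(\Lambda')\,\Psi_t(\vt),\qquad\text{where}\qquad \Psi_t(\vt):=\int_{\mathbb{R}^d}p_{\mathbb{R}^d}(t,\vs)\,\psi(\vs+\vt)\,d\vs .
\]
After the change of variables $\vu=\vs+\vt$ and using $p_{\mathbb{R}^d}(t,\cdot)=p_{\mathbb{R}^d}(t,-\cdot)$, the function $\Psi_t$ is the Euclidean heat evolution of $\psi$ at time $t$; hence it is continuous and $\Psi_t>0$ on all of $\mathbb{R}^d$, so $\kappa_q(t):=\int_{B_{\varepsilon_*}(\vzer)}\Psi_t^q\,d\lambda^d\in(0,\infty)$ for every $t>0$. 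Using \eqref{E:localprodmeas} and Tonelli,
\[
\|P_tf\|_{L^q(\Omega,\mu)}^q\ \ge\ \int_{O_{\Lambda_0,\varepsilon_*}}|P_tf|^q\,d\mu\ \ge\ \kappa_q(t)\int_{\mathcal C_{\Lambda_0,\varepsilon_*}}a^q\,d\nu_{\mathcal{C}} .
\]
Thus the whole statement reduces to choosing $a\in L^2(\nu_{\mathcal{C}})$ with $\int a^q\,d\nu_{\mathcal{C}}=+\infty$, and this is the step where the structure of $\nu_{\mathcal{C}}$ must be used — the main obstacle.

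To produce such an $a$ I would show $L^q(\nu_{\mathcal{C}})\subsetneq L^2(\nu_{\mathcal{C}})$, for which it suffices that $\mho$ carries clopen sets of arbitrarily small positive $\nu_{\mathcal{C}}$-measure. Since $\nu_{\mathcal{C}}$ is a finite measure it has at most countably many atoms, while $\mho$ is an (uncountable) Cantor set, so there is $\Lambda_0\in\mho$ with $\nu_{\mathcal{C}}(\{\Lambda_0\})=0$; take this $\Lambda_0$ above. By repetitivity (together with Assumption~\ref{A:uniqueergodic}, cf.\ the theorem of \cite{LMS} quoted above) every cluster of $\Lambda_0$ recurs with bounded gaps, so $\nu_{\mathcal{C}}(\mathcal C_{\Lambda_0,\varepsilon})=\operatorname{freq}\big(\Lambda_0\cap B_{1/\varepsilon}(\vzer),\Lambda_0\big)>0$ for every $\varepsilon>0$, while $\nu_{\mathcal{C}}(\mathcal C_{\Lambda_0,\varepsilon})\downarrow\nu_{\mathcal{C}}(\{\Lambda_0\})=0$ as $\varepsilon\downarrow0$ by continuity from above. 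Hence I can pick $\varepsilon_*>\varepsilon_1>\varepsilon_2>\cdots\to0$ with $v_n:=\nu_{\mathcal{C}}(\mathcal C_{\Lambda_0,\varepsilon_n})$ strictly decreasing and $v_n\le 2^{-nq/(q-2)}$ for $n\ge2$, and set $E_n:=\mathcal C_{\Lambda_0,\varepsilon_n}\setminus\mathcal C_{\Lambda_0,\varepsilon_{n+1}}$: pairwise disjoint clopen subsets of $\mathcal C_{\Lambda_0,\varepsilon_*}$ with $c_n:=\nu_{\mathcal{C}}(E_n)=v_n-v_{n+1}\in(0,v_n]$. Then $a:=\sum_{n\ge1}c_n^{-1/q}\,\mathbbm 1_{E_n}$ satisfies
\[
\int a^q\,d\nu_{\mathcal{C}}=\sum_{n\ge1}c_n^{-1}c_n=+\infty,\qquad\qquad \int a^2\,d\nu_{\mathcal{C}}=\sum_{n\ge1}c_n^{\,1-2/q}\le c_1^{\,1-2/q}+\sum_{n\ge2}2^{-n}<\infty ,
\]
using $q>2$. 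Feeding this $a$ into the displayed lower bound gives $\|P_tf\|_{L^q(\Omega,\mu)}^q\ge\kappa_q(t)\cdot(+\infty)=+\infty$ for every $t>0$, which is the assertion. The remaining points are routine: the measurability of $f$ (as $a$ is a countable combination of indicators of clopen sets pulled back through the homeomorphism $\phi$), the pointwise inequality $f(\varphi_{\vu}(\Lambda'))\ge a(\Lambda')\psi(\vu)$, and the continuity and strict positivity of $\Psi_t$.
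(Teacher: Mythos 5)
Your proof is correct and follows essentially the same route as the paper's: a product function $f=a\otimes\psi$ supported in one foliated chart, with the leafwise factor evolving under the Euclidean heat semigroup to something strictly positive, and the transversal factor chosen in $L^2(\nu_{\mathcal C})\setminus L^q(\nu_{\mathcal C})$ so that Tonelli forces $\left\|P_tf\right\|_{L^q(\Omega,\mu)}=+\infty$. The only (harmless) differences are cosmetic: you make the existence of arbitrarily small positive-measure transversal sets explicit via nested cylinder sets around a non-atom, and you use a smooth bump instead of the paper's indicator $\mathbbm{1}_{B'}$ in the leaf direction.
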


\begin{proof}
Let $\mathcal{O}=\phi^{-1}(\mathcal{C}\times B)$ be an open set as in (\ref{E:transcylsets}) and let $(E_k)_{k=1}^\infty$ be a sequence of pairwise disjoint subsets $E_k$ of $\mathcal{C}$ of positive measure $0<\nu(E_k)<1$  such that $\sum_{k=1}^\infty\nu(E_k)^{s_+}<+\infty$ for some $0<s_+<1$. Since,
because of the unique ergodicity,  
there are subsets of $\mathcal{C}$ of nonzero but arbitrarily small $\nu$-measure, 
such a sequence can be found (otherwise we could find an orbit with positive $\mu$-measure, a contradiction). 
Let $s_-:=\inf\left\lbrace 0<s\leq s_+: \sum_{k=1}^\infty \nu(E_k)^s<+\infty\right\rbrace$, choose a number $s$ such that $s_-<s<1$ and $1+q(s-1)/2<s_-$. Then the function
\[f_0=\sum_{k=1}^\infty \nu(E_k)^{(s-1)/2}\mathbf{1}_{E_k}\]
is in $L^2(\mathcal{C},\nu)$ but $\left\|f_0\right\|_{L^q(\mathcal{C},\nu)}=+\infty$. Now $F_0=\mathbf{1}_{B'}$, where $B'$ is a nonempty open Euclidean ball whose closure is contained in $B$, and consider the function $f(\overline{\Lambda})=f_0(\Lambda')F_0(\vt\,)$, $(\Lambda',\vt\,)=\phi(\overline{\Lambda})$ on $\mathcal{O}$ as in (\ref{e-prod}). Clearly $f\in L^2(\Omega,\mu)$. However, for any $t>0$ we have
\[\left\|P_tf\right\|_{L^q(\Omega,\mu)}^q\geq \left\|f_0\right\|_{L^q(\mathcal{C},\nu)}^q\left(\int_B\left|\int_{\mathbb{R}^d} p_{\mathbb{R}^d}(t,\vs\,)\mathbf{1}_{B'}(\vt+\vs\,)d\vs\,\right|^q d\vt\,\right)=+\infty,\]
note that the second factor on the right hand side is strictly positive.
\end{proof}

\subsection{$L^2$-generator and quadratic forms}
Let $(\mathcal{L},\mathcal{D}(\mathcal{L}))$ be the $L^2(\Omega,\mu)$-generator of $(P_t)_{t>0}$, i.e. the unique non-positive definite self-adjoint operator in $L^2(\Omega,\mu)$ defined by 
\[
\mathcal{D}(\mathcal{L}):=\left\lbrace f\in L^2(\Omega,\mu): \lim_{t\to 0}\frac{1}{t}(P_tf-f) \text{ exists strongly in $L^2(\Omega,\mu)$}\right\rbrace
\]
and 
\[
\mathcal{L}f:=\lim_{t\to 0}\frac{1}{t}(P_tf-f),\quad f\in \mathcal{D}(\mathcal{L}).
\]

The following lemma is implied by (\ref{E:Fellerdomain}), Lemma~\ref{T:TisSg}, and the density of $C_{tlc}^\infty(\Omega)$ in $C(\Omega)$. 
\begin{lemma}\label{L:L2domain} 
We have $C^2(\Omega)\subset \mathcal{D}(\mathcal{L}_{C(\Omega)})\subset \mathcal{D}(\mathcal{L})$, and for $f\in C^2(\Omega)$ the identity
\begin{equation}\label{E:justDelta}
\mathcal{L}f=\frac12\Delta f
\end{equation}
holds in $L^2(\Omega,\mu)$. The space $C_{tlc}^\infty(\Omega)$ is dense in $\mathcal{D}(\mathcal{L})$ and $\mathcal{L}$ is a local operator.
\end{lemma}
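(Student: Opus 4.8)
The plan is to deduce everything from the corresponding facts about the Feller generator $\mathcal{L}_{C(\Omega)}$ established in Lemma~\ref{T:TisSg}, using that $(P_t)_{t>0}$ is the $L^2(\Omega,\mu)$-extension of $(T_t)_{t>0}$ (Notation~\ref{nPt}) and that $\mu$ is a finite measure on the compact space $\Omega$, so that uniform convergence implies $L^2(\Omega,\mu)$-convergence. First I would record the inclusion $C^2(\Omega)\subset\mathcal{D}(\mathcal{L}_{C(\Omega)})$, which is exactly the content of Lemma~\ref{T:TisSg}. For the inclusion $\mathcal{D}(\mathcal{L}_{C(\Omega)})\subset\mathcal{D}(\mathcal{L})$: if $f\in\mathcal{D}(\mathcal{L}_{C(\Omega)})$, then $\tfrac1t(T_tf-f)\to\mathcal{L}_{C(\Omega)}f$ in $C(\Omega)$, i.e.\ uniformly; since $\mu(\Omega)=1$, $\|g\|_{L^2(\Omega,\mu)}\leq\|g\|_\infty$ for $g\in C(\Omega)$, so the same difference quotients converge in $L^2(\Omega,\mu)$ to (the $\mu$-class of) $\mathcal{L}_{C(\Omega)}f$. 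Because $P_tf=T_tf$ for $f\in C(\Omega)$, this shows $f\in\mathcal{D}(\mathcal{L})$ and $\mathcal{L}f=\mathcal{L}_{C(\Omega)}f$ in $L^2(\Omega,\mu)$. Combining this with $C^2(\Omega)\subset\mathcal{D}(\mathcal{L}_{C(\Omega)})$ and identity~\eqref{E:LequalsDelta} from Lemma~\ref{T:TisSg} yields $\mathcal{L}f=\tfrac12\Delta f$ in $L^2(\Omega,\mu)$ for $f\in C^2(\Omega)$, which is~\eqref{E:justDelta}.

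Next I would treat the core statement. By Lemma~\ref{T:TisSg}, $C^\infty_{tlc}(\Omega)$ is a core for $\mathcal{L}_{C(\Omega)}$, i.e.\ dense in $\mathcal{D}(\mathcal{L}_{C(\Omega)})$ with respect to the Feller graph norm $\|f\|_\infty+\|\mathcal{L}_{C(\Omega)}f\|_\infty$. Since the $L^2(\Omega,\mu)$-graph norm is dominated by the Feller graph norm (again because $\mu$ is a probability measure and these functions are continuous), $C^\infty_{tlc}(\Omega)$ is dense in $\mathcal{D}(\mathcal{L}_{C(\Omega)})$ for the $L^2(\Omega,\mu)$-graph norm as well. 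It therefore remains to show $\mathcal{D}(\mathcal{L}_{C(\Omega)})$ is dense in $\mathcal{D}(\mathcal{L})$ for the $L^2$-graph norm. For this I would invoke the standard resolvent characterization of a core (see e.g.\ the Hille--Yosida/Reed--Simon toolkit cited for Lemma~\ref{T:TisSg}): a subspace $\mathcal{D}_0\subset\mathcal{D}(\mathcal{L})$ is a core iff $(1-\mathcal{L})\mathcal{D}_0$ is dense in $L^2(\Omega,\mu)$. Using Remark~\ref{R:Fellerresolvent}, the $1$-resolvent $R_1$ of $(T_t)_{t>0}$ maps $C(\Omega)$ into $\mathcal{D}(\mathcal{L}_{C(\Omega)})$ (indeed $R_1(C^\infty(\Omega))\subset C^\infty(\Omega)\subset\mathcal{D}(\mathcal{L}_{C(\Omega)})$), and $(1-\mathcal{L})R_1g=g$ for $g\in C(\Omega)$; since $C(\Omega)$ is dense in $L^2(\Omega,\mu)$, the range $(1-\mathcal{L})\mathcal{D}(\mathcal{L}_{C(\Omega)})\supseteq C(\Omega)$ is dense, so $\mathcal{D}(\mathcal{L}_{C(\Omega)})$ — hence $C^\infty_{tlc}(\Omega)$ — is a core for $\mathcal{L}$.

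Finally, locality: $\mathcal{L}$ is local in the sense that if $f\in\mathcal{D}(\mathcal{L})$ is ($\mu$-a.e.) supported in an open set $O\subset\Omega$ then $\mathcal{L}f$ is ($\mu$-a.e.) supported in $O$. By the core property just proved it suffices to verify this on $C^\infty_{tlc}(\Omega)$, where $\mathcal{L}f=\tfrac12\Delta f$ and the claim is precisely the locality of $\Delta$ recorded in Lemma~\ref{T:TisSg} (via Corollary~\ref{C:local}); the general case then follows by approximating $f\in\mathcal{D}(\mathcal{L})$ with support in $O$ by core elements with support in $O$ and passing to the $L^2$-limit, exactly as in the last paragraph of the proof of Lemma~\ref{T:TisSg}. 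The only mildly delicate point — and the one I would be most careful about — is the core argument in the second paragraph: one must make sure the Feller-level resolvent identity and the density of $C(\Omega)$ in $L^2(\Omega,\mu)$ are combined correctly so that the $L^2$-range of $(1-\mathcal{L})$ on $\mathcal{D}(\mathcal{L}_{C(\Omega)})$ is genuinely dense; everything else is a routine transfer of uniform estimates to $L^2$ estimates using $\mu(\Omega)=1$.
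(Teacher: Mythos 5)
Your proposal is correct and follows essentially the same route as the paper, which disposes of this lemma in one line by citing \eqref{E:Fellerdomain}, Lemma~\ref{T:TisSg}, and the density of $C^\infty_{tlc}(\Omega)$ in $C(\Omega)$: everything is transferred from the Feller level to $L^2(\Omega,\mu)$ using $\|\cdot\|_{L^2(\Omega,\mu)}\leq\|\cdot\|_\infty$. Your resolvent-range argument for the core property (showing $(1-\mathcal{L})\mathcal{D}(\mathcal{L}_{C(\Omega)})\supseteq C(\Omega)$ is dense in $L^2(\Omega,\mu)$) actually supplies a step the paper leaves implicit, and is the right way to make the graph-norm density rigorous; the only residual imprecision, shared with the paper's own treatment of locality in Lemma~\ref{T:TisSg}, is that approximating a function supported in $O$ by core elements supported in $O$ tacitly uses multiplication by smooth cutoffs from Lemma~\ref{L:partofunity}.
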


\begin{remark}\label{R:productform}
Given an open set $O=\phi^{-1}(\mathcal{C}\times B)$ as in (\ref{E:transcylsets}) we write $(\phi^{-1})^\ast L^2(\mathcal{C},\nu_{\mathcal{C}})\otimes C_c^2(B))$ for the space of finite linear combinations of functions $f$ of product form (\ref{e-prod}) with $f_0\in L^2(\mathcal{C},\nu_{\mathcal{C}})$ and $F_0\in C_c^2(B)$. Extending it by zero, we consider such a function $f$ as a function on all of $\Omega$. For any open set $O=\phi^{-1}(\mathcal{C}\times B)$ as in (\ref{E:transcylsets}) the space $(\phi^{-1})^\ast (L^2(\mathcal{C},\nu_{\mathcal{C}})\otimes C_c^2(B))$ is contained in $\mathcal{D}(\mathcal{L})$, and on this space (\ref{E:justDelta}) holds. This is related to Lemmas~\ref{L:LO} and \ref{L:LO2}.
\end{remark}

%
%


From the general theory of semigroups, see for instance~\cite[Section VIII.6]{RS80}, there is a unique closed quadratic form $(\mathcal{E},\mathcal{D}(\mathcal{E}))$ on $L^2(\Omega,\mu)$ associated with the Markovian semigroup $(P_t)_{t>0}$, which is defined by 
\begin{equation}\label{E:Dformdomain}
\mathcal{D}(\mathcal{E}):=\left\lbrace f\in L^2(\Omega,\mu): \sup_{t>0}\frac{1}{t}\left\langle f-P_tf,f\right\rangle_{L^2(\Omega,\mu)}<+\infty\right\rbrace
\end{equation}
and 
\begin{equation}\label{E:Dform}
\mathcal{E}(f):=\lim_{t\to 0}\frac{1}{t}\left\langle f-P_t f, g\right\rangle_{L^2(\Omega,\mu)},\quad f,g\in\mathcal{D}(\mathcal{E}).
\end{equation}
To the operator $(\mathcal{L},\mathcal{D}(\mathcal{L}))$ the form $(\mathcal{E},\mathcal{D}(\mathcal{E}))$ is uniquely related by the identity
\begin{equation}\label{E:L2gen}
\mathcal{E}(f,g)=-\left\langle \mathcal{L}f,g\right\rangle_{L^2(\Omega,\mu)}
\end{equation}
for all $f\in \mathcal{D}(\mathcal{L})$ and $g\in\mathcal{D}(\mathcal{E})$. Moreover, it is a Dirichlet form, see e.g.~\cite[Theorems 1.3.1 and 1.4.1]{FOT94}. 

The following lemma should be considered together with Lemma  \ref{L:Soboanddomains} below. 
\begin{lemma}\label{L:Dform}
The Dirichlet form $(\mathcal{E},\mathcal{D}(\mathcal{E}))$ is regular and strongly local. The space $C_{tlc}^\infty(\Omega)$ is dense in $\mathcal{D}(\mathcal{E})$. The gradient operator $\nabla$ extends to a closed unbounded operator $\nabla:L^2(\OO,\mu)\to L^2(\OO,\mathbb{R}^d,\mu)$
with dense domain $\mathcal{D}(\mathcal{E})$, and the identity
\begin{equation}\label{e-ee}
\mathcal{E}(f,g)=\frac12\int_{\Omega}\left\langle \nabla f,\nabla g\right\rangle d\mu
\end{equation}
holds for any $f,g\in\mathcal{D}(\mathcal{E})$. Moreover, $(\mathcal{E},\mathcal{D}(\mathcal{E}))$ admits a carr\'e du champ given by $\Gamma(f,g):=\left\langle \nabla f,\nabla g\right\rangle d\mu$.
\end{lemma}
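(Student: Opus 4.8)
The plan is to reduce all claims about the Dirichlet form $(\mathcal{E},\mathcal{D}(\mathcal{E}))$ to the corresponding local statements on translated cylinder sets $\OLe$, where the product structure $\mu\circ\phi^{-1}=\nu_{\mathcal{C}}\times\lambda^d|_B$ from~\eqref{E:localprodmeas} and the fiberwise Euclidean picture of Lemma~\ref{L:W2kOLe} reduce everything to classical facts about the Dirichlet form $\tfrac12\int|\nabla|^2$ on a Euclidean ball. First I would establish the form expression~\eqref{e-ee}: for $f,g\in C^\infty_{tlc}(\Omega)$, the identity $\mathcal{E}(f,g)=-\langle\mathcal{L}f,g\rangle=-\tfrac12\langle\Delta f,g\rangle$ follows from Lemma~\ref{L:L2domain}, and integration by parts (Lemma~\ref{L:ibP}) turns the right-hand side into $\tfrac12\int_\Omega\langle\nabla f,\nabla g\rangle\,d\mu$. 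Since $C^\infty_{tlc}(\Omega)$ is dense in $\mathcal{D}(\mathcal{L})$ hence a form core (or one can invoke Theorem~\ref{T:main_Sobolev}\eqref{sob3} together with the $\mathcal{W}^{1,2}$-characterization), $\mathcal{E}$ extends to~\eqref{e-ee} on $\mathcal{D}(\mathcal{E})$, and this simultaneously identifies $\mathcal{D}(\mathcal{E})$ with $\mathcal{W}^{1,2}(\Omega,\mu)$ and shows $\nabla$ is a closed operator with that domain (closedness of $\nabla$ is exactly closedness of the form). The carré du champ $\Gamma(f,g)=\langle\nabla f,\nabla g\rangle$ is then read off the integrand, using the Leibniz rule for $\nabla$ on $C^\infty_{tlc}(\Omega)$ (valid by Lemma~\ref{L:ibP} and the remarks on weak derivatives) to verify the defining relation $\int_\Omega h\,d\Gamma(f,f)=\mathcal{E}(f,fh)-\tfrac12\mathcal{E}(f^2,h)$.

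Next I would handle \emph{regularity}: one needs $C^\infty_{tlc}(\Omega)\cap C_c(\Omega)$ to be dense both in $(\mathcal{D}(\mathcal{E}),\mathcal{E}_1^{1/2})$ and in $C(\Omega)$ (uniform norm). Since $\Omega$ is compact, $C_c(\Omega)=C(\Omega)$ and the second requirement is just that $C^\infty_{tlc}(\Omega)$ is uniformly dense in $C(\Omega)$ — this is Lemma~\ref{L:tlcdense} type material, available from the cited background on transversally locally constant functions, together with the partition of unity from Lemma~\ref{L:partofunity} and the mollification used in Lemma~\ref{L:local_approx}. Form-density is Lemma~\ref{L:Dform}'s own assertion that $C^\infty_{tlc}(\Omega)$ is $\mathcal{E}_1$-dense in $\mathcal{D}(\mathcal{E})$, which follows from Theorem~\ref{T:main_Sobolev}\eqref{sob3} once $\mathcal{D}(\mathcal{E})=\mathcal{W}^{1,2}(\Omega,\mu)$ is known. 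For \emph{strong locality}: if $f,g\in\mathcal{D}(\mathcal{E})$ with $g$ constant on a neighborhood of $\supp f$, then localizing via a partition of unity and passing to the fiberwise Euclidean derivatives $h^\ast_{\Lambda'}f$, $h^\ast_{\Lambda'}g$ through~\eqref{e-intertwine}, one has $\nabla_{\mathbb{R}^d}(h^\ast_{\Lambda'}g)=0$ on the support of $h^\ast_{\Lambda'}f$ for $\nu_{\mathcal{C}}$-a.e.\ $\Lambda'$, so the integrand $\langle\nabla f,\nabla g\rangle$ vanishes $\mu$-a.e.\ and $\mathcal{E}(f,g)=0$; strong locality (allowing $g$ to differ from a constant only far from $\supp f$) is immediate since $\Gamma$ is a genuine $\mathbb{R}^d$-gradient form. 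Strong locality also excludes a killing part, and the absence of a jumping part is clear because $\Gamma$ is absolutely continuous with carré du champ and there is no singular or jump contribution in the fiberwise Euclidean forms.

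The main obstacle, and the place requiring the most care, is the identification $\mathcal{D}(\mathcal{E})=\mathcal{W}^{1,2}(\Omega,\mu)$ underpinning everything else — and in particular making the closedness-of-$\nabla$ claim precise. The subtlety is that $\mathcal{D}(\mathcal{E})$ is defined abstractly via~\eqref{E:Dformdomain} from the semigroup, while $\mathcal{W}^{1,2}$ is defined through distributional derivatives against $C^\infty(\Omega)$ test functions; bridging these requires knowing that the distributional gradient on $\Omega$ really does decompose into the fiberwise Euclidean gradients (Lemma~\ref{L:W22loc} and the intertwining~\eqref{e-intertwine}) \emph{and} that the abstract form closure of $\tfrac12\int\langle\nabla\cdot,\nabla\cdot\rangle d\mu$ from $C^\infty_{tlc}(\Omega)$ coincides with $\mathcal{W}^{1,2}(\Omega,\mu)$ rather than a strictly smaller space. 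This is handled by the fiber decomposition of Lemma~\ref{L:W2kOLe} (the isometry~\eqref{e-key-sob-OLe}): on each $\OLe$, $\mathcal{W}^{1,2}(\OLe,\mu)$ is a direct integral $\int^\oplus_{\mathcal{C}}W^{1,2}(B)\,\nu_{\mathcal{C}}(d\Lambda')$, and the classical density of $C^\infty_c$ in $W^{1,2}$ on Euclidean balls, applied fiberwise and combined with the transversal projections $\Phi_i$ of Lemma~\ref{L:local_approx}, yields $\mathcal{E}_1$-density of $C^\infty_{tlc}$ and hence the domain identification. Once this is in hand, closedness of $\nabla:L^2(\Omega,\mu)\to L^2(\Omega,\mathbb{R}^d,\mu)$ is automatic because the graph norm of $\nabla$ is precisely the $\mathcal{W}^{1,2}$-norm, which is complete by Theorem~\ref{T:main_Sobolev}\eqref{sob1}; the remaining assertions (regular, strongly local, carré du champ) then follow as sketched above by localization and reduction to the Euclidean case.
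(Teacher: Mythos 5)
Your proof is correct, and its core --- deriving \eqref{e-ee} on $C^\infty_{tlc}(\Omega)$ from $\mathcal{E}(f,g)=-\langle\mathcal{L}f,g\rangle_{L^2(\Omega,\mu)}$, $\mathcal{L}f=\tfrac12\Delta f$ and the integration by parts of Lemma~\ref{L:ibP}, then extending by density of this core --- is exactly the paper's argument. Where you diverge is in how the remaining properties are obtained: the paper cites \cite[Lemma 2.8]{BBKT10} for regularity, deduces strong locality from conservativeness together with the locality of $\mathcal{L}$ established in Lemma~\ref{L:L2domain}, and obtains the carr\'e du champ by invoking \cite[Theorem I.4.2.1]{BH91}, whereas you verify all three directly from the gradient representation (the two density conditions for regularity, the vanishing of $\langle\nabla f,\nabla g\rangle$ when $g$ is constant near $\supp f$ for strong locality, and the algebraic identity for $\Gamma$). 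Your route is more self-contained but leans on the identification $\mathcal{D}(\mathcal{E})=\mathcal{W}^{1,2}(\Omega,\mu)$, which the paper postpones to Lemma~\ref{L:Soboanddomains}; your derivation of it (equivalence of the $\mathcal{E}_1$- and $\mathcal{W}^{1,2}$-norms on the common core $C^\infty_{tlc}(\Omega)$, closedness of both forms, and Theorem~\ref{T:main_Sobolev}(\ref{sob3})) is sound and arguably cleaner than the duality argument given there, though strictly speaking the lemma does not need it --- density of $C^\infty_{tlc}(\Omega)$ in $\mathcal{D}(\mathcal{L})$ already makes it a form core, as you note parenthetically. One constant to watch: with $\mathcal{E}=\tfrac12\int\langle\nabla\cdot,\nabla\cdot\rangle\,d\mu$ the relation you quote, $\int h\,d\Gamma(f,f)=\mathcal{E}(f,fh)-\tfrac12\mathcal{E}(f^2,h)$, produces $\tfrac12|\nabla f|^2$ rather than the stated $\Gamma(f,f)=|\nabla f|^2$; the normalization matching \cite{BH91} and the lemma is $\int h\,\Gamma(f,g)\,d\mu=\mathcal{E}(fh,g)+\mathcal{E}(gh,f)-\mathcal{E}(fg,h)$.
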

\begin{proof}
The regularity follows from Lemma~\ref{L:symmetric} and the arguments in~\cite[Lemma 2.8]{BBKT10}, which are classical, but not widely available. 
The strong locality follows from Lemma~\ref{L:L2domain} due to the conservativeness of $(P_t)_{t>0}$ and the locality of $\mathcal{L}$. The latter lemma also implies the density of $C_{tlc}^\infty(\Omega)$. For functions from $C_{tlc}^\infty(\Omega)$ identity (\ref{e-ee}) follows from (\ref{E:L2gen}) and Lemma \ref{L:ibP}. 
By density, the gradient operator $\nabla$ and formula (\ref{e-ee}) extend to $\mathcal{D}(\mathcal{E})$ as stated. The last statement is a direct application of~\cite[Definition I.4.1.2 and Theorem I.4.2.1]{BH91}. 
\end{proof}

\begin{remark}
From~\eqref{E:ergodic} it is immediate that for any F\o lner sequence $(A_n)_n$ and any $f,g\in C_{tlc}^1(\Omega)$ we have
\[
\mathcal{E}(f,g)=\lim_{n\to\infty}\frac{1}{2\lambda^d(A_n)}\int_{A_n}\left\langle \nabla_{\mathbb{R}^d} h_{\Lambda}^\ast f(\vt\,), \nabla_{\mathbb{R}^d} h_{\Lambda}^\ast g(\vt\,)\right\rangle \:d\vt
\]
uniformly for every $\Lambda\in \Omega$, where the orbit homeomorphism $ h_{\Lambda}^\ast$ is defined in \eqref{E:algiso}.
\end{remark}

We record another simple property of $(\mathcal{E},\mathcal{D}(\mathcal{E}))$. Given an open subset $G\subset \mathbb{R}^d$, we write
\begin{equation}
\mathcal{E}_{\mathbb{R}^d}^G(f,g)=\frac12\int_G\left\langle \nabla_{\mathbb{R}^d} f, \nabla_{\mathbb{R}^d} g\right\rangle dx
\end{equation}
whenever $f$ and $g$ are functions on $G$ such that the expression makes sense. For functions with the product form (\ref{e-prod}) we can characterize whether they are in $\mathcal{D}(\mathcal{E})$ or not. 

\begin{lemma}\label{L:productf0F0}
Let $O=\phi^{-1}(\mathcal{C}\times B)$ be an open set of type~\eqref{E:transcylsets} and let $f\in b\mathcal{B}(\Omega)$ with $\supp f\subset O$ be a function of the product form~\eqref{e-prod} for some $f_0\in b\mathcal{B}(\mathcal{C})$ and $F_0\in b\mathcal{B}(B)$ compactly supported. Then, $f\in\mathcal{D}(\mathcal{E})$ if and only if $\mathcal{E}^B_{\mbbR^d}(F_0,F_0)<\infty$. In particular, $\mathcal{D}(\mathcal{E})\not\subset C(\Omega)$ for any $d\geqslant 1$. 
\end{lemma}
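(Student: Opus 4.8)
The strategy is to use the local product structure of $\mu$ from \eqref{E:localprodmeas} together with Lemma~\ref{L:W2kOLe} to reduce the question about $f$ on $\Omega$ to a statement about the Euclidean factor $F_0$ on the ball $B\subset\mathbb{R}^d$. The key point is that for a function $f$ of product form \eqref{e-prod} with $\supp f\subset O=\phi^{-1}(\mathcal{C}\times B)$, the orbit restriction is $h^\ast_{\Lambda'}f(\vt\,)=f_0(\Lambda')F_0(\vt\,)$ for $\nu_{\mathcal{C}}$-a.e.\ $\Lambda'$, so a direct computation gives
\[
\int_{\mathcal{C}}\|h^\ast_{\Lambda'}f\|_{W^{1,2}_{\mathbb{R}^d}(B)}^2\,\nu_{\mathcal{C}}(d\Lambda')
=\Big(\int_{\mathcal{C}}|f_0(\Lambda')|^2\nu_{\mathcal{C}}(d\Lambda')\Big)\cdot\Big(\|F_0\|_{L^2(B)}^2+\|\nabla_{\mathbb{R}^d}F_0\|_{L^2(B)}^2\Big).
\]
Since $f\in b\mathcal{B}(\Omega)$ and $\supp f\subset O$, we have $f\in L^2(\Omega,\mu)$ and $\|f_0\|_{L^2(\mathcal{C},\nu_{\mathcal{C}})}\in(0,\infty)$ (we may assume $f_0\not\equiv 0$, else the statement is trivial). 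Hence the left-hand side above is finite if and only if $\|\nabla_{\mathbb{R}^d}F_0\|_{L^2(B)}^2<\infty$, i.e.\ if and only if $\mathcal{E}^B_{\mathbb{R}^d}(F_0,F_0)<\infty$.

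\textbf{Key steps, in order.} First I would invoke Lemma~\ref{L:W2kOLe} with $k=1$: if $f\in\mathcal{W}^{1,2}(O,\mu)$ then $h^\ast_{\Lambda'}f\in W^{1,2}(B)$ for $\nu_{\mathcal{C}}$-a.e.\ $\Lambda'$, and \eqref{e-key-sob-OLe} gives $\|f\|^2_{\mathcal{W}^{1,2}(O,\mu)}=\int_{\mathcal{C}}\|h^\ast_{\Lambda'}f\|^2_{W^{1,2}_{\mathbb{R}^d}(B)}\,\nu_{\mathcal{C}}(d\Lambda')$; conversely, finiteness of the right-hand side implies $f\in\mathcal{W}^{1,2}(O,\mu)$. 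Second, I would identify $\mathcal{D}(\mathcal{E})$ with $\mathcal{W}^{1,2}(\Omega,\mu)$ (this is part of the content of Lemma~\ref{L:Dform} together with Lemma~\ref{L:Soboanddomains} referenced there, and for a function supported in a single chart $O$ it reduces to $f\in\mathcal{W}^{1,2}(O,\mu)$ with the energy given by \eqref{e-ee}). Third, I would compute both sides of \eqref{e-key-sob-OLe} explicitly for the product function, using $h^\ast_{\Lambda'}f=f_0(\Lambda')F_0$ and Fubini/Tonelli to factor the integral, obtaining the displayed identity above. The equivalence then follows because the transversal factor $\|f_0\|^2_{L^2(\mathcal{C},\nu_{\mathcal{C}})}$ is a finite nonzero constant. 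For the final assertion $\mathcal{D}(\mathcal{E})\not\subset C(\Omega)$, I would exhibit a concrete example: take $f_0=\mathbbm{1}_E$ with $E\subset\mathcal{C}$ clopen and $\partial_{\mathcal{C}}E\neq\varnothing$ (so $f_0$ is genuinely non-constant, hence $f$ is discontinuous on $\Omega$ by the argument in Section~\ref{subsec-no-strong-Feller}), and $F_0\in C_c^\infty(B)$ smooth and not identically zero, so that $\mathcal{E}^B_{\mathbb{R}^d}(F_0,F_0)<\infty$ and the first part gives $f\in\mathcal{D}(\mathcal{E})$ while $f\notin C(\Omega)$.

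\textbf{Main obstacle.} The only delicate point is making sure the reduction to the Euclidean ball via Lemma~\ref{L:W2kOLe} is applied correctly to functions that are not \emph{a priori} in $\mathcal{W}^{1,2}$: one needs the second half of that lemma, namely that $f\in L^2(O,\mu)$ automatically has $h^\ast_{\Lambda'}f\in W^{1,2}(B)$ for a.e.\ $\Lambda'$ only \emph{if} $F_0$ already has a weak gradient, and that finiteness of the fibered norm is both necessary and sufficient for membership in $\mathcal{W}^{1,2}$. Concretely, when $F_0$ is merely bounded Borel and compactly supported, $h^\ast_{\Lambda'}f$ need not be weakly differentiable at all, and one must phrase the statement as: $f\in\mathcal{D}(\mathcal{E})=\mathcal{W}^{1,2}(O,\mu)$ iff $F_0\in W^{1,2}(B)$ iff $\mathcal{E}^B_{\mathbb{R}^d}(F_0,F_0)<\infty$. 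Once this bookkeeping is in place, everything else is a routine application of Fubini's theorem and the already-established identification of $\mathcal{D}(\mathcal{E})$ with the Sobolev space; no genuinely new analytic input is required beyond Lemmas~\ref{L:W2kOLe}, \ref{L:Dform}, and the local product form \eqref{E:localprodmeas} of $\mu$.
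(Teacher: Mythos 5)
Your argument is correct, but it takes a genuinely different route from the paper's. The paper proves this lemma in one line by working directly with the semigroup approximation of the form: it combines the explicit representation of $P_t$ on nonnegative Borel $L^2$-functions from Lemma~\ref{L:symmetric}(\ref{L:repstillholds}) with the definition \eqref{E:Dform}, so that for the product function the quantity $\tfrac{1}{t}\langle f-P_tf,f\rangle_{L^2(\Omega,\mu)}$ factors into $\|f_0\|_{L^2(\mathcal{C},\nu_{\mathcal{C}})}^2$ times the corresponding Euclidean semigroup approximation of $\mathcal{E}^B_{\mathbb{R}^d}(F_0,F_0)$; this keeps the proof self-contained within the subsection and in particular does not use the identification $\mathcal{D}(\mathcal{E})=\mathcal{W}^{1,2}(\Omega,\mu)$, which the paper only establishes afterwards in Lemma~\ref{L:Soboanddomains}. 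Your route instead goes through that identification plus the fibered Sobolev norm \eqref{e-key-sob-OLe} and Fubini. This is legitimate (I see no circularity: the proof of Lemma~\ref{L:Soboanddomains} does not invoke the present lemma), and it buys you a cleaner two-sided statement --- the paper's written proof only treats the ``if'' direction, whereas your fiber computation gives ``only if'' for free, since $f\in\mathcal{W}^{1,2}(O,\mu)$ forces $h^\ast_{\Lambda'}f=f_0(\Lambda')F_0\in W^{1,2}(B)$ for some $\Lambda'$ with $f_0(\Lambda')\neq 0$. The price is a reordering of the paper's logical architecture.

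One small slip in your example for the final assertion: a clopen $E\subset\mathcal{C}$ has \emph{empty} boundary in $\mathcal{C}$, so ``$E$ clopen and $\partial_{\mathcal{C}}E\neq\varnothing$'' is contradictory, and $\mathbbm{1}_E$ for clopen $E$ is in fact continuous (it is transversally locally constant). You need $E$ Borel but \emph{not} clopen, chosen so that both $E$ and its complement meet every small cylinder in positive $\nu_{\mathcal{C}}$-measure near some point; this guarantees that no $\mu$-version of $f=\mathbbm{1}_E\otimes F_0$ is continuous, which is the property actually required since elements of $\mathcal{D}(\mathcal{E})$ are $\mu$-classes. With that correction the ``in particular'' follows as you intend.
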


\begin{proof}
If $F_0\in b\mathcal{B}(B)$ has compact support in $B$ and finite $\mathcal{E}^B_{\mathbb{R}^d}$-energy, Lemma~\ref{L:symmetric}(\ref{L:repstillholds}), formula (\ref{E:Dform}) and the corresponding semigroup approximation for $\mathcal{E}^B_{\mathbb{R}^d}$ yield $f\in\mathcal{D}(\mathcal{E})$.
\end{proof}

\begin{remark} Many typical functional inequalities fail to hold for $(\mathcal{E},\mathcal{D}(\mathcal{E}))$. We follow \cite[Proposition II.1 (and the Remark following it)]{Coulhon96} and say that a Dirichlet form $(\mathcal{Q},\mathcal{D}(\mathcal{Q}))$ on $L^2(\Omega,\mu)$ \emph{satisfies a Nash type inequality} if there exist a continuous function $\theta:(0,+\infty)\to (0,+\infty)$ with $\int_0^{+\infty}\frac{ds}{\theta(s)}<+\infty$ and positive constants $c_1$ and $c_2$ such that 
\[\theta\left(c_1\:\left\|f\right\|_{L^2(\Omega,\mu)}^2\right)\leq c_2\:\mathcal{Q}(f,f)\]
for all $f\in\mathcal{D}(\mathcal{Q})$ with $\left\|f\right\|_{L^1(\Omega,\mu)}=1$. For more classical formulations of this inequality see \cite{CKS87} or \cite{Nash58}. By \cite[Proposition II.1 and its proof]{Coulhon96} it follows that if a Dirichlet form satisfies a Nash type inequality then the associated Markov semigroup of self-adjoint operators is ultracontractive (see also \cite[(2.1) Theorem]{CKS87}). 
Therefore Theorem \ref{T:main_L2sg} (respectively Lemma \ref{L:nothyper} and Remark \ref{R:HKs}) imply the following.

\begin{corollary}\label{C:noNash}
The Dirichlet form $(\mathcal{E},\mathcal{D}(\mathcal{E}))$ does not satisfy a Nash type inequality.
\end{corollary}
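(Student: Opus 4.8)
The plan is to argue by contraposition, leaning entirely on the already-established failure of integrability improvement. Suppose, for contradiction, that $(\mathcal{E},\mathcal{D}(\mathcal{E}))$ satisfies a Nash type inequality in the sense made precise in the preceding remark, i.e. with a continuous $\theta:(0,+\infty)\to(0,+\infty)$ obeying $\int_0^{+\infty}\frac{ds}{\theta(s)}<+\infty$. Then, by \cite[Proposition II.1 and its proof]{Coulhon96} (as recalled just above), the associated Markov semigroup of self-adjoint operators $(P_t)_{t>0}$ is ultracontractive: for every $t>0$ the operator $P_t$ maps $L^2(\Omega,\mu)$ boundedly into $L^\infty(\Omega,\mu)$. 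Since $\mu$ is a probability measure, the inclusion $L^\infty(\Omega,\mu)\hookrightarrow L^q(\Omega,\mu)$ is bounded for every $q\in(2,+\infty)$, so $P_t$ would in fact map $L^2(\Omega,\mu)$ boundedly into $L^q(\Omega,\mu)$ for every such $q$ and every $t>0$.

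This contradicts Lemma~\ref{L:nothyper} (equivalently, the last assertion of Theorem~\ref{T:main_L2sg}), which exhibits, for each fixed $q\in(2,+\infty)$, a function $f\in L^2(\Omega,\mu)$ with $\left\|P_tf\right\|_{L^q(\Omega,\mu)}=+\infty$ for all $t>0$. Hence no Nash type inequality can hold. Alternatively, the same conclusion follows from the absence of a heat kernel with respect to $\mu$ (Lemma~\ref{L:noHK}): an ultracontractive extension to a Markov semigroup of self-adjoint operators of a Feller semigroup is known to admit a heat kernel, see Remark~\ref{R:HKs}(ii) and \cite[Section 3]{CKS87}, which would again be a contradiction.

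I do not expect any genuine obstacle here, since the substantive work is already contained in Lemma~\ref{L:nothyper} and in the implication ``Nash type inequality $\Rightarrow$ ultracontractivity'' quoted from \cite{Coulhon96}. The only point requiring a moment of care is the matching of normalizations: the Nash type inequality is imposed on $f\in\mathcal{D}(\mathcal{E})$ with $\left\|f\right\|_{L^1(\Omega,\mu)}=1$, and the finiteness of $\mu$ guarantees $L^2(\Omega,\mu)\subset L^1(\Omega,\mu)$ continuously, so the hypotheses of \cite[Proposition II.1]{Coulhon96} are satisfied in precisely the form needed for the ultracontractivity conclusion.
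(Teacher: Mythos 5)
Your proposal is correct and follows essentially the same route as the paper: the implication ``Nash type inequality $\Rightarrow$ ultracontractivity'' from \cite[Proposition II.1]{Coulhon96}, contradicted by the failure of integrability improvement in Lemma~\ref{L:nothyper} (or, alternatively, by the non-existence of a heat kernel as in Remark~\ref{R:HKs}). The paper states this more tersely, but the logical content, including the reduction from $L^\infty$ to $L^q$ via the finiteness of $\mu$, is identical.
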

\end{remark}

\begin{remark}
Notice also that $(\mathcal{E},\mathcal{D}(\mathcal{E}))$ does not satisfy a local Poincar\'e inequality. More precisely, there are open sets $O\subset \Omega$ of type (\ref{E:transcylsets}) and functions $f\in \mathcal{D}(\mathcal{E})$ such that 
\[
\int_O |\nabla f|^2 d\mu=0 \quad \text{ but }\ \ \int_O|f-f_O|^2d\mu>0.
\]
To see this, let $V$ and $O=\phi^{-1}(\mathcal{C}\times B)$ be open sets of type (\ref{E:transcylsets}) with $\overline{O}\subset V$, and let $\chi\in C_{tlc}^\infty(\Omega)$ be a function with $\supp\chi\subset V$ and $\chi\equiv 1$ on $O$, cf. Corollary \ref{C:bumps}. If $\mathcal{C}=\mathcal{C}_{\Lambda,\varepsilon}$, consider the function $f(\overline{\Lambda}):=\mathbbm{1}_{\mathcal{C}_{\Lambda,\varepsilon'}}(\Lambda')\chi(\vt\,)$ with $0<\varepsilon'<\varepsilon$ such that $\nu_{\mathcal{C}_{\Lambda,\varepsilon'}}<\nu_{\mathcal{C}_{\Lambda,\varepsilon}}$. By Lemma \ref{L:productf0F0}, $f\in\mathcal{D}(\mathcal{E})$, and by locality we have $\int_O |\nabla f|^2 d\mu=0$, but $\left\|f-f_O\right\|_{L^2(O,\mu)}^2>0$.
\end{remark}


\subsection{Caracterization of domains}\label{SS:SoboDom}

We can give the following descriptions of the domains $\mathcal{D}(\mathcal{L})$ and $\mathcal{D}(\mathcal{E})$ in terms of Sobolev spaces. They are to be understood as equalities of vector spaces with equivalent norms.

\begin{lemma}\label{L:Soboanddomains}
We have $\mathcal{D}(\mathcal{L})=\mathcal{W}^{2,2}(\Omega,\mu)$ and $\mathcal{D}(\mathcal{E})=\mathcal{W}^{1,2}(\Omega,\mu)$. 
\end{lemma}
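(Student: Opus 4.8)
The strategy is to establish each equality separately, using the localization results of Section~\ref{S:S} to reduce everything to the Euclidean setting on a translated cylinder set $\OLe=\phi^{-1}(\mathcal{C}\times B)$. The two inclusions $\mathcal{D}(\mathcal{L})\supseteq\mathcal{W}^{2,2}(\Omega,\mu)$ and $\mathcal{D}(\mathcal{E})\supseteq\mathcal{W}^{1,2}(\Omega,\mu)$ will come from approximation: by Theorem~\ref{T:main_Sobolev}(\ref{sob3}) the space $C^\infty_{tlc}(\Omega)$ is dense in each $\mathcal{W}^{k,2}(\Omega,\mu)$, and by Lemmas~\ref{T:TisSg} and~\ref{L:L2domain} (resp.\ Lemma~\ref{L:Dform}) it lies in $\mathcal{D}(\mathcal{L})$ (resp.\ $\mathcal{D}(\mathcal{E})$), with $\mathcal{L}f=\tfrac12\Delta f$ and $\mathcal{E}(f,g)=\tfrac12\langle\nabla f,\nabla g\rangle_{L^2}$ there; the point is to upgrade this to a norm comparison. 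The reverse inclusions $\mathcal{D}(\mathcal{L})\subseteq\mathcal{W}^{2,2}(\Omega,\mu)$ and $\mathcal{D}(\mathcal{E})\subseteq\mathcal{W}^{1,2}(\Omega,\mu)$ require showing that spectral (resp.\ energy) regularity forces existence of distributional derivatives in $L^2(\Omega,\mu)$.

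\textbf{Step 1: the form domain.} First I would treat $\mathcal{D}(\mathcal{E})=\mathcal{W}^{1,2}(\Omega,\mu)$, since this is the easier identity and feeds into the other. For $f\in C^\infty_{tlc}(\Omega)$, Lemma~\ref{L:ibP} gives $\mathcal{E}(f,f)=\tfrac12\|\nabla f\|_{L^2(\Omega,\mu)}^2$, so the graph norm $(\|f\|_{L^2}^2+\mathcal{E}(f,f))^{1/2}$ is comparable to $\|f\|_{\mathcal{W}^{1,2}(\Omega,\mu)}$ on this dense subspace. Since $C^\infty_{tlc}(\Omega)$ is a core for $(\mathcal{E},\mathcal{D}(\mathcal{E}))$ by Lemma~\ref{L:Dform} and is dense in $\mathcal{W}^{1,2}(\Omega,\mu)$ by Theorem~\ref{T:main_Sobolev}(\ref{sob3}), both spaces are the completion of $C^\infty_{tlc}(\Omega)$ under equivalent norms, hence coincide. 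Alternatively, and more robustly, for the inclusion $\mathcal{D}(\mathcal{E})\subseteq\mathcal{W}^{1,2}(\Omega,\mu)$ one takes $f\in\mathcal{D}(\mathcal{E})$, uses the closed extension of $\nabla$ from Lemma~\ref{L:Dform}, and checks via Lemma~\ref{L:ibP} (integration by parts against test functions in $C^\infty(\Omega)$, with $\chi f$ supported in a chart) that $\nabla f$ as produced by the carr\'e du champ agrees with the distributional gradient $D^{e_i}f$ of Definition~\ref{D:distributionalsense}; this directly gives $D^\alpha f\in L^2(\Omega,\mu)$ for $|\alpha|=1$.

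\textbf{Step 2: the generator domain.} For $\mathcal{D}(\mathcal{L})=\mathcal{W}^{2,2}(\Omega,\mu)$ I would use Lemma~\ref{lem-w22}, which already identifies $\mathcal{W}^{2,2}(\Omega,\mu)$ with $\{f\in\mathcal{W}^{1,2}(\Omega,\mu):\Delta f\in L^2(\Omega,\mu)\}$ under the norm $\|\cdot\|'$. The inclusion $\mathcal{D}(\mathcal{L})\subseteq\mathcal{W}^{2,2}(\Omega,\mu)$: given $f\in\mathcal{D}(\mathcal{L})$, by Step~1 we have $f\in\mathcal{D}(\mathcal{E})=\mathcal{W}^{1,2}(\Omega,\mu)$, and from $\mathcal{E}(f,\varphi)=-\langle\mathcal{L}f,\varphi\rangle$ for all $\varphi\in C^\infty_{tlc}(\Omega)$ together with Lemma~\ref{L:ibP} one reads off that $\Delta f$ in the distributional sense equals $-2\mathcal{L}f\in L^2(\Omega,\mu)$; Lemma~\ref{lem-w22} then places $f$ in $\mathcal{W}^{2,2}(\Omega,\mu)$ with norm control. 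For the reverse inclusion, take $f\in\mathcal{W}^{2,2}(\Omega,\mu)$; after a smooth partition of unity (Lemma~\ref{L:partofunity}) assume $\supp f\subset\OLe$, apply Lemma~\ref{L:W22loc}/Lemma~\ref{L:W2kOLe} to see $h^\ast_{\Lambda'}f\in W^{2,2}(B)$ for $\nu_\mathcal{C}$-a.e.\ $\Lambda'$, and use Remark~\ref{R:productform} together with the local approximation $\PHii f\to f$ in $\mathcal{W}^{2,2}$ from Lemma~\ref{L:local_approx}: each $\PHii f$ is a finite sum of products $f_0(\Lambda')F_0(\vt)$ with $F_0\in C^2_c(B)$, hence lies in $\mathcal{D}(\mathcal{L})$ by Remark~\ref{R:productform}, and $\|(\tfrac12\Delta)(\PHii f-\PHij f)\|_{L^2}\le\|\PHii f-\PHij f\|_{\mathcal{W}^{2,2}}\to0$, so $\PHii f\to f$ in the graph norm of $\mathcal{L}$; closedness of $\mathcal{L}$ gives $f\in\mathcal{D}(\mathcal{L})$ with $\mathcal{L}f=\tfrac12\Delta f$. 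Norm equivalence follows since $\|\mathcal{L}f\|_{L^2}=\tfrac12\|\Delta f\|_{L^2}$ and the graph norm of $\mathcal{L}$ dominates $\mathcal{E}(f,f)^{1/2}\sim\|\nabla f\|_{L^2}$, while the $\mathcal{W}^{2,2}$-norm is controlled by $\|f\|_{L^2}+\|\nabla f\|_{L^2}+\|\Delta f\|_{L^2}$ by Lemma~\ref{lem-w22}.

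\textbf{Main obstacle.} The delicate point is the reverse inclusion $\mathcal{D}(\mathcal{L})\subseteq\mathcal{W}^{2,2}(\Omega,\mu)$, i.e.\ elliptic regularity: knowing that $\tfrac12\Delta f=\mathcal{L}f\in L^2(\Omega,\mu)$ and $f\in\mathcal{W}^{1,2}(\Omega,\mu)$, one must recover \emph{all} mixed second derivatives $\partial^2 f/\partial e_i\partial e_j$ in $L^2(\Omega,\mu)$. On a fixed orbit this is the classical interior $W^{2,2}$-regularity for the Laplacian on $B\subset\mathbb{R}^d$ (via the Calder\'on--Zygmund/Fourier estimate $\|D^2 u\|_{L^2}\lesssim\|u\|_{L^2}+\|\Delta u\|_{L^2}$ for compactly supported $u$), applied to $h^\ast_{\Lambda'}f$ for $\nu_\mathcal{C}$-a.e.\ $\Lambda'$; this is exactly the content of the proof of Lemma~\ref{lem-w22}. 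The nontrivial part is the \emph{measurability and integrability in the transversal variable}: one must check that $\Lambda'\mapsto\|D^2_{\mathbb{R}^d}h^\ast_{\Lambda'}f\|_{L^2(B)}$ is $\nu_\mathcal{C}$-measurable and that its $L^2(\mathcal{C},\nu_\mathcal{C})$-norm is finite, so that Lemma~\ref{L:W2kOLe} (its converse half) can be invoked to conclude $f\in\mathcal{W}^{2,2}(\OLe,\mu)$. This is handled by the Fubini-type structure of $\mu\circ\phi^{-1}=\nu_\mathcal{C}\times\lambda^d|_B$ from~\eqref{E:localprodmeas}, combined with the fact that the Euclidean $W^{2,2}$-estimate has a \emph{uniform} constant depending only on $B$; the pointwise-a.e.\ bound $\|D^2 h^\ast_{\Lambda'}f\|_{L^2(B)}^2\le C(\|h^\ast_{\Lambda'}f\|_{L^2(B)}^2+\|\Delta h^\ast_{\Lambda'}f\|_{L^2(B)}^2)$ then integrates in $\Lambda'$ against $\nu_\mathcal{C}$ using~\eqref{e-key-sob2}. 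Everything else is soft functional analysis (cores, closed operators, partitions of unity), so the essential work is this transversal integration of the classical elliptic estimate.
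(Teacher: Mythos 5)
Your proof is correct, but for the key step --- showing that every $f\in\mathcal{W}^{2,2}(\Omega,\mu)$ actually belongs to $\mathcal{D}(\mathcal{L})$ --- you take a genuinely different route from the paper. You argue by approximation and closedness: $C^\infty_{tlc}(\Omega)$ is dense in $\mathcal{W}^{2,2}(\Omega,\mu)$ by Theorem~\ref{T:main_Sobolev}(\ref{sob3}), it lies in $\mathcal{D}(\mathcal{L})$ with $\mathcal{L}=\frac12\Delta$ there, and the graph norm of $\mathcal{L}$ is dominated by the $\mathcal{W}^{2,2}$-norm, so closedness of $\mathcal{L}$ finishes the job. The paper instead runs a duality argument inside the Hilbert space $\left(\mathcal{W}^{2,2}(\Omega,\mu),[\cdot,\cdot]\right)$ with the equivalent inner product from Lemma~\ref{lem-w22}: it shows that any $g$ orthogonal to $\mathcal{D}(\mathcal{L})$ must vanish, by testing against product functions of the form \eqref{e-prod}, exploiting the separability of $C_c^2(B)$ and the elliptic regularity of the Dirichlet resolvent $(1-\Delta_{\mathbb{R}^d}^B)^{-1}$ fiberwise. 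Your route is shorter and more transparent but leans entirely on the density statement of Theorem~\ref{T:main_Sobolev}(\ref{sob3}); the paper's orthogonality computation is longer but self-contained at this point and simultaneously yields fiberwise information. Your inclusion $\mathcal{D}(\mathcal{L})\subseteq\mathcal{W}^{2,2}(\Omega,\mu)$ (reading off $\Delta f=2\mathcal{L}f$ distributionally and invoking Lemma~\ref{lem-w22}) and your treatment of $\mathcal{D}(\mathcal{E})=\mathcal{W}^{1,2}(\Omega,\mu)$ agree in substance with the paper, which proves the $\mathcal{E}$-statement ``similarly.'' One small slip: in your Step~2 you assert that $\PHii f$ is a finite sum of products $f_0(\Lambda')F_0(\vt\,)$ with $F_0\in C_c^2(B)$; the transversal averages $c_l^{(i)}$ of a $\mathcal{W}^{2,2}$-function are in general only $W^{2,2}(B)$-functions, so Remark~\ref{R:productform} does not literally apply to them. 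This does not damage the argument, because the cleaner approximation by $C^\infty_{tlc}(\Omega)$ already announced in your plan makes that detour unnecessary (alternatively, mollify orbit-wise in $\vt$ before projecting).
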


\begin{proof} 
We prove the first statement, and the second follows similarly. For any $f\in C_{tlc}^\infty(\Omega)$ we have 
\[
\left\|f\right\|_{\mathcal{W}^{2,2}(\Omega,\mu)}\leq c(\left\|f\right\|_{L^2(\Omega,\mu)}+\left\|\Delta f\right\|_{L^2(\Omega,\mu)})
\] 
by~\eqref{E:L2gen} and Lemmas \ref{L:L2domain} and \ref{lem-w22}. Therefore, any sequence $(f_n)_n\subset C_{tlc}^\infty(\Omega)$ that is Cauchy in $\mathcal{D}(\mathcal{L})$ 
with respect to the norm 
\begin{equation}
\|f\|_{ \mathcal{D}(\mathcal{L}) }=\left(\left\|f\right\|_{L^2(\Omega,\mu)}^2+\left\| \mathcal{L} f\right\|_{L^2(\Omega,\mu)}^2\right)^{\frac12}
\end{equation}
is also Cauchy in $\mathcal{W}^{2,2}(\Omega,\mu)$. The density of $C_{tlc}^\infty(\Omega)$ in $\mathcal{D}(\mathcal{L})$ implies that $\mathcal{D}(\mathcal{L})\subset \mathcal{W}^{2,2}(\Omega,\mu)$. To prove the equality of these two spaces 
we use the equivalent norm $\left\|\cdot\right\|_{\mathcal{W}^{2,2}(\Omega,\mu)}'$ in $\mathcal{W}^{2,2}(\Omega,\mu)$ from Lemma \ref{lem-w22}, denoting the associated scalar product in $\mathcal{W}^{2,2}(\Omega,\mu)$ by $[\cdot,\cdot]$. We will show that if a function $g\in\mathcal{W}^{2,2}(\Omega,\mu)$ satisfies $[g,f]=0$ for all $f\in\mathcal{D}(\mathcal{L})$ then $g=0$ in $\mathcal{W}^{2,2}(\Omega,\mu)$. This entails $\mathcal{D}(\mathcal{L})= \mathcal{W}^{2,2}(\Omega,\mu)$. Using $C_{tlc}^\infty(\Omega)$ a partition from Lemma~\ref{L:partofunity} we may assume that $g$ is supported in an open set $O=\phi^{-1}(\mathcal{C}\times B)$ of type (\ref{E:transcylsets}). Then, for any $\Lambda'\in\mathcal{C}$, the function $g\circ \phi^{-1}(\Lambda',\cdot)$ has compact support in $B$. It suffices to test $g$ with functions $f\in \phi^{-1}(L^2(\mathcal{C},\nu_{\mathcal{C}})\otimes C_c^2(B))$ of product form (\ref{e-prod}), for which we have $-\left\langle \Delta g, f\right\rangle_{L^2(O,\mu)}=\left\langle \nabla g,\nabla f\right\rangle_{L^2(O,\mu)}=-\left\langle g, \Delta f\right\rangle_{L^2(O,\mu)}$. The latter follows from the fact that
\begin{multline}
-\int_{\mathcal{C}} f_0(\Lambda')\int_B \Delta_{\mathbb{R}^d}g\circ \phi^{-1}(\Lambda',\vt\,)F_0(\vt\,)\:d\vt\nu_{\mathcal{C}}(d\Lambda')\notag\\
=\int_{\mathcal{C}} f_0(\Lambda')\int_B \nabla_{\mathbb{R}^d}g\circ \phi^{-1}(\Lambda',\vt\,) \nabla_{\mathbb{R}^d} F_0(\vt\,)(d\Lambda')\:d\vt \nu_{\mathcal{C}}(d\Lambda'),
\end{multline}
which can be verified by approximating $F_0\in C_c^2(B)$ with $C^\infty_c(B)$-functions. 
This implies that $[f,g]=\left\langle (1-\Delta)g, (1-\Delta)f\right\rangle_{L^2(O,\mu)}$ and therefore
\[
0= \int_{\mathcal{C}}f_0(\Lambda')\int_B (1-\Delta_{\mathbb{R}^d}) g\cphi (\Lambda',\vt\,)(1-\Delta_{\mathbb{R}^d}) F_0(\vt\,)\:d\vt\,\nu_{\mathcal{C}}(d\Lambda').
\]
Varying $f_0$ we can deduce that 
\begin{equation}\label{E:annihilatetensors}
0= \int_B (1-\Delta_{\mathbb{R}^d}) g\cphi (\Lambda',\vt\,)(1-\Delta_{\mathbb{R}^d}) F_0(\vt\,)\:d\vt
\end{equation}
holds a priori for all $\Lambda'\in\mathcal{C}$ outside some $\nu_{\mathcal{C}}$-null set that a priori may depend on $F_0$. Due to the separability of $C_c^2(B)$ we can find a Borel set $\mathcal{N}\subset \mathcal{C}$ of zero $\nu_{\mathcal{C}}$-measure such that (\ref{E:annihilatetensors}) holds for all $\Lambda'\in\mathcal{C}\setminus \mathcal{N}$ and all $F_0\in C_c^2(B)$. For any such $\Lambda'$, let $\chi_{\Lambda'}\in C_c^\infty(B)$ be a cut-off function such that $0\leq \chi_{\Lambda'}\leq 1$ and $\chi_{\Lambda'}\equiv 1$ on a neighbourhood of the support of $(1-\Delta_{\mathbb{R}^d}) g\cphi (\Lambda',\cdot)$. Such a function exists because of the locality of $\Delta_{\mathbb{R}^d}$. 
Defining $F_0:=\chi_{\Lambda'}(1-\Delta_{\mathbb{R}^d}^B)^{-1}\psi$, where $\psi\in C_c(B)$ and $(1-\Delta_{\mathbb{R}^d}^B)^{-1}$ is the $1$-resolvent of the Dirichlet Laplacian $\Delta_{\mathbb{R}^d}^B$ on $B$, we have $F_0\in C_c^2(B)$ by classical elliptic regularity~\cite[Chapter 6]{GT01}, and can conclude 
\[
0= \int_B (1-\Delta_{\mathbb{R}^d}) g\cphi (\Lambda',\vt\,)\psi(\vt\,)\:d\vt.
\]
Since this is true for any $\Lambda'\in\mathcal{C}\setminus \mathcal{N}$ and $\psi\in C_c(B)$, we obtain $(1-\Delta_{\mathbb{R}^d}) g=0$ in $L^2(B)$, which implies $g=0$ in $\mathcal{W}^{2,2}(\Omega,\mu)$.
\end{proof}

The next Corollary follows from Lemma \ref{L:W22loc}.
\begin{corollary} \label{L:W22loc-}\mbox{\ }
\begin{enumerate}
\item If a measurable function $f$ on $\Omega$ is in $\mathcal{D}(\mathcal{E})$ then $h_{\Lambda}^\ast f\in W^{1,2}_{loc}(\mathbb{R}^d)$ for $\mu$-a.e. $\Lambda\in \Omega$.
\item If a measurable function $f$ on $\Omega$ is in $\mathcal{D}(\mathcal{L})$ then $h_{\Lambda}^\ast f\in W^{2,2}_{loc}(\mathbb{R}^d)$ for $\mu$-a.e. $\Lambda\in \Omega$.
\item
If $f\in b\mathcal{B}(\Omega)$ is compactly supported in $O=\phi^{-1}(\mathcal{C}\times\mathcal{B})$ and $f\in\mathcal{D}(\mathcal{L}\cphi )$ then for $\mu$-a.e. $\Lambda\in \Omega$ the restriction of $h_{\Lambda}^\ast f$ to any connected component of $O\cap \orb(\Lambda)$ is a member of $\mathring{W}^{1,2}(B)\cap W^{2,2}(B)$.
\end{enumerate}
\end{corollary}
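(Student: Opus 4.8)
The plan is to derive all three items from Lemma~\ref{L:W22loc} together with the identification of domains in Lemma~\ref{L:Soboanddomains}. For item~(1), if $f\in\mathcal{D}(\mathcal{E})$ then $f\in\mathcal{W}^{1,2}(\Omega,\mu)$ by Lemma~\ref{L:Soboanddomains}, and Lemma~\ref{L:W22loc} with $k=1$ yields $h_{\Lambda}^\ast f\in W^{1,2}_{loc}(\mathbb{R}^d)$ for $\mu$-a.e.\ $\Lambda\in\Omega$. Item~(2) is the same argument with $k=2$, using $\mathcal{D}(\mathcal{L})=\mathcal{W}^{2,2}(\Omega,\mu)$.

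For item~(3) I would run a localized, Dirichlet-boundary-condition version of the proof of Lemma~\ref{L:W22loc}. First recall (Remark~\ref{R:productform} and Lemmas~\ref{L:LO}, \ref{L:LO2}) that on the chart $O=\phi^{-1}(\mathcal{C}\times B)$ the operator $\mathcal{L}\cphi$ is the self-adjoint generator of the Dirichlet form $(\mathcal{E},\mathcal{W}^{1,2}_0(O,\mu))$, and that, arguing as in Lemmas~\ref{L:Soboanddomains} and~\ref{lem-w22}, one has $\mathcal{D}(\mathcal{L}\cphi)=\{f\in\mathcal{W}^{1,2}_0(O,\mu):\Delta f\in L^2(O,\mu)\}$. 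Now fix $f\in b\mathcal{B}(\Omega)$ with $\supp f\subset O$ and $f\in\mathcal{D}(\mathcal{L}\cphi)$. Using the product measure $\mu\circ\phi^{-1}=\nu_{\mathcal{C}}\times\lambda^d|_B$ from~\eqref{E:localprodmeas}, Fubini's theorem, the intertwining identity~\eqref{e-key-sob0} (read distributionally, cf.\ Theorem~\ref{T:main_Sobolev}(\ref{sob2})) and the hypothesis $\Delta f\in L^2(O,\mu)$, one obtains a $\nu_{\mathcal{C}}$-null set $\mathcal{N}\subset\mathcal{C}$ outside of which $h^\ast_{\Lambda'}f\in W^{1,2}(B)$ with $\Delta_{\mathbb{R}^d}h^\ast_{\Lambda'}f\in L^2(B)$; separability of $C_c^\infty(B)$ makes $\mathcal{N}$ independent of the test functions. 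The Dirichlet boundary condition is inherited fiberwise from $f\in\mathcal{W}^{1,2}_0(O,\mu)$: approximating $f$ in $\mathcal{W}^{1,2}(O,\mu)$ by finite sums of product functions $f_0\cdot F_0$ with $f_0\in b\mathcal{B}(\mathcal{C})$, $F_0\in C_c^\infty(B)$ — available via the projections $\Phi_i$ of Section~\ref{subsec:sob-appr} together with Lemma~\ref{L:productf0F0} — and passing to a $\nu_{\mathcal{C}}$-a.e.\ convergent subsequence shows $h^\ast_{\Lambda'}f\in\mathring{W}^{1,2}(B)$ for $\nu_{\mathcal{C}}$-a.e.\ $\Lambda'$. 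Since $B$ is a Euclidean ball, classical elliptic regularity (\cite[Chapter 6]{GT01}) then upgrades $h^\ast_{\Lambda'}f\in\mathring{W}^{1,2}(B)$ with $\Delta_{\mathbb{R}^d}h^\ast_{\Lambda'}f\in L^2(B)$ to $h^\ast_{\Lambda'}f\in\mathring{W}^{1,2}(B)\cap W^{2,2}(B)$.

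Finally I would pass from fibers to orbits exactly as in the proof of Lemma~\ref{L:W22loc}: the union $\mathcal{O}:=\bigcup_{\vt\in\mathbb{R}^d}\varphi_{\vt}(\mathcal{N})$ of all orbits meeting $\mathcal{N}$ is, by~\eqref{E:countableunion}, a countable union of translates of $\phi^{-1}(\mathcal{N}\times B_\varepsilon(\vec{0}))$, hence measurable, and the $\mathbb{R}^d$-invariance together with the local product structure of $\mu$ make it $\mu$-null. For $\Lambda\notin\mathcal{O}$, every connected component of $O\cap\orb(\Lambda)$ is, via the foliated chart~\eqref{e-phi}, of the form $\{\varphi_{\vt}(\Lambda'):\vt\in B\}$ for a unique $\Lambda'\in\mathcal{C}\setminus\mathcal{N}$, and the restriction of $h^\ast_{\Lambda}f$ to the corresponding open subset of $\mathbb{R}^d$ coincides with $h^\ast_{\Lambda'}f$ up to an affine change of variables; hence it belongs to $\mathring{W}^{1,2}(B)\cap W^{2,2}(B)$, proving~(3). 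The step I expect to be the main obstacle is showing that membership in $\mathcal{D}(\mathcal{L}\cphi)$ genuinely forces the Dirichlet boundary condition fiberwise (not merely $h^\ast_{\Lambda'}f\in W^{1,2}(B)$): this requires the compatibility of the $\mathcal{W}^{1,2}_0(O,\mu)$-approximation with Fubini along $\mathcal{C}$ and careful control of the a priori test-function-dependent null sets, while the remainder is routine bookkeeping with the product structure and standard elliptic estimates on a ball.
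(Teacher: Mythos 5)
Your proposal is correct and follows the paper's intended route: the paper's entire proof is the single remark that the corollary follows from Lemma~\ref{L:W22loc}, and your items (1) and (2) are exactly that, combined with the identifications $\mathcal{D}(\mathcal{E})=\mathcal{W}^{1,2}(\Omega,\mu)$ and $\mathcal{D}(\mathcal{L})=\mathcal{W}^{2,2}(\Omega,\mu)$ from Lemma~\ref{L:Soboanddomains}. For item (3) your argument is a valid filling-in of details the paper omits; note only that the step you flag as the main obstacle --- forcing the Dirichlet boundary condition fiberwise --- is essentially automatic from the paper's definition of $\mathcal{L}^O$ via \eqref{E:domainviaspectral}--\eqref{E:defLO}: expanding $f=\sum_{i,j}\langle f,b_{ij}^O\rangle b_{ij}^O$ with $b_j^B\in W_0^{1,2}(B)\cap W^{2,2}(B)$ and using the weights $(\lambda_j^B)^2$ together with Fubini and the elliptic estimate $\|u\|_{W^{2,2}(B)}\lesssim\|u\|_{L^2(B)}+\|\Delta_{\mathbb{R}^d} u\|_{L^2(B)}$ for $u\in W_0^{1,2}(B)\cap W^{2,2}(B)$ gives the fiberwise membership in $\mathring{W}^{1,2}(B)\cap W^{2,2}(B)$ directly, without your approximation by product functions.
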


\subsection{Recurrence and Kusuoka-Hino index} Recall that the semigroup $(P_t)_{t>0}$ is said to be \emph{recurrent} if for any nonnegative $f\in L^1(\Omega,\mu)$ we have $Gf=+\infty$ or $Gf=0$ $\mu$-a.e. where $Gf=\int_0^\infty P_tf dt$. Following \cite[p.48]{FOT94} we say that a Dirichlet form is recurrent if its semigroup is recurrent. An application of \cite[Theorem 1.6.3]{FOT94} (see also \cite[p.45 and Theorem Theorem 2.1.8]{ChFu12}) immediately yields that the Dirichlet form $(\mathcal{E},\mathcal{D}(\mathcal{E}))$ is recurrent.
 
The the concept of \emph{pointwise Kusuoka-Hino index (or briefly, pointwise index)} 
 for strongly local regular Dirichlet forms was studied
in \cite{Hino10}, based on the \emph{martingale dimension} of 
Dirichlet forms introduced by Kusuoka (see \cite{Kusuoka89}). By definition, the pointwise index $p$ 
of $(\mathcal{E},\mathcal{D}(\mathcal{E}))$ is a Borel measurable function $p:\Omega\to \mathbb{N}\cup \left\lbrace +\infty\right\rbrace$ such that for any $N$ and any $f_1,...,f_N\in\mathcal{D}(\mathcal{E})$ we have $\rank \left(\left\langle \nabla f_i,\nabla f_j\right\rangle\right)_{i,j=1}^N \leq p$ $\mu$-a.e. on $\Omega$, and if $\widetilde{p}$ is another such function that satisfies (i) in place of $p$, then $p\leq \widetilde{p}$ $\mu$-a.e. on $\Omega$. See \cite[Definition 2.9]{Hino10} (or \cite[Definition 2.2]{Hino12}). The {martingale dimension of $(\mathcal{E},\mathcal{D}(\mathcal{E}))$} is defined as the essential supremum of the pointwise index. Using the arguments of \cite[Proposition 2.12]{Hino10} and \cite[Example 2.15]{Hino10} together with Lemma \ref{L:directionsforindex} we can see that the pointwise index of $(\mathcal{E},\mathcal{D}(\mathcal{E}))$ equals $d$ $\mu$-a.e. and therefore its martingale dimension equals $d$.

\subsection{Spectrum in the product neighborhood \OLe}
Let $D^\alpha_{\mathbb{R}^d}$ denote the distributional derivative of order $\alpha$ in the Euclidean sense, see e.g.~\cite[Section 5.2.1]{Evans98} 
or~\cite[Section 7.3]{GT01}. For any open set $G\subset \mathbb{R}^d$ and any $k=1,2,\ldots$ the Sobolev space $W^{k,2}(G)$ is defined as 
\[
W^{k,2}(G):=\left\lbrace f\in L^2(G):~D^\alpha_{\mathbb{R}^d} f\in L^2(G)\quad\text{for all }|\alpha|\leq k\right\rbrace,
\]
endowed with the norm 
\[
\left\|f\right\|_{W^{k,2}(G)}:=\left(\sum_{|\alpha|\leq k} \left\|D^\alpha_{\mathbb{R}^d} f\right\|_{L^2(G)}^2\right)^{1/2}.
\] 
Moreover, $W_0^{k,2}(G)$ denotes the closure of $C_c^\infty(G)$ in $W^{k,2}(G)$. Both $W^{k,2}(G)$ and $W_0^{k,2}(G)$ are Hilbert spaces and we refer to~\cite[Section 5]{Evans98} or~\cite[Section 7.5]{GT01} 
for further definitions and basic results about these spaces.

If $G$ is bounded and its boundary $\partial G$ is smooth, the self-adjoint Laplacian $\Delta_{\mathbb{R}^d}^G$ on $G$ with Dirichlet boundary conditions is obtained as the Friedrichs extension of $(\Delta_{\mathbb{R}^d},C_c^\infty(G))$ in $L^2(G)$. This operator has domain 
$W^{1,2}_0(G)\cap W^{2,2}(G)$, see e.g.~\cite[Theorem 8.12]{GT01} or \cite[Chapter VI, Remark 1.7]{EE87} and it is a non-positive definite self-adjoint operator on $L^2(G)$ with pure point spectrum.

In this subsection, we consider an open subset of $\Omega$ of type~\eqref{E:transcylsets}, i.e. $O=\OLe=\phi^{-1}(\mathcal{C}\times B)$, and on the open ball $B\subset \mathbb{R}^d$ consider the Dirichlet Laplacian $\left(\frac12\Delta_{\mathbb{R}^d}^{B}, W_0^{1,2}(B)\cap W^{2,2}(B)\right)$. Let $0<\lambda_1^{B}\leq\lambda_2^{B}\leq\ldots$ denote the eigenvalues of $-\Delta_{\mathbb{R}^d}^{B}$, ordered with multiplicities taken into account and let $\{ b_j^{B}\}_{j=1}^\infty$ be an orthonormal basis in $L^2(B)$ of eigenfunctions of $\Delta_{\mathbb{R}^d}^{B}$, where $b_j^{B}$ is the eigenfunction corresponding to $\lambda_j^{B}$. By classical theory, $b_j^{B}\in C^\infty(B)$.

For each $i,j\geq 1$, we define the function $b_{ij}^{O}\in (\phi^{-1})^\ast (L^2(\mathcal{C},\nu_{\mathcal{C}})\otimes W_0^{1,2}(B)\cap W^{2,2}(B))$ by
\begin{equation}\label{E:local_ONB}
b_{ij}^{O}(\overline{\Lambda}):=b_i^{\mathcal{C}}(\Lambda') b_j^{B}(\vt\,), \quad \overline{\Lambda}=\varphi_{\vt}\,(\Lambda')\in O.
\end{equation}
The collection $\{ b_{ij}^{O}\}_{i,j=1}^\infty$ is an orthonormal basis of $L^2(O,\mu)$ and 
\begin{multline}\label{E:domainviaspectral}
\mathcal{D}(\mathcal{L}^{O}):=\mathcal{W}_0^{1,2}(O,\mu)\cap\mathcal{W}^{2,2}(O,\mu)=
\\ 
\left\lbrace f\in L^2(O,\mu): \sum_{i,j=1}^\infty \left(\lambda_j^{B}\right)^2 |\left\langle f, b_{ij}^O \right\rangle_{L^2(O,\mu)}|^2<+\infty\right\rbrace 
\end{multline}
as well as
\begin{equation}\label{E:defLO}
\mathcal{L}^O f:=\sum_{i,j=1}^\infty\lambda_k^{B} \left\langle f, b_{ij}^O \right\rangle_{L^2(O,\mu)}b_{ij}^O , \quad f\in\mathcal{D}(\mathcal{L}^O ).
\end{equation}
In view of the next lemma, the operator $(\mathcal{L}^O ,\mathcal{D}(\mathcal{L}^O ))$ may be regarded as the natural Dirichlet Laplacian on $O$.

\begin{lemma}\label{L:LO}
The operator $(\mathcal{L}^O ,\mathcal{D}(\mathcal{L}^O))$ is a non-positive definite self-adjoint operator on $L^2(O,\mu)$. It has pure point spectrum and each of its eigenvalues has an infinite dimensional eigenspace. The domain $\mathcal{D}(\mathcal{L}^O)$ contains $(\phi^{-1})^\ast(L^2(\mathcal{C},\nu_{\mathcal{C}})\otimes W_0^{1,2}(B)\cap W^{2,2}(B))$ and for any function $f$ in this space with the product form~\eqref{e-prod} we have 
 \[
 \mathcal{L}^O f(\overline{\Lambda})=f_0(\Lambda')\Delta_{\mathbb{R}^d}^B F_0(\vt\,),\quad \overline{\Lambda}=\varphi_{\vt}\,(\Lambda')\in O.
 \]
 Moreover, for any $f\in (\phi^{-1})^\ast(L^2(\mathcal{C},\nu_{\mathcal{C}})\otimes C_c^2(B))$ we have $\mathcal{L}^O f=\Delta f$.
\end{lemma}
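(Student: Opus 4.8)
The plan is to prove Lemma~\ref{L:LO} by systematically transferring the spectral theory of the Dirichlet Laplacian on the Euclidean ball $B$ to the product space $O=\OLe=\phi^{-1}(\mathcal{C}\times B)$ through the tensor product structure. First I would verify that $\{b_{ij}^O\}_{i,j=1}^\infty$ defined in~\eqref{E:local_ONB} is indeed an orthonormal basis of $L^2(O,\mu)$: this follows from the local product structure~\eqref{E:localprodmeas}, i.e. $\mu\circ\phi^{-1}=\nu_{\mathcal{C}}\times\lambda^d|_B$, combined with the fact that $\{b_i^{\mathcal{C}}\}$ is an orthonormal basis of $L^2(\mathcal{C},\nu_{\mathcal{C}})$ and $\{b_j^B\}$ is an orthonormal basis of $L^2(B)$, and the standard fact that a product of orthonormal bases is an orthonormal basis of the $L^2$-space of the product measure. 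Given this, the operator $\mathcal{L}^O$ as defined by the spectral series~\eqref{E:defLO} with domain~\eqref{E:domainviaspectral} is automatically non-positive definite and self-adjoint, since it is by construction the operator associated with the symmetric, densely defined quadratic form given by that spectral expansion; this is the abstract functional-analytic fact that a real-valued multiplier acting diagonally on an orthonormal basis defines a self-adjoint operator.

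Next I would establish the pure point spectrum with infinite-dimensional eigenspaces. The eigenvalues of $\mathcal{L}^O$ are exactly the numbers $-\lambda_j^B$, $j\geq 1$ (coming from $-\Delta_{\mathbb{R}^d}^B$), each appearing with the index $i$ ranging over all of $\mathbb{N}$; hence for each fixed $j$ the eigenspace is spanned by $\{b_{ij}^O : i\geq 1\}$ and is therefore infinite-dimensional. That the spectrum is pure point follows because the $b_{ij}^O$ form an orthonormal basis of eigenvectors. Then I would check the containment $(\phi^{-1})^\ast(L^2(\mathcal{C},\nu_{\mathcal{C}})\otimes W_0^{1,2}(B)\cap W^{2,2}(B))\subset\mathcal{D}(\mathcal{L}^O)$: for a product function $f$ as in~\eqref{e-prod} with $f_0\in L^2(\mathcal{C},\nu_{\mathcal{C}})$ and $F_0\in W_0^{1,2}(B)\cap W^{2,2}(B)$, one expands $f_0=\sum_i a_i b_i^{\mathcal{C}}$ and $F_0=\sum_j c_j b_j^B$, computes $\langle f,b_{ij}^O\rangle_{L^2(O,\mu)}=a_i c_j$ by Fubini, and notes that $\sum_{i,j}(\lambda_j^B)^2|a_ic_j|^2=\big(\sum_i|a_i|^2\big)\big(\sum_j(\lambda_j^B)^2|c_j|^2\big)<\infty$ precisely because $F_0\in\mathcal{D}(\Delta_{\mathbb{R}^d}^B)=W_0^{1,2}(B)\cap W^{2,2}(B)$, so the right-hand side of~\eqref{E:domainviaspectral} is satisfied. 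On such $f$, applying $\mathcal{L}^O$ term by term and using $\Delta_{\mathbb{R}^d}^B b_j^B=-\lambda_j^B b_j^B$ gives $\mathcal{L}^O f(\overline{\Lambda})=\sum_{i,j}a_ic_j\,b_i^{\mathcal{C}}(\Lambda')(\Delta_{\mathbb{R}^d}^B b_j^B)(\vt\,)=f_0(\Lambda')\,\Delta_{\mathbb{R}^d}^B F_0(\vt\,)$, as claimed.

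Finally, for the last assertion, I would take $f\in(\phi^{-1})^\ast(L^2(\mathcal{C},\nu_{\mathcal{C}})\otimes C_c^2(B))$, which is a subspace of the previous tensor space since $C_c^2(B)\subset W_0^{1,2}(B)\cap W^{2,2}(B)$; by the formula just established, $\mathcal{L}^O f(\overline{\Lambda})=f_0(\Lambda')\,\Delta_{\mathbb{R}^d}^B F_0(\vt\,)=f_0(\Lambda')\,\Delta_{\mathbb{R}^d}F_0(\vt\,)$, the last equality holding because $F_0\in C_c^2(B)$ so its Dirichlet and ordinary Laplacians agree on the interior. On the other hand, by Remark~\ref{R:productform} (or Lemma~\ref{L:L2domain} applied via the local product structure~\eqref{E:fO}), the operator $\Delta$ from~\eqref{E:Laplace} acts on such product functions by differentiating in the Euclidean variable only, giving $\Delta f(\overline{\Lambda})=f_0(\Lambda')\,\Delta_{\mathbb{R}^d}F_0(\vt\,)$ as well; hence $\mathcal{L}^O f=\Delta f$. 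I expect the main subtlety to be the careful bookkeeping in the Fubini-type argument identifying $\langle f,b_{ij}^O\rangle$ with the product of Fourier coefficients and justifying the term-by-term application of $\mathcal{L}^O$ — in particular ensuring that the series manipulations are legitimate in $L^2(O,\mu)$ — together with correctly invoking the characterization $\mathcal{D}(\Delta_{\mathbb{R}^d}^B)=W_0^{1,2}(B)\cap W^{2,2}(B)$ for the Dirichlet Laplacian on the ball, which relies on the classical elliptic regularity results cited in the surrounding text.
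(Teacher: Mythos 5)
Your proposal is correct and follows essentially the same route as the paper's proof: the paper also factorizes $\langle f, b_{ij}^O\rangle_{L^2(O,\mu)}=\langle f_0,b_i^{\mathcal{C}}\rangle_{L^2(\mathcal{C},\nu_{\mathcal{C}})}\langle F_0,b_j^B\rangle_{L^2(B)}$ via the product measure structure and reads off membership in $\mathcal{D}(\mathcal{L}^O)$ from \eqref{E:domainviaspectral}, declaring the self-adjointness, pure point spectrum, and remaining identities ``clear from the construction.'' Your version simply fills in the details the paper leaves implicit (the orthonormal basis verification, the term-by-term action, and the agreement of $\Delta^B_{\mathbb{R}^d}$ with $\Delta_{\mathbb{R}^d}$ on $C_c^2(B)$), all of which are accurate.
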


\begin{proof}
The first two statements are clear from the construction of $(\mathcal{L}^O ,\mathcal{D}(\mathcal{L}^O))$. If a function $f\in (\phi^{-1})^\ast(L^2(\mathcal{C},\nu_{\mathcal{C}})\otimes W_0^{1,2}(B)\cap W^{2,2}(B))$ has the product form~\eqref{e-prod}, then
\[
\left\langle f, b_{ij}^O \right\rangle_{L^2(O,\mu)}=\left\langle f_0,b_i^{\mathcal{C}}\right\rangle_{L^2(\mathcal{C},\nu_{\mathcal{C}} )} \left\langle F_0,b_j^B\right\rangle_{L^2(B)}
\]
holds for any eigenfunction $b_{ij}^O$. By virtue of~\eqref{E:domainviaspectral} it follows that $f\in \mathcal{D}(\mathcal{L})$ because $F_0\in W_0^{1,2}(B)\cap W^{2,2}(B)$. The remaining statements follow similarly.
\end{proof}

\begin{remark}
 In a similar manner one can study operators on open neighborhoods of the form $\phi^{-1}(\mathcal{C}\times Q)$ where $\mathcal{C}$ is a Cantor set and $Q$ is a {regular enough} set in $\mathbb{R}^d$. The above arguments also allow to consider differential operators more general than the Laplacian $\Delta$.
\end{remark}

We can also localize the quadratic form $\mathcal{E}$. Given an open subset $O$ of $\Omega$ let $\mathcal{D}(\mathcal{E}^O):=\clos(\mathcal{D}(\mathcal{E})\cap C_c(O))$
with the closure taken in $\mathcal{D}(\mathcal{E})$. Writing 
\[\mathcal{E}^O (f,g):=\mathcal{E}(f,g), \quad f,g\in\mathcal{D}(\mathcal{E}^O),\]
we obtain a strongly local regular Dirichlet form $(\mathcal{E}^O ,\mathcal{D}(\mathcal{E}^O ))$ in $L^2(O,\mu)$.
If $O$ is of type (\ref{E:transcylsets}), this is the Dirichlet form uniquely associated with the operator $\mathcal{L}^O $.

The following lemma is a version of the main result in \cite[Section 5.2.1]{BH91}.

\begin{lemma}\label{L:LO2}
 Assume $O$ is an open set of type (\ref{E:transcylsets}). Then $(\mathcal{E}^O ,\mathcal{D}(\mathcal{E}^O ))$ is the Dirichlet form generated by $(\mathcal{L}^O ,\mathcal{D}(\mathcal{L}^O ))$, that is, 
\[
\mathcal{E}^O (f,g)=-\left\langle\mathcal{L}^O f,g\right\rangle_{L^2(O,\mu)}
\]
for all $f\in\mathcal{D}(\mathcal{L}^O )$ and $g\in\mathcal{D}(\mathcal{E}^O )$. The operator $(\mathcal{L}^O ,\mathcal{D}(\mathcal{L}^O ))$ is the Friedrichs extension of $\Delta$ with domain $(\phi^{-1})^\ast(L^2(\mathcal{C},\nu_{\mathcal{C}})\otimes C_c^2(B))$. In addition, $\int_O f^2d\mu\leq \lambda_1^{-1}\:\mathcal{E}^O(f)$, $f\in \mathcal{D}(\mathcal{E}^O )$.
\end{lemma}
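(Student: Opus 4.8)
The plan is to transfer everything from the Euclidean ball $B$ to the product neighborhood $O=\OLe=\phi^{-1}(\mathcal{C}\times B)$ via the local product structure~\eqref{E:localprodmeas}, using the orthonormal basis $\{b_{ij}^O\}$ from~\eqref{E:local_ONB} as the bookkeeping device. First I would establish the identification $\mathcal{E}^O(f,g)=-\left\langle\mathcal{L}^O f,g\right\rangle_{L^2(O,\mu)}$ for $f\in\mathcal{D}(\mathcal{L}^O)$ and $g\in\mathcal{D}(\mathcal{E}^O)$. Expanding $f=\sum_{i,j}\langle f,b_{ij}^O\rangle b_{ij}^O$ and using~\eqref{E:defLO}, it suffices to check that $\mathcal{E}^O(b_{ij}^O,b_{kl}^O)=\lambda_j^B\,\delta_{ik}\delta_{jl}$; this reduces to the Euclidean identity $\mathcal{E}^B_{\mathbb{R}^d}(b_j^B,b_l^B)=\lambda_j^B\delta_{jl}$ together with the fact that $\frac12\int_O\langle\nabla b_{ij}^O,\nabla b_{kl}^O\rangle d\mu=\langle b_i^{\mathcal C},b_k^{\mathcal C}\rangle_{L^2(\mathcal C,\nu_{\mathcal C})}\cdot\frac12\int_B\langle\nabla_{\mathbb{R}^d}b_j^B,\nabla_{\mathbb{R}^d}b_l^B\rangle\,dx$, which is immediate from~\eqref{e-ee}, the product form, and Fubini using~\eqref{E:localprodmeas}. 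Density of the $b_{ij}^O$ (equivalently, of $(\phi^{-1})^\ast(L^2(\mathcal C,\nu_{\mathcal C})\otimes C_c^\infty(B))$) in $\mathcal{D}(\mathcal{E}^O)$ in the form norm then extends the identity by continuity, which is the defining property of ``the Dirichlet form generated by $(\mathcal{L}^O,\mathcal{D}(\mathcal{L}^O))$''.

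Next I would identify $(\mathcal{L}^O,\mathcal{D}(\mathcal{L}^O))$ as the Friedrichs extension of $(\Delta,(\phi^{-1})^\ast(L^2(\mathcal C,\nu_{\mathcal C})\otimes C_c^2(B)))$. By Lemma~\ref{L:LO}, $\mathcal{L}^O$ agrees with $\Delta$ on this product core and is a non-positive self-adjoint operator; by Remark~\ref{R:productform} (and Lemma~\ref{L:LO}) this core is $\mathcal{E}^O$-dense in $\mathcal{D}(\mathcal{E}^O)$. Since the Friedrichs extension of a lower semibounded symmetric operator is the unique self-adjoint extension whose domain is contained in the form domain, and since $\mathcal{D}(\mathcal{L}^O)\subset\mathcal{D}(\mathcal{E}^O)$ (clear from~\eqref{E:domainviaspectral}, as $\sum(\lambda_j^B)^2|\langle f,b_{ij}^O\rangle|^2<\infty$ implies $\sum\lambda_j^B|\langle f,b_{ij}^O\rangle|^2<\infty$), the claim follows once we know that the closure of $\frac12\mathcal{E}^B_{\mathbb{R}^d}$ on $C_c^\infty(B)$ is $W_0^{1,2}(B)$ — the classical fact quoted before Lemma~\ref{L:LO}. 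The spectral representation of $\mathcal{D}(\mathcal{E}^O)$, namely $\{f:\sum\lambda_j^B|\langle f,b_{ij}^O\rangle|^2<\infty\}$, drops out in the same way and is what makes all of this bookkeeping clean.

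Finally, for the Poincar\'e-type bound $\int_O f^2\,d\mu\leq\lambda_1^{-1}\,\mathcal{E}^O(f)$ (here $\lambda_1:=\tfrac12\lambda_1^B$, half the first Dirichlet eigenvalue of $-\Delta_{\mathbb{R}^d}^B$, in keeping with the factor $\tfrac12$ in the form), I would simply note that in the orthonormal basis $\{b_{ij}^O\}$ one has $\|f\|_{L^2(O,\mu)}^2=\sum_{i,j}|\langle f,b_{ij}^O\rangle|^2$ while $\mathcal{E}^O(f)=\tfrac12\sum_{i,j}\lambda_j^B|\langle f,b_{ij}^O\rangle|^2\geq\tfrac12\lambda_1^B\|f\|_{L^2(O,\mu)}^2$, since $\lambda_j^B\geq\lambda_1^B>0$ for all $j$ (the first Dirichlet eigenvalue on a bounded ball being strictly positive). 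Equivalently this is the Poincar\'e inequality on $B$ lifted fiberwise via Lemma~\ref{L:W2kOLe}. I do not expect any serious obstacle here: the only mildly delicate point is matching the two descriptions of $\mathcal{D}(\mathcal{E}^O)$ — the intrinsic one $\clos(\mathcal{D}(\mathcal{E})\cap C_c(O))$ versus the spectral one — which requires knowing that $C_c^\infty(B)$ is $W^{1,2}$-dense in $W_0^{1,2}(B)$ (true by definition of $W_0^{1,2}$) and that tensoring with $L^2(\mathcal C,\nu_{\mathcal C})$ preserves this density, a routine Hilbert-space tensor product argument that is already implicit in Corollary after Lemma~\ref{L:local_approx}.
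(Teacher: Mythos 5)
Your proposal is correct in substance but reaches the conclusion by a genuinely different route than the paper. You diagonalize everything in the orthonormal basis $\{b_{ij}^{O}\}$ from \eqref{E:local_ONB}, verify $\mathcal{E}^O(b_{ij}^O,b_{kl}^O)=\tfrac12\lambda_j^{B}\delta_{ik}\delta_{jl}$ by Fubini and the Euclidean Gauss--Green identity, extend by density, and then invoke the abstract characterization of the Friedrichs extension as the unique self-adjoint extension whose domain lies inside the form closure of the initial domain; the Poincar\'e bound falls out of the same spectral expansion since $\lambda_j^B\geq\lambda_1^B>0$. The paper instead never computes the form on the basis: it first proves that the product core $(\phi^{-1})^\ast(L^2(\mathcal{C},\nu_{\mathcal{C}})\otimes C_c^2(B))$ is $\mathcal{E}^O_1$-dense in $\mathcal{D}(\mathcal{E}^O)$ by a duality argument (a $g$ orthogonal to all product functions is shown, fiberwise and via separability of $C_c^2(B)$, to lie in $\ker(1-\mathcal{L}^{\mathcal{E}^O})$), and then identifies $\mathcal{L}^O$ with the form generator $\mathcal{L}^{\mathcal{E}^O}$ by exhibiting both as self-adjoint extensions of $\tfrac12\Delta$ on that core and proving the two domain inclusions separately, the reverse one via Gauss--Green on $B$ and \eqref{E:defLO}. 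Both arguments hinge on the same crux, namely matching $\clos(\mathcal{D}(\mathcal{E})\cap C_c(O))$ with the tensor product $L^2(\mathcal{C},\nu_{\mathcal{C}})\otimes W_0^{1,2}(B)$; you correctly flag this as the only delicate point, but be aware that Remark~\ref{R:productform} and Lemma~\ref{L:LO}, which you cite for density of the core in $\mathcal{D}(\mathcal{E}^O)$, only give containment --- the density itself still needs either the paper's duality argument or the fiberwise $W_0^{1,2}$ tensor-product argument you sketch at the end (Lemma~\ref{L:W2kOLe} plus approximation of the $L^2(\mathcal{C})$ factor by locally constant functions). Your explicit handling of the factor $\tfrac12$ in the Poincar\'e constant is a welcome clarification, since the paper itself is not entirely consistent about whether $\mathcal{L}^O$ extends $\Delta$ or $\tfrac12\Delta$. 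What your approach buys is a self-contained spectral verification of the form identity; what the paper's buys is that it never needs to know $\mathcal{D}(\mathcal{E}^O)$ explicitly before concluding, only that the two operators agree on a common core and dominate each other's domains.
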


\begin{proof}
 We first claim that the space $(\phi^{-1})^\ast(L^2(\mathcal{C},\nu_{\mathcal{C}})\otimes C_c^2(B))$ is a dense subspace of $\mathcal{D}(\mathcal{E}^O ))$. Approximating with continuous functions compactly supported on $\mathcal{C}$ in the first tensor component, we can see that it is contained in $\mathcal{D}(\mathcal{E}^O )$. Now, let $\mathcal{L}^{\mathcal{E}^O }$ denote the generator of $\mathcal{E}^O $ and suppose that $g\in \mathcal{D}(\mathcal{L}^{\mathcal{E}^O })$ is such that  
\[
\big\langle (1-\mathcal{L}^{\mathcal{E}^O })g,f\big\rangle_{L^2(O,\mu)}=\left\langle g,f\right\rangle_{L^2(O,\mu)}+\mathcal{E}^O (g,f)=0
\]
for all $f$ of product form (\ref{e-prod}) with $f_0\in L^2(\mathcal{C},\nu_{\mathcal{C}})$ and $F_0\in C_c^2(B)$. Then, for each $F_0\in C^2_c(B)$, the finite signed measure $\int_B (1-\mathcal{L}^{\mathcal{E}^O })g(\Lambda',\vt\,)F_0(\vt\,)d\vt\,\nu_{\mathcal{C}}(d\Lambda')$ is the zero measure on $\mathcal{C}$.
Using the separability of $C_c^2(B)$ we can find a $\nu_\mathcal{C}$-null set $\mathcal{N}$ such that for all $\Lambda'\in \mathcal{C}\setminus \mathcal{N}$ the function $(1-\mathcal{L}^{\mathcal{E}^O})g(\Lambda',\cdot,)$ is zero $d\vt\,$-a.e. on $B$. This implies that $g\in \ker(1-\mathcal{L}^{\mathcal{E}^O})$ and therefore $f=0$ in $\mathcal{D}(\mathcal{E}^O )$, proving the claimed density.
 
For $g\in (\phi^{-1})^\ast(L^2(\mathcal{C},\nu_{\mathcal{C}})\otimes C_c^2(B))$ we have $g\in\mathcal{D}(\mathcal{L}^{\mathcal{E}^O })$ and $\mathcal{L}^{\mathcal{E}^O }g=\frac12\Delta g$ by (\ref{E:L2gen}) and Remark \ref{R:productform}. Consequently, both $\mathcal{L}^O $ and $\mathcal{L}^{\mathcal{E}^O }$ are self-adjoint extensions of $\frac12\Delta$ endowed with $(\phi^{-1})^\ast(L^2(\mathcal{C},\nu_{\mathcal{C}})\otimes C_c^2(B))$. Since $\mathcal{L}^{\mathcal{E}^O }
 $ is the smallest extension, we have $\mathcal{D}(\mathcal{L}^{\mathcal{E}^O })\subset \mathcal{D}(\mathcal{L}^O )$. 
 
Finally, we claim that $(\phi^{-1})^\ast(L^2(\mathcal{C},\nu_{\mathcal{C}})\otimes W_0^{1,2}(B)\cap W^{2,2}(B))$ is contained in $\mathcal{D}(\mathcal{L}^{\mathcal{E}^O})$ and for all its elements $f$ we have $\mathcal{L}^{\mathcal{E}^O}f=\mathcal{L}^Of$. Then, $\mathcal{D}(\mathcal{L}^O)\subset \mathcal{D}(\mathcal{L}^{\mathcal{E}^O})$ follows from (\ref{E:defLO}). To see this claim note that if $f$ is as above and of product form (\ref{e-prod}), and $g\in (\phi^{-1})^\ast(L^2(\mathcal{C},\nu_{\mathcal{C}})\otimes C_c^2(B))$, then by Lemma \ref{L:LO} and the Gauss-Green identity for the Dirichlet Laplacian $\Delta_{\mathbb{R}^d}^B$ on $B$ we have $\mathcal{E}^O(f,g)=-\big\langle \mathcal{L}^Of,g\big\rangle_{L^2(O,\mu)}$. By approximation in $\mathcal{D}(\mathcal{E}^O)$, this is true for all $g\in\mathcal{D}(\mathcal{E})$ and the claim is proved.
\end{proof}

\section{Proof of Theorem \ref{T:main_Liouville}: 
 harmonic functions and irreducibility of the Dirichlet form}\label{S:harmonic}
 
In this section we discuss harmonic functions and prove Theorem \ref{T:main_Liouville}. We begin with the classical non-probabilistic approach. After that, we present regularity results for measurable harmonic functions and connections to the probabilistic interpretation. In the end we consider finite energy $L^2(\Omega,\mu)$ harmonic functions and prove the irreducibility of the Dirichlet form. 

\subsection{Proof of Theorem \ref{T:main_Liouville}.}
First, assume that a harmonic function $f$ is bounded from above or below up to a set of $\mu$-measure zero. By the classical Liouville theorem, $f$ is constant on $\mu$-almost every orbit and
therefore $f$ is equal to a constant on a set of full $\mu$-measure because of the unique ergodicity. 
To show this, assume that the union $\mathcal{U}$ of all orbits on which the given harmonic function $f$ is not bounded from above (or below) has zero $\mu$-measure. For any $\Lambda\in \mathcal{U}^c$ the function $h_\Lambda^\ast f$ is harmonic on $\mathbb{R}^d$ and bounded from above (or below), so that by the classical Liouville theorem, $h_\Lambda^\ast f$ is constant on $\mathbb{R}^d$, see for instance \cite[Problem I.2.14]{GT01}. This means $f$ is constant on $\mu$-almost every orbit. By unique ergodicity $f$ must then be equal to a constant on a set of full $\mu$-measure.

 Now 
 let $f$ be harmonic and integrable. Assume, for a moment, that $\mu(\mathcal{U})>0$, 
 which by unique ergodicity implies $\mu(\mathcal{U})=1$.   
   For each $\Lambda\in\mathcal{U}$, $h^\ast_\Lambda f$ must have an unbounded growth at infinity.  Then $h^\ast_\Lambda f$ must have an unbounded growth at infinity in the following sense: 
 \begin{equation}\label{e-7.1}
 \lim_{R\to\infty} 
 \int\limits_{\OO}\left(\frac{1}{\lambda^d(B_R(\vec{\,0}\,))}
 \int\limits_{B_R(\vec{\,0}\,)} \big|f\circ \varphi_\vt\:(\Lambda)\big|d\vt\right)\,d\mu(\Lambda)
 =\infty.
 \end{equation}
 This is because if $u:\mathbb R^d\to\mathbb R$ is a harmonic function, then the following average 
 \begin{equation}\label{e-7.2}
 R\mapsto\frac{1}{\lambda^d(B_R(\vec{\,0}\,))}
 \int_{B_R(\vec{\,0}\,)} \big| u (\vt) \big|d\vt 
 \end{equation} 
 is a nondecreasing function of $R$, see for instance \cite[Section 1.VIII.10]{Doob84} (applied to $-|u|$). This grows to infinity as $R\to\infty$ if $u$ is unbounded. 
However, for any measurable function $u:\OO\to[0,\infty) $ we have 
 \begin{equation}\label{e-7.3}
 \frac{1}{\lambda^d(B_R(\vec{\,0}\,))}\int_{B_R(\vec{\,0}\,)}\left(\int_{\OO}
 v\circ \varphi_\vt\:(\Lambda) d\mu(\Lambda) \right)\, d\vt = 
 \int_{\OO}
 v (\Lambda) d\mu(\Lambda)
 \end{equation} 
 because of 
 the fact that  
 $\int_{\OO}
 v\circ \varphi_\vt\:(\Lambda) d\mu(\Lambda)$ does not depend on $\vt$. Thus \eqref{e-7.1} contradicts 
 the assumption that $f$ is integrable, which completes 
 the proof of Theorem~\ref{T:main_Liouville} in this case.
 
 If $d=1$ and $O$ is an open subset of $\Omega$, then a measurable function $f$ is harmonic in $O$ if and only if it is a linear function of $\vt$ on every orbit in $O$. This follows directly from Definition~\ref{def-harm}. 
   Hence we have  $$
   \lim_{R\to\infty}
   \int\limits_{\OO}\left(
   \frac
   { \lambda^d\Big(B_R(\vec{\,0}\,) \cap \{\vt : \big|f\circ \varphi_\vt\:(\Lambda)\big|>c  \}\Big) }
   {\lambda^d\big(B_R(\vec{\,0}\,)\big)}
    \right)\,d\mu(\Lambda)
   =1.
   $$ for   
   any number $c$. 
  Therefore  by  \eqref{E:ergodic}, adapted for bounded measurable functions
  for $\mu$-a.e. $\Lambda\in\OO$, 
   $$
   \mu(   \{\Lambda'\ : |f(\Lambda')|>c \})=1
   ,$$ which   contradicts the assumption that $f$ is a measurable real valued function and completes the proof of 
   Theorem~\ref{T:main_Liouville}.

\subsection{Mean value properties, regularity results and the probabilistic 
 interpretation. }

Definition \ref{def-harm} is based on the classical definition of harmonicity for functions on open sets in $\mathbb{R}^d$.

\begin{definition}
Let $D\subset \mathbb{R}^d$ be a nonempty open set. A function $f:D\to \mathbb{R}$ is \emph{harmonic in $D$} if $f\in C^2(D)$ and $\Delta_{\mathbb{R}^d}f(\vec{x})=0$ for all $\vec{x}\in D$.
\end{definition}

Recall that classical characterizations for harmonicity in Euclidean domains include 
the Weyl's lemma, and the ball and sphere averaging properties, 
see 
\cite[Sections 1.I.3 and 1.I.3]{Doob84}, 
\cite[Section II.1, Definition 1.1 and Propositions 1.2 and 1.3]{Bass95}.
 
\begin{proposition}\label{prop-harm}
Let $D\subset \mathbb{R}^d$ be a nonempty open set. 

\begin{enumerate}
\item If a function $f:D\to \mathbb{R}$ is harmonic in $D$, then $f\in C^\infty(D)$, it satisfies \eqref{E:Weylslemma}, and it is real analytic.
\item\label{i-shereaveraging}\mbox{[Sphere averaging property]}
A locally bounded Borel measurable function $f:D\to \mathbb{R}$ is harmonic in $D$ if and only if for any open ball $B_r(\vec{x}\,)$ with center $\vec{x}\in D$ and radius $r>0$ such that $\overline{B_r(\vec{x})}\subset D$ we have 
\begin{equation}\label{E:sphereaverage}
f(\vec{x}\,)=\frac{1}{\omega_d\:r^{d-1}}\int_{\partial B_r(\vec{x}\,)}f(\vec{y}\,)\sigma(d\vec{y}\,),
\end{equation}
where $\sigma$ denotes the surface measure on $\partial B_r(\vec{x}\,)$ and $\omega_{d}$ denotes the surface area of the unit sphere in $\mathbb{R}^d$. 

 \item\label{i-ballaveraging}\mbox{[Ball averaging property]}
 A Borel measurable function $f:D\to \mathbb{R}$ is harmonic in $D$ if and only if it is locally integrable on $D$ and for any open ball $B_r(\vec{x}\,)$ with center $\vec{x}\in D$ and radius $r>0$ such that $\overline{B_r(\vec{x})}\subset D$ we have 
 \begin{equation}\label{E:ballaverage}
 f(\vec{x}\,)=\frac{1}{v_d\:r^{d}}\int_{B_r(\vec{x}\,)}f(\vec{y}\,)d\vec{y},
 \end{equation}
 where $v_d$ denotes the volume of the unit ball in $\mathbb{R}^d$.
 
\item\label{i-Weylslemma}\mbox{[Weyl's lemma]} 
If a  Borel measurable locally integrable function $f:D\to \mathbb{R}$ satisfies 
\begin{equation}\label{E:Weylslemma}
\int_{D}f(\vec{y}\,)\Delta \varphi (\vy) d\vec{y}=0 \text{ for all }\varphi\in C^\infty_0(D) 
\end{equation}
then there is a harmonic function $\tilde f:D\to \mathbb{R}$ such that $f(x)=\tilde f(x)$ for $\lambda^d$-a.e.\ $x\in D$. 
In other words, a Borel measurable locally integrable function $f:D\to \mathbb{R}$ has a harmonic $\lambda^d$-version if and only if $\Delta f =0$ in $D$ in the distributional sense. 
\end{enumerate}
\end{proposition}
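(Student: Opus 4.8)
The plan is to reduce every assertion to classical potential theory on $\mathbb{R}^d$, organizing the argument around mollification and Green's identity; no feature of the pattern space enters, so this proposition is purely a collection of quotable facts assembled for later use, and I would also point to \cite[Sections 1.I.3 and 1.VIII.10]{Doob84}, \cite[Section II.1]{Bass95} and \cite[Chapters 2 and 7]{GT01} for self-contained write-ups. For the smoothness part of (1), given $f$ harmonic on $D$ (so $f\in C^2(D)$ with $\Delta_{\mathbb{R}^d}f=0$ by the preceding definition) and a radial mollifier $\phi_\varepsilon\in C^\infty_0(B_\varepsilon(\vzer))$ with $\int\phi_\varepsilon=1$, I would first establish the Gauss mean value identity for such $f$ by integrating $\Delta_{\mathbb{R}^d}f=0$ over concentric balls and applying the divergence theorem; this gives $f\ast\phi_\varepsilon=f$ on $\{\vx:\dist(\vx,\partial D)>\varepsilon\}$, whence $f\in C^\infty(D)$, and integrating against test functions yields \eqref{E:Weylslemma}. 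Real analyticity then follows from the interior (Cauchy-type) derivative estimates obtained by iterating the mean value property for the partial derivatives of $f$, which are themselves harmonic, and summing the Taylor series.

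\textbf{Averaging properties (2) and (3).} For the forward direction of (2), a harmonic $f$ is $C^2$, so differentiating $r\mapsto\frac{1}{\omega_d r^{d-1}}\int_{\partial B_r(\vx)}f\,d\sigma$ and using $\int_{B_r(\vx)}\Delta_{\mathbb{R}^d}f=0$ shows this average is constant in $r$ and equals $f(\vx)$ by continuity, i.e.\ \eqref{E:sphereaverage}; integrating \eqref{E:sphereaverage} in $r$ against $\omega_d r^{d-1}\,dr$ gives \eqref{E:ballaverage}, the forward direction of (3). For the converse in (2): if a locally bounded Borel $f$ satisfies \eqref{E:sphereaverage} for all admissible balls, then it satisfies \eqref{E:ballaverage} as well, and the right-hand side of \eqref{E:ballaverage} is continuous in $\vx$ by dominated convergence; hence $f$ is continuous, so $f\ast\phi_\varepsilon=f$ (using \eqref{E:sphereaverage} and radiality of $\phi_\varepsilon$) gives $f\in C^\infty(D)$, and expanding \eqref{E:sphereaverage} in a Taylor series as $r\to0^+$ forces $\Delta_{\mathbb{R}^d}f\equiv0$. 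This also yields (3): a locally integrable Borel $f$ satisfying \eqref{E:ballaverage} is automatically locally bounded, indeed continuous, by dominated convergence, so the case just treated applies.

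\textbf{Weyl's lemma (4).} The ``if'' direction is immediate from (1). For the ``only if'', fix a radial mollifier and set $f_\varepsilon:=f\ast\phi_\varepsilon$ on $D_\varepsilon:=\{\dist(\cdot,\partial D)>\varepsilon\}$. Since $\Delta_{\mathbb{R}^d}\phi_\varepsilon(\,\cdot-\vx\,)\in C^\infty_0(D)$ for $\vx\in D_\varepsilon$, the hypothesis \eqref{E:Weylslemma} gives $\Delta_{\mathbb{R}^d}f_\varepsilon=f\ast\Delta_{\mathbb{R}^d}\phi_\varepsilon=0$ on $D_\varepsilon$, so each $f_\varepsilon$ is smooth and harmonic. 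Because harmonic functions are reproduced by convolution with radial mollifiers, $f_\varepsilon\ast\phi_\delta=f_\varepsilon$ and $f_\delta\ast\phi_\varepsilon=f_\delta$ wherever both are defined, so $f_\varepsilon=f_\delta$ on $D_{\varepsilon+\delta}$; the $f_\varepsilon$ therefore patch into a single harmonic $\tilde f$ on $D$, and $f_\varepsilon\to f$ in $L^1_{loc}(D)$ gives $f=\tilde f$ $\lambda^d$-a.e. The equivalent distributional formulation is a restatement of this together with (1).

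\textbf{Main difficulty.} The only point requiring genuine care is the bookkeeping of regularity hypotheses: upgrading ``satisfies a mean value identity'' to ``continuous'', and then to ``$C^\infty$'', for the merely locally bounded (resp.\ locally integrable) functions appearing in (2)--(4). This hinges on two elementary but essential facts — that mollifiers may be chosen radial, and that spherical averages are preserved under convolution with radial mollifiers — which make the identities $f\ast\phi_\varepsilon=f$ and $f_\varepsilon\ast\phi_\delta=f_\varepsilon$ available. Everything else is routine and fully citable.
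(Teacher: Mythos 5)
Your proof is correct. For this proposition the paper itself gives no argument at all: it simply records these four classical facts and points to \cite[Sections 1.I.3 and 1.VIII.10]{Doob84} and \cite[Section II.1]{Bass95} (and implicitly \cite{GT01}), so there is no ``paper proof'' to compare against line by line. Your mollification-based write-up is exactly the standard textbook route and all the steps check out: the reproducing identity $f\ast\phi_\varepsilon=f$ for radial mollifiers (via the layer decomposition of $\phi_\varepsilon$ into spherical shells), the upgrade from a mean-value identity to continuity by observing that $\vec{x}\mapsto\frac{1}{v_d r^d}\int_{B_r(\vec{x})}f$ is continuous for a locally fixed admissible $r$, the Taylor expansion of the spherical average to recover $\Delta_{\mathbb{R}^d}f=0$, and the patching $f_\varepsilon=f_\delta$ on $D_{\varepsilon+\delta}$ in Weyl's lemma. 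The only point you leave slightly implicit is the passage in (3) from the ball-average hypothesis back to the sphere-average (or directly to $f\ast\phi_\varepsilon=f$) once continuity is known; differentiating $v_d r^d f(\vec{x})=\int_{B_r(\vec{x})}f$ in $r$ handles this immediately, so nothing is missing in substance.
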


Note that items (\ref{i-shereaveraging}) and (\ref{i-ballaveraging}) in Proposition~\ref{prop-harm} 
also have $\lambda^d$-a.e.\ versions similar to item (\ref{i-Weylslemma}), but we omit them for the sake of brevity. 

\begin{lemma}\label{lem-ave}
Let $O\subset \Omega$ be open and let $f:\Omega\to\mathbb{R}$ be a measurable function. 

\begin{enumerate}
\item If $f$ is harmonic in $O$ then we have $f\in C^\infty_{\orb}(O)$ and $\Delta f(\Lambda)=0$ for any $\Lambda\in O$. 
\item 
If for each $\Lambda\in O$ the function $h_\Lambda^\ast f$ is locally bounded on an open neighborhood of $\vec{0}$, then $f$ is harmonic in $O$ if and only if for each $\Lambda\in O$ the function $h_\Lambda^\ast f$ satisfies the sphere averaging property (\ref{E:sphereaverage}) in an open neighborhood of $\vec{0}$.

\item The function $f$ is harmonic in $O$ if and only if for each $\Lambda\in O$ the function 
$h_\Lambda^\ast f$ is locally integrable on an open neighborhood of $\vec{0}$ and satisfies the ball averaging property (\ref{E:ballaverage}) there. 

\item  \label{i-Weylslemma-OO}
If a Borel measurable locally integrable function $f:O\to \mathbb{R}$ satisfies 
\begin{equation}\label{E:Weylslemma-OO}
\int_{O}f(\Lambda)\Delta \varphi (\Lambda) d\mu(\Lambda)=0 \text{ for all }\varphi\in C^\infty_0(O) 
\end{equation}
then there is a harmonic function $\tilde f:O\to \mathbb{R}$ such that $f(\Lambda)=\tilde f(\Lambda)$ for $\mu$-a.e.\ $\Lambda\in O$. In other words, a measurable locally integrable function $f:O\to \mathbb{R}$ has a harmonic $\mu$-version if and only if $\Delta f =0$ in $O$ in the distributional sense. 
\end{enumerate}\end{lemma}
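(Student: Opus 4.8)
For parts (1)--(3) the plan is to transport the Euclidean facts of Proposition~\ref{prop-harm} along the orbit homeomorphisms~\eqref{E:orbithomeo}, using that by Definition~\ref{def-harm} and the identity $h_{\varphi_{\vs\,}(\Lambda)}^\ast f=(h_\Lambda^\ast f)(\vs+\cdot)$, ``$f$ harmonic in $O$'' is equivalent to ``$h_\Lambda^\ast f$ is harmonic on $h_\Lambda^{-1}(O\cap\orb(\Lambda))$ for every $\Lambda\in O$''. Then (1) follows from the smoothness/analyticity statement of Proposition~\ref{prop-harm}, which makes $f$ orbit-wise $C^\infty$ with $\varrho_{\orb}$-continuous horizontal derivatives, hence $f\in C^\infty_{\orb}(O)$, and gives $\Delta f(\Lambda)=\Delta_{\mathbb R^d}(h_\Lambda^\ast f)(\vzer)=0$ by~\eqref{E:Laplace}; (2) and (3) are immediate from the sphere-averaging property Proposition~\ref{prop-harm}(\ref{i-shereaveraging}) and the ball-averaging property Proposition~\ref{prop-harm}(\ref{i-ballaveraging}) applied to $h_\Lambda^\ast f$ near $\vzer$, since those averages are exactly the averages of $f$ over spheres and balls inside $\orb(\Lambda)$ centred at $\Lambda$.

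The substance is part (4), the Weyl-type lemma on $\Omega$. The forward implication is easy: if $f=\tilde f$ $\mu$-a.e.\ with $\tilde f$ harmonic in $O$, then for $\varphi\in C_0^\infty(O)$ a $C^\infty_{tlc}$-partition of unity (Lemma~\ref{L:partofunity}) reduces $\int_O f\,\Delta\varphi\,d\mu=\int_O\tilde f\,\Delta\varphi\,d\mu$ to the case $\supp\varphi$ inside one chart $\phi^{-1}(\mathcal C\times B)$, where~\eqref{E:localprodmeas} and Fubini write it as an integral over $\mathcal C$ of the quantities $\int_B(h_{\Lambda'}^\ast\tilde f)\,\Delta_{\mathbb R^d}(h_{\Lambda'}^\ast\varphi)\,d\vt$, each vanishing by Green's identity on $B$ because $h_{\Lambda'}^\ast\tilde f$ is smooth harmonic (part (1)) and $h_{\Lambda'}^\ast\varphi\in C_c^\infty(B)$; this is $\Delta f=0$ distributionally.

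For the converse the plan is: cover $O$ by countably many charts $O_i=\phi_i^{-1}(\mathcal C_i\times B_i)$ of type~\eqref{E:transcylsets}. In a fixed chart, testing $\int_O f\,\Delta\varphi\,d\mu=0$ against product functions $\varphi(\overline\Lambda)=b(\Lambda')F_0(\vt)$ with $b$ locally constant on $\mathcal C$ and $F_0\in C_c^\infty(B)$ gives, via~\eqref{E:localprodmeas}, that $\int_B(h_{\Lambda'}^\ast f)\,\Delta_{\mathbb R^d}F_0\,d\vt=0$ for $\nu_{\mathcal C}$-a.e.\ $\Lambda'$ (run $b$ over indicators of clopen sets, $F_0$ over a countable dense subset of $C_c^\infty(B)$, and use local $\mu$-integrability of $f$ and Fubini); hence $h_{\Lambda'}^\ast f$ is distributionally harmonic on $B$ for all $\Lambda'$ outside a $\nu_{\mathcal C}$-null set $\mathcal N$, and the Euclidean Weyl lemma Proposition~\ref{prop-harm}(\ref{i-Weylslemma}) supplies a harmonic $\lambda^d$-version on $B$. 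Let $Z^0\subset O$ be the set of $\Lambda$ where $h_\Lambda^\ast f$ is not locally integrable or not distributionally harmonic near $\vzer$; the chart analysis shows $Z^0\cap O_i\subset\phi_i^{-1}(\mathcal N_i\times B_i)$, so (saturating over orbits exactly as in the proof of Lemma~\ref{L:W22loc}, cf.~\eqref{E:countableunion}) its orbit-saturation $Z$ in $O$ is Borel with $\mu(Z)=0$. Because distributional harmonicity is local and glues, for $\Lambda\notin Z$ the function $h_\Lambda^\ast f$ is distributionally harmonic on all of $h_\Lambda^{-1}(O\cap\orb(\Lambda))$ and thus has a canonical harmonic version $u_\Lambda$ there. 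I would then define $\tilde f:=u_\Lambda(\vzer)$ on $O\setminus Z$ --- equivalently $\tilde f(\Lambda)=\lim_{r\downarrow0}\frac1{v_d r^d}\int_{B_r(\vzer)}f(\varphi_{\vt\,}(\Lambda))\,d\vt$ --- and $\tilde f:=0$ on $Z$.

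It then remains to check the three conclusions, which I expect to be the real work. Measurability of $\tilde f$ follows by writing it on $O\setminus Z$ as a pointwise limit along rational radii of the Borel functions $\Lambda\mapsto\frac1{v_d r^d}\int_{B_r(\vzer)}f(\varphi_{\vt\,}(\Lambda))\,d\vt$ and using that $Z$ is Borel and $\mu$-null. For harmonicity of $\tilde f$ in $O$: on any orbit hitting $Z$ the whole orbit-piece in $O$ lies in $Z$, so $h_\Lambda^\ast\tilde f\equiv0$ there; on any other orbit $h_\Lambda^\ast\tilde f=u_\Lambda$, harmonic on $h_\Lambda^{-1}(O\cap\orb(\Lambda))$; in both cases $h_\Lambda^\ast\tilde f$ is harmonic near $\vzer$, so Definition~\ref{def-harm} applies. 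For $f=\tilde f$ $\mu$-a.e.: in each chart $O_i$, for $\nu_{\mathcal C_i}$-a.e.\ $\Lambda'$ the slice avoids $Z$ and there $h_{\Lambda'}^\ast f$ agrees $\lambda^d$-a.e.\ with its continuous harmonic version $=h_{\Lambda'}^\ast\tilde f$, so~\eqref{E:localprodmeas} and $\mu(Z)=0$ give $\mu(\{f\neq\tilde f\}\cap O_i)=0$; sum over the countably many charts. The main obstacle is the interplay between the two directions of the product structure: the hypothesis controls $f$ only up to $\mu$-null sets and hence only along $\nu_{\mathcal C}$-almost every orbit, so one is forced to manufacture a genuinely everywhere-harmonic $\tilde f$ on $O$ by defaulting to $0$ on the exceptional (orbit-saturated, $\mu$-null) set $Z$, and the delicate bookkeeping is to verify that this saturation is still $\mu$-null --- which works by the same Fubini/translation argument the authors already use for Lemma~\ref{L:W22loc} --- and that the default does not break the $\mu$-a.e.\ identity with $f$.
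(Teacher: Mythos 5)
Your proof is correct and follows essentially the same route as the paper's: parts (1)--(3) are read off from Proposition~\ref{prop-harm} via the orbit homeomorphisms, and part (4) is reduced, by testing against product functions over a countable dense family of Euclidean test functions and using the local product structure of $\mu$, to the classical Weyl lemma applied to $\nu_{\mathcal{C}}$-almost every orbit slice. You are in fact more complete than the paper's brief contradiction argument, since you explicitly glue the slice-wise harmonic versions into a globally defined, everywhere-harmonic, measurable $\tilde f$ (defaulting to $0$ on the orbit-saturated $\mu$-null exceptional set) --- a step the paper leaves implicit.
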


\begin{proof}
The first three statements of the lemma are immediate consequences of Definition \ref{def-harm} and Proposition \ref{prop-harm}.

The last assertion follows from  Section~\ref{S:S} and Proposition~\ref{prop-harm}(\ref{i-Weylslemma}) as follows.  
Without loss of generality we can assume that $O=\OLe$.
	 Suppose, for a moment,  that $f$ is  distributionally harmonic in $O$ but does not have a harmonic $\mu$-version. Then there is a set of orbits 
	 of positive $\nu$ measure and a countable set of smooth test functions 
	 compactly supported in $B$ such that on this set 
	 $h_{\Lambda'}^\ast f$ is not \Rd-orthogonal to the \Rd-Laplacian of one of these test functions. Then there is a possibly smaller   
	 set of orbits 
	 of positive $\nu$ measure and $\phi\in C^\infty_0(B)$ such that  
	 $h_{\Lambda'}^\ast f$ is not \Rd-orthogonal to $\Delta_\Rd\phi$. This contradicts the assumption that $f$ is  $\mu$-distributionally harmonic and proves the lemma. 
\end{proof}

Another well known definition of harmonicity for functions on open subsets of $\mathbb{R}^d$ is the probabilistic one,
it goes back to \cite{Doob54} and is based on a probabilistic interpretation of (\ref{E:sphereaverage}) by \cite{Kakutani44, Kakutani45}. We write $\vec{W}^{\vec{x}}$ to denote the Brownian motion on $\mathbb{R}^d$ started at $\vec{x}\in\mathbb{R}$. For any open set $D\subset \mathbb{R}^d$ and any $\vec{x}\in\mathbb{R}^d$ let $\tau_D^{\vec{x}}:=\inf \{ t\geq 0: \vec{W}_t^{\vec{x}} \in \mathbb{R}^d\setminus D \}$ denote the first exit time of $\vec{W}^{\vec{x}}$ from $D$.
\begin{definition}
Let $D\subset \mathbb{R}^d$ be a nonempty open set. A Borel measurable function $f:D\to\mathbb{R}$ is said to be \emph{harmonic in $D$ in the probabilistic sense} if for any bounded open set $D'\subset D$ with $\overline{D'}\subset D$ and any $\vec{x}\in D$ the family $(f(\vec{W}_{t\wedge \tau_{D'}^{\vec{x}}}^{\vec{x}})_{t\geq 0}$ is a $\mathbb{P}$-martingale.
\end{definition}

The following fact was first proved in \cite{Doob54} (see also \cite[Sections 2.IX.6 and 2.IX.8]{Doob84} or \cite[Theorem 13.9]{Dynkin}). For more recent expositions, see \cite[Proposition II.1.3 and Proposition II.1.5 and its proof]{Bass95} or \cite[Definition 9.2.2 and Lemma 9.2.3]{Oks00}.

\begin{proposition}
Let $D\subset \mathbb{R}^d$ be open and let $f:D\to\mathbb{R}$ be Borel measurable and locally bounded. Then $f$ is harmonic in $D$ if and only if $f$ is harmonic in $D$ in the probabilistic sense. 
\end{proposition}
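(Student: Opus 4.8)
The plan is to prove the equivalence between classical harmonicity and probabilistic harmonicity for a Borel measurable, locally bounded function $f:D\to\mathbb{R}$ on an open set $D\subset\mathbb{R}^d$; this is a well-known classical statement, so the goal is to give a clean argument rather than to introduce new ideas.

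\textbf{Necessity (classical $\Rightarrow$ probabilistic).} First I would assume $f$ is harmonic in $D$, hence $f\in C^\infty(D)$ by Proposition~\ref{prop-harm}(1). Fix a bounded open set $D'$ with $\overline{D'}\subset D$ and a starting point $\vec{x}\in D$ (if $\vec{x}\notin D'$ the stopped process is constant and the martingale property is trivial, so assume $\vec{x}\in D'$). Applying It\^o's formula to $f$ along the Brownian motion $\vec{W}^{\vec{x}}$ up to the stopping time $\tau_{D'}^{\vec{x}}$ gives
\[
f(\vec{W}^{\vec{x}}_{t\wedge\tau^{\vec{x}}_{D'}})=f(\vec{x})+\sum_{i=1}^d\int_0^{t\wedge\tau^{\vec{x}}_{D'}}\frac{\partial f}{\partial x_i}(\vec{W}^{\vec{x}}_s)\,dW^i_s+\frac12\int_0^{t\wedge\tau^{\vec{x}}_{D'}}\Delta_{\mathbb{R}^d}f(\vec{W}^{\vec{x}}_s)\,ds.
\]
The last integral vanishes because $\Delta_{\mathbb{R}^d}f\equiv 0$ on $D$. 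Since $f$ and $\nabla f$ are bounded on the compact set $\overline{D'}$, the stochastic integral is a genuine $L^2$-martingale (not merely a local martingale), so $(f(\vec{W}^{\vec{x}}_{t\wedge\tau^{\vec{x}}_{D'}}))_{t\geq0}$ is a $\mathbb{P}$-martingale, which is exactly probabilistic harmonicity.

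\textbf{Sufficiency (probabilistic $\Rightarrow$ classical).} Conversely, suppose $f$ is harmonic in $D$ in the probabilistic sense. I would verify the sphere averaging property~\eqref{E:sphereaverage} and then invoke Proposition~\ref{prop-harm}(\ref{i-shereaveraging}), which (together with local boundedness of $f$) yields classical harmonicity. To this end, fix $\vec{x}\in D$ and $r>0$ with $\overline{B_r(\vec{x})}\subset D$, and take $D'=B_r(\vec{x})$. Then $\tau^{\vec{x}}_{D'}<\infty$ $\mathbb{P}$-a.s. and $\vec{W}^{\vec{x}}_{\tau^{\vec{x}}_{D'}}$ is distributed according to the normalized surface measure $\sigma/(\omega_d r^{d-1})$ on $\partial B_r(\vec{x})$, by rotational invariance of Brownian motion. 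The stopped process $(f(\vec{W}^{\vec{x}}_{t\wedge\tau^{\vec{x}}_{D'}}))_{t\geq0}$ is a martingale by hypothesis, and it is uniformly bounded since $f$ is bounded on the compact set $\overline{B_r(\vec{x})}$; hence it is uniformly integrable and converges in $L^1$ to $f(\vec{W}^{\vec{x}}_{\tau^{\vec{x}}_{D'}})$. Taking expectations and using the optional stopping theorem,
\[
f(\vec{x})=\mathbb{E}\big[f(\vec{W}^{\vec{x}}_{\tau^{\vec{x}}_{D'}})\big]=\frac{1}{\omega_d\,r^{d-1}}\int_{\partial B_r(\vec{x})}f(\vec{y})\,\sigma(d\vec{y}),
\]
which is~\eqref{E:sphereaverage}. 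Since this holds for every admissible ball, Proposition~\ref{prop-harm}(\ref{i-shereaveraging}) gives that $f$ is harmonic in $D$, completing the equivalence.

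\textbf{Main obstacle.} The genuinely substantive point is the identification of the exit distribution of Brownian motion from a ball with the uniform surface measure and the passage from the martingale property to optional stopping at the (a.s.\ finite but unbounded) exit time $\tau^{\vec{x}}_{D'}$; both are handled cleanly by uniform integrability coming from local boundedness of $f$ on the closed ball. Everything else is either It\^o's formula (for necessity) or a direct appeal to Proposition~\ref{prop-harm}. Since this proposition is stated as classical and attributed to \cite{Doob54} in the excerpt, one may alternatively simply cite the references given there; I would include the short argument above for completeness and because It\^o's formula along $X^\Lambda$ (via $h_\Lambda^\ast$) is already used elsewhere in the paper.
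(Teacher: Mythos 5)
Your proof is correct. Note that the paper does not actually prove this proposition: it is recorded as a classical fact with pointers to \cite{Doob54}, \cite[Sections 2.IX.6 and 2.IX.8]{Doob84}, \cite[Theorem 13.9]{Dynkin} and \cite[Propositions II.1.3 and II.1.5]{Bass95}, so there is no in-paper argument to compare against; what you have written is essentially the standard proof found in those references. Both directions check out. For necessity, since the stopped path lives in the compact set $\overline{D'}\subset D$ on which $f$ is $C^\infty$ by Proposition~\ref{prop-harm}(1), It\^o's formula applies (after a routine cutoff extending $f|_{\overline{D'}}$ to a $C^2$ function on $\mathbb{R}^d$), the drift term vanishes, and boundedness of $\nabla f$ on $\overline{D'}$ makes the stochastic integral a true martingale rather than merely a local one. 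For sufficiency, the one point worth making explicit is that no continuity of $f$ is needed to pass to the limit $t\to\infty$: since $\tau^{\vec{x}}_{B_r(\vec{x})}<\infty$ a.s., the stopped process $f(\vec{W}^{\vec{x}}_{t\wedge\tau})$ is pathwise eventually constant and equal to $f(\vec{W}^{\vec{x}}_{\tau})$, and uniform integrability (from local boundedness on the closed ball) converts this into $f(\vec{x})=\mathbb{E}\big[f(\vec{W}^{\vec{x}}_{\tau})\big]$; rotational invariance then identifies the exit law with normalized surface measure, and Proposition~\ref{prop-harm}(\ref{i-shereaveraging}) closes the loop. The only difference from the paper is stylistic: the authors cite rather than reprove, while your self-contained version is consistent with how It\^o's formula along orbits is already used in Remark~\ref{R:Feller}.
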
 

A probabilistic definition of harmonicity for more general continuous Markov processes has been given in \cite[Section XII.5, in particular 12.18 and Theorem 12.12]{Dynkin}. A probabilistic definition of harmonicity for functions on open subsets $O\subset \Omega$ arises as a special case. For any open set $O\subset \Omega$ and any point $\Lambda\in \Omega$ let $\tau_O^\Lambda:=\inf\left\lbrace t\geq 0: X_t^\Lambda \in \Omega\setminus O\right\rbrace$ be the first exit time of $X^\Lambda$ from $O$. 

\begin{definition}\label{D:prob-harm}
We say that a measurable function $f:\Omega\to\mathbb{R}$ is \emph{harmonic on $O$ in the probabilistic sense} if for any open set $O'\subset O$ with $\overline{O'}\subset O$
and every $\Lambda\in O$ the family $(f(X^\Lambda_{t\wedge \tau_{O'}^\Lambda}))_{t\geq 0}$ is a $\mathbb{P}$-martingale. 
\end{definition}

\begin{proposition}\label{prop-harmonic}
Let $O\subset \Omega$ be open and let $f:\Omega\to\mathbb{R}$ be a measurable function that is locally bounded on $O$. Then $f$ is harmonic in $O$ if and only if it is harmonic in $O$ in the probabilistic sense. 
\end{proposition}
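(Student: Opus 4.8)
The plan is to reduce the statement on $\Omega$ to the classical Euclidean equivalence via the orbit homeomorphisms $h_\Lambda$, exploiting the fact that by~\eqref{E:process} the process $X^\Lambda$ is literally $h_\Lambda$ composed with a Euclidean Brownian motion. First I would fix an open $O\subset\Omega$, a measurable $f:\Omega\to\mathbb R$ locally bounded on $O$, and recall from Lemma~\ref{lem-ave}(1) (together with Definition~\ref{def-harm}) that harmonicity of $f$ in $O$ means precisely that for every $\Lambda\in O$ the function $h_\Lambda^\ast f$ is harmonic in the Euclidean sense on the open neighbourhood $h_\Lambda^{-1}(O\cap\orb(\Lambda))$ of $\vec 0$ in $\mathbb R^d$ (and note $h_\Lambda^{-1}(O\cap\orb(\Lambda))$ is open since $h_\Lambda$ is a homeomorphism onto the orbit). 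I would also observe that local boundedness of $f$ on $O$ transfers to local boundedness of $h_\Lambda^\ast f$ on that Euclidean neighbourhood.

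Next I would prove both implications by pulling back to $\mathbb R^d$. Suppose $f$ is harmonic in $O$; let $O'\subset O$ be open with $\overline{O'}\subset O$ and fix $\Lambda\in O$. The key point is that $X^\Lambda$ stays in $\orb(\Lambda)$ almost surely, so the exit time $\tau_{O'}^\Lambda$ of $X^\Lambda$ from $O'$ coincides with the exit time $\tau_{D'}^{\vec 0}$ of the Euclidean Brownian motion $\vec W$ from the open set $D':=h_\Lambda^{-1}(O'\cap\orb(\Lambda))\subset\mathbb R^d$, and moreover $\overline{D'}\subset h_\Lambda^{-1}(O\cap\orb(\Lambda))$ because $h_\Lambda$ is a homeomorphism onto $\orb(\Lambda)$ with the orbit-metric topology (so one must take closures within the orbit; since $X^\Lambda$ never leaves the orbit this causes no loss). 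Then
\[
f(X^\Lambda_{t\wedge\tau_{O'}^\Lambda})=h_\Lambda^\ast f\bigl(\vec W_{t\wedge\tau_{D'}^{\vec 0}}\bigr),
\]
and since $h_\Lambda^\ast f$ is Euclidean-harmonic and locally bounded on a neighbourhood of $\overline{D'}$, the classical equivalence (the proposition quoted just above, going back to Doob) says exactly that the right-hand side is a $\mathbb P$-martingale. Hence $f$ is harmonic in $O$ in the probabilistic sense. Conversely, if $f$ is harmonic in $O$ in the probabilistic sense, then for each $\Lambda\in O$ and each admissible $D'$, the identity above shows $\bigl(h_\Lambda^\ast f(\vec W_{t\wedge\tau_{D'}^{\vec 0}})\bigr)_{t\geq 0}$ is a $\mathbb P$-martingale; by the classical result, $h_\Lambda^\ast f$ is Euclidean-harmonic on $h_\Lambda^{-1}(O\cap\orb(\Lambda))$, which by Definition~\ref{def-harm} means $f$ is harmonic in $O$.

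The main subtlety — not a deep obstacle, but the point requiring care — is the bookkeeping between the ambient topology $\varrho$ and the orbit topology $\varrho_{\orb}$ when passing to $\mathbb R^d$: one needs the set $D'=h_\Lambda^{-1}(O'\cap\orb(\Lambda))$ to be open in $\mathbb R^d$ with $\overline{D'}$ (closure in $\mathbb R^d$, equivalently in the orbit) contained in $h_\Lambda^{-1}(O\cap\orb(\Lambda))$, so that the classical local-boundedness hypothesis is met; since $O\cap\orb(\Lambda)$ is $\varrho_{\orb}$-open in the orbit and the orbit is $\varrho_{\orb}$-isometric to $\mathbb R^d$ by Lemma~\ref{L:Horizontal}, and since the process never exits the orbit so that $\tau_{O'}^\Lambda$ only ``sees'' $O'\cap\orb(\Lambda)$, this works directly. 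I would also remark that the measurability and local-boundedness hypotheses are exactly what make the composed process a well-defined process to which the Euclidean theorem applies, and that no further regularity of $\Omega$ is used — the whole proof is just the observation that along a single orbit everything is Euclidean, combined with the almost-sure confinement of $X^\Lambda$ to its starting orbit noted in Remark~\ref{R:Feller} and used already in Lemma~\ref{L:noHK}.
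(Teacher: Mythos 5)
Your proof is correct and takes essentially the same route as the paper: the paper's own proof is the one-sentence observation that the diffusion is confined to a single orbit, which is $\varrho_{\orb}$-isometric to $\mathbb{R}^d$ under $h_\Lambda$, so the statement reduces to the classical Euclidean (Doob--Kakutani) equivalence. Your write-up merely supplies the topological bookkeeping between $\varrho$ and $\varrho_{\orb}$ that the paper leaves implicit.
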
\begin{proof}
The diffusion moves in a single orbit, and this orbit is isometric to \Rd, as under the orbit homeomorphism the orbit metric is the Euclidean \Rd.
\end{proof}
 
The same arguments that were used to show the absence of the strong Feller property in Subsection~\ref{subsec-no-strong-Feller} also show the following. 
\begin{corollary}
Let $O\subset \Omega$ be open. There exist bounded and measurable functions $f:\Omega\to\mathbb{R}$ that are harmonic in $O$ but not continuous in $O$, $f\notin C(O)$. 
\end{corollary}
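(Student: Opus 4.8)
The plan is to exhibit the simplest possible witness, namely the indicator function of a single orbit — exactly the function singled out at the beginning of Subsection~\ref{subsec-no-strong-Feller} as the shortest obstruction to the strong Feller property. We may assume $O\neq\varnothing$, since otherwise there is nothing to prove. Fix a point $\Lambda_\ast\in O$ and set $f:=\mathbbm{1}_{\orb(\Lambda_\ast)}$. First I would record that $f$ is a bounded Borel function: by the estimate $\varrho(\varphi_{\vs}(\Lambda_\ast),\varphi_{\vt}(\Lambda_\ast))\le 2|\vs-\vt|$ from~\eqref{E:Euclaction}, the orbit map $h_{\Lambda_\ast}\colon\mathbb{R}^d\to(\Omega,\varrho)$ is continuous, so $\orb(\Lambda_\ast)=\bigcup_{n\ge 1}h_{\Lambda_\ast}\big(\overline{B_n(\vzer)}\,\big)$ is a countable union of compact sets, hence Borel.

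Next I would check that $f$ is harmonic in $O$ — in fact in all of $\Omega$. For any $\Lambda\in\Omega$ and $\vt\in\mathbb{R}^d$ one has $\orb(\varphi_{\vt}(\Lambda))=\orb(\Lambda)$, so $h_{\Lambda}^\ast f(\vt\,)=\mathbbm{1}_{\orb(\Lambda_\ast)}(\varphi_{\vt}(\Lambda))$ is independent of $\vt$; thus $h_{\Lambda}^\ast f$ is identically $1$ if $\Lambda\in\orb(\Lambda_\ast)$ and identically $0$ otherwise. A constant function is $\mathbb{R}^d$-harmonic, so the requirement of Definition~\ref{def-harm} is met at every $\Lambda\in O$. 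This is the same observation that $T_tf=f$ used in Subsection~\ref{subsec-no-strong-Feller}.

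Finally I would establish that $f$ is discontinuous at every point of $O$. By repetitivity (Assumption~\ref{A:flcapr}), identity~\eqref{E:minimal} makes the $\mathbb{R}^d$-action minimal, so every orbit is dense in $\Omega$; in particular $\orb(\Lambda_\ast)$ is dense. On the other hand $(\Omega,\varrho)$ is connected but not path connected, while by Remark~\ref{R:orbits} orbits are exactly its path-connected components, so $\Omega$ is the union of at least two distinct orbits; picking $\Lambda\notin\orb(\Lambda_\ast)$, the dense orbit $\orb(\Lambda)$ lies inside $\Omega\setminus\orb(\Lambda_\ast)$, so the complement of $\orb(\Lambda_\ast)$ is dense as well. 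Since both $\orb(\Lambda_\ast)$ and its complement are dense, the indicator $f$ oscillates between $0$ and $1$ in every neighbourhood of every point, hence is continuous nowhere; a fortiori $f\notin C(O)$.

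I do not expect a genuine obstacle here; the only points that deserve a word are the Borel measurability of a single orbit (handled via continuity of $h_{\Lambda_\ast}$ and $\sigma$-compactness of $\mathbb{R}^d$) and the fact that $\Omega$ is genuinely a union of several orbits, which is precisely why $\orb(\Lambda_\ast)$ has empty interior — both are immediate from the structural facts recorded in the preceding sections. If one wanted a non-degenerate example, i.e.\ one not carried by a $\mu$-null set, the parallel move is the product construction of Subsection~\ref{subsec-no-strong-Feller}: inside a cylinder set $\OLe=\phi^{-1}(\mathcal{C}\times B)\subseteq O$ one takes $f(\bar\Lambda)=f_0(\Lambda')F_0(\vt\,)$ with $f_0=\mathbbm{1}_E$, $\partial_{\mathcal{C}}E\neq\varnothing$, and $F_0$ a bounded nonconstant harmonic (e.g.\ linear) function on $B$; there the extra care is to extend $f$ by $0$ outside $\OLe$ in a way that keeps it harmonic along the orbit segments of $O$ that leave $\OLe$, which is exactly why the indicator-of-orbit example is the cleaner route.
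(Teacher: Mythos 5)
Your proposal is correct and follows essentially the same route as the paper: the paper's one-line proof simply points back to Subsection on the absence of the strong Feller property, whose opening observation is exactly your witness, the indicator of a single orbit, which is locally constant along orbits (hence harmonic) and nowhere continuous because every orbit and its complement are dense. Your added care about Borel measurability of the orbit and the remark on why the product-form example needs a harmonic $F_0$ are both accurate and consistent with the paper's framework.
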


\subsection{Finite energy harmonic functions} We discuss the connection between harmonic functions in the sense of Definition \ref{def-harm} and finite energy $L^2(\Omega,\mu)$ harmonic functions (see \cite{FOT94,ChFu12}). 

Let $O$ be an open subset of $(\Omega,\varrho)$. We say that a measurable function $f:\Omega\to\mathbb{R}$ is \emph{$\mathcal{E}$-harmonic in $O$} if $f\in\mathcal{D}(\mathcal{E})=\mathcal{W}^{1,2}(\Omega,\mu)$
and $\mathcal{E}(f,g)=0$ for all $g\in\mathcal{D}(\mathcal{E})\cap C_c(O)$.

Note that in the context of our paper it is enough to consider $g\in C^1_{tlc}(\Omega)\cap\cap C_c(O)$.

It is known that the notion of harmonicity can be localized, and in our case the following lemma can be easily obtained from a partition of unity as in Lemmas \ref{L:partofunity} and \ref{L:Sobolev_Charact} together with Lemma~\ref{lem-ave}.
In our situation, the $\mathcal{E}$-harmonicity in the above sense is connected to harmonicity on subsets of $\mathbb{R}^d$. Given an open subset $D\subset \mathbb{R}^d$, we say that a measurable function $f:D\to\mathbb{R}$ \emph{is $\mathcal{E}_{\mathbb{R}^d}$-harmonic in $D$} if 
$f\in L^2(O,\mu)$, $\mathcal{E}_{\mathbb{R}^d}^D(f)<+\infty$ and $\mathcal{E}_{\mathbb{R}^d}^D(f,g)=0$ for all $g\in C_0^\infty(D)$.

The second part of the Lemma can be seen using Lemmas  \ref{lem-w22}   and  
\ref{L:productf0F0}. 

\begin{lemma} \label{L:relatetoEuclid}
Let $O$ be an open subset of $\Omega$ and $O_1,...,O_n$ be a finite open cover of $O$ with open subsets $O_i\subset O$. A measurable function $f$ is $\mathcal{E}$-harmonic in $O$ if and only if it is $\mathcal{E}$-harmonic in $O_i$ for all $i$.

Suppose $O=\phi^{-1}(\mathcal{C}\times B)$ is an open subset of $\Omega$ of form (\ref{E:transcylsets}). Then, a measurable function $f\in\mathcal{D}(\mathcal{E})=\mathcal{W}^{1,2}(\Omega,\mu)$ is $\mathcal{E}$-harmonic in $O$ if and only if for $\nu_{\mathcal{C}}$-a.e. $\Lambda'\in\mathcal{C}$, the function $h_{\Lambda'}^\ast f$ is $\mathcal{E}_{\mathbb{R}^d}$-harmonic in $B$. In this case, $f$ can be modified on a set of measure zero so that $h_{\Lambda'}^\ast f\in C^\infty(B)$ for $\nu_{\mathcal{C}}$-a.e. $\Lambda'\in\mathcal{C}$.
\end{lemma}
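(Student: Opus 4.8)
The plan is to reduce the statement to its Euclidean counterpart via the local product structure, using the characterizations already established for $\mathcal{W}^{1,2}$ and the Dirichlet form. First I would prove the localization assertion: if $f$ is $\mathcal{E}$-harmonic in each $O_i$, then for any $g\in\mathcal{D}(\mathcal{E})\cap C_c(O)$ I write $g=\sum_{i=1}^n\chi_i g$ using a smooth partition of unity $\{\chi_i\}$ subordinate to $\{O_i\}$ as in Lemma~\ref{L:partofunity}; by the Leibniz rule each $\chi_i g\in\mathcal{D}(\mathcal{E})$ with support in $O_i$, so strong locality of $(\mathcal{E},\mathcal{D}(\mathcal{E}))$ (Lemma~\ref{L:Dform}) gives $\mathcal{E}(f,\chi_i g)=0$, and summing yields $\mathcal{E}(f,g)=0$. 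The converse direction is trivial since $\mathcal{D}(\mathcal{E})\cap C_c(O_i)\subset\mathcal{D}(\mathcal{E})\cap C_c(O)$.

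Next, for $O=\phi^{-1}(\mathcal{C}\times B)$, I would exploit the factorization $\mathcal{E}(f,g)=\tfrac12\int_\Omega\langle\nabla f,\nabla g\rangle\,d\mu$ from Lemma~\ref{L:Dform} together with the product decomposition $\mu\circ\phi^{-1}=\nu_{\mathcal C}\times\lambda^d|_B$ from~\eqref{E:localprodmeas} and the fiberwise Sobolev identity~\eqref{e-key-sob-OLe} of Lemma~\ref{L:W2kOLe}. For $f\in\mathcal{W}^{1,2}(\Omega,\mu)$ supported (after multiplication by a cutoff, using the first part of the lemma) in $O$, Lemma~\ref{L:W2kOLe} gives $h_{\Lambda'}^\ast f\in W^{1,2}(B)$ for $\nu_{\mathcal C}$-a.e.\ $\Lambda'$, with the intertwining~\eqref{e-key-sob0}, i.e.\ $\nabla f\cphi(\Lambda',\vt)=\nabla_{\mathbb R^d}h_{\Lambda'}^\ast f(\vt)$. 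Testing against functions of product form $g(\bar\Lambda)=f_0(\Lambda')F_0(\vt)$ with $f_0\in L^2(\mathcal C,\nu_{\mathcal C})$ and $F_0\in C_c^\infty(B)$, which lie in $\mathcal{D}(\mathcal{E})$ by Lemma~\ref{L:productf0F0}, the $\mathcal{E}$-harmonicity of $f$ becomes
\[
\int_{\mathcal C}f_0(\Lambda')\Big(\int_B\langle\nabla_{\mathbb R^d}h_{\Lambda'}^\ast f(\vt),\nabla_{\mathbb R^d}F_0(\vt)\rangle\,d\vt\Big)\nu_{\mathcal C}(d\Lambda')=0.
\]
Varying $f_0$ over $L^2(\mathcal C,\nu_{\mathcal C})$, for each fixed $F_0$ the inner integral vanishes for $\nu_{\mathcal C}$-a.e.\ $\Lambda'$; since $C_c^\infty(B)$ is separable, a single $\nu_{\mathcal C}$-null set $\mathcal N$ works for all $F_0$ simultaneously, so $h_{\Lambda'}^\ast f$ is $\mathcal{E}_{\mathbb R^d}$-harmonic in $B$ for $\Lambda'\notin\mathcal N$. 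The converse follows by running the same computation backwards, using~\eqref{e-key-sob-OLe} to confirm $f\in\mathcal{W}^{1,2}$ and Fubini to assemble $\mathcal{E}(f,g)=0$ from the fiberwise identities, noting that general $g\in\mathcal{D}(\mathcal{E})\cap C_c(O)$ are approximated in $\mathcal{D}(\mathcal{E})$ by finite sums of product-form functions (this density is part of Lemma~\ref{L:LO2} and the corollary after Lemma~\ref{L:local_approx}).

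Finally, the smoothness statement: for $\nu_{\mathcal C}$-a.e.\ $\Lambda'$, the function $h_{\Lambda'}^\ast f\in W^{1,2}(B)$ is $\mathcal{E}_{\mathbb R^d}$-harmonic, hence distributionally harmonic on $B$; by Weyl's lemma (Proposition~\ref{prop-harm}(\ref{i-Weylslemma})) it agrees $\lambda^d$-a.e.\ on $B$ with a genuine harmonic, hence $C^\infty$, function. Modifying $f$ on the $\mu$-null set $\phi^{-1}(\mathcal N\times B)\cup\{\text{fiberwise null sets}\}$—which is $\mu$-null by the product structure of $\mu$ and the argument of~\eqref{E:countableunion}—gives a version with $h_{\Lambda'}^\ast f\in C^\infty(B)$ for $\nu_{\mathcal C}$-a.e.\ $\Lambda'$. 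The main obstacle I anticipate is the measurable selection / separability step: making sure that the exceptional $\nu_{\mathcal C}$-null set can be chosen uniformly in the test function $F_0$ and that the pointwise-modified $f$ is still jointly measurable on $\Omega$; this is handled exactly as in the proof of Lemma~\ref{L:W22loc} and Lemma~\ref{L:Soboanddomains}, invoking separability of $C_c^\infty(B)$ and the countable-union argument~\eqref{E:countableunion} to control measurability of the modification across all orbits.
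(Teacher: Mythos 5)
Your proposal is correct and follows essentially the route the paper itself indicates: the paper only sketches this lemma, pointing to the partition-of-unity lemmas for the localization statement and to Lemmas~\ref{lem-w22} and~\ref{L:productf0F0} together with the fiberwise Sobolev decomposition and Weyl's lemma for the second part, and your argument supplies exactly those details. The one point worth stating explicitly is that product test functions $f_0\otimes F_0$ with $f_0\in L^2(\mathcal{C},\nu_{\mathcal{C}})$ need not belong to $C_c(O)$, so one should first approximate $f_0$ in $\mathcal{D}(\mathcal{E})$-norm by continuous (e.g.\ locally constant) functions on $\mathcal{C}$, as is done in the proof of Lemma~\ref{L:LO2}.
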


The notion of $\mathcal{E}$-harmonicity can be characterized by a version of the probabilistic definition of harmonicity, modified for Dirichlet forms (compare to
Definition \ref{D:prob-harm}, Proposition \ref{prop-harmonic}). A bounded measurable function $f$ on $\Omega$ is $\mathcal{E}$-harmonic in an open set $O\subset \Omega$ if and only if $(f(X^\Lambda_{t\wedge \tau_{O'}}))_{t\geq 0}$ is a uniformly integrable $\mathbb{P}$-martingale for $\mathcal{E}$-quasi every $\Lambda\in\Omega$ whenever $O'\subset O$ is a relatively compact open subset of $O$. This has been stated and proved in \cite[Proposition 2.5]{BBKT10} and \cite[Theorem 2.11]{Chen09}).

There is also a Liouville theorem for $\mathcal{E}$-harmonic functions.
 
\begin{corollary}\label{C:Fukushima}
 If a measurable function $f:\Omega\to\mathbb{R}$ is $\mathcal{E}$-harmonic in $\Omega$, then $f$ is constant $\mu$-a.e.
\end{corollary}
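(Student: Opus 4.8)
The plan is to combine the recurrence of $(\mathcal{E},\mathcal{D}(\mathcal{E}))$ with the general structure theory of recurrent Dirichlet forms, together with the irreducibility that the preceding subsection establishes. Recall from Subsection~\ref{SS:SoboDom} that $(\mathcal{E},\mathcal{D}(\mathcal{E}))$ was shown to be recurrent via \cite[Theorem 1.6.3]{FOT94}, and that the locality together with the fact that $\mu(\Omega)=1<\infty$ places us in the classical setting where a recurrent Dirichlet form on a space of finite measure is automatically \emph{conservative} in the Dirichlet-form sense. The standard consequence (see \cite[Theorem 1.6.3]{FOT94} and the surrounding discussion, or \cite[Theorem 2.1.9]{ChFu12}) is that an $\mathcal{E}$-harmonic function on the whole space which additionally lies in $\mathcal{D}(\mathcal{E})=\mathcal{W}^{1,2}(\Omega,\mu)$ must have zero energy, i.e.\ $\mathcal{E}(f,f)=0$; indeed, testing $\mathcal{E}(f,g)=0$ against $g=f$ is not directly allowed because $f$ need not be in $C_c(O)$, but recurrence supplies a sequence $g_n\in\mathcal{D}(\mathcal{E})$ with $g_n\to 1$ and $\mathcal{E}(g_n,g_n)\to 0$, and approximating $f$ by the bounded truncations $f_N=(-N)\vee f\wedge N$ and then multiplying by $g_n$ yields $\mathcal{E}(f,f)=0$ after passing to the limit.

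Once $\mathcal{E}(f,f)=0$ is known, the conclusion that $f$ is $\mu$-a.e.\ constant is exactly the statement of \textbf{irreducibility} of $(\mathcal{E},\mathcal{D}(\mathcal{E}))$, which is part of Theorem~\ref{T:main_L2sg}(3) and is proved in this section: a strongly local Dirichlet form is irreducible if and only if the only functions of zero energy are the constants. So the proof of the corollary reduces to: (i) invoke recurrence (already established) to force $\mathcal{E}(f,f)=0$ for any $f\in\mathcal{D}(\mathcal{E})$ that is $\mathcal{E}$-harmonic in all of $\Omega$, and (ii) invoke irreducibility (also established in this section) to conclude $f$ is constant $\mu$-a.e. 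Alternatively, and perhaps more transparently in our concrete setting, one can bypass (i)--(ii) and argue directly: by Lemma~\ref{L:relatetoEuclid}, $\mathcal{E}$-harmonicity of $f$ on $\Omega$ localizes to $\mathcal{E}_{\mathbb{R}^d}$-harmonicity of $h_{\Lambda'}^\ast f$ on Euclidean balls for $\nu_{\mathcal{C}}$-a.e.\ transversal point, hence (patching together a finite cover via Lemma~\ref{L:relatetoEuclid}) $h_\Lambda^\ast f$ is harmonic on all of $\mathbb{R}^d$ in the classical sense for $\mu$-a.e.\ $\Lambda$; since $f\in L^2(\Omega,\mu)\subset L^1(\Omega,\mu)$ and a classically harmonic function on $\mathbb{R}^d$ whose orbit-averages are controlled by an $L^1$ bound cannot grow, we are precisely in the situation of the $L^1$ case of Theorem~\ref{T:main_Liouville}, from which $f$ is $\mu$-a.e.\ constant.

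I would present the short version: state that $f\in\mathcal{D}(\mathcal{E})$ is $\mathcal{E}$-harmonic on $\Omega$, note by Lemma~\ref{L:relatetoEuclid} (using a finite cover of $\Omega$ by translated cylinder sets) that $f$ has a $\mu$-version for which $h_\Lambda^\ast f$ is classically harmonic on $\mathbb{R}^d$ for $\mu$-a.e.\ $\Lambda$, and then apply Theorem~\ref{T:main_Liouville} in the case $f\in L^1(\Omega,\mu)$. Since $L^2(\Omega,\mu)\hookrightarrow L^1(\Omega,\mu)$ because $\mu$ is a probability measure, the hypothesis of that case is met, and the conclusion is immediate. The one point requiring a line of care is the passage from ``$\mathcal{E}_{\mathbb{R}^d}$-harmonic on each ball $B$ of a finite cover'' to ``harmonic on all of $\mathbb{R}^d$'': this is because the balls $B$ appearing in translated cylinder sets $O_{\Lambda',\varepsilon}$, as $\Lambda'$ and the chart vary along an orbit, cover that entire orbit, and harmonicity is a local property, so the classical harmonic function obtained on each piece glues to a globally harmonic function on $\mathbb{R}^d\cong\orb(\Lambda)$.

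The main obstacle I anticipate is purely bookkeeping: making sure the ``$\mathcal{E}$-harmonic on $\Omega$'' hypothesis, which only tests against $g\in\mathcal{D}(\mathcal{E})\cap C_c(O)$ for $O\subset\Omega$ open, is strong enough to deduce orbit-wise classical harmonicity \emph{everywhere} on each orbit and not merely on the balls of one fixed finite cover. Lemma~\ref{L:relatetoEuclid} handles this, but one must observe that any point of any orbit lies in \emph{some} translated cylinder set, so covering $\Omega$ by finitely many such sets (compactness) and applying the local characterization on each gives harmonicity of $h_\Lambda^\ast f$ on a neighborhood of every point of $\mathbb{R}^d$, hence on all of $\mathbb{R}^d$. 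Everything else is a direct citation of Theorem~\ref{T:main_Liouville}.
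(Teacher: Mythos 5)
Your short version is essentially the paper's own proof: the paper deduces from $\mathcal{E}$-harmonicity that $f$ has a harmonic $\mu$-version via the Weyl-type Lemma~\ref{lem-ave}(4) (you route the same reduction through Lemma~\ref{L:relatetoEuclid} and a gluing over a finite cover, which is equivalent bookkeeping), and then applies the $L^1$ case of Theorem~\ref{T:main_Liouville}, using $L^2(\Omega,\mu)\subset L^1(\Omega,\mu)$. One caution: your first proposed route (recurrence plus irreducibility) is circular within this paper, because the irreducibility of $(\mathcal{E},\mathcal{D}(\mathcal{E}))$ is itself proved in Section~\ref{S:harmonic} by applying Corollary~\ref{C:Fukushima} to $\mathbbm{1}_A$; since you discard that route in favor of the direct one, your final argument stands.
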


\begin{proof} This follows from Theorem~\ref{T:main_Liouville} combined with Lemma  \ref{lem-ave} (4). 
\end{proof}

\subsection{Irreducibility}

From the preceding discussion we can infer the irreducibility of the Dirichlet form $(\mathcal{E},\mathcal{D}(\mathcal{E}))$ stated in Theorem \ref{T:main_L2sg}. Recall that $(P_t)_{t>0}$ denotes the symmetric Markovian semigroup on $L^2(\Omega,\mu)$ associated with $(\mathcal{E},\mathcal{D}(\mathcal{E}))$. We follow the terminology of \cite[Section 1.6]{FOT94} and call a Borel subset $A$ of $(\Omega,\varrho)$ \emph{$(P_t)_{t> 0}$-invariant} if $P_t(\mathbbm{1}_Af)=\mathbbm{1}_AP_t f$ for any $f\in L^2(\Omega,\mu)$ and $t>0$. The Markov semigroup $(P_t)_{t> 0}$ or the Dirichlet form $(\mathcal{E},\mathcal{D}(\mathcal{E}))$ is called \emph{irreducible} if any $(P_t)_{t\geq 0}$-invariant subset $A$ satisfies either $\mu(A)=0$ or $\mu(\Omega\setminus A)=0$. The irreducibility of $(\mathcal{E},\mathcal{D}(\mathcal{E}))$ can be verified as follows.

\begin{proof}
Let $A$ be a $(P_t)_{t>0}$-invariant Borel subset of $\Omega$. If $\mu(A)=0$ there is nothing to prove, so we may assume $\mu(A)>0$. By invariance we have $P_t\mathbbm{1}_A=\mathbbm{1}_A$ $\mu$-a.e. so that $\mathbbm{1}_A\in\mathcal{D}(\mathcal{E})$ by (\ref{E:Dformdomain}) and according to (\ref{E:Dform}), $\mathcal{E}(\mathbbm{1}_A)=0$. Cauchy-Schwarz now implies that $\mathbbm{1}_A$ is $\mathcal{E}$-harmonic in $\Omega$, and since it is bounded, constant $\mu$-a.e. on $\Omega$ by Corollary \ref{C:Fukushima}. Since $\int_\Omega \mathbbm{1}_A\:d\mu=\mu(A)>0$ we must have $\mathbbm{1}_A=1$ $\mu$-a.e. on $\Omega$, which means $\mu(A)=1$.
\end{proof}

\begin{remark}
Alternatively, one can prove the irreducibility of $(\mathcal{E},\mathcal{D}(\mathcal{E}))$ more directly, without using Corollary \ref{C:Fukushima}. This can be done using similar arguments as in Lemma \ref{L:W22loc} and Lemma \ref{L:atmostone} below. If one proceeds this way, then Corollary \ref{C:Fukushima} becomes a simple consequence of  \cite[Theorem 1]{Kajino17}, together with irreducibility and recurrence. Also, a known probabilistic argument could be applied: By \cite[Lemma 6.7.3]{ChFu12} we have $f(X_t^\Lambda)=f(\Lambda)$ for all $t\geq 0$ $\mathbb{P}$-a.s. for $\mathcal{E}$-quasi every $\Lambda\in \Omega$, which implies that for any $c\in\mathbb{R}$ the level set $\left\lbrace \Lambda\in\Omega: f(\Lambda)=c\right\rbrace$ is $(P_t)_{t>0}$-invariant. By irreducibility, $\mu(\left\lbrace \Lambda\in\Omega: f(\Lambda)=c\right\rbrace) = 0 \text{ or } 1$ for any $c\in\mathbb{R}$, which is possible only if $f$ is constant $\mu$-a.e. A more general result is stated in \cite[Proposition 1.1.(i)]{Fukushima16}.
\end{remark}

\section{Proof of Theorem \ref{T:main_Hodge}: 
 Helmholtz-Hodge decomposition for one-dimensional patterns}\label{S:Hodge}

In this section we discuss $L^2$-vector fields associated with the Dirichlet form $(\mathcal{E},\mathcal{D}(\mathcal{E}))$. Motivated by earlier results in \cite{HT15}, Theorem~\ref{T:main_Hodge} provides a Hodge decomposition for the one-dimensional case. 

The proof Theorem~\ref{T:main_Hodge} requires the combination of several concepts. First, recall from the proof of Lemma \ref{L:Dform} that the gradient
\[
\nabla\colon 
W^{1,2}(\Omega,\mu)\subset L^2(\Omega,\mu)\to L^2(\Omega,\mu)
\] 
is a densely defined, closed, unbounded operator with domain $\D(\E)=W^{1,2}(\Omega,\mu)$. The image $\im \nabla$ of $\nabla$ is a closed subspace of $L^2(\Omega,\mu)$, see e.g. \cite[Section 4]{HKT15}. Therefore, we have the orthogonal decomposition 
\eqref{E:Hodgedecomp}
where $(\im \nabla)^\bot$ denotes the orthogonal complement of $\im \nabla$. By ~\cite[Theorem 2.1]{HRT13}, the space $\mcH$ of $L^2$-differential $1$-forms associated with $(\E,\D(\E))$ can be seen as a direct integral over a measurable field of Hilbert spaces $\mcH_\Lambda$, which are isometrically isomorphic to $\mbbR^d$ via 
\begin{equation*}
\iota_\Lambda\colon\mcH_\Lambda\to\mbbR^d,
\end{equation*}
c.f.~\cite[Remark 2.6 (iii)]{HRT13} and \cite[Proposition 4.2]{BK17}. Consequently, there is an isometric isomorphism 
\begin{equation*}
\iota\colon L^2(\Omega, \mathbb{R}^d, \mu)\to\mcH,
\end{equation*}
where $\mcH$ denotes the space of $L^2$-differential $1$-forms associated with $(\E,\D(\E))$. In the one-dimensional case, the Hodge-star operator 
\begin{equation*}
\star_{\omega_0}\colon L^2(\Omega,\mu)\to\mcH,
\end{equation*}
c.f.\ Proposition~\ref{prop:Hi1.01}, where $\omega_0=\iota_\Lambda^{-1}(1)$ for a fixed $\Lambda\in\Omega$, gives an isometric isomorphism, see~\cite[Proposition 4.5]{BK17}. Therefore, defining
\begin{equation*}
\star\colon L^2(\Omega,\mu)\xrightarrow{\iota^{-1}}\mcH\xrightarrow{\star_{\omega_0}^{-1}}L^2(\Omega,\mu)
\end{equation*}
we obtain an isometric isomorphism of $L^2(\Omega,\mu)$ onto itself. In particular, by (\ref{E:Hodgeandaction}) we have $\star \nabla f=f'$ for any $f\in\mathcal{D}(\mathcal{E})$. Finally, the following Lemma~\ref{L:atmostone} yields
\begin{equation*}
(\im\nabla)^\bot\simeq\star(\im\nabla)^\bot=\{\text{constant functions}\}\simeq\mbbR.
\end{equation*}

To prove Theorem \ref{T:main_Hodge} we take a closer look at the structure of $\ker d^\ast$ by following the arguments used to establish Theorem \ref{T:main_Liouville}.

\begin{lemma}\label{L:atmostone}
Assume $d=1$ and $v\in (\im \nabla)^\bot$. Then, there is a Borel set $\mathcal{O}\subset \OO$ of $\mu$-measure zero, such that for any $\Lambda\in \mathcal{O}^c$, the function $\star v$ is $d\vt$-a.e. constant on $\orb(\Lambda)$. As a consequence, $\star v$ is constant $\mu$-a.e.\ on $\OO$.
\end{lemma}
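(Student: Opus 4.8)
The plan is to mimic the proof of Theorem~\ref{T:main_Liouville}, transporting everything to $\mbbR^d$ (here $d=1$) via the orbit homeomorphisms, and then invoking unique ergodicity to pass from an orbit-wise statement to a $\mu$-a.e.\ statement. First I would reduce to a local situation: using the finite cover of $\OO$ by translated cylinder sets $O=\phi^{-1}(\mathcal{C}\times B)$ of type~\eqref{E:transcylsets} and a subordinate smooth partition of unity $\{\chi_i\}\subset C_{tlc}^\infty(\Omega)$ from Lemma~\ref{L:partofunity}, together with Lemma~\ref{L:Sobolev_Charact}, it suffices to understand the condition $v\in(\im\nabla)^\bot$ in such an $O$. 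Testing $v$ against $\nabla f$ for $f=\phi^{-1}$-pullbacks of products $f_0(\Lambda')F_0(\vt)$ with $f_0\in L^2(\mathcal{C},\nu_{\mathcal{C}})$ and $F_0\in C_c^\infty(B)$, which all lie in $\mathcal{D}(\mathcal{E})$ by Lemma~\ref{L:productf0F0}, and using the product structure~\eqref{E:localprodmeas} of $\mu$ together with the intertwining~\eqref{e-intertwine} and the identification $\star\nabla f=f'$ from~\eqref{E:Hodgeandaction}, I would arrive (after varying $f_0$ and using separability of $C_c^\infty(B)$ as in the proof of Lemma~\ref{L:Soboanddomains}) at the statement that for $\nu_{\mathcal{C}}$-a.e.\ $\Lambda'\in\mathcal{C}$ the function $h_{\Lambda'}^\ast(\star v)$ is $\mathcal{E}_{\mathbb{R}^d}$-orthogonal to all derivatives $F_0'$ with $F_0\in C_c^\infty(B)$, i.e.\ $h_{\Lambda'}^\ast(\star v)$ has vanishing distributional derivative on $B$, hence is $d\vt$-a.e.\ constant on $B$.

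Next I would globalize from the cylinder set to the full orbit. Here I would argue exactly as in Lemma~\ref{L:W22loc}: the $\nu_{\mathcal{C}}$-null exceptional set $\mathcal{N}\subset\mathcal{C}$ obtained locally generates, under the $\mbbR^d$-action, a $\mu$-null saturated set $\mathcal{O}=\bigcup_{\vt\in\mbbR^d}\varphi_{\vt}(\mathcal{N})$ (a countable union of translates of $\phi^{-1}(\mathcal{N}\times B)$, using the invariance and local product structure of $\mu$), and for $\Lambda\in\mathcal{O}^c$ the orbit $\orb(\Lambda)$ is covered by finitely many such cylinder pieces on every compact set. On each piece $h_\Lambda^\ast(\star v)$ is $d\vt$-a.e.\ constant, and since a function on $\mbbR\cong\orb(\Lambda)$ that is locally $d\vt$-a.e.\ constant on a connected covering by overlapping intervals is globally $d\vt$-a.e.\ constant, I conclude that $\star v$ is $d\vt$-a.e.\ constant on $\orb(\Lambda)$ for every $\Lambda\in\mathcal{O}^c$. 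Here it is essential that $d=1$, so that the orbit is one-dimensional and "distributional derivative zero" means "constant"; in higher dimensions this step would instead produce a general harmonic/closed form rather than a constant.

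Finally I would upgrade "constant on $\mu$-a.e.\ orbit" to "constant $\mu$-a.e.\ on $\OO$" by unique ergodicity. For this I would record the orbit-wise value as a function $c(\Lambda):=h_\Lambda^\ast(\star v)(\vzer)$ defined for $\Lambda\in\mathcal{O}^c$; it is $\varphi_{\vt}$-invariant in the sense that $c\circ\varphi_{\vt}=c$ wherever defined, and it represents the $\mu$-class of $\star v$ since $\star v$ agrees $d\vt$-a.e.\ with $c$ on each good orbit and $\mu$ integrates Lebesgue measure along orbits against $\nu_{\mathcal{C}}$ transversally. A bounded $\mbbR^d$-invariant $L^2$ (or even just measurable) function on a uniquely ergodic system is $\mu$-a.e.\ constant — for instance by applying the ergodic averages in~\eqref{E:ergodic} (in their $L^1$/pointwise form for Euclidean balls) to $\star v$, whose averages along every orbit equal the orbit's constant value and must converge to $\int_{\OO}\star v\,d\mu$; alternatively this follows from the irreducibility of $(\mathcal{E},\mathcal{D}(\mathcal{E}))$ established in Section~\ref{S:harmonic}, since every level set of an invariant function is $(P_t)_{t>0}$-invariant. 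Either way, $\star v$ is $\mu$-a.e.\ equal to the constant $\int_{\OO}\star v\,d\mu$, which is the assertion.

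The main obstacle I anticipate is the careful bookkeeping in the globalization step — ensuring that the local $\nu_{\mathcal{C}}$-null sets coming from different charts of the finite cover assemble into a single genuinely $\mu$-null, $\mbbR^d$-saturated set, and that on its complement the "locally constant on overlapping intervals implies globally constant" argument is applied to $d\vt$-a.e.\ defined functions without running into measure-zero pathologies. This is essentially the same technical point handled in Lemma~\ref{L:W22loc}, so it should go through, but it is where the proof has to be written with some care rather than hand-waved; the reduction to Euclidean derivatives and the final unique-ergodicity step are comparatively routine given the results already in hand.
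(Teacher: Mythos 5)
Your proposal is correct and follows essentially the same route as the paper's proof: test $v$ against gradients of product-form functions $f_0(\Lambda')F_0(\vt)$ on a cylinder chart to deduce that $h_{\Lambda'}^\ast(\star v)$ has vanishing distributional derivative (hence is $d\vt$-a.e.\ constant) for $\nu_{\mathcal{C}}$-a.e.\ $\Lambda'$, assemble the exceptional sets into a $\mu$-null saturated set exactly as in Lemma~\ref{L:W22loc}, glue along overlapping charts on each good orbit, and conclude by ergodicity. The only cosmetic difference is that you spell out the final unique-ergodicity step, which the paper leaves as a one-line appeal to ergodicity of $\mu$.
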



\begin{proof}
We have $\left\langle \star v, f'\right\rangle_{L^2(\Omega,\mu)}=\left\langle \star v, \star \nabla f\right\rangle_{L^2(\Omega,\mu)}=\left\langle v, \nabla f\right\rangle_{L^2(\Omega,\mu)}=0$
for all $f\in\mathcal{D}(\mathcal{E})$. If $O=\phi^{-1}(\mathcal{C}\times (-\varepsilon,\varepsilon))$ is a neighborhood of type (\ref{E:transcylsets}) then for all $f\in (\phi^{-1})^\ast (L^2(\mathcal{C},\nu_\mathcal{C})\otimes C_c^1(-\varepsilon,\varepsilon))$ of product form (\ref{e-prod}) we observe 
\[ \int_{\mathcal{C}}\int_{(-\varepsilon,\varepsilon)} h_{\Lambda'}^\ast(\star v)(\vt\,)f_0(\Lambda')F_0'(\vt\,)\:d\vt d\nu_{\mathcal{C}}(\Lambda')=0\]
by (\ref{e-der2}) and therefore
\[\int_{(-\varepsilon,\varepsilon)} (h_{\Lambda'}^\ast(\star v))'(\vt\,)F_0(\vt\,)\:d\vt=-\int_{(-\varepsilon,\varepsilon)} h_{\Lambda'}^\ast(\star v)(\vt\,)F_0'(\vt\,)\:d\vt=0\]
for $\nu_{\mathcal{C}}$-a.e. $\Lambda'\in\mathcal{C}$. Testing with $F_0$ from a countable dense subspace of $C_c^1(-\varepsilon,\varepsilon)$ we can see that for $\nu_{\mathcal{C}}$-a.e. $\Lambda'\in\mathcal{C}$ the function $h_{\Lambda'}^\ast(\star v)$ must be $d\vt$-a.e. constant on $(-\varepsilon,\varepsilon)$.

Now, let $O_1,...,O_n$ be open sets $\phi(O_i)=\mathcal{C}_i\times (-\varepsilon_i,\varepsilon_i)$ of type (\ref{E:transcylsets}) covering $\OO$. By the preceding there exists a Borel set $\mathcal{N}_i\subset \mathcal{C}_i$ such that for any $\Lambda'\in \mathcal{C}_i\setminus \mathcal{N}_i$ the function $h_{\Lambda'}^\ast(\star v)$ is $d\vt$-a.e. constant on $(-\varepsilon_i,\varepsilon_i)$. Similarly as in the proof of Lemma \ref{L:W22loc}, 
let $\mathcal{O}:=\bigcup_{i=1}^N \bigcup_{\vt\in\mathbb{R}^d}\varphi_{\vt}\,(\mathcal{N}_i)$ denote the union of orbits that hit some $\mathcal{N}_i$. As discussed there, $\mathcal{O}$ is measurable and we have $\mu(\mathcal{O})=0$. If $\Lambda\in\OO$ is such that its $\orb(\Lambda)$ is not contained in $\mathcal{O}$, then $\star v$ is $d\vt$-a.e. constant on each $O_i\cap \orb(\Lambda)$, so that by Lemma
\ref{L:relatetoEuclid}, 
$\star v$ is $d\vt$-a.e. constant on $\orb(\Lambda)$. The second statement is a consequence of the ergodicity of $\mu$.
\end{proof}

\begin{remark} 
For an arbitrary dimension $d$, let $L^2(\OO,\mathbb{R}^d,\mu)$ denote the space of $L^2$-vector fields on $\OO$. For $v\in L^2(\OO,\mathbb{R}^d,\mu)$ we have $v=\sum_{i=1}^d v_i \vec{e}_i$ with suitable $v_i\in L^2(\Omega_ {\Lambda_0},\mu)$, where $\ve_1,...\ve_d$ denote the standard unit vectors in $\mathbb{R}^d$. Given $f\in b\mathcal{B}(\OO)$, we define a vector field $fv\in L^2(\OO,\mathbb{R}^d,\mu)$ by $fv:=\sum_{i=1}^d (fv_i)\vec{e}_i$. Again, the gradient $\nabla$ is considered as a densely defined closed unbounded operator $\nabla:L^2(\OO,\mu)\to L^2(\OO,\mathbb{R}^d,\mu)$ with domain $\mathcal{D}(\mathcal{E})$.

The space $C^\infty_{tlc}(\OO,\mathbb{R}^d)$ of smooth vector fields on $\OO$ is dense in $L^2(\Omega,\mathbb{R}^d,\mu)$. Any element $v$ of $C^\infty_{tlc}(\OO,\mathbb{R}^d)$ can be written in the form $v=\sum_{i=1}^d v_i \vec{e}_i$ with suitable functions $v_i\in C^\infty_{tlc}(\OO)$, and given a function $f\in C^\infty_{tlc}(\OO)$, we have $fv \in C^\infty_{tlc}(\OO,\mathbb{R}^d)$. The gradient operator satisfies the Leibniz rule $\nabla(fg)=f\nabla g+g\nabla f$, $f,g\in C^\infty_{tlc}(\OO)$. By duality, a smooth vector field $v=\sum_{i=1}^d v_i \vec{e}_i$ may be identified with a smooth differential $1$-form $\sum_{i=1}^d v_i dx^i$. Interpreting $h_{\Lambda_0}^\ast$ from~\eqref{E:algiso} as a map $\sum_i f_idx^i\mapsto \sum_i h_{\Lambda_0}^\ast f_i dx^i$, it provides a bijection of $C_{tlc}^\infty(\OO,\mathbb{R}^d)$ onto the space $\Delta_{\Lambda_0}^1$ of smooth $\Lambda_0$-equivariant $1$-forms on $\mathbb{R}^d$, i.e. differential $1$-forms on $\mathbb{R}^d$ with coefficients in $C_{\Lambda_0}^\infty(\mathbb{R}^d)$.
See for instance \cite{Kellendonk:PEC} or \cite[Section 3]{ST:SA} and the references cited there.
\end{remark}

The space $L^2(\OO,\mathbb{R}^d,\mu)$ is isometrically isomorphic to the space $\mathcal{H}$ of $L^2$-differential $1$-forms asociated with $(\mathcal{E},\mathcal{D}(\mathcal{E}))$ as in Definition \ref{D:Habstract}: The space $\mathcal{H}$ can be written as the direct integral over a measurable field of Hilbert spaces $\mathcal{H}_{\Lambda}$, see the Appendix for details. Since the pointwise index of $(\mathcal{E},\mathcal{D}(\mathcal{E}))$ is $d$ $\mu$-a.e. we know that for $\mu$-a.e. $\Lambda\in \Omega$ the fiber $\mathcal{H}_{\Lambda}$ of $\mathcal{H}$ is a vector space of dimension $d$ so that there exists an isometric isomorphism $\iota_\Lambda:\mathcal{H}_{\Lambda}\to \mathbb{R}^d$. This implies the existence of an isometric isomorphism $\iota: \mathcal{H} \to L^2(\OO,\mathbb{R}^d,\mu)$. The gradient operator $\nabla$ is related to the abstract derivation $\partial$ in Definition \ref{D:partial} by $\nabla=\iota\circ \partial$.

\begin{remark}
We conjecture that also for pattern spaces originating from Delone sets in $\mathbb{R}^d$ with arbitrary $d\geq 1$ a Hodge type decomposition can be proved that generalizes (\ref{E:Hodgedecomp}) and a meaning can be given to the summands involved.
\end{remark}

\begin{remark} An alternative proof of Theorem \ref{T:main_Hodge} can be given using the notion of local harmonicity from \cite[Definition 4.1]{HT15}. One can show that \cite[Theorem 4.2]{HT15} applies in the present situation and combine it with Theorem \ref{T:main_Liouville} to verify Theorem~\ref{T:main_Hodge}.
\end{remark}

\appendix

\section{Derivatives, smoothness and partitions of unity} \label{A:S}

We consider canonical differential operators on $\Omega$.

\begin{definition}\label{D:Ck} Let $O\subset \Omega$ be open.
A function $f:O\to\mathbb{R}$ is called \emph{differentiable} in $O$ if for any $\Lambda\in O$ and $\vt\in\mathbb{R}^d$ the limit
\begin{equation}\label{E:diff}
\frac{\partial f}{\partial\vt}(\Lambda)=\lim_{s\to 0}\frac{f\circ \varphi_{s\vt\,}(\Lambda)-f(\Lambda)}{s}
\end{equation}
exists. As usual we say $f$ is \emph{$k$-times differentiable in $O$} if the (k-1)-th derivative of $f$ in the sense of (\ref{E:diff}) exists and is differentiable in $O$. 
\end{definition}

Note that for any function $f$ which is differentiable on all of $\Omega$ and any $\vv\in\mathbb{R}^d$ we have 
\[
\frac{\partial}{\partial\vt}\:(f\circ \varphi_{\vec{v}})=\frac{\partial f}{\partial\vt}\circ \varphi_{\vec{v}}.
\] 

Let $O\subset \Omega$ be open. For a function $f$ differentiable in $O$ we define the gradient $\nabla f$ of $f$ as the vector field
\begin{equation}\label{E:gradient}
\nabla g=\sum_{i=1}^d \frac{\partial g}{\partial\ve_i}\:\vec{e}_i,
\end{equation} 
where $\ve_1,...,\ve_d$ are the standard unit base vectors in $\mathbb{R}^d$. For a function $f$ twice differentiable in $O$ we define the Laplacian $\Delta f$ of $f$ by 
\begin{equation}\label{E:Laplace}
\Delta f:=\sum_{i=1}^d \frac{\partial^2 f}{\partial \ve_i\:^2},
\end{equation}
where $\frac{\partial^2 f}{\partial \ve_i\:^2}:=\frac{\partial}{\partial \ve_i}\left(\frac{\partial f}{\partial \ve_i}\right)$. If $f$ is $k$-times differentiable in $O$ and $\alpha=(\alpha_1,...,\alpha_d)$ is a multiindex with $|\alpha|\leq k$ then we can similarly introduce general mixed derivatives 
\begin{equation}\label{E:Dalpha}
D^\alpha f=\frac{\partial^{|\alpha|}f}{\partial \ve_1\:^{\alpha_1}\cdots \partial \ve_d\:^{\alpha_d}}.
\end{equation}

If $O=\phi^{-1}(\mathcal{C}\times B)$ is an open set of type (\ref{E:transcylsets}) and $f$ is $k$-times differentiable in $O$, then, using the notation (\ref{E:fO}), we have
\begin{equation}\label{E:DalphafO}
D^\alpha f(\overline{\Lambda}) = D^\alpha_{\mathbb{R}^d} f\cphi (\Lambda',\vt\,), \quad \overline{\Lambda}=\varphi_{\vt\,}(\Lambda')\in O,
\end{equation}
provided $|\alpha|\leq k$. The differential operators $D_{\mathbb{R}^d}^\alpha$ on the right hand side are considered in the usual Euclidean sense with respect to $\vt$. Using (\ref{E:DalphafO}) we can also verify the locality of the operators $D^\alpha$. 
\begin{corollary}\label{C:local}
The operators $D^\alpha$ are local, i.e. if $f,g$ are $k$-times differentiable and satisfy $f=g$ in an open set $O$ then for any 
$\alpha$ with $|\alpha|\leq k$ and any $\Lambda\in O$ we have $D^\alpha f(\Lambda)=D^\alpha g(\Lambda)$. 
\end{corollary}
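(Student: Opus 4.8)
The statement to prove is Corollary~\ref{C:local}: the operators $D^\alpha$ are local.

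\medskip

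The plan is to reduce the claim immediately to the local product structure via formula~\eqref{E:DalphafO} and then invoke the locality of the Euclidean differential operators $D^\alpha_{\mathbb{R}^d}$. First I would fix $\alpha$ with $|\alpha|\le k$, functions $f,g$ that are $k$-times differentiable on $\Omega$ with $f=g$ on an open set $O\subset\Omega$, and a point $\Lambda\in O$. Since the translated cylinder sets~\eqref{E:transcylsets} form a base of the topology of $(\Omega,\varrho)$, I can choose a translated cylinder set $O_{\Lambda,\varepsilon}=\phi^{-1}(\mathcal{C}\times B)$ with $\Lambda\in O_{\Lambda,\varepsilon}\subset O$, for $\varepsilon>0$ small enough. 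Writing $\phi(\Lambda)=(\Lambda',\vzer)$ in the foliated chart, the value $D^\alpha f(\Lambda)$ is computed from~\eqref{E:DalphafO} as $D^\alpha_{\mathbb{R}^d}\big(f\cphi\big)(\Lambda',\vzer)$, where the right-hand side is an ordinary Euclidean derivative taken in the $\vt$-variable on the ball $B$, with $\Lambda'$ held fixed.

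\medskip

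The key observation is that $f\cphi$ and $g\cphi$ agree on all of $\mathcal{C}\times B$, because $\phi^{-1}(\mathcal{C}\times B)=O_{\Lambda,\varepsilon}\subset O$ and $f=g$ on $O$; in particular $\vt\mapsto f\cphi(\Lambda',\vt)$ and $\vt\mapsto g\cphi(\Lambda',\vt)$ coincide on the open ball $B\subset\mathbb{R}^d$. By the locality of the classical Euclidean differential operator $D^\alpha_{\mathbb{R}^d}$ — two $C^{|\alpha|}$ functions that agree on an open set have equal partial derivatives up to that order there — we obtain
\[
D^\alpha_{\mathbb{R}^d}\big(f\cphi\big)(\Lambda',\vzer)=D^\alpha_{\mathbb{R}^d}\big(g\cphi\big)(\Lambda',\vzer).
\]
Applying~\eqref{E:DalphafO} again to read off the left- and right-hand sides yields $D^\alpha f(\Lambda)=D^\alpha g(\Lambda)$, as desired.

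\medskip

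There is no real obstacle here: the only point requiring a word of care is that the identity~\eqref{E:DalphafO} is an identity valid on the whole chart neighborhood $O_{\Lambda,\varepsilon}$, so that the Euclidean derivative on its right-hand side sees only the restriction of $f$ (respectively $g$) to $O_{\Lambda,\varepsilon}\subset O$, on which the two functions are literally equal. The differentiability hypotheses on $f$ and $g$ guarantee that both sides of~\eqref{E:DalphafO} are well defined, and since $\Lambda\in O$ was arbitrary, the conclusion holds at every point of $O$. This is precisely why the corollary follows immediately from the discussion preceding it.
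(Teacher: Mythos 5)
Your proof is correct and follows exactly the route the paper intends: the paper's entire justification for Corollary~\ref{C:local} is the remark that locality can be verified using~\eqref{E:DalphafO}, and you have simply fleshed out that argument by localizing to a translated cylinder set~\eqref{E:transcylsets} inside $O$ and invoking the locality of the Euclidean operators $D^\alpha_{\mathbb{R}^d}$ in the chart. No gaps.
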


If $O=\phi^{-1}(\mathcal{C}\times B)$ is an open set of type (\ref{E:transcylsets}) and, similarly as in (\ref{e-prod}), $f\in b\mathcal{B}(O)$ is a function defined as the product $f(\bar\Lambda):=f_0(\Lambda')F_0(\vt\,)$ of some $f_0\in b\mathcal{B}(O)$ and $F_0\in C^k(B)$, whose supports are contained in $\mathcal{C}$ and $B$, respectively, then
\begin{equation}\label{e-der2}
D^\alpha f(\bar\Lambda)=f_0(\Lambda')D_{\mathbb{R}^d}^\alpha F_0(\vt\,),\quad \bar\Lambda=\varphi_{\vt\,}(\Lambda')\in O, 
\end{equation}
$|\alpha|\leq k$, reducing derivatives on the fractal-like space $\Omega$ to ordinary calculus derivatives.

\begin{notation}
We write $C^k(O)$ ($k=1,2,...$ or $k=\infty$) for the space of $k$-times continuously differentiable functions, that is, the space of continuous functions $f$ on $O$ whose derivatives up to the order $k$ in the sense of (\ref{E:diff}) are all continuous functions on $O$.
\end{notation}

We endow the space $C^k(O)$ with the norm $$\left\|f\right\|_{C^k}:=\sup_{\Lambda\in O}|f(\Lambda)| +\sum_{0<|\alpha|\leq k}\sup_{\Lambda\in O} |D^\alpha f(\Lambda)|.$$

We recall the definition of transversally locally constant functions and collect some known results for later use.
 
\begin{definition}\label{D:tlc}
A function $f\in C(\Omega)$ is called \emph{transversally locally constant} if for any $\Lambda\in\Omega$ there exists $\varepsilon>0$ such that
\[
B_{\frac{1}{\varepsilon}}(\vec{0})\cap\Lambda' = B_{\frac{1}{\varepsilon}}(\vec{0})\cap\Lambda\quad\text{ implies }\quad f(\Lambda') = f(\Lambda),
\]
i.e. if for any $\Lambda\in\Omega$ there exists $\varepsilon>0$ such that $f$ is constant on $\CLe$.
We write $C_{tlc}(\Omega)$ to denote the space of transversally locally constant functions and $C^k_{tlc}(\Omega)$ to denote the space of functions $f\in C^k(\Omega)$, $k=1,2,...$ or $k=\infty$, that are locally transversally constant.
\end{definition} 

Note that if $f\in C^k_{tlc}(\Omega)$ then also $f\circ \varphi_{\vt\:}\in C^k_{tlc}(\Omega)$ for any fixed $\vt\in\mathbb{R}^d$. Transversally locally constant functions on $\Omega$ are linked to the following specific type of functions on $\mathbb{R}^d$.

\begin{definition}\label{D:eqvar}
A function $F\in C(\mathbb{R}^d)$ is \emph{(strongly) $\Lambda$-equivariant} if there exists $R>0$ such that if for two $\vx,\vy\in\mathbb{R}^d$ we have that 
\[
B_R(\vx)\cap\Lambda = B_R(\vy)\cap\Lambda\quad\text{ implies }\quad f(\vx) = f(\vy).
\] 
We write $C_\Lambda(\mathbb{R}^d)$ for the space of all $\Lambda$-equivariant functions on $\mathbb{R}^d$, and by $C_\Lambda^k(\mathbb{R}^d)$ (for $k=1,2,...$ or $k=\infty$) we denote the spaces of functions $f\in C^k(\mathbb{R}^d)$ that are $\Lambda$-equivariant. 
\end{definition}
See \cite[Definition 2.1]{Kellendonk:PEC}. The space $C_\Lambda^\infty(\mathbb{R}^d)$ is uniformly dense in $C_\Lambda(\mathbb{R}^d)$. Let $\overline{C}_\Lambda(\mathbb{R}^d)$ denote the uniform closure of $C_\Lambda(\mathbb{R}^d)$.

Recall the notation (\ref{E:algiso}). We summarize some well known statements from \cite[Lemma 4.2]{Kellendonk:PEC} and \cite[Proposition 22]{KP:RS}, see also \cite[Theorem 6]{ST:SA}. Because we assume repetitivity and therefore have (\ref{E:minimal}) they read as follows. 

\begin{lemma}\label{L:basicprops}
Let $\Lambda\in\Omega$. The restriction of $h_{\Lambda}^\ast$ to continuous functions defines an isomorphism
of Banach algebras $h_{\Lambda}^\ast:C(\Omega)\to \overline{C}_{\Lambda}(\mathbb{R}^d)$. The algebras $C_{tlc}(\Omega)$ and $C_{tlc}^k(\Omega)$ are uniformly dense in $C(\Omega)$ and their images under $h_{\Lambda}^\ast$ are $C_{\Lambda}(\mathbb{R}^d)$ and $C_{\Lambda}^k(\mathbb{R}^d)$, respectively.
\end{lemma}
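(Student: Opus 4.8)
The plan is to reduce everything to the structure of the orbit homeomorphism $h_\Lambda$ and the density statements for transversally locally constant functions that were already alluded to in Section~\ref{subsec:LocalStru}. First I would recall that, under Assumption~\ref{A:flcapr}, repetitivity gives $\Omega_\Lambda=\Omega_{\Lambda_0}=\Omega$ for every $\Lambda\in\Omega$ by~\eqref{E:minimal}, so $\orb(\Lambda)$ is dense in $(\Omega,\varrho)$ and $h_\Lambda\colon\mathbb{R}^d\to\orb(\Lambda)$ from~\eqref{E:orbithomeo} has dense image. Since $h_\Lambda^\ast f=f\circ h_\Lambda$, it is immediate that $h_\Lambda^\ast$ is an algebra homomorphism $C(\Omega)\to C(\mathbb{R}^d)$, that it is contractive for the sup-norms, and (by density of the image of $h_\Lambda$) that it is \emph{injective} and in fact isometric: $\|h_\Lambda^\ast f\|_\infty=\sup_{\vt}|f(\varphi_\vt(\Lambda))|=\sup_{\Lambda'\in\orb(\Lambda)}|f(\Lambda')|=\|f\|_\infty$, the last equality by continuity of $f$ and density of the orbit. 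This already shows $h_\Lambda^\ast$ is an isometric isomorphism onto its image, which is a closed subalgebra of $C(\mathbb{R}^d)$ since $C(\Omega)$ is complete.

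Next I would identify that image with $\overline{C}_\Lambda(\mathbb{R}^d)$. For one inclusion: if $f\in C_{tlc}(\Omega)$, then by Definition~\ref{D:tlc} there is a single $\varepsilon$ (using compactness of $\Omega$ one can take $\varepsilon$ uniform) such that $f$ is constant on each $\varepsilon$-cylinder, and then $h_\Lambda^\ast f$ is $\Lambda$-equivariant with radius $R=1/\varepsilon$ directly from the definitions of $\CLe$ and of $C_\Lambda(\mathbb{R}^d)$ (Definition~\ref{D:eqvar}); conversely every $F\in C_\Lambda(\mathbb{R}^d)$ with equivariance radius $R$ descends to a well-defined function on $\Omega$ constant on $(1/R)$-cylinders, which is in $C_{tlc}(\Omega)$, and this function is continuous because the cylinder sets form a neighborhood basis and $F$ is uniformly continuous on bounded sets. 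This gives $h_\Lambda^\ast(C_{tlc}(\Omega))=C_\Lambda(\mathbb{R}^d)$; intersecting with $C^k$ and using~\eqref{E:DalphafO} (which shows $h_\Lambda^\ast$ intertwines $D^\alpha$ with $D^\alpha_{\mathbb{R}^d}$) gives the corresponding statement for $C^k_{tlc}$. Taking uniform closures and using that $h_\Lambda^\ast$ is isometric onto a closed subspace yields $h_\Lambda^\ast(C(\Omega))\supseteq\overline{C}_\Lambda(\mathbb{R}^d)$.

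For the reverse inclusion $h_\Lambda^\ast(C(\Omega))\subseteq\overline{C}_\Lambda(\mathbb{R}^d)$, and simultaneously the density claims $\overline{C_{tlc}(\Omega)}=C(\Omega)$ and $\overline{C^k_{tlc}(\Omega)}=C(\Omega)$, I would invoke the Stone--Weierstrass theorem on the compact space $(\Omega,\varrho)$: the algebra $C^\infty_{tlc}(\Omega)$ contains the constants, separates points of $\Omega$ (two distinct Delone sets in $\Omega$ differ in some bounded ball after small translations, which can be detected by a smooth transversally-locally-constant bump, built from a partition of unity as in Lemma~\ref{L:partofunity} composed with a smooth function of $\vt$ on a chart), and is closed under the lattice operations up to uniform approximation; hence it is uniformly dense in $C(\Omega)$. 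Applying the already-established isometry $h_\Lambda^\ast$ transports this density to $C_\Lambda^\infty(\mathbb{R}^d)$ being dense in $h_\Lambda^\ast(C(\Omega))$, which forces $h_\Lambda^\ast(C(\Omega))=\overline{C}_\Lambda(\mathbb{R}^d)$ and completes the proof. The main obstacle I anticipate is the point-separation / bump-function construction feeding Stone--Weierstrass: one must check carefully, using the local product charts~\eqref{e-phi} and the fact that $\mho$ and the cylinder sets are Cantor sets, that for any $\Lambda_1\neq\Lambda_2$ there is a \emph{smooth} transversally locally constant function separating them — the transversal (Cantor) direction only supplies locally constant functions, so separation in that direction relies on clopen sets, while separation within an orbit relies on the Euclidean factor; combining the two in a single globally-defined $C^\infty_{tlc}$ function is where the care is needed. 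Everything else is routine once the isometry and the cylinder-set description of $C_{tlc}$ are in place.
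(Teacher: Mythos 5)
Your proposal is correct, but it is worth noting that the paper does not actually prove Lemma~\ref{L:basicprops}: it presents the statement as a summary of known results, citing \cite[Lemma 4.2]{Kellendonk:PEC} and \cite[Proposition 22]{KP:RS} and only remarking that minimality (via \eqref{E:minimal}) lets one state them for an arbitrary $\Lambda\in\Omega$. Your argument is a self-contained reconstruction along exactly the lines of those references: minimality gives that $h_\Lambda^\ast$ is an isometric algebra monomorphism with closed image; compactness of $\Omega$ upgrades the local constancy in Definition~\ref{D:tlc} to a uniform transversal radius, which is precisely what matches $C_{tlc}(\Omega)$ with the \emph{strongly} equivariant class $C_\Lambda(\mathbb{R}^d)$ (finite local complexity giving the uniform continuity needed to descend an equivariant $F$ to the hull); and Stone--Weierstrass supplies the density of $C^\infty_{tlc}(\Omega)$, from which the identification of the image with $\overline{C}_\Lambda(\mathbb{R}^d)$ follows by taking uniform closures on both sides. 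The one place where you should reorganize slightly: you invoke Lemma~\ref{L:partofunity} to produce the separating bump functions, but in the paper that lemma (via Corollary~\ref{C:bumps} and Proposition~\ref{P:comb}) is itself deduced \emph{from} Lemma~\ref{L:basicprops}, so citing it here is circular as written. The fix is already implicit in your plan: construct the separating functions directly on $\mathbb{R}^d$ as Dirac-comb convolutions $\delta_{\Lambda,\varepsilon}\ast\eta$ (the construction in the proof of Proposition~\ref{P:comb}), check by hand that they are strongly $\Lambda$-equivariant, and then descend them to $C^\infty_{tlc}(\Omega)$ using the correspondence you establish in your second paragraph, which does not depend on any partition of unity. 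With that ordering made explicit, the proof is complete and supplies a genuine argument where the paper offers only a citation.
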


We recall a useful localization argument from \cite[Lemma 4.2 and its proof]{Kellendonk:PEC}.

\begin{proposition}\label{P:comb} Let $\Lambda\in \Omega$ and let $\eta\in C(\mathbb{R}^d)$ be a function supported in a ball $B_{\varepsilon}(\vec{0}\,)$. Then there is a function $f\in C_{tlc}(\Omega)$ that is supported in $O_{\Lambda,\varepsilon}$, and satisfies $h_{\Lambda'}^\ast f(\vt\,)=\eta(\vt\,)$ for all $\Lambda'\in\mathcal{C}_{\Lambda,\varepsilon}$ and $\vt\in B_{\varepsilon}(\vec{0})$. Moreover, if $\eta \in C^k(\mathbb{R}^d)$ then $f\in C_{tlc}^k(\Omega)$.
\end{proposition}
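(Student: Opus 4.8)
The plan is to reduce the construction of $f$ to a localized average over the cylinder set, exploiting the product structure of the translated cylinder set $O_{\Lambda,\varepsilon}$ together with the identification $\phi_{\Lambda,\varepsilon}\colon O_{\Lambda,\varepsilon}\to\mathcal{C}_{\Lambda,\varepsilon}\times B_\varepsilon(\vec 0)$ from~\eqref{e-phi}. First I would define a candidate function directly on the chart: set $f\cphi(\Lambda',\vt\,):=\eta(\vt\,)$ for $(\Lambda',\vt\,)\in\mathcal{C}_{\Lambda,\varepsilon}\times B_\varepsilon(\vec 0)$, i.e. $f:=(\phi^{-1})^\ast(\mathbbm 1_{\mathcal{C}_{\Lambda,\varepsilon}}\otimes\eta)$ in the notation of~\eqref{e-prod}. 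Since $\eta$ is supported in $B_\varepsilon(\vec 0)$, this $f$ has support contained in $O_{\Lambda,\varepsilon}$ and extends by zero to a function on all of $\Omega$. Using~\eqref{e-prod} and~\eqref{e-der2}, $f$ is transversally locally constant on $O_{\Lambda,\varepsilon}$ because it does not depend on the $\Lambda'$-coordinate, and if $\eta\in C^k(\mathbb R^d)$ then $D^\alpha f\cphi(\Lambda',\vt\,)=D^\alpha_{\mathbb R^d}\eta(\vt\,)$ for $|\alpha|\le k$, so the $C^k_{tlc}$ assertion on $O_{\Lambda,\varepsilon}$ is immediate, and the defining identity $h_{\Lambda'}^\ast f(\vt\,)=\eta(\vt\,)$ for $\Lambda'\in\mathcal{C}_{\Lambda,\varepsilon}$, $\vt\in B_\varepsilon(\vec 0)$ holds by construction.

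The substantive point is global continuity (resp. global $C^k$-smoothness and global transversal local constancy) of the zero-extension of $f$ across the boundary of $O_{\Lambda,\varepsilon}$. Here I would use that, by~\eqref{E:transcylsets} and the discussion following it, the sets of the form $O_{\Lambda,\varepsilon}$ form a base for the topology of $\Omega$, and more precisely that $\mathcal{C}_{\Lambda,\varepsilon}$ is clopen in the canonical transversal and $O_{\Lambda,\varepsilon}$ is open in $\Omega$. The key geometric fact to invoke is that $\partial O_{\Lambda,\varepsilon}$, under the chart, corresponds to $\mathcal{C}_{\Lambda,\varepsilon}\times\partial B_\varepsilon(\vec 0)$ together with translates across the boundary of the Cantor piece; since $\eta$ vanishes near $\partial B_\varepsilon(\vec 0)$, the function $f$ vanishes in a neighborhood (in $\Omega$) of every boundary point of $O_{\Lambda,\varepsilon}$, so the zero-extension agrees with $f$ on an open neighborhood of each point and inherits all local regularity. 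Concretely I would argue: for $\bar\Lambda\in\partial O_{\Lambda,\varepsilon}$ one finds $\delta>0$ so that the $\varrho$-ball of radius $\delta$ around $\bar\Lambda$ meets $O_{\Lambda,\varepsilon}$ only in points whose $B_\varepsilon(\vec 0)$-coordinate lies outside $\supp\eta$, hence $f\equiv 0$ there; this uses uniform continuity of $\eta$ and the relation~\eqref{E:Euclaction} between $\varrho$ and the Euclidean metric in the orbit direction.

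I expect the main obstacle to be exactly this boundary analysis: verifying that the product chart $\phi_{\Lambda,\varepsilon}$ is compatible with the topology of $\Omega$ near $\partial O_{\Lambda,\varepsilon}$ so that "supported away from $\partial B_\varepsilon(\vec 0)$ in the chart'' really does translate into "supported in a $\varrho$-open subset of $\Omega$.'' Once that is settled, the transversal local constancy of $f$ globally follows from the clopenness of $\mathcal{C}_{\Lambda,\varepsilon}$ in the transversal direction (so that the condition $B_{1/\varepsilon}(\vec 0)\cap\Lambda''=B_{1/\varepsilon}(\vec 0)\cap\Lambda'$ pins down the value of $f$), and Lemma~\ref{L:basicprops} together with Definition~\ref{D:tlc} then confirm $f\in C_{tlc}(\Omega)$, respectively $f\in C^k_{tlc}(\Omega)$ when $\eta\in C^k(\mathbb R^d)$. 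The remaining claim $h_{\Lambda'}^\ast f=\eta$ on $\mathcal{C}_{\Lambda,\varepsilon}\times B_\varepsilon(\vec 0)$ is then just a restatement of the definition via~\eqref{E:algiso} and~\eqref{E:fO}.
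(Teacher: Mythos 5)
Your construction is correct, and it is essentially the route the paper only gestures at: the published proof opens by remarking that the proposition ``immediately follows from \eqref{e-prod}'' and the local product structure, but then writes out a different argument borrowed from Kellendonk. There one never touches the topology of $\Omega$ near $\partial O_{\Lambda,\varepsilon}$ at all; instead one forms the function $\delta_{\Lambda,\varepsilon}\ast\eta$ on $\mathbb{R}^d$, where $\delta_{\Lambda,\varepsilon}$ is a Dirac comb on the set of return vectors $S_{\Lambda,\varepsilon}=\{\vs: B_{1/\varepsilon}(\vec 0\,)\cap\Lambda\subset\varphi_{\vs}\,(\Lambda)\}$, proves by a purely Euclidean pattern-matching argument that it is $\Lambda$-equivariant, and then invokes the isomorphism $h_{\Lambda}^\ast\colon C_{tlc}(\Omega)\to C_{\Lambda}(\mathbb{R}^d)$ of Lemma~\ref{L:basicprops} to pull it back to a $C_{tlc}$ (resp.\ $C^k_{tlc}$) function on $\Omega$; the delicate topology is absorbed into that lemma, and as a bonus one gets an explicit formula for $h_{\Lambda'}^\ast f$ on \emph{every} orbit as a locally finite sum of disjointly supported translates of $\eta$. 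Your approach buys self-containedness at the price of the boundary analysis you rightly single out as the main obstacle, and on that step two comments are in order. First, it can be packaged more cleanly than a pointwise $\delta$-argument: since $\mathcal{C}_{\Lambda,\varepsilon}$ is clopen in the compact transversal, the set $\phi^{-1}(\mathcal{C}_{\Lambda,\varepsilon}\times\supp\eta)$ is a compact subset of the open set $O_{\Lambda,\varepsilon}$ containing $\{f\neq 0\}$, and a function continuous on an open set whose support is compactly contained in it extends by zero continuously; this also disposes of your worry about ``translates across the boundary of the Cantor piece,'' since there is no transversal boundary. Second, this (like the paper's own proof, which needs $\varepsilon<\varepsilon_0/2$ so the translates of $\eta$ do not overlap) requires reading ``supported in $B_{\varepsilon}(\vec 0\,)$'' as $\supp\eta$ being a compact subset of the open ball. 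Finally, your appeal to clopenness for the global tlc property should be made quantitative: for $1/\delta>1/\varepsilon+\varepsilon$ the chart coordinates of a point of $O_{\Lambda,\varepsilon}$ are determined by its pattern on $B_{1/\delta}(\vec 0\,)$, so $f$ is constant on every $\mathcal{C}_{\bar\Lambda,\delta}$ --- but this is exactly the computation your plan calls for, so I see no gap.
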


\begin{proof} In our notation this proposition 
 immediately follows from \eqref{e-prod}, and in some sense is equivalent to the local product structure of pattern spaces in subsection \ref{subsec:LocalStru}.
 For the convenience of the reader we provide the part of the proof essentially taken from \cite[Lemma 4.2]{Kellendonk:PEC}.
 Let $\varepsilon<\frac{1}{\varepsilon_0}\wedge \frac{\varepsilon_0}{2}$ , where $\varepsilon_0$ is the minimal Euclidean distance between points in $\Lambda$. Then the distance of two different points in $$S_{\Lambda,\varepsilon}:=\left\lbrace \vs\in\mathbb{R}^d: B_{\frac{1}{\varepsilon}}(\vec{0})\cap \Lambda\subset \varphi_{\vec{s}}\,(\Lambda)\right\rbrace$$ is at least $\varepsilon_0$. Let $\delta_{\Lambda,\varepsilon}$ be a Dirac comb on $S_{\Lambda,\varepsilon}$. Then for any $\vs\in S_{\Lambda,\varepsilon}$ the function $\delta_{\Lambda,\varepsilon}\ast \eta(\vs+\cdot)$ is continuous on $\mathbb{R}^d$ and its restriction to $B_{\varepsilon_0/2}(\vec{0}\,)$ is supported in $B_{\varepsilon}(\vec{0}\,)$ and equals $\eta$ on $ B_{\varepsilon}(\vec{0}\,)$. The function $\delta_{\Lambda,\varepsilon}\ast \eta$ is $\Lambda$-equivariant: Suppose $\vx_1, \vx_2\in\mathbb{R}^d$. Then we can can write $\vx_i=\vs_i+\vt_i$, where $\vs_i$ is a point in $S_{\Lambda,\varepsilon}$ minimizing the distance between $\vx_i$ and $S_{\Lambda,\varepsilon}$. Suppose that the sets $B_{\frac{8}{\varepsilon}}(\vx_i)\cap \Lambda$ agree. Then, since $|\vt\,|<\varepsilon_0<\frac{1}{\varepsilon}$, both sets $B_{\frac{4}{\varepsilon}}(\vs_i)\cap \Lambda$ agree with $B_{\frac{4}{\varepsilon}}(\vec{0}\,)\cap \Lambda$. Consequently we have $B_{\frac{2}{\varepsilon}}(\vec{0}\,)\cap \varphi_{\vt_1}(\Lambda)=B_{\frac{2}{\varepsilon}}(\vec{0}\,)\cap \varphi_{\vt_2}(\Lambda)$, which implies that 
 $\vt_1-\vt_2\in S_{\Lambda,\varepsilon}$ and therefore $\delta_{\Lambda,\varepsilon}\ast \eta(\vx_1)=\delta_{\Lambda,\varepsilon}\ast \eta(\vx_2)$. This shows that $\delta_{\Lambda,\varepsilon}\ast \eta$ is $\Lambda$-equivariant. By Lemma \ref{L:basicprops} the function $f(\overline{\Lambda}):=(\delta_{\Lambda,\varepsilon}\ast \eta)(h_\Lambda^{-1})(\overline{\Lambda}))$, $\overline{\Lambda}=h_\Lambda(\vt\,)$, $\vt\in\mathbb{R}^d$, is in $C_{tlc}(\mathbb{R}^d)$. The stated properties are immediate. 
\end{proof}

Formula \eqref{e-prod} allows to construct smooth bump functions.

\begin{corollary}\label{C:bumps}
Let $O=O_{\Lambda,\varepsilon}$ be an open set of type (\ref{E:transcylsets}) with $\phi_{\Lambda,\varepsilon}(O)=\mathcal{C}_{\Lambda, \varepsilon}\times B_{\varepsilon}(\vec{0}\,)$. Let $K\subset \mathbb{R}^d$ be compact and such that $K\subset B_{\varepsilon}(\vec{0}\,)$. Then, writing $V:=\phi_{\Lambda,\varepsilon}^{-1}(\mathcal{C}_{\Lambda,\varepsilon}\times K)$, we can find a function $\chi\in C^\infty_{tlc}(\Omega)$ such that $0\leq \chi\leq 1$, $\supp\:\chi\subset O$, $\chi\equiv 1$ on $V$, and $\chi\cphi $ depends only on the second argument.
\end{corollary}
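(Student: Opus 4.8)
\textbf{Proof plan for Corollary~\ref{C:bumps}.} The plan is to reduce the construction of $\chi$ on $\Omega$ to the construction of an ordinary smooth bump function on $\mathbb{R}^d$ via Proposition~\ref{P:comb} (equivalently, via formula~\eqref{e-prod} and the local product structure of $\Omega$ from Subsection~\ref{subsec:LocalStru}). First I would pick an auxiliary radius: since $K\subset B_\varepsilon(\vzer)$ is compact, there is $\varepsilon'$ with $0<\varepsilon'<\varepsilon$ such that $K\subset B_{\varepsilon'}(\vzer)$ and $\overline{B_{\varepsilon'}(\vzer)}\subset B_\varepsilon(\vzer)$. By a standard mollification argument on $\mathbb{R}^d$, choose $\eta\in C^\infty(\mathbb{R}^d)$ with $0\le\eta\le 1$, $\eta\equiv 1$ on a neighborhood of $K$, and $\supp\eta\subset B_{\varepsilon'}(\vzer)\subset B_\varepsilon(\vzer)$; in particular $\eta$ is compactly supported inside the ball $B_\varepsilon(\vzer)$.

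Next I would transfer $\eta$ to $\Omega$. Apply Proposition~\ref{P:comb} with this $\Lambda$, this $\varepsilon$, and this $\eta\in C^\infty(\mathbb{R}^d)$: since $\eta$ is supported in $B_\varepsilon(\vzer)$ and lies in $C^\infty(\mathbb{R}^d)$, the proposition yields a function $\chi\in C^\infty_{tlc}(\Omega)$ that is supported in $O=O_{\Lambda,\varepsilon}$ and satisfies $h^\ast_{\Lambda'}\chi(\vt)=\eta(\vt)$ for all $\Lambda'\in\mathcal{C}_{\Lambda,\varepsilon}$ and all $\vt\in B_\varepsilon(\vzer)$. In the notation of~\eqref{E:fO}, this exactly says $\chi\cphi(\Lambda',\vt)=\eta(\vt)$, so $\chi\cphi$ depends only on the second argument, as required. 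The bounds $0\le\chi\le 1$ follow because $\chi\cphi=\eta$ takes values in $[0,1]$ and $\chi\equiv 0$ off $O$. Finally, $\chi\equiv 1$ on $V=\phi_{\Lambda,\varepsilon}^{-1}(\mathcal{C}_{\Lambda,\varepsilon}\times K)$ because for $\bar\Lambda\in V$ we have $\phi_{\Lambda,\varepsilon}(\bar\Lambda)=(\Lambda',\vt)$ with $\vt\in K$, and $\chi\cphi(\Lambda',\vt)=\eta(\vt)=1$ there.

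I do not expect a serious obstacle here: the statement is essentially a repackaging of Proposition~\ref{P:comb}, and the only mild point is the routine choice of the Euclidean bump $\eta$ interpolating between $\mathbbm 1_K$ and $\mathbbm 1_{B_{\varepsilon'}(\vzer)}$ with controlled support, which is classical. If one prefers to avoid quoting Proposition~\ref{P:comb} directly, the same conclusion follows by taking $\chi$ of the product form~\eqref{e-prod} with $f_0\equiv\mathbbm 1$ on $\mathcal{C}_{\Lambda,\varepsilon}$ and $F_0=\eta$, and invoking~\eqref{e-der2} to see that $\chi\in C^\infty_{tlc}(\Omega)$ with the stated support and values; either route is immediate once the local product structure of $\Omega$ is in place.
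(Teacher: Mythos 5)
Your proposal is correct and follows essentially the same route as the paper, which gives no explicit proof beyond the remark that formula \eqref{e-prod} (equivalently, Proposition \ref{P:comb} and the local product structure) yields such bump functions; your choice of a Euclidean cutoff $\eta$ with $\eta\equiv 1$ near $K$ and $\supp\eta\subset B_{\varepsilon'}(\vzer)\Subset B_{\varepsilon}(\vzer)$, transferred to $\Omega$ via Proposition \ref{P:comb} or the product form with $f_0\equiv\mathbbm 1$, is exactly the intended argument. The only detail worth keeping is the one you already included: shrinking to $\varepsilon'<\varepsilon$ so that $\supp\chi$ is genuinely contained in $O$.
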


We can obtain $C^\infty_{tlc}$-partitions of unity by a variant of the standard construction, this yields Lemma \ref{L:partofunity}.

For transversally locally constant functions we can also use (\ref{E:algiso}) connect the operators $D^\alpha$ to differential operators $D_{\mathbb{R}^d}^\alpha$ on $\mathbb{R}^d$.
\begin{corollary}\label{C:connecttoRd}
Let $\Lambda\in \Omega$. For any multiindex $\alpha$ with $|\alpha|\leq k$ and any $f\in C_{tlc}^k(\Omega)$ we have 
\[h_{\Lambda}^\ast D^\alpha f=D_{\mathbb{R}^d}^\alpha h_{\Lambda}^\ast f.\]
\end{corollary}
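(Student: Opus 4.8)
The plan is to unwind the definitions and reduce the statement, via the local product identity \eqref{E:DalphafO} and the translation invariance of Euclidean differential operators, to a one–line computation, then to bootstrap to arbitrary order by induction on $|\alpha|$.

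First I would treat the case $|\alpha|=1$. Fix $f\in C^1_{tlc}(\Omega)$, $\Lambda\in\Omega$ and a direction $\vt\in\mathbb{R}^d$. For every $\vu\in\mathbb{R}^d$, using \eqref{E:algiso} and the group law $\varphi_{\vec v}\circ\varphi_{\vec w}=\varphi_{\vec v+\vec w}$, both sides of the asserted identity at $\vu$ collapse to the same difference quotient,
\[
h_{\Lambda}^\ast\!\Big(\tfrac{\partial f}{\partial\vt}\Big)(\vu)
=\lim_{s\to0}\frac{f(\varphi_{\vu+s\vt}(\Lambda))-f(\varphi_{\vu}(\Lambda))}{s}
=\frac{\partial}{\partial\vt}\big(h_{\Lambda}^\ast f\big)(\vu).
\]
The limit exists because $f$ is differentiable on $\Omega$, and by Lemma~\ref{L:basicprops} the function $h_{\Lambda}^\ast f$ lies in $C_\Lambda^1(\mathbb{R}^d)\subset C^1(\mathbb{R}^d)$, so the right-hand side is a genuine partial derivative; this is precisely the identity $\frac{\partial}{\partial\vt}(f\circ\varphi_{\vec v})=\frac{\partial f}{\partial\vt}\circ\varphi_{\vec v}$ recorded after Definition~\ref{D:Ck}, read off along the orbit of $\Lambda$.

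For general $\alpha$ with $|\alpha|\le k$ I would induct on $|\alpha|$. Writing $D^\alpha=\frac{\partial}{\partial\vec e_i}D^\beta$ with $|\beta|=|\alpha|-1$, the function $g:=D^\beta f$ is $(k-|\beta|)$-times continuously differentiable on $\Omega$, hence $h_{\Lambda}^\ast g\in C^1(\mathbb{R}^d)$, and the first-order case applied to $g$ gives $h_{\Lambda}^\ast\big(\frac{\partial g}{\partial\vec e_i}\big)=\frac{\partial}{\partial\vec e_i}\big(h_{\Lambda}^\ast g\big)$. Combining this with the inductive hypothesis $h_{\Lambda}^\ast g=D^\beta_{\mathbb{R}^d}h_{\Lambda}^\ast f$ and with the fact that $h_{\Lambda}^\ast f\in C^k(\mathbb{R}^d)$ (so Euclidean mixed partials may be taken in any order, by Schwarz' theorem) yields $h_{\Lambda}^\ast D^\alpha f=D^\alpha_{\mathbb{R}^d}h_{\Lambda}^\ast f$. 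Equivalently, one can argue in a single step from \eqref{E:DalphafO}: for arbitrary $\vt_0$, apply \eqref{E:DalphafO} in the foliated chart $\phi_{\Lambda'',\varepsilon}$ centred at $\Lambda'':=\varphi_{\vt_0}(\Lambda)$, where $\phi_{\Lambda'',\varepsilon}(\Lambda'')=(\Lambda'',\vec{0})$ and $f\circ\phi_{\Lambda'',\varepsilon}^{-1}(\Lambda'',\cdot)=h_{\Lambda''}^\ast f$, to obtain $D^\alpha f(\Lambda'')=D^\alpha_{\mathbb{R}^d}h_{\Lambda''}^\ast f(\vec{0})$; since $h_{\Lambda''}^\ast f=h_{\Lambda}^\ast f(\,\cdot+\vt_0)$, the right-hand side equals $D^\alpha_{\mathbb{R}^d}h_{\Lambda}^\ast f(\vt_0)$ by translation invariance, while the left-hand side is $h_{\Lambda}^\ast D^\alpha f(\vt_0)$.

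I do not expect a genuine obstacle here: the content is a routine unwinding of definitions. The two points deserving care are the passage from the \emph{local} identity \eqref{E:DalphafO}, valid only on a single cylinder neighbourhood of $\vec{0}$, to the assertion at an arbitrary $\vt_0\in\mathbb{R}^d$ — handled by recentring the chart and using Euclidean translation invariance — and the appeal to Lemma~\ref{L:basicprops} to guarantee that $h_{\Lambda}^\ast f$ is honestly $C^k$ on $\mathbb{R}^d$, so that the operators $D^\alpha_{\mathbb{R}^d}$ on the right-hand side are well defined.
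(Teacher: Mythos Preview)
Your argument is correct. Both the direct difference-quotient computation with induction on $|\alpha|$ and the chart-recentring argument via \eqref{E:DalphafO} are valid, and in fact either one alone suffices; the identity $\frac{\partial}{\partial\vt}(f\circ\varphi_{\vec v})=\frac{\partial f}{\partial\vt}\circ\varphi_{\vec v}$ recorded after Definition~\ref{D:Ck} already encodes the whole content, and you simply iterate it along the orbit of $\Lambda$. One minor remark: in your induction step you apply the first-order case to $g=D^\beta f$, which tacitly uses that derivatives of transversally locally constant functions remain transversally locally constant (with a possibly larger cylinder radius). This is true and easy to see from the definition of $\CLe$, but strictly speaking your first-order computation does not actually need the $tlc$ hypothesis at all---it holds for any differentiable $f$---so the induction goes through regardless.

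The paper takes a structurally different route: it first localises via the smooth partition of unity of Lemma~\ref{L:partofunity} to reduce to $f$ supported in a single translated cylinder set, then invokes the explicit Dirac-comb representation $h_{\Lambda}^\ast f=\delta_{\Lambda,\varepsilon}\ast\eta$ from Proposition~\ref{P:comb}, and finally uses that Euclidean differentiation commutes with convolution against a locally finite sum of point masses. Your approach is more elementary and self-contained, avoiding both the partition of unity and the comb representation; the paper's approach, on the other hand, makes transparent that $h_{\Lambda}^\ast$ carries $C^k_{tlc}(\Omega)$ onto $C^k_{\Lambda}(\mathbb{R}^d)$ together with its differential structure, which is the conceptual point behind Lemma~\ref{L:basicprops}.
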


\begin{proof}
If $f\in C_{tlc}^k(\OO)$ has compact support contained in an open set $O$ of type (\ref{E:transcylsets}), $h_\Lambda^\ast f$ is in $C^k_{\Lambda}(\mathbb{R}^d)$ and of form $\delta_{\Lambda,\varepsilon}\ast \eta$. Also $D^\alpha f$ is supported in $O$, we have $h_\Lambda^\ast D^\alpha f\in C_\Lambda(\mathbb{R}^d)$ and 
\[h_{\Lambda}^\ast(D^\alpha f)(\vt\,)= \delta_{\Lambda,\varepsilon}\ast D^\alpha_{\mathbb{R}^d} \eta (\vt\,)= D^\alpha_{\mathbb{R}^d}(\delta_{\Lambda,\varepsilon}\ast \eta)(\vt\,)=D^\alpha_{\mathbb{R}^d} h_{\Lambda}^\ast(f)(\vt).\] 
The general case follows using Lemma \ref{L:partofunity}.
\end{proof}

Another useful consequence of Proposition \ref{P:comb} is the following, which can be seen by localizing to functions supported in sets of type (\ref{E:transcylsets}) and using standard mollification in $\mathbb{R}^d$.
\begin{corollary}\label{C:Cldense} 
 For any $l\geq k$ the space $C_{tlc}^l(\Omega)$ is a dense subspace of $C^k_{tlc}(\Omega)$.
\end{corollary}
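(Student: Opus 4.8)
The plan is to combine a localization via a smooth partition of unity with standard mollification in the Euclidean variable, so that everything reduces to elementary calculus in $\mathbb{R}^d$. Fix $f\in C^k_{tlc}(\Omega)$ and $\varepsilon>0$. First I would choose a finite cover of $\Omega$ by translated cylinder sets $O_i=\phi_i^{-1}(\mathcal{C}_i\times B_i)$ of type \eqref{E:transcylsets} and, by Lemma~\ref{L:partofunity}, a subordinate partition of unity $\{\chi_i\}_{i=1}^n\subset C^\infty_{tlc}(\Omega)$. Then $f=\sum_i\chi_i f$, each $\chi_i f\in C^k_{tlc}(\Omega)$ is supported in $O_i$ (it is a product of $C^k_{tlc}$-functions, hence $C^k_{tlc}$ by the Leibniz rule for the local operators $D^\alpha$ together with Lemma~\ref{L:basicprops}), and since $C^l_{tlc}(\Omega)$ is a vector space it suffices to approximate each $\chi_i f$ to within $\varepsilon/n$ in $\|\cdot\|_{C^k}$ by an element of $C^l_{tlc}(\Omega)$; summing the approximants then handles $f$. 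Thus we may assume from the start that $f$ is supported in a single $O=\phi^{-1}(\mathcal{C}\times B)$ with $\mathcal{C}=\mathcal{C}_{\Lambda,\varepsilon_0}$ and $B=B_{\varepsilon_0}(\vec 0)$.

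Next I would put $f$ into the product form \eqref{e-prod}. Because $f$ is continuous and transversally locally constant on the compact space $\Omega$ and $\supp f$ is a compact subset of $O$, there is a single transversal scale $\epsilon'>0$, uniform over $\supp f$, such that $f\cphi(\Lambda',\vt)$ depends on $\Lambda'$ only through the clopen piece of $\mathcal{C}$ of that scale containing it; writing $\mathcal{C}=\bigsqcup_{l=1}^{N}\mathcal{C}_l$ for the resulting finite clopen partition (as in the construction used for Lemma~\ref{L:tlcdense}) we obtain
\[
f\cphi(\Lambda',\vt)=\sum_{l=1}^{N}\mathbbm 1_{\mathcal{C}_l}(\Lambda')\,F_l(\vt),
\]
with each $F_l\in C^k_c(B)$, the compact support in $B$ following from compactness of $\supp f$ in $O$ and of $\mathcal{C}$. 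Now fix a standard mollifier $\eta_\delta$ on $\mathbb{R}^d$; for $\delta$ small enough $F_l^\delta:=F_l\ast\eta_\delta\in C^\infty_c(B)$, and since $F_l\in C^k_c(B)$ we have $D^\alpha_{\mathbb{R}^d}F_l^\delta=(D^\alpha_{\mathbb{R}^d}F_l)\ast\eta_\delta\to D^\alpha_{\mathbb{R}^d}F_l$ uniformly for all $|\alpha|\le k$, i.e.\ $\|F_l^\delta-F_l\|_{C^k(B)}\to 0$ as $\delta\to 0$.

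Finally I would assemble the approximant. By Proposition~\ref{P:comb} (equivalently, by \eqref{e-prod}, \eqref{e-der2}, and the local product structure of Subsection~\ref{subsec:LocalStru}), the function $g$ determined by $g\cphi(\Lambda',\vt)=\sum_{l=1}^{N}\mathbbm 1_{\mathcal{C}_l}(\Lambda')F_l^\delta(\vt)$, extended by zero outside $O$, lies in $C^\infty_{tlc}(\Omega)$ — its transversal factors $\mathbbm 1_{\mathcal{C}_l}$ are locally constant on $\mathcal{C}$ and its Euclidean factors $F_l^\delta$ are smooth — is supported in $O$, and in particular $g\in C^l_{tlc}(\Omega)$ for every $l\ge k$ (including $l=\infty$). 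Using \eqref{E:DalphafO} and the fact that $f-g$ is supported in $O$,
\[
\|f-g\|_{C^k(\Omega)}=\sum_{|\alpha|\le k}\ \sup_{(\Lambda',\vt)\in\mathcal{C}\times B}\Big|D^\alpha_{\mathbb{R}^d}(f-g)\cphi(\Lambda',\vt)\Big|\le\sum_{|\alpha|\le k}\ \max_{1\le l\le N}\big\|D^\alpha_{\mathbb{R}^d}(F_l-F_l^\delta)\big\|_{\infty}\xrightarrow[\delta\to 0]{}0,
\]
which yields an element of $C^l_{tlc}(\Omega)$ within $\varepsilon$ of $f$ and proves density. The only step that requires genuine care — and the only place the geometry of $\Omega$ enters — is the passage to the finite product representation \eqref{e-prod}, namely checking that one transversal scale $\epsilon'$ works on all of $\supp f$; this uses compactness of $\Omega$ (and of $\mathcal{C}$) and the definition of $C^k_{tlc}$. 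Once it is in place, the argument is purely Euclidean mollification plus the Leibniz rule, which is why the excerpt records it as routine.
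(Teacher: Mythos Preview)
Your proof is correct and follows the same route the paper indicates: localize via a smooth $C_{tlc}^\infty$ partition of unity to a single translated cylinder set and then mollify in the Euclidean variable. You have simply made explicit the product decomposition $f\cphi(\Lambda',\vt)=\sum_l\mathbbm{1}_{\mathcal{C}_l}(\Lambda')F_l(\vt)$ and the uniform transversal scale (which, as you note, is the only nontrivial point and follows from compactness, or equivalently from the isomorphism $C^k_{tlc}(\Omega)\cong C^k_\Lambda(\mathbb{R}^d)$ in Lemma~\ref{L:basicprops}, where equivariance comes with a single global radius $R$).
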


 The following lemma is used to approximate smooth functions by the \emph{tlc} functions, and is also used to define approximations in Sobolev spaces in Subsection~\ref{subsec:sob-appr}. 
\begin{lemma}\label{L:tlcdense}
The space $C^\infty_{tlc}(\Omega)$ is dense in $C^k(\Omega)$ for any $k\geq 1$.
\end{lemma}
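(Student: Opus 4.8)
\textbf{Proof strategy for Lemma~\ref{L:tlcdense} ($C^\infty_{tlc}(\Omega)$ is dense in $C^k(\Omega)$).}
The plan is to reduce the global statement to a local one via a smooth partition of unity and then to transport the problem to $\mathbb{R}^d$ using the orbit homeomorphisms, where ordinary mollification does the job. First I would fix $f\in C^k(\Omega)$ and $\varepsilon>0$. By the compactness of $(\Omega,\varrho)$ there is a finite cover by open sets $O_1,\dots,O_n$ of type~\eqref{E:transcylsets}, say $O_j=\phi_{\Lambda_j,\varepsilon_j}^{-1}(\mathcal{C}_j\times B_j)$, and by Lemma~\ref{L:partofunity} a subordinate partition of unity $\chi_1,\dots,\chi_n\in C^\infty_{tlc}(\Omega)$. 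Since the $C^k$-norm obeys a Leibniz estimate and $\chi_j f\in C^k(\Omega)$ with $\supp(\chi_j f)\subset O_j$, it suffices to approximate each $g:=\chi_j f$, compactly supported in a single chart $O=O_{\Lambda,\delta}$, by functions in $C^\infty_{tlc}(\Omega)$ supported in $O$, to within $\varepsilon/n$ in $C^k$-norm; summing gives the result.

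Next I would work inside the fixed chart. Using the product coordinates $\phi=\phi_{\Lambda,\delta}\colon O\to\mathcal{C}\times B$ and the identification~\eqref{E:fO}, write $g\cphi(\Lambda',\vt)$; for each fixed $\Lambda'\in\mathcal{C}$ the slice $\vt\mapsto g\cphi(\Lambda',\vt)=h_{\Lambda'}^\ast g(\vt)$ is a $C^k$ function compactly supported in the Euclidean ball $B$, and by~\eqref{E:DalphafO} its Euclidean derivatives $D^\alpha_{\mathbb{R}^d}$ agree with the restrictions of $D^\alpha g$. The key point is that these slices are ``equi-$C^k$'' and have a uniform modulus of continuity: because $g\in C^k(\Omega)$, each $D^\alpha g$ with $|\alpha|\le k$ is uniformly continuous on the compact space $(\Omega,\varrho)$, and by Lemma~\ref{L:prep} (or directly by the estimate $\varrho\le 2\varrho_{\orb}$) small Euclidean translations inside a cylinder set move points only a little in $\varrho$; hence the functions $\{D^\alpha h_{\Lambda'}^\ast g : \Lambda'\in\mathcal{C},\ |\alpha|\le k\}$ share a common modulus of continuity on $B$, uniformly in $\Lambda'$. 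Now mollify in the Euclidean variable only: fix a standard mollifier $\psi_r$ on $\mathbb{R}^d$ and set $g_r\cphi(\Lambda',\vt):=(\psi_r *_{\vt} g\cphi(\Lambda',\cdot))(\vt)$, extended by zero off $O$. For $r$ small this is again supported in $O$; it is $C^\infty$ in $\vt$; and, crucially, $g_r$ is transversally locally constant \emph{if} we first replace $g$ by a transversally locally constant approximant.

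To get the transversal local constancy I would insert one more step before mollifying: approximate $g$ by the projections $\PHi_i g$ of Lemma~\ref{L:local_approx} (equivalently, use Lemma~\ref{L:basicprops}: $h_\Lambda^\ast g\in C^k_\Lambda(\mathbb{R}^d)$ is of the Dirac-comb form $\delta_{\Lambda,\delta}\ast\eta$ for some $\eta\in C^k_c(B_\delta(\vzer))$, and $C^k_{tlc}(\Omega)$ is uniformly dense in $C^k(\Omega)$ already in the $C^k$-norm by the construction in Proposition~\ref{P:comb} and Corollary~\ref{C:Cldense}). Having reduced to $g\in C^k_{tlc}(\Omega)$, the transversal coordinate takes only finitely many values on $\mathcal{C}$ (the function $\mathbbm{1}_{\mathcal{C}^{(i)}_l}$ is locally constant), so mollifying purely in $\vt$ preserves transversal local constancy, and the mollified function $g_r$ lies in $C^\infty_{tlc}(\Omega)$ by Proposition~\ref{P:comb}. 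Standard mollifier estimates on $\mathbb{R}^d$ give $\sup_\vt|D^\alpha_{\mathbb{R}^d}(g_r\cphi - g\cphi)(\Lambda',\vt)|\to 0$ as $r\to0$, uniformly in $\Lambda'$ because of the uniform modulus of continuity established above, hence $\|g_r - g\|_{C^k}\to0$ by~\eqref{E:DalphafO}. Combining with the previous reduction, $g_r\in C^\infty_{tlc}(\Omega)$ approximates $\chi_j f$ in $C^k$-norm, and summing over $j$ finishes the proof.

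\textbf{Main obstacle.} The only delicate point is the \emph{uniformity in $\Lambda'\in\mathcal{C}$} of the mollification estimates: a priori mollifying slice-by-slice could converge at different rates on different orbits. This is resolved by exploiting that $g$, being in $C^k$ of the \emph{compact} space $(\Omega,\varrho)$, has all derivatives $D^\alpha g$ uniformly continuous on $\Omega$, and that $\varrho$-closeness is controlled by Euclidean-closeness within a fixed cylinder set (Lemma~\ref{L:prep}); everything else is the routine combination of a partition of unity, the dictionary~\eqref{E:DalphafO}–\eqref{E:fO} between $\Omega$ and $\mathbb{R}^d$, and classical mollification.
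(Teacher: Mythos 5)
Your proposal is correct and follows essentially the same route as the paper's proof: reduce to a single chart $O=\phi^{-1}(\mathcal{C}\times B)$ via the smooth partition of unity of Lemma~\ref{L:partofunity}, approximate by transversal averages $f^{O,i}$ over a partition of the Cantor set $\mathcal{C}$ into pieces of shrinking diameter (with convergence of all derivatives up to order $k$ guaranteed by exchanging $D^\alpha_{\mathbb{R}^d}$ with the average and using the uniform continuity of $D^\alpha f$ on the compact space $(\Omega,\varrho)$), and then upgrade from $C^k_{tlc}$ to $C^\infty_{tlc}$ by Euclidean mollification in the leaf direction, which is exactly Corollary~\ref{C:Cldense}. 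One caveat: your parenthetical alternative is circular --- Lemma~\ref{L:basicprops} gives only \emph{uniform} density of $C^k_{tlc}(\Omega)$ in $C(\Omega)$, not density in the $C^k$-norm (which is the statement being proved), and $h_\Lambda^\ast g$ for a general $g\in C^k(\Omega)$ need not have the Dirac-comb form; moreover Lemma~\ref{L:local_approx} appears later and its proof refers back to the present lemma, so the transversal averaging operator should be constructed here directly rather than imported from there.
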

\begin{proof}
By Lemma \ref{L:partofunity} and Corollary \ref{C:Cldense} it suffices to show that any $f\in C^k(\Omega)$ supported in an open set $O=\phi^{-1}(\mathcal{C}\times B)$ can be approximated in the $C^k$-norm by functions from $C^k_{tlc}(\Omega)$. 
We consider the function $f\cphi (\Lambda',\vt\,)=f(\overline{\Lambda})$ on $\mathcal{C}\times B$. 
Since $\mathcal{C}$ is a Cantor set, we can find a finite Borel measure $\nu$ on $\mathcal{C}$ and for every $i\geq 1$ a 
partition $\mathcal{C}_1^{(i)},...,\mathcal{C}_{N_i}^{(i)}$ of $\mathcal{C}$ into sets $\mathcal{C}_l^{(i)}$ of positive $\nu$-measure and such that $\lim_i \max_{l=1,...,N_i} \diam_{\varrho}(\mathcal{C}_l^{(i)})=0$. 
 Note that in most cases we consider 
$\nu=\nu_{\mathcal{C}}$ from~\eqref{E:localprodmeas}, but in this particular lemma $\nu$ 
can be any finite measure with full support.

We define the functions 
\begin{equation}\label{e-fOi}
f^{O,i}(\Lambda',\vt\,):=\sum_{l=1}^{N_i} c_l^{(i)}(\vt\,)\mathbbm{1}_{\mathcal{C}_l^{(i)}}(\Lambda'), \quad (\Lambda',\vt\,)\in \mathcal{C}\times B,
\end{equation} 
where
\[
c_l^{(i)}(\vt\,):=\frac{1}{\nu(\mathcal{C}_l^{(i)})}\int_{\mathcal{C}_l^{(i)}} f\cphi (\Lambda',\vt\,)\:\nu(d\Lambda').
\]
Clearly, $\lim_i f^{O,i}=f\cphi $ uniformly on $\mathcal{C}\times B$. Moreover, for any $|\alpha|\leq k$ we have 
\[
D^\alpha_{\mathbb{R}^d} f^{O,i}(\Lambda',\vt\,)= \sum_{l=1}^{N_i} D^\alpha_{\mathbb{R}^d}c_l^{(i)}(\vt\,)\mathbbm{1}_{\mathcal{C}_l^{(i)}}(\Lambda')
\] 
and by dominated convergence
\[
D^\alpha_{\mathbb{R}^d} c_l^{(i)}(\vt\,) =\frac{1}{\nu(\mathcal{C}_l^{(i)})}\int_{\mathcal{C}_l^{(i)}} D^\alpha_{\mathbb{R}^d}f\cphi (\Lambda',\vt\,)\:\nu(d\Lambda').
\]
Since $D^\alpha_{\mathbb{R}^d}f\cphi $ is uniformly continuous, we also have $\lim_i D^\alpha_{\mathbb{R}^d}f^{O,i}=D^\alpha_{\mathbb{R}^d}f\cphi $ uniformly on $\mathcal{C}\times B$.
\end{proof}
\begin{remark}
In the uniquely ergodic case we can use $\nu_{\mathcal{C}}$ in place of $\nu$.
\end{remark}

The following lemma follows easily from Corollary \ref{C:bumps}. We use the notation $\vt=(t_1,...,t_d)$ for a vector $\vt\in\mathbb{R}^d$.
\begin{lemma}\label{L:directionsforindex}
Assume $O$ and $V$ are sets of type (\ref{E:transcylsets}) such that $O=\phi^{-1}(\mathcal{C}\times B)$ and
$O\subset \subset V$. Then for any $i=1,...,d$ there exists a function $f_i\in C_{tlc}^\infty(\Omega)$ supported in $V$ and such that on $O$ we have $h_{\Lambda'}^\ast f_i=t_i\ve_i$ for all $\Lambda'\in \mathcal{C}$ and $\vt\in B$. Moreover, for all $\overline{\Lambda}\in O$ we have 
\[\frac{\partial f_i}{\partial \vec{e_j}}\,(\overline{\Lambda})=\begin{cases} 1 & \text{if $j=i$}\\ 0 & \text{if $j\neq i$}.\end{cases}\]
\end{lemma}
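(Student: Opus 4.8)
The plan is to prove Lemma~\ref{L:directionsforindex} by first constructing the desired functions locally on $\mathbb{R}^d$ and then transporting them back to $\Omega$ using the equivariance machinery of Proposition~\ref{P:comb} and Corollary~\ref{C:bumps}. Fix $i\in\{1,\dots,d\}$. On $\mathbb{R}^d$ consider the coordinate function $\vt\mapsto t_i$; it is obviously smooth but not compactly supported, so first I would multiply it by a suitable cutoff. Since $O=\phi^{-1}(\mathcal{C}\times B)$ and $O\subset\subset V$, we may write $\phi_{\Lambda,\varepsilon}(V)=\mathcal{C}_{\Lambda,\varepsilon}\times B_\varepsilon(\vzer)$ with $\overline{B}\subset B_\varepsilon(\vzer)$, and choose a compact $K$ with $\overline{B}\subset \operatorname{int}K\subset K\subset B_\varepsilon(\vzer)$. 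Pick $\psi\in C^\infty(\mathbb{R}^d)$ with $\supp\psi\subset B_\varepsilon(\vzer)$ and $\psi\equiv 1$ on $K$, so that $\eta_i(\vt\,):=t_i\,\psi(\vt\,)$ lies in $C^\infty(\mathbb{R}^d)$, is supported in $B_\varepsilon(\vzer)$, and coincides with $t_i$ on $K$, hence on $B$. Applying Proposition~\ref{P:comb} (the smooth version) to $\eta_i$ produces $f_i\in C^\infty_{tlc}(\Omega)$ that is supported in $O_{\Lambda,\varepsilon}=V$ and satisfies $h_{\Lambda'}^\ast f_i(\vt\,)=\eta_i(\vt\,)=t_i$ for all $\Lambda'\in\mathcal{C}_{\Lambda,\varepsilon}$ and $\vt\in B_\varepsilon(\vzer)$, in particular on the chart covering $O$ where $\vt\in B$.

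Next I would read off the partial derivatives. By Corollary~\ref{C:connecttoRd}, for $f_i\in C^\infty_{tlc}(\Omega)$ we have $h_{\Lambda'}^\ast\frac{\partial f_i}{\partial\ve_j}=\frac{\partial}{\partial t_j}\,h_{\Lambda'}^\ast f_i$. Restricting to $\overline{\Lambda}\in O$, so that $\overline{\Lambda}=\varphi_{\vt\,}(\Lambda')$ with $\Lambda'\in\mathcal{C}$ and $\vt\in B$, and using $h_{\Lambda'}^\ast f_i(\vt\,)=t_i$ on $B$, we get
\[
\frac{\partial f_i}{\partial\ve_j}(\overline{\Lambda})=\frac{\partial}{\partial t_j}\,t_i=\delta_{ij},
\]
which is exactly the claimed formula. (Equivalently one invokes~\eqref{e-der2} directly with the product $f_i=\psi$-localized coordinate, but the cleaner route is via $h_{\Lambda'}^\ast$.) The statement that $h_{\Lambda'}^\ast f_i=t_i\ve_i$ on $O$ as a vector-field identity is then just the identification of $\frac{\partial f_i}{\partial\ve_i}=1$ under $\nabla$, i.e. $\nabla f_i$ restricted to $O$ equals $\ve_i$; I would phrase this carefully to match the lemma's notation, noting that the gradient $\nabla f_i=\sum_j\frac{\partial f_i}{\partial\ve_j}\ve_j$ equals $\ve_i$ on $O$.

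There is essentially no hard obstacle here; the content is entirely contained in Proposition~\ref{P:comb} and Corollary~\ref{C:connecttoRd}, and the only mild care needed is bookkeeping with the two nested neighborhoods $O\subset\subset V$ to ensure the cutoff $\psi$ can be chosen $\equiv 1$ on a compact set containing $\overline{B}$ while still being supported in the ball $B_\varepsilon(\vzer)$ whose $\phi$-preimage is $V$. Since $\psi$ is transversally locally constant by construction (it only depends on the Euclidean variable $\vt$) and $\eta_i$ inherits this, Proposition~\ref{P:comb} applies verbatim and the resulting $f_i$ is genuinely in $C^\infty_{tlc}(\Omega)$ rather than merely $C^\infty(\Omega)$. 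I expect the write-up to be short: one paragraph setting up $\psi$ and $\eta_i$, one invoking Proposition~\ref{P:comb}, and one computing the derivative via Corollary~\ref{C:connecttoRd}.
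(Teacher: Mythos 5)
Your proof is correct and follows essentially the same route as the paper, which simply observes that the lemma ``follows easily from Corollary~\ref{C:bumps}'': in both cases one multiplies the Euclidean coordinate function $t_i$ by a smooth cutoff $\psi$ that is $\equiv 1$ on $\overline{B}$ and supported in the chart for $V$, transports the result to a function in $C^\infty_{tlc}(\Omega)$ via the local product structure (you invoke Proposition~\ref{P:comb}, the paper invokes the bump functions of Corollary~\ref{C:bumps} built from \eqref{e-prod} --- the same construction), and reads off $\partial f_i/\partial\ve_j=\delta_{ij}$ on $O$ from Corollary~\ref{C:connecttoRd} or \eqref{e-der2}. Your remark that the displayed identity $h_{\Lambda'}^\ast f_i=t_i\ve_i$ should be read as $h_{\Lambda'}^\ast f_i(\vt\,)=t_i$, equivalently $\nabla f_i=\ve_i$ on $O$, is the right reading of the statement.
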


\section{Hodge star operators for Dirichlet forms}\label{A:H}

We discuss the definition of Hodge star operators in a general setup. Let $X$ be a locally compact separable metric space, $\mu$ a Radon measure on $X$ with full support and $(\mathcal{E},\mathcal{D}(\mathcal{E}))$ a regular Dirichlet form on $L^2(X,\mu)$. There are several articles concerned with $L^2$-differential $1$-forms associated with Dirichlet forms, see for instance \cite{CS03, IRT12, HRT13, HT15, HT15b}. A brief description is as follows.

The space $\mathcal{D}(\mathcal{E})\cap C_c(X)$ is an algebra, and on the space $\mathcal{D}(\mathcal{E})\cap C_c(X)\otimes \mathcal{D}(\mathcal{E})\cap C_c(X)$ we can introduce a non-negative definite symmetric bilinear form by extending
\[\left\langle a\otimes b, c\otimes d\right\rangle_{\mathcal{H}}:=\mathcal{E}(abd,c)+\mathcal{E}(a,bcd)-\mathcal{E}(ac,bd).\]
Let $\left\|\cdot\right\|_{\mathcal{H}}=\sqrt{\left\langle\cdot,\cdot\right\rangle_{\mathcal{H}}}$ be the associated Hilbert seminorm. 
\begin{definition}\label{D:Habstract}
The Hilbert space $\mathcal{H}$ of $L^2$-differential $1$-forms associated with $(\mathcal{E},\mathcal{D}(\mathcal{E}))$ is defined as the completion of the quotient space $(\mathcal{D}(\mathcal{E})\cap C_c(X)\otimes \mathcal{D}(\mathcal{E})\cap C_c(X))/\ker \left\|\cdot\right\|_{\mathcal{H}}$ obtained by factoring out zero seminorm elements.
\end{definition} 

One can then introduce an abstract derivation operator associated with $(\mathcal{E},\mathcal{D}(\mathcal{E}))$.
\begin{definition}\label{D:partial}
We define the abstract derivation $\partial:\mathcal{D}(\mathcal{E})\cap C_c(X)\to\mathcal{H}$ 
associated with $(\mathcal{E},\mathcal{D}(\mathcal{E}))$ by\[\partial f:=f\otimes \mathbbm{1},\quad f\in\mathcal{D}(\mathcal{E})\cap C_c(X).\]
\end{definition}

One can define uniformly bounded actions of the algebra $\mathcal{D}(\mathcal{E})\cap C_c(X)$ on $\mathcal{H}$ and then see that $\partial$ satisfies a Leibniz rule. The operator $\partial$ extends to a closed operator $\partial: L^2(X,\mu)\to\mathcal{H}$ with domain $\mathcal{D}(\mathcal{E})$. For these facts and further details see for instance \cite[Section 2]{HRT13}. 

\begin{lemma}\label{L:omegaex}
Let $(\mathcal{E},\mathcal{D}(\mathcal{E}))$ be a regular Dirichlet form on $L^2(X,\mu)$ that admits a carr\'e du champ. Then, there exists some $\omega\in\mathcal{H}$ such that
\begin{equation}\label{eq:HSO01}
\int_X f\,d\mu=\langle f\,\omega,\omega\rangle_{\mathcal{H}}
\end{equation}
for any $f\in b\mathcal{B}(X)$.
\end{lemma}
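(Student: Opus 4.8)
The plan is to exhibit the element $\omega\in\mathcal{H}$ explicitly by using the carré du champ and a localization argument, exploiting the fact that any regular Dirichlet form admitting a carré du champ has a rich supply of bounded cutoff functions in $\mathcal{D}(\mathcal{E})\cap C_c(X)$. First I would recall that admitting a carré du champ $\Gamma$ means there is a continuous bilinear map $\Gamma\colon\mathcal{D}(\mathcal{E})\times\mathcal{D}(\mathcal{E})\to L^1(X,\mu)$ such that $\mathcal{E}(f,g)=\frac12\int_X\Gamma(f,g)\,d\mu$ and, at the level of the space $\mathcal{H}$ of $1$-forms, $\langle\partial f,\partial g\rangle_{\mathcal{H}}=\int_X\Gamma(f,g)\,d\mu$, together with the module action $\langle a\,\partial f,\partial g\rangle_{\mathcal{H}}=\int_X a\,\Gamma(f,g)\,d\mu$ for $a\in b\mathcal{B}(X)$. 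So the identity \eqref{eq:HSO01} will hold for $\omega$ of the form $\omega=\sum_i\chi_i\,\partial g_i$ as soon as $\sum_i\chi_i\,\Gamma(g_i,g_i)=\mathbbm{1}$ $\mu$-a.e., i.e. as soon as we can write the constant function $1$ as a finite sum of the ``energy densities'' $\chi_i\,\Gamma(g_i,g_i)$ with bounded Borel $\chi_i$.

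Next I would construct such $g_i$ and $\chi_i$ locally. Using regularity, cover the (locally compact, separable) space $X$ by countably many relatively compact open sets, and on each such set produce a function $g$ of local ``coordinate'' type together with a cutoff: concretely, pick $U\subset\subset V$ and $\psi,\varphi\in\mathcal{D}(\mathcal{E})\cap C_c(X)$ with $\psi=1$ on $\overline U$, $\supp\psi\subset V$, and $\varphi$ chosen so that $\Gamma(\varphi,\varphi)\geq c>0$ on $U$ for some constant $c$ (the existence of a function with non-degenerate energy density on a given relatively compact open set is a standard consequence of regularity; in the concrete situation of the paper this is exactly the content of Lemma~\ref{L:directionsforindex} and Corollary~\ref{C:bumps}, which yield $C^\infty_{tlc}$ functions whose gradient is a prescribed coordinate vector, so that $\Gamma(g_i,g_i)=|\nabla g_i|^2=1$ on a translated cylinder set). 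Since $X$ can be taken compact in the application — or in general using a locally finite refinement and $\sigma$-compactness — a finite subcover $U_1,\dots,U_n$ suffices; choose a measurable partition $\{B_i\}_{i=1}^n$ of $X$ subordinate to it, and set $\chi_i:=\mathbbm{1}_{B_i}\,\Gamma(g_i,g_i)^{-1}$, which is a bounded Borel function because $\Gamma(g_i,g_i)$ is bounded below on $U_i\supset B_i$ and $g_i$ has bounded energy density. Then $\omega:=\sum_{i=1}^n\chi_i\,\partial g_i\in\mathcal{H}$ and, using bilinearity of $\langle\cdot,\cdot\rangle_{\mathcal{H}}$ together with the module property, for $f\in b\mathcal{B}(X)$,
\[
\langle f\,\omega,\omega\rangle_{\mathcal{H}}=\sum_{i,j}\int_X f\,\chi_i\chi_j\,\Gamma(g_i,g_j)\,d\mu=\sum_i\int_X f\,\mathbbm{1}_{B_i}\,d\mu=\int_X f\,d\mu,
\]
where the cross terms $i\neq j$ can be arranged to vanish by choosing the $g_i$ supported in the $V_i$ and the $B_i$ so that $B_i\cap\supp g_j=\varnothing$ for $i\neq j$ (or, more robustly, by writing $\omega$ with a single orthonormal-type frame on each piece and invoking strong locality, so that $\chi_i\chi_j\Gamma(g_i,g_j)=0$ off the diagonal).

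The main obstacle I expect is the handling of the cross terms and, relatedly, the precise bookkeeping needed so that the energy densities $\Gamma(g_i,g_i)$ are simultaneously bounded above, bounded below on $B_i$, and such that the off-diagonal contributions genuinely cancel; in the non-smooth Dirichlet-form generality this requires a careful use of strong locality (so that $\Gamma(g_i,g_j)$ is supported where both $g_i$ and $g_j$ are non-locally-constant) and of the structure theory of $\mathcal{H}$ as a direct integral $\int^{\oplus}\mathcal{H}_\Lambda\,\mu(d\Lambda)$ with fibers of constant dimension equal to the pointwise index, which here is $d$ $\mu$-a.e. by Theorem~\ref{T:main_L2sg}(3). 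An alternative, cleaner route that avoids cross terms altogether is to work fiberwise: identify $\mathcal{H}\cong L^2(X,\mathbb{R}^d,\mu)$ via the isometry $\iota$ from Section~\ref{S:Hodge}, under which $\partial g_i\mapsto\nabla g_i$ and the inner product becomes $\langle v,w\rangle_{\mathcal H}=\int_X\langle v,w\rangle_{\mathbb R^d}\,d\mu$; then simply take $\omega:=\iota(\mathbf{u})$ for any measurable unit vector field $\mathbf{u}$, e.g. $\mathbf{u}\equiv\vec{e}_1$, for which $\langle f\,\omega,\omega\rangle_{\mathcal H}=\int_X f\,|\mathbf{u}|^2\,d\mu=\int_X f\,d\mu$ is immediate. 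I would present the fiberwise argument as the proof in the uniquely ergodic setting relevant to this paper and remark that the general statement follows from the localization construction above.
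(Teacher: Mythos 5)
Your second, ``fiberwise'' route is in fact the paper's proof of the general statement, not just of the special case: the paper works directly with the direct integral representation $\mathcal{H}\cong\int_X^\oplus\mathcal{H}_x\,\mu(dx)$, takes a measurable field of orthonormal bases $(\eta_i)_i$ (which exists by the general theory of measurable fields of Hilbert spaces, cf.\ Dixmier), forms $\widetilde\omega=\sum_i 2^{-i}\eta_i$, which satisfies $0<\|\widetilde\omega\|_{\mathcal{H}_x}<+\infty$ for $\mu$-a.e.\ $x$, and normalizes fiberwise, $\omega:=\mathbbm{1}_M(\|\widetilde\omega\|_{\mathcal{H}_\cdot})^{-1}\widetilde\omega$; then $\|\omega\|_{\mathcal{H}_x}=1$ a.e.\ and \eqref{eq:HSO01} is immediate. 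This requires no identification of the fibers with $\mathbb{R}^d$, no unique ergodicity and no local coordinate functions, so you should promote it to the proof of the lemma as stated rather than restricting it to the pattern-space setting and leaning on your first construction for the general case.

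The localization construction you offer for the general statement has a genuine gap. The key step --- producing, on each relatively compact open $U$, a function $g\in\mathcal{D}(\mathcal{E})\cap C_c(X)$ with $\Gamma(g,g)\geq c>0$ $\mu$-a.e.\ on $U$ --- is \emph{not} a standard consequence of regularity: regularity supplies an abundance of cutoff functions but says nothing about lower bounds on their energy densities. For a form whose energy measures degenerate on a set of positive measure no such $g$ exists at all, and even when the fibers $\mathcal{H}_x$ are nontrivial a.e.\ there is no reason a single continuous $g$ should have $\Gamma(g,g)$ bounded away from zero on a prescribed open set. In the concrete situation of the paper this step is rescued by Lemma~\ref{L:directionsforindex}, but the lemma under review is stated for an arbitrary regular Dirichlet form admitting a carr\'e du champ. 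There is also a normalization slip: with $\chi_i=\mathbbm{1}_{B_i}\Gamma(g_i,g_i)^{-1}$ the diagonal terms give $\sum_i\int_X f\,\mathbbm{1}_{B_i}\,\Gamma(g_i,g_i)^{-1}\,d\mu$ rather than $\int_X f\,d\mu$; you need $\chi_i=\mathbbm{1}_{B_i}\,\Gamma(g_i,g_i)^{-1/2}$, i.e.\ the correct requirement is $\sum_i\chi_i^2\,\Gamma(g_i,g_i)=\mathbbm{1}$, not $\sum_i\chi_i\,\Gamma(g_i,g_i)=\mathbbm{1}$. On the other hand, the cross terms you worry about vanish for free, since the $B_i$ are pairwise disjoint and hence $\chi_i\chi_j=0$ for $i\neq j$; no appeal to strong locality or to support separation of the $g_i$ is needed there.
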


Following \cite[Chapter 3]{Eb99} it was shown in \cite[Section 2]{HRT13} that since $(\mathcal{E},\mathcal{D}(\mathcal{E}))$ admits a carr\'e du champ, the space $\mathcal{H}$ is isometrically isomorphic to the direct integral with respect to $\mu$ of a certain measurable field of Hilbert spaces $(\mathcal{H}_{x})_{x\in X}$, see~\cite[Theorem 2.1]{HRT13}. These `fibers' $\mathcal{H}_x$ play the role of (co-)tangent spaces. For $f,g\in\mathcal{D}(\mathcal{E})$ it holds that $\left\langle \partial f,\partial g\right\rangle_{\mathcal{H}_x}=
\Gamma(f,g)(x)$ for $\mu$-a.e.\ $x\in X$, where $\Gamma$ denotes the carr\'e du champ operator. If $(\mathcal{E},\mathcal{D}(\mathcal{E}))$ is strongly local, then for $\mu$-a.e.\ $x\in X$ the dimension of $\mathcal{H}_x$ equals the pointwise Kusuoka-Hino index $p(x)$ of $(\mathcal{E},\mathcal{D}(\mathcal{E}))$ at $x$. See \cite[Definition 2.9]{Hino10} and \cite[Proposition 4.2]{BK17}.

\begin{proof}
There is a $1$-form $\widetilde{\omega}\in\mathcal{H}$ such that $0<\left\|\widetilde{\omega}\right\|_{\mathcal{H}_{x}}<+\infty$ for all $x\in M$, where $M\subset X$ is a Borel set with $\mu(M^c)=0$.
For instance, we can use $\widetilde{\omega}=\sum_{i=1}^\infty 2^{-i}\eta_i$, where $(\eta_i)_i$ is a measurable field of orthonormal bases, see \cite[Proposition II.4.1]{Dix81}, \cite[Lemma 8.12]{Tak02} or \cite[Remark 2.4]{HRT13}. Now consider $\omega:=\mathbbm{1}_M(\left\|\widetilde{\omega}\right\|_{\mathcal{H}_{\cdot}})^{-1}\widetilde{\omega}$.
\end{proof}

Also the following definition makes sense in this general setup.

\begin{definition} Let $(\mathcal{E},\mathcal{D}(\mathcal{E}))$ be a regular Dirichlet form on $L^2(X,\mu)$ admitting a carr\'e du champ.
For each $\omega\in\mathcal{H}$ such that $\|\omega\|_{\mathcal{H}_x}=1$ $\mu$-a.e. $\in X$ define
\begin{align}\label{eq:HSO03}
\star_\omega\colon&L^2(X,\mu)\times\mathcal{H}\quad\!\longrightarrow\quad L^2(X,\mu)\times\mathcal{H}\nonumber\\
&\qquad\quad\; (f,\eta)\,\quad\longmapsto\quad (\langle \eta,\omega\rangle_{\mathcal{H}_\cdot},f\!\cdot\omega).
\end{align}
\end{definition}

If the pointwise Kusuoka-Hino index of $(\mathcal{E},\mathcal{D}(\mathcal{E}))$ is one $\mu$-a.e., then we have $\dim \mathcal{H}_{x}=1$ $\mu$-a.e. If $\omega\in\mathcal{H}$ is such that $\left\|\omega\right\|_{\mathcal{H}_x}=1$ for $\mu$-a.e.\ $x\in X$, then each $\eta\in\mathcal{H}$ has the form $g\,\omega$ with some uniquely defined $g\in L^2(X,\mu)$. In this case, (\ref{eq:HSO03}) rewrites $(f,g\,\omega)\mapsto (g, f\,\omega)$. This motivates the following observation made in \cite[Proposition 4.5]{BK17}.

\begin{proposition}\label{prop:Hi1.01}
Let $(\mathcal{E},\mathcal{D}(\mathcal{E}))$ be a regular Dirichlet form on $L^2(X,\mu)$ admitting a carr\'e du champ and having pointwise index one $\mu$-a.e. Then, for each $\omega\in\mathcal{H}$ such that $\|\omega\|_{\mathcal{H}_x}=1$ $\mu$-a.e.\ $x\in X$, the linear operator defined by 
\begin{align}\label{eq:Hi1.01}
\star_{\omega}\colon&L^2(X,\mu)\quad\!\longrightarrow\quad\; \mathcal{H}\nonumber\\
&\qquad f\qquad\longmapsto\quad f\!\cdot\omega
\end{align}
is an isometry, both fiberwise and globally, i.e. 
$|f(x)|=\|{\star}_{\omega}f\|_{\mathcal{H}_x}$ $\mu$-a.e. $x\in X$ and $\|f\|_{L^2(X,\mu)}=\|{\star}_{\omega}f\|_{\mathcal{H}}$.
\end{proposition}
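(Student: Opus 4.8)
The plan is to reduce everything to the direct integral description of $\mathcal{H}$ recalled just before the statement. First I would invoke \cite[Theorem 2.1]{HRT13}: since $(\mathcal{E},\mathcal{D}(\mathcal{E}))$ admits a carr\'e du champ, $\mathcal{H}$ is isometrically isomorphic to the direct integral $\int_X^\oplus\mathcal{H}_x\,d\mu(x)$ of the measurable field $(\mathcal{H}_x)_{x\in X}$, so that each $\eta\in\mathcal{H}$ corresponds to a $\mu$-measurable section $x\mapsto\eta(x)\in\mathcal{H}_x$ with $\|\eta\|_{\mathcal{H}}^2=\int_X\|\eta(x)\|_{\mathcal{H}_x}^2\,d\mu(x)$, and the module action of $b\mathcal{B}(X)$ on $\mathcal{H}$ is the pointwise one, $(g\cdot\eta)(x)=g(x)\,\eta(x)$ for $\mu$-a.e.\ $x$. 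Because the pointwise Kusuoka--Hino index of $(\mathcal{E},\mathcal{D}(\mathcal{E}))$ is one $\mu$-a.e., I would then record that $\dim\mathcal{H}_x=1$ for $\mu$-a.e.\ $x$, so the fixed $\omega$ with $\|\omega\|_{\mathcal{H}_x}=1$ $\mu$-a.e.\ has $\omega(x)$ a unit basis vector of the line $\mathcal{H}_x$ for $\mu$-a.e.\ $x$.

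Next I would check that $\star_{\omega}$ makes sense on all of $L^2(X,\mu)$, not just on bounded functions: for $f\in L^2(X,\mu)$ the section $x\mapsto f(x)\,\omega(x)$ is $\mu$-measurable and
\[
\int_X\|f(x)\,\omega(x)\|_{\mathcal{H}_x}^2\,d\mu(x)=\int_X|f(x)|^2\,\|\omega(x)\|_{\mathcal{H}_x}^2\,d\mu(x)=\|f\|_{L^2(X,\mu)}^2<+\infty,
\]
so it defines an element $f\cdot\omega\in\mathcal{H}$, which agrees with the module action when $f$ is bounded and, for general $f$, with the $\mathcal{H}$-limit of $f_n\cdot\omega$ for $f_n:=(f\wedge n)\vee(-n)$. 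Linearity of $f\mapsto f\cdot\omega$ is immediate.

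With these identifications both claims drop out of the displayed computation. From $(\star_{\omega}f)(x)=f(x)\,\omega(x)$ for $\mu$-a.e.\ $x$ I obtain $\|\star_{\omega}f\|_{\mathcal{H}_x}=|f(x)|\,\|\omega(x)\|_{\mathcal{H}_x}=|f(x)|$ $\mu$-a.e., which is the fibrewise isometry, and integrating over $X$ gives $\|\star_{\omega}f\|_{\mathcal{H}}^2=\int_X|f(x)|^2\,d\mu(x)=\|f\|_{L^2(X,\mu)}^2$, the global one. If one also wants the isometric \emph{isomorphism} used in the surrounding text, I would add that $\dim\mathcal{H}_x=1$ $\mu$-a.e.\ allows writing any $\eta\in\mathcal{H}$ as $\eta(x)=g(x)\,\omega(x)$ with $g$ $\mu$-measurable, the global identity forcing $g\in L^2(X,\mu)$, so that $\eta=\star_{\omega}g$ and $\star_{\omega}$ is onto.

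The only step that is not completely formal is the identification, for unbounded $f\in L^2(X,\mu)$, of the $1$-form $f\cdot\omega$ (defined as the $\mathcal{H}$-limit of the $f_n\cdot\omega$) with the fibrewise product $x\mapsto f(x)\,\omega(x)$; this I expect to be the main, and essentially only, obstacle. I would handle it by the approximation $f_n\to f$ in $L^2(X,\mu)$ together with the isometry already established for bounded functions, passing to a subsequence along which $f_n\cdot\omega\to f\cdot\omega$ in $\mathcal{H}_x$ for $\mu$-a.e.\ $x$, and using that $f_n(x)\,\omega(x)\to f(x)\,\omega(x)$ in $\mathcal{H}_x$ for $\mu$-a.e.\ $x$ along a further subsequence. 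Everything else is a direct application of the direct integral formalism and of the one-dimensionality of the fibers.
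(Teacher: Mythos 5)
Your proof is correct and takes essentially the route the paper itself relies on: the statement is presented as an observation from \cite[Proposition 4.5]{BK17} immediately after the text recalls the direct integral decomposition of $\mathcal{H}$ over the measurable field $(\mathcal{H}_x)_{x\in X}$ from \cite[Theorem 2.1]{HRT13} and the fact that pointwise index one forces $\dim\mathcal{H}_x=1$ $\mu$-a.e., which is exactly the formalism you deploy. Your additional care in extending $f\mapsto f\cdot\omega$ from bounded Borel functions to all of $L^2(X,\mu)$ by truncation and identifying the limit with the fiberwise product is a legitimate and correctly handled detail that the paper leaves implicit.
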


If $\omega\in \mathcal{H}$ as in Proposition \ref{prop:Hi1.01} is fixed, and $\eta\in\mathcal{H}$, according to the above we have $\eta=g\,\omega$ with a uniquely defined function $g\in L^2(X,\mu)$. This shows that $\star_\omega$ is a bijection. As in the classical theory we denote its inverse $(\star_\omega)^{-1}$ again by the symbol $\star_{\omega}$, so that 
 \begin{align}\label{eq:Hi1.01inv}
\star_{\omega}\colon&\qquad\mathcal{H}\quad\!\longrightarrow\quad\; L^2(X,\mu)\nonumber\\
& \eta=g\,\omega\ \longmapsto\quad g.
\end{align}

\begin{definition}\label{D:Hodgestar}
Assume that the pointwise Kusuoka-Hino index of $(\mathcal{E},\mathcal{D}(\mathcal{E}))$ is one $\mu$-a.e. Let $\omega\in\mathcal{H}$ be such that $\left\|\omega\right\|_{\mathcal{H}_x}=1$ for $\mu$-a.e.\ $x\in X$. To the operator $\star_\omega$ as in (\ref{eq:Hi1.01}) and to its inverse in (\ref{eq:Hi1.01inv}) we refer as the \emph{Hodge star operator with respect to $\omega$}.
\end{definition}

It follows immediately from the definition that for $\eta\in\mathcal{H}$ and $f\in b\mathcal{B}(\Omega)$ it holds that
\begin{equation}\label{E:Hodgeandaction}
\star_\omega (f\eta)=f\star_\omega \eta.
\end{equation}

{ 
 \section{Dynamical spectrum and invariant sets}\label{SS:DsIs}
 This section is presented here in connection to Corollary~\ref{c:S}. It opens the possibility to study spectral analysis of the heat semigroup and the Laplacian, which will be the subject of future work. 
 
 
 
 
 Recall that 
 in the present setting the \textit{Koopman operators} $\{\Koo_{\vec{t}}\}_{{\vec{t}}\in\mathbb{R}^d}$ provide a family of unitary operators on $L^2(\Omega,\mu)$ defined by
 \eqref{E:KoopmanP}
 for ${\vec{t}}\in\mathbb{R}^d$. An eigenfunction of $\Koo_{\vec{t}\,}$ 
 with eigenvalue ${{{\vec{\alpha}}}}\in\mathbb{R}^d$ 
 is a nonzero function $f\in L^2$ satisfying
 \begin{equation}
 \label{eqn:evalue}
 \Koo_{\vec{t}} f = e^{2\pi i \langle {\vec{t}},{\vec{\alpha}} \rangle} f
 \end{equation}
 for all ${\vec{t}}\in\mathbb{R}^d$. The collection of all eigenvalues is called the \textit{dynamical spectrum}. This spectral approach allows us to characterize ergodicity by the action of the Koopman operator on $L^2(\Omega,\mu)$.
 
 In the particular case of $\Omega$ being the pattern space of an aperiodic Delone set of finite local complexity, the dynamical spectrum carries a great amount of information about the structure of the canonical transversal. In addition, the dynamical spectrum is related to the so-called diffraction spectrum of the quasicrystal modeled by the Delone set. Thus, the spectrum is very important from a dynamics point of view as well as from a mathematical physics point of view. We refer the reader to \cite{BaakeLenzvanEnter15, BaakeLenz17} for up-to-date perspectives on the dynamical spectrum and its role in aperiodic order.
 \begin{definition}
  The Koopman family of operators has \emph{pure point spectrum} if $L^2(\Omega,\mu)$ admits a basis of eigenfunctions.
\end{definition}
 \begin{proposition}
 Let $\varphi_{\vec{t}}:\Omega\rightarrow \Omega$ be a minimal and uniquely ergodic action of $\mathbb{R}^d$ on a compact metric space $\Omega$ with pure discrete spectrum and denote by $\mu$ the unique $\mathbb{R}^d$-invariant probability measure for this action. If, for ${\vec{\tau}}\in\mathbb{R}^d$, we have that $\langle{\vec{\tau}},{\vec{\alpha}}\rangle\not\in\mathbb{Z}$ for all eigenvalues ${\vec{\alpha}}$ corresponding to $f\in L^2(\Omega,\mu)$ which are non-constant $\mu$-almost everywhere, then for any $\mu$-measurable set $A\subset \Omega$ with the property that $\varphi_{\vec{\tau}}(A) = A$ it is true that $\mu(A)\in \{0,1\}$.
 \end{proposition}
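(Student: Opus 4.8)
The plan is to reduce the statement to a fixed-vector computation for the single unitary Koopman operator $\Koo_{\vec\tau}$ and then read off the conclusion from the spectral decomposition provided by the pure point spectrum hypothesis. First I would observe that, since $\mu$ is $\mathbb R^d$-invariant, every $\Koo_{\vec t}$ — in particular $\Koo_{\vec\tau}$ — is unitary on $L^2(\Omega,\mu)$. Because $\varphi_{\vec\tau}$ is a $\mu$-preserving bijection, the hypothesis $\varphi_{\vec\tau}(A)=A$ is (modulo a $\mu$-null set) equivalent to $\varphi_{\vec\tau}^{-1}(A)=A$, hence to
\[
\Koo_{\vec\tau}\mathbbm 1_A=\mathbbm 1_A\circ\varphi_{\vec\tau}=\mathbbm 1_{\varphi_{\vec\tau}^{-1}(A)}=\mathbbm 1_A
\]
in $L^2(\Omega,\mu)$; that is, $\mathbbm 1_A$ is fixed by $\Koo_{\vec\tau}$.

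Next I would invoke pure point spectrum: there is an orthonormal basis $\{f_{\vec\alpha}\}$ of $L^2(\Omega,\mu)$ consisting of eigenfunctions of the whole Koopman family, with $\Koo_{\vec t}f_{\vec\alpha}=e^{2\pi i\langle\vec t,\vec\alpha\rangle}f_{\vec\alpha}$ for all $\vec t\in\mathbb R^d$. Writing $\mathbbm 1_A=\sum_{\vec\alpha}c_{\vec\alpha}f_{\vec\alpha}$ and applying $\Koo_{\vec\tau}$, the fixed-vector identity together with uniqueness of the expansion gives $c_{\vec\alpha}\bigl(e^{2\pi i\langle\vec\tau,\vec\alpha\rangle}-1\bigr)=0$ for every $\vec\alpha$, so $c_{\vec\alpha}=0$ whenever $\langle\vec\tau,\vec\alpha\rangle\notin\mathbb Z$.

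Finally I would eliminate every nonzero frequency. If $\vec\alpha\neq\vec 0$, then each eigenfunction $f_{\vec\alpha}$ is non-constant $\mu$-a.e.\ (a nonzero constant $c$ satisfying $c=e^{2\pi i\langle\vec t,\vec\alpha\rangle}c$ for all $\vec t$ forces $\vec\alpha=\vec 0$), so the standing hypothesis gives $\langle\vec\tau,\vec\alpha\rangle\notin\mathbb Z$ and hence $c_{\vec\alpha}=0$. Therefore $\mathbbm 1_A$ lies in the eigenspace of the frequency $\vec 0$, which consists of the $\mathbb R^d$-invariant functions; by ergodicity of $\mu$ (a consequence of unique ergodicity) this eigenspace is spanned by the constants, so $\mathbbm 1_A$ is $\mu$-a.e.\ constant. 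An indicator function that is $\mu$-a.e.\ constant equals $0$ or $1$ $\mu$-a.e., which yields $\mu(A)\in\{0,1\}$. The argument has no serious obstacle: the only points deserving care are the equivalence between set-invariance of $A$ and the $L^2$ fixed-point condition (the ``modulo null set'' caveat), and the standard fact that unique ergodicity makes the $\vec 0$-eigenspace one-dimensional; the content of the proposition is entirely carried by the joint spectral decomposition.
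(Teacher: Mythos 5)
Your proof is correct and takes essentially the same route as the paper's: both rest on expanding an invariant $L^2$ function in the joint eigenbasis supplied by the pure point spectrum hypothesis and observing that, by ergodicity, the $\vec{0}$-eigenspace consists of the constants. If anything, your explicit expansion of $\mathbbm{1}_A$ and the coefficient computation $c_{\vec{\alpha}}\bigl(e^{2\pi i\langle\vec{\tau},\vec{\alpha}\rangle}-1\bigr)=0$ spells out the implication (no resonant eigenvalue $\Rightarrow$ ergodicity of $\varphi_{\vec{\tau}}$) that the paper only asserts after proving its converse.
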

 \begin{proof}
 Recall that an action on a compact metric space $\Omega$ is ergodic with respect to an invariant probability measure $\mu$ if there are no non-constant invariant functions in $L^2(\Omega,\mu)$ \cite[\S 2]{EW:book}. As such, for a fixed ${\vec{\tau}}\in\mathbb{R}^d$, the map $\varphi_{\vec{\tau}}:\Omega\rightarrow\Omega$ is ergodic if and only if $\langle {\vec{\tau}},{\vec{\alpha}}\rangle\not\in\mathbb{Z}$. Indeed, if $\langle {\vec{\tau}},{\vec{\alpha}}\rangle\in\mathbb{Z}$ for an eigenvalue ${\vec{\alpha}}$ corresponding to a non-constant function, then by (\ref{eqn:evalue}) and the fact that $L^2(\Omega,\mu)$ is generated by eigenfunctions, we have an invariant function in $L^2(\Omega,\mu)$ which is not constant $\mu$-almost everywhere, which is equivalent to the map $\varphi_{\vec{\tau}}$ not being ergodic with respect to $\mu$.
 
 Thus, if $\langle {\vec{\tau}},{\vec{\alpha}}\rangle\not\in\mathbb{Z}$ for all eigenvalues ${\vec{\alpha}}$ associated with non-constant functions in $L^2(\Omega,\mu)$, then the map $\varphi_{\vec{\tau}}$ is ergodic with respect to $\mu$. As such, every $\mu$-measurable set $A$ which is invariant under $\varphi_{\vec{\tau}}$ either has full or null measure.
 \end{proof}
 
}


\begin{thebibliography}{100}
	
	\bibitem{A1}
	Eric Akkermans.
	\newblock Statistical mechanics and quantum fields on fractals.
	\newblock In {\em Fractal Geometry and Dynamical Systems in Pure and Applied
		Mathematics II: Fractals in Applied Mathematics}, volume 601, pages 1--22.
	AMS, Providence, USA, 2013.
	
	\bibitem{A2}
	Eric Akkermans, Gerald Dunne, and Eli Levy.
	\newblock Wave propagation in one-dimension: methods and applications to
	complex and fractal structures.
	\newblock In {\em Optics of Aperiodic Structures}, volume Dal Negro, L. (Ed.).,
	pages 407--450. New York: Pan Stanford., 2013.
	
	\bibitem{A3}
	Eric Akkermans, Gerald Dunne, and Alexander Teplyaev.
	\newblock Thermodynamics of photons on fractals.
	\newblock {\em Physical review letters}, 105(23):230407, 2010.
	
	\bibitem{ACCDP:survey}
	Jos\'e Aliste-Prieto, Daniel Coronel, Mar\'\i a~Isabel Cortez, Fabien Durand,
	and Samuel Petite.
	\newblock Linearly repetitive {D}elone sets.
	\newblock In {\em Mathematics of aperiodic order}, volume 309 of {\em Progr.
		Math.}, pages 195--222. Birkh\"auser/Springer, Basel, 2015.
	
	\bibitem{PARdiamonds}
	Patricia Alonso-Ruiz.
	\newblock Explicit formulas for heat kernels on diamond fractals.
	\newblock {\em arXiv:1712.00385 To appear in: Comm. in Mathematical Physics},
	2018.
	
	\bibitem{AvilaDamanikZhang}
	Artur Avila, David Damanik, and Zhenghe Zhang.
	\newblock Singular density of states measure for subshift and quasi-periodic
	{S}chr\"odinger operators.
	\newblock {\em Comm. Math. Phys.}, 330(2):469--498, 2014.
	
	\bibitem{BG:book1}
	Michael Baake and Uwe Grimm.
	\newblock {\em Aperiodic order. {V}ol. 1}, volume 149 of {\em Encyclopedia of
		Mathematics and its Applications}.
	\newblock Cambridge University Press, Cambridge, 2013.
	\newblock A mathematical invitation, With a foreword by Roger Penrose.
	
	\bibitem{BaakeLenz17}
	Michael Baake and Daniel Lenz.
	\newblock Spectral notions of aperiodic order.
	\newblock {\em Discrete Contin. Dyn. Syst. Ser. S}, 10:161--190, 2017.
	
	\bibitem{BaakeLenzvanEnter15}
	Michael Baake, Daniel Lenz, and Aernout van Enter.
	\newblock Dynamical versus diffraction spectrum for structures with finite
	local complexity.
	\newblock {\em Ergodic Theory Dynam. Systems}, 35:2017--2043, 2015.
	
	\bibitem{A5}
	Florent Baboux, Eli Levy, Aristide Lema{\^\i}tre, Carmen G{\'o}mez, Elisabeth
	Galopin, Luc Le~Gratiet, Isabelle Sagnes, Alberto Amo, Jacqueline Bloch, and
	Eric Akkermans.
	\newblock Measuring topological invariants from generalized edge states in
	polaritonic quasicrystals.
	\newblock {\em Physical Review B}, 95(16):161114, 2017.
	
	\bibitem{BGL14}
	Dominique Bakry, Ivan Gentil, and Michel Ledoux.
	\newblock {\em Analysis and {G}eometry of {M}arkov {D}iffusion {O}perators},
	volume 348 of {\em Grundlehren der math. Wiss.}
	\newblock Springer, New York, 2014.
	
	\bibitem{Bandt}
	C.~Bandt and P.~Gummelt.
	\newblock Fractal {P}enrose tilings. {I}. {C}onstruction and matching rules.
	\newblock {\em Aequationes Math.}, 53(3):295--307, 1997.
	
	\bibitem{BBKT10}
	Martin~T. Barlow, Richard~F. Bass, Takashi Kumagai, and Alexander Teplyaev.
	\newblock Uniqueness of {B}rownian motion on {S}ierpinski carpet.
	\newblock {\em J. Eur. Math. Soc.}, 12:655--701, 2010.
	
	\bibitem{BE04}
	Martin~T. Barlow and Steven~N. Evans.
	\newblock Markov processes on vermiculated spaces.
	\newblock In {\em Random walks and geometry}, pages 337--348. Walter de
	Gruyter, Berlin, 2004.
	
	\bibitem{Bass95}
	Richard~F. Bass.
	\newblock {\em Probabilistic techniques in analysis}.
	\newblock Probability and its Applications (New York). Springer-Verlag, New
	York, 1995.
	
	\bibitem{BG12}
	Richard~F. Bass and Maria Gordina.
	\newblock Harnack inequalities in infinite dimensions.
	\newblock {\em J. Funct. Anal.}, 263(11):3707--3740, 2012.
	
	\bibitem{BK17}
	Fabrice Baudoin and Daniel~J Kelleher.
	\newblock Differential one-forms on {D}irichlet spaces and {B}akry-{{E}}mery
	estimates on metric graphs.
	\newblock {\em arXiv:1604.02520, to apear in Trans. Amer. Math. Soc.}, pages
	1--42, 2017.
	
	\bibitem{Bellissard00}
	J.~Bellissard, D.~J.~L. Herrmann, and M.~Zarrouati.
	\newblock Hulls of aperiodic solids and gap labeling theorems.
	\newblock In {\em Directions in mathematical quasicrystals}, volume~13 of {\em
		CRM Monogr. Ser.}, pages 207--258. Amer. Math. Soc., Providence, RI, 2000.
	
	\bibitem{BBG06}
	Jean Bellissard, Riccardo Benedetti, and Jean-Marc Gambaudo.
	\newblock Spaces of tilings, finite telescopic approximations and gap-labeling.
	\newblock {\em Comm. Math. Phys.}, 261:1--41, 2006.
	
	\bibitem{Bellissard15}
	Jean~V. Bellissard.
	\newblock Delone sets and material science: a program.
	\newblock In {\em Mathematics of aperiodic order}, volume 309 of {\em Progr.
		Math.}, pages 405--428. Birkh\"auser/Springer, Basel, 2015.
	
	\bibitem{BCR11}
	Lucian Beznea, Aurel Cornea, and Michael R\"ockner.
	\newblock Potential theory of infinite dimensional {L}\'evy processes.
	\newblock {\em J. Funct. Anal.}, 261(10):2845--2876, 2011.
	
	\bibitem{BH91}
	Nicholas Bouleau and Francis Hirsch.
	\newblock {\em Dirichlet {F}orms and {A}nalysis on {W}iener {S}pace}.
	\newblock deGruyter Studies in Mathematics. Walter deGruyter, Berlin, 1991.
	
	\bibitem{Breuillard}
	E.~Breuillard.
	\newblock Distributions diophantiennes et th\'eor\`eme limite local sur {$\Bbb
		R^d$}.
	\newblock {\em Probab. Theory Related Fields}, 132(1):39--73, 2005.
	
	\bibitem{Candel}
	Alberto Candel.
	\newblock The harmonic measures of {L}ucy {G}arnett.
	\newblock {\em Adv. Math.}, 176(2):187--247, 2003.
	
	\bibitem{CanCon00}
	Alberto Candel and Lawrence Conlon.
	\newblock {\em Foliations I}.
	\newblock Amer. Math. Soc., Providence, 2000.
	
	\bibitem{CanCon03}
	Alberto Candel and Lawrence Conlon.
	\newblock {\em Foliations II}.
	\newblock Amer. Math. Soc., Providence, 2003.
	
	\bibitem{CKS87}
	E.A. Carlen, S.~Kusuoka, and D.W. Stroock.
	\newblock Upper bounds for symmetric {M}arkov transition functions.
	\newblock {\em Ann. Inst. Henri Poincar\'e}, 23(2):245--287, 1987.
	
	\bibitem{Carron02}
	Gilles Carron.
	\newblock ${L}^2$-harmonic forms on non-compact {R}iemannian manifolds.
	\newblock {\em Proc. Centre Math. Appl. Austral. Nat. Univ.}, 40:49--59, 2002.
	
	\bibitem{Cheeger}
	J.~Cheeger.
	\newblock Differentiability of {L}ipschitz functions on metric measure spaces.
	\newblock {\em Geom. Funct. Anal.}, 9(3):428--517, 1999.
	
	\bibitem{Chen09}
	Zhen-Qing Chen.
	\newblock On notions of harmonicity.
	\newblock {\em Proc. Amer. Math.Soc.}, 137:3497--3510, 2009.
	
	\bibitem{ChFu12}
	Zhen-Qing Chen and Masatoshi Fukushima.
	\newblock {\em {S}ymmetric {M}arkov {P}rocesses, {T}ime {C}hange, and
		{B}oundary {T}heory}, volume~35 of {\em {L}ondon {M}athematical {S}ociety
		{M}onographs}.
	\newblock {P}rinceton {U}niversity {P}ress, {P}rinceton and {O}xford, 2012.
	
	\bibitem{CS03}
	F.~Cipriani and J.-L. Sauvageot.
	\newblock Derivations as square roots of {D}irichlet forms.
	\newblock {\em J. Funct. Anal.}, 201:78--120, 2003.
	
	\bibitem{CortezNavas}
	Mar\'\i a~Isabel Cortez and Andr\'es Navas.
	\newblock Some examples of repetitive, nonrectifiable {D}elone sets.
	\newblock {\em Geom. Topol.}, 20(4):1909--1939, 2016.
	
	\bibitem{Coulhon96}
	Thierry Coulhon.
	\newblock Ultracontractivity and nash type inequalities.
	\newblock {\em J. Funct. Anal.}, 141:510--539, 1996.
	
	\bibitem{DamanikGorodetskiYessen}
	David Damanik, Anton Gorodetski, and William Yessen.
	\newblock The {F}ibonacci {H}amiltonian.
	\newblock {\em Invent. Math.}, 206(3):629--692, 2016.
	
	\bibitem{A8}
	A.~Dareau, E.~Levy, M.~Bosch Aguilera, R.~Bouganne, E.~Akkermans, F.~Gerbier,
	and J.~Beugnon.
	\newblock Revealing the topology of quasicrystals with a diffraction
	experiment.
	\newblock {\em Phys. Rev. Lett.}, 119:215304, Nov 2017.
	
	\bibitem{Davies89}
	Edward~Brian Davies.
	\newblock {\em Heat kernels and spectral theory}.
	\newblock Cambride University Press, Cambridge, 1989.
	
	\bibitem{DaviesSimon84}
	Edward~Brian Davies and Barry Simon.
	\newblock Ultracontractivity and the heat kernel for schr\"odinger operators
	and dirichlet laplacians.
	\newblock {\em J. Funct. Anal.}, 59:335--395, 1984.
	
	\bibitem{DG1}
	Manfred Denker and Mikhail Gordin.
	\newblock Gibbs measures for fibred systems.
	\newblock {\em Adv. Math.}, 148(2):161--192, 1999.
	
	\bibitem{divincenzo1999quasicrystals}
	David~P DiVincenzo and Paul~J Steinhardt.
	\newblock {\em Quasicrystals: the State of the Art}, volume~16.
	\newblock World scientific, 1999.
	
	\bibitem{Dix81}
	J~Dixmier.
	\newblock {\em Von {N}eumann {A}lgebras}.
	\newblock North-Holland Math. Lib. 27. North-Holland, Amsterdam, 1981.
	
	\bibitem{Dodziuk77}
	Jozef Dodziuk.
	\newblock De {R}ham-{H}odge theory for {$L^2$}-cohomology of infinite
	coverings.
	\newblock {\em Topology}, 16:157–165, 1977.
	
	\bibitem{Dodziuk82}
	Jozef Dodziuk.
	\newblock ${L}^2$-harmonic forms on complete manifolds.
	\newblock {\em Semin. differential geometry, Ann. Math. Stud.}, 102:291--302,
	1982.
	
	\bibitem{DolgopyatFernando}
	Dmitry Dolgopyat and Kasun Fernando.
	\newblock An error term in the central limit theorem for sums of discrete
	random variables.
	\newblock {\em preprint}, 2018.
	
	\bibitem{Doob54}
	Joseph~L. Doob.
	\newblock Semimartingales and subharmonic functions.
	\newblock {\em Trans. Amer. Math. Soc.}, 77(11):86--121, 1954.
	
	\bibitem{Doob84}
	Joseph~L. Doob.
	\newblock {\em Classical {P}otential {T}heory and {I}ts {P}robabilistic
		{C}ounterpart}, volume 262 of {\em Grundlehren der mathematischen
		Wissenschaften, Band 262}.
	\newblock Springer-Verlag, New York, 1984.
	
	\bibitem{duneau1985quasiperiodic}
	Michel Duneau and Andr{\'e} Katz.
	\newblock Quasiperiodic patterns.
	\newblock {\em Physical review letters}, 54(25):2688, 1985.
	
	\bibitem{Dynkin}
	E.~B. Dynkin.
	\newblock {\em Markov processes. {V}ols. {I}, {II}}.
	\newblock Translated with the authorization and assistance of the author by J.
	Fabius, V. Greenberg, A. Maitra, G. Majone. Die Grundlehren der
	Mathematischen Wissenschaften, B\"ande 121 und 122. Academic Press Inc.,
	Publishers, New York; Springer-Verlag, Berlin-G\"ottingen-Heidelberg, 1965.
	
	\bibitem{Eb99}
	Andreas Eberle.
	\newblock {\em Uniqueness and non-uniqueness of semigroups generated by
		singular diffusion operators}, volume 1718 of {\em Lect. Notes Math.}
	\newblock Springer, New York, 1999.
	
	\bibitem{EE87}
	D.~E. Edmunds and W.~D. Evans.
	\newblock {\em Spectral theory and differential operators}.
	\newblock Oxford Mathematical Monographs. The Clarendon Press, Oxford
	University Press, New York, 1987.
	\newblock Oxford Science Publications.
	
	\bibitem{EW:book}
	Manfred Einsiedler and Thomas Ward.
	\newblock {\em Ergodic theory with a view towards number theory}, volume 259 of
	{\em Graduate Texts in Mathematics}.
	\newblock Springer-Verlag London, Ltd., London, 2011.
	
	\bibitem{elser1985crystal}
	Veit Elser and Christopher~L Henley.
	\newblock Crystal and quasicrystal structures in al-mn-si alloys.
	\newblock {\em Physical Review Letters}, 55(26):2883, 1985.
	
	\bibitem{EthierKurtz86}
	Stuart~N. Ethier and Thomas~G. Kurtz.
	\newblock {\em Markov {P}rocesses, {C}haracterization and {C}onvergence}.
	\newblock Wiley Series in Probability and Mathematical Statistics. Wiley, New
	York, 1986.
	
	\bibitem{Evans98}
	Lawrence~C. Evans.
	\newblock {\em Partial {D}ifferential {E}quations}, volume~19 of {\em Graduate
		Studies in Mathematics}.
	\newblock American Mathematical Society, Providence, Rhode Island, 1998.
	
	\bibitem{FHK:CCPT}
	A.H. Forrest, J.R. Hunton, and J.~Kellendonk.
	\newblock Cohomology of canonical projection tilings.
	\newblock {\em Comm. Math. Phys.}, 226:289--322, 2002.
	
	\bibitem{Fukushima16}
	Masatoshi Fukushima.
	\newblock Liouville property of harmonic functions of finite energy for
	{D}irichlet forms.
	\newblock In {\em Stochastic Partial Differential Equations and Related Fields
		- In Honor of Michael R\"ockner, SPDERF, Bielefeld, Germany, October 10-14,
		2016}, Springer Proceedings in Mathematics and Statistics. Springer, 2016.
	
	\bibitem{FOT94}
	Masatoshi Fukushima, Yoichi Oshima, and Masayoshi Takeda.
	\newblock {\em Dirichlet {F}orms and {S}ymmetric {M}arkov {P}rocesses},
	volume~19 of {\em de{G}ruyter {S}tudies in {M}athematics}.
	\newblock Walter de{G}ruyter, Berlin, 1994.
	
	\bibitem{Gardner77}
	M.~Gardner.
	\newblock Extraordinary nonperiodic tiling that enriches the theory of tiles.
	\newblock {\em Scientific American}, pages 110--119, December 1977.
	
	\bibitem{GarnettL}
	Lucy Garnett.
	\newblock Foliations, the ergodic theorem and {B}rownian motion.
	\newblock {\em J. Funct. Anal.}, 51(3):285--311, 1983.
	
	\bibitem{GT01}
	David Gilbarg and Neil~S. Trudinger.
	\newblock {\em Elliptic partial differential equations of second order}.
	\newblock Classics in Mathematics. Springer-Verlag, Berlin, 2001.
	\newblock Reprint of the 1998 edition.
	
	\bibitem{G1}
	M.~I. Gordin.
	\newblock The central limit theorem for stationary processes.
	\newblock {\em Dokl. Akad. Nauk SSSR}, 188:739--741, 1969.
	
	\bibitem{G2}
	M.~I. Gordin and B.~A. Lifsic.
	\newblock Central limit theorem for stationary {M}arkov processes.
	\newblock {\em Dokl. Akad. Nauk SSSR}, 239(4):766--767, 1978.
	
	\bibitem{G3}
	Mikhail Gordin.
	\newblock Homoclinic approach to the central limit theorem for dynamical
	systems.
	\newblock In {\em Doeblin and modern probability ({B}laubeuren, 1991)}, volume
	149 of {\em Contemp. Math.}, pages 149--162. Amer. Math. Soc., Providence,
	RI, 1993.
	
	\bibitem{GP1}
	Mikhail Gordin and Magda Peligrad.
	\newblock On the functional central limit theorem via martingale approximation.
	\newblock {\em Bernoulli}, 17(1):424--440, 2011.
	
	\bibitem{GW1}
	Mikhail Gordin and Michel Weber.
	\newblock On the almost sure central limit theorem for a class of {$\Bbb
		Z^d$}-actions.
	\newblock {\em J. Theoret. Probab.}, 15(2):477--501, 2002.
	
	\bibitem{Gross67}
	Leonard Gross.
	\newblock Abstract {W}iener spaces.
	\newblock In {\em Proc. {F}ifth {B}erkeley {S}ympos. {M}ath. {S}tatist. and
		{P}robability ({B}erkeley, {C}alif., 1965/66), {V}ol. {II}: {C}ontributions
		to {P}robability {T}heory, {P}art 1}, pages 31--42. Univ. California Press,
	Berkeley, Calif., 1967.
	
	\bibitem{Gross67jfa}
	Leonard Gross.
	\newblock Potential theory on {H}ilbert space.
	\newblock {\em J. Functional Analysis}, 1:123--181, 1967.
	
	\bibitem{Gross75}
	Leonard Gross.
	\newblock Logarithmic sobolev inequalities.
	\newblock {\em Amer. J. of Math.}, 97:1061--1083, 1975.
	
	\bibitem{Hansen}
	W.~Hansen.
	\newblock Semipolar sets and quasibalayage.
	\newblock {\em Math. Ann.}, 257(4):495--517, 1981.
	
	\bibitem{HKST}
	Juha Heinonen, Pekka Koskela, Nageswari Shanmugalingam, and Jeremy~T. Tyson.
	\newblock {\em Sobolev spaces on metric measure spaces}, volume~27 of {\em New
		Mathematical Monographs}.
	\newblock Cambridge University Press, Cambridge, 2015.
	\newblock An approach based on upper gradients.
	
	\bibitem{Hino10}
	Masanori Hino.
	\newblock Energy measures and indices of {D}irichlet forms, with applications
	to derivatives on soms fractals.
	\newblock {\em Proc. London Math. Soc.}, 100(3):269--302, 2010.
	
	\bibitem{Hino12}
	Masanori Hino.
	\newblock Measurable {R}iemannian structures associated with strong local
	{D}irichlet forms.
	\newblock {\em Math. Nachr.}, 286(14-15):1466--1478, 2013.
	
	\bibitem{HKT15}
	M.~Hinz, D.~Kelleher, and A.~Teplyaev.
	\newblock Metrics and spectral triples for {D}irichlet and resistance forms.
	\newblock {\em J. Noncomm. Geom.}, 9(2):359--390, 2015.
	
	\bibitem{HRT13}
	M.~Hinz, M.~R\"ockner, and A.~Teplyaev.
	\newblock Vector analysis for {D}irichlet forms and quasilinear {PDE} and
	{SPDE} on fractals.
	\newblock {\em Stoch. Proc. Appl.}, 123:4373--4406, 2013.
	
	\bibitem{H15m}
	Michael Hinz.
	\newblock Magnetic energies and {F}eynman-{K}ac-{I}t\^o formulas for symmetric
	{M}arkov processes.
	\newblock {\em Stoch. Anal. Appl.}, 33(6):1020--1049, 2015.
	
	\bibitem{HT13m}
	Michael Hinz and Alexander Teplyaev.
	\newblock Dirac and magnetic {S}chr\"odinger operators on fractals.
	\newblock {\em J. Funct. Anal.}, 265(11):2830--2854, 2013.
	
	\bibitem{HT15b}
	Michael Hinz and Alexander Teplyaev.
	\newblock Finite energy coordinates and vector analysis on fractals.
	\newblock In {\em Fractal {G}eometry and {S}tochastics {V}}, volume~70 of {\em
		Progress in Probab.}, pages 209--227. Birkh\"auser/Springer, Basel, 2015.
	
	\bibitem{HT15}
	Michael Hinz and Alexander Teplyaev.
	\newblock Local {D}irichlet forms, {H}odge theory, and the {N}avier-{S}tokes
	equation on topologically one-dimensional fractals.
	\newblock {\em Trans. Amer. Math. Soc.}, 367:1347--1380, 2015.
	\newblock Corrected version: arXiv:1206.6644, Corrigendum to appear in Trans.
	Amer. Math. Soc. (2017+).
	
	\bibitem{IRT12}
	M.~Ionescu, L.~Rogers, and A.~Teplyaev.
	\newblock Derivations and {D}irichlet forms on fractals.
	\newblock {\em J. Funct. Anal.}, 263:2141--2169, 2012.
	
	\bibitem{JitomirskayaLiu}
	Svetlana Jitomirskaya and Wencai Liu.
	\newblock Universal hierarchical structure of quasiperiodic eigenfunctions.
	\newblock {\em Ann. of Math. (2)}, 187(3):721--776, 2018.
	
	\bibitem{Jost02}
	J\"urgen Jost.
	\newblock {\em Riemannian {G}eometry and {G}eometric {A}nalysis}.
	\newblock Springer, Heidelberg, 2002.
	
	\bibitem{Kaimanovich89}
	Vadym Kaimanovich.
	\newblock Brownian motion on foliations: entropy, invariant measures, mixing.
	\newblock {\em Funct. Anal. Appl.}, 22:326--328, 1989.
	
	\bibitem{Kajino17}
	Naotaka Kajino.
	\newblock Equivalence of recurrence and {L}iouville property for symmetric
	{D}irichlet forms.
	\newblock {\em Sci. Journal of Volgograd State Univ. Math. Physics and Comp.
		Sim.}, 20(3):89--97, 2017.
	
	\bibitem{Kakutani44}
	Shizuo Kakutani.
	\newblock Two-dimensional brownian motion and harmonic functions.
	\newblock {\em Proc. Imp. Acad. Tokyo}, 20:706--714, 1944.
	
	\bibitem{Kakutani45}
	Shizuo Kakutani.
	\newblock Markoff process and the dirichlet problem.
	\newblock {\em Proc. Japan Acad.}, 21:227--233, 1945.
	
	\bibitem{Kellendonk:PEC}
	Johannes Kellendonk.
	\newblock Pattern-equivariant functions and cohomology.
	\newblock {\em J. Phys. A}, 36(21):5765--5772, 2003.
	
	\bibitem{kellendonk2015mathematics}
	Johannes Kellendonk, Daniel Lenz, and Jean Savinien.
	\newblock {\em Mathematics of aperiodic order}, volume 309.
	\newblock Springer, 2015.
	
	\bibitem{KP00}
	Johannes Kellendonk and Ian Putnam.
	\newblock Tilings, $c^\ast$-algebras, and $k$-theory.
	\newblock In {\em Directions in mathematical quasicrystals}, volume~13 of {\em
		CRM Monograph Series}, pages 177--206. Amer. Math. Soc., Providence, RI,
	2000.
	
	\bibitem{KP:RS}
	Johannes Kellendonk and Ian~F. Putnam.
	\newblock The {R}uelle-{S}ullivan map for actions of {$\Bbb R^n$}.
	\newblock {\em Math. Ann.}, 334(3):693--711, 2006.
	
	\bibitem{KV1}
	C.~Kipnis and S.~R.~S. Varadhan.
	\newblock Central limit theorem for additive functionals of reversible {M}arkov
	processes and applications to simple exclusions.
	\newblock {\em Comm. Math. Phys.}, 104(1):1--19, 1986.
	
	\bibitem{KLObook}
	Tomasz Komorowski, Claudio Landim, and Stefano Olla.
	\newblock {\em Fluctuations in {M}arkov processes}, volume 345 of {\em
		Grundlehren der Mathematischen Wissenschaften [Fundamental Principles of
		Mathematical Sciences]}.
	\newblock Springer, Heidelberg, 2012.
	\newblock Time symmetry and martingale approximation.
	
	\bibitem{KSZ14}
	Pekka Koskela, Nageswari Shanmugalingam, and Yuan Zhou.
	\newblock Geometry and analysis of {D}irichlet forms ({II}).
	\newblock {\em J. Funct. Anal.}, 267(7):2437--2477, 2014.
	
	\bibitem{KZ12}
	Pekka Koskela and Yuan Zhou.
	\newblock Geometry and analysis of {D}irichlet forms.
	\newblock {\em Adv. Math.}, 231(5):2755--2801, 2012.
	
	\bibitem{Kusuoka89}
	Shigeo Kusuoka.
	\newblock Dirichlet forms on fractals and products of random matrices.
	\newblock {\em Publ. Res. Inst. Math. Sci.}, 25:659--680, 1989.
	
	\bibitem{LP03}
	Jeffrey~C. Lagarias and Peter A.~B. Pleasants.
	\newblock Repetitive {D}elone sets and quasicrystals.
	\newblock {\em Ergodic Theory Dynamical Systems}, 23:831--867, 2003.
	
	\bibitem{LMS}
	J.-Y. Lee, R.~V. Moody, and B.~Solomyak.
	\newblock Pure point dynamical and diffraction spectra.
	\newblock {\em Ann. Henri Poincar\'e}, 3(5):1003--1018, 2002.
	
	\bibitem{LT16}
	Daniel Lenz and Alexander Teplyaev.
	\newblock Expansion in generalized eigenfunctions for {L}aplacians on graphs
	and metric measure spaces.
	\newblock {\em Trans. Amer. Math. Soc.}, 368(7):4933--4956, 2016.
	
	\bibitem{levine1984quasicrystals}
	Dov Levine and Paul~Joseph Steinhardt.
	\newblock Quasicrystals: a new class of ordered structures.
	\newblock {\em Physical review letters}, 53(26):2477, 1984.
	
	\bibitem{MSch06}
	Calvin~C. Moore and Claude~L. Schochet.
	\newblock {\em Global analysis on foliated spaces}, volume~9 of {\em
		Mathematical Sciences Research Institute Publications}.
	\newblock Cambridge University Press, New York, second edition, 2006.
	
	\bibitem{Nash58}
	John Nash.
	\newblock Continuity of solutions of parabolic and elliptic equations.
	\newblock {\em Amer. J. Math.}, 80:931--954, 1958.
	
	\bibitem{Nelson66}
	Edward Nelson.
	\newblock A quartic interaction in two dimensions.
	\newblock In {\em Mathematical Theory of Elementary Particles}, pages 69--73.
	MIT, 1966.
	
	\bibitem{Oks00}
	Bernt {\O}ksendal.
	\newblock {\em Stochastic {D}ifferential {E}quations}.
	\newblock Springer-Verlag, Heidelberg, 5 edition, 2000.
	
	\bibitem{PB09}
	John Pearson and Jean Bellissard.
	\newblock Noncommutative {R}iemannian geometry and diffusion on ultrametric
	{C}antor sets.
	\newblock {\em J. Noncomm. Geo.}, 3:447--480, 2009.
	
	\bibitem{Penrose}
	R.~Penrose.
	\newblock Pentaplexity: a class of nonperiodic tilings of the plane.
	\newblock {\em Math. Intelligencer}, 2(1):32--37, 1979/80.
	
	\bibitem{RS80}
	M.~Reed and B.~Simon.
	\newblock {\em Methods of {M}odern {M}athematical {P}hysics, vol. 1}.
	\newblock Acad. Press, San Diego, 1980.
	
	\bibitem{RudinRCA}
	Walter Rudin.
	\newblock {\em Real and complex analysis}.
	\newblock McGraw-Hill Book Co., New York, third edition, 1987.
	
	\bibitem{RudinFA}
	Walter Rudin.
	\newblock {\em Functional analysis}.
	\newblock International Series in Pure and Applied Mathematics. McGraw-Hill,
	Inc., New York, second edition, 1991.
	
	\bibitem{sadun:book}
	Lorenzo Sadun.
	\newblock {\em Topology of tiling spaces}, volume~46 of {\em University Lecture
		Series}.
	\newblock American Mathematical Society, Providence, RI, 2008.
	
	\bibitem{Sadun2015}
	Lorenzo Sadun.
	\newblock Cohomology of hierarchical tilings.
	\newblock In {\em Mathematics of aperiodic order}, volume 309 of {\em Progr.
		Math.}, pages 73--104. Birkh\"auser/Springer, Basel, 2015.
	
	\bibitem{ST:SA}
	Scott Schmieding and Rodrigo Trevi{\~n}o.
	\newblock {Self affine {D}elone sets and deviation phenomena}.
	\newblock {\em preprint arXiv:1511.07557}, pages 1--42, 2015.
	
	\bibitem{SchmiedingTrevino2018}
	Scott Schmieding and Rodrigo Trevi{\~{n}}o.
	\newblock Self affine delone sets and deviation phenomena.
	\newblock {\em Communications in Mathematical Physics}, 357(3):1071--1112, Feb
	2018.
	
	\bibitem{SBGC82}
	D.~Shechtman, I.~Blech, D.~Gratias, and J.W. Cahn.
	\newblock Metallic {P}hase with {L}ong-{R}ange {O}rientational {O}rder and {N}o
	{T}ranslational {S}ymmetry.
	\newblock {\em Phys. Rev. Letters}, 53:1951--1953, 1984.
	
	\bibitem{solomyak:SS}
	Boris Solomyak.
	\newblock Dynamics of self-similar tilings.
	\newblock {\em Ergodic Theory Dynam. Systems}, 17(3):695--738, 1997.
	
	\bibitem{ST17}
	Benjamin Steinhurst and Alexander Teplyaev.
	\newblock Spectral analysis and dirichlet forms on barlow-evans fractals.
	\newblock {\em arXiv preprint arXiv:1204.5207}, 2017.
	
	\bibitem{Sturm1}
	Karl-Theodor Sturm.
	\newblock On the geometry of metric measure spaces. {I}.
	\newblock {\em Acta Math.}, 196(1):65--131, 2006.
	
	\bibitem{Sturm2}
	Karl-Theodor Sturm.
	\newblock On the geometry of metric measure spaces. {II}.
	\newblock {\em Acta Math.}, 196(1):133--177, 2006.
	
	\bibitem{Su15}
	Kiyotaka Suzaki.
	\newblock {An SDE approach to leafwise diffusions on foliated spaces and its
		applications}.
	\newblock {\em Tohoku {M}athematical {J}ournal}, 67(2):247--272, 2015.
	
	\bibitem{Tak02}
	M~Takesaki.
	\newblock {\em Theory of Operator Algebras I}.
	\newblock Encycl. Math. Sci. 124. Springer, New York, 2002.
	
\end{thebibliography}
\end{document}